\documentclass[11pt]{article}


\usepackage{amssymb,amsmath,amscd,amsthm,xy,enumitem,mathabx,mathrsfs,color,caption,stmaryrd}
\usepackage{amsxtra,pstricks,pst-node}
\xyoption{all}

\addtolength{\textheight}{1.2in}
\addtolength{\topmargin}{-.6in}
\addtolength{\textwidth}{1.5in}
\addtolength{\oddsidemargin}{-.75in}
\addtolength{\evensidemargin}{-.75in}
 
\setcounter{topnumber}{1}

\newtheorem{thm}{Theorem}[section]
\newtheorem{prp}[thm]{Proposition}
\newtheorem{lmm}[thm]{Lemma}

\newtheorem{crl}[thm]{Corollary}

\theoremstyle{definition}
\newtheorem{dfn}[thm]{Definition}
\newtheorem{eg}[thm]{Example}

\theoremstyle{remark}
\newtheorem{rmk}[thm]{Remark}

\numberwithin{equation}{section}

\def\lra{\longrightarrow}

\def\BE#1{\begin{equation}\label{#1}}
\def\EE{\end{equation}}
\def\lr#1{\langle#1\rangle}
\def\flr#1{\left\lfloor{#1}\right\rfloor}
\def\blr#1{\big\langle#1\big\rangle}
\def\ti#1{\tilde{#1}}
\def\wt#1{\widetilde{#1}}
\def\ov#1{\overline{#1}}
\def\eref#1{(\ref{#1})}
\def\tn#1{\textnormal{#1}}
\def\sf#1{\textsf{#1}}

\def\wh#1{\widehat{#1}}
\def\wch#1{\widecheck{#1}}

\def\De{\Delta}
\def\Ga{\Gamma}
\def\La{\Lambda}
\def\Om{\Omega}
\def\Si{\Sigma}
\def\Th{\Theta}

\def\al{\alpha}
\def\be{\beta}

\def\ga{\gamma}
\def\io{\iota}

\def\la{\lambda}
\def\na{\nabla}
\def\om{\omega}
\def\si{\sigma}
\def\th{\theta}
\def\ve{\varepsilon}
\def\vph{\varphi}

\def\ze{\zeta}

\def\fB{\mathfrak B}

\def\C{\mathbb C}
\def\cC{\mathcal C}  
\def\cD{\mathcal{D}}

\def\ne{\textnormal{e}}
\def\ff{\mathfrak f}

\def\cH{\mathcal H}

\def\bI{\mathbb I}
\def\fI{\mathfrak i}
\def\cJ{\mathcal J}
\def\fJ{\mathfrak j}

\def\cL{\mathcal L}

\def\cM{\mathcal M}

\def\fM{\mathfrak M}

\def\cN{\mathcal N}
\def\cO{\mathcal O}
\def\P{\mathbb P}
\def\fP{\mathfrak P}

\def\R{\mathbb{R}}

\def\fR{\mathfrak R}

\def\cT{\mathcal T}
\def\cU{\mathcal U}

\def\Z{\mathbb{Z}}

\def\fa{\mathfrak a}

\def\be{\mathbf e}
\def\fc{\mathfrak c}

\def\fs{\mathfrak s}
\def\s{\mathbf s}

\def\bt{\mathbf t}
\def\u{\mathbf u}
\def\x{\mathbf x}

\def\Aut{\tn{Aut}}

\def\tnd{\textnormal{d}}

\def\ev{\tn{ev}}
\def\GL{\tn{GL}}

\def\Hom{\tn{Hom}}
\def\id{\textnormal{id}}

\def\Id{\tn{Id}}

\def\Ind{\textnormal{Ind}\,}

\def\pt{\tn{pt}}

\def\Re{\tn{Re}}
\def\Res{\tn{Res}}
\def\rdet{\wh{\tn{det}}}
\def\rk{\textnormal{rk}}

\def\SO{\tn{SO}}
\def\tO{\tn{O}}

\def\SL{\tn{SL}}

\def\top{\textnormal{top}}
\def\vrt{\tn{vrt}}

\def\0{\mathbf 0}
\def\1{\mathbf 1}

\def\dbar{\bar\partial}
\def\prt{\partial}
\def\eset{\emptyset}
\def\i{\infty}
\def\bp{\bar\partial}

\def\bu{\bullet}

\begin{document}

\title{Real Gromov-Witten Theory in All Genera and\\ 
Real Enumerative Geometry: Properties}
\author{Penka Georgieva\thanks{Partially supported by ERC grant STEIN-259118} $~$and 
Aleksey Zinger\thanks{Partially supported by NSF grants DMS 0846978 and 1500875}}
\date{\today}
\maketitle

\begin{abstract}
\noindent
The first part of this work constructs positive-genus  real Gromov-Witten invariants
of real-orientable symplectic manifolds of odd ``complex" dimensions;
the present part focuses on their properties that are essential for actually working
with these invariants.
We determine the compatibility of the orientations on the moduli spaces of real maps
constructed in the first part with the standard node-identifying
immersion of Gromov-Witten theory. 
We also compare these orientations with alternative ways of orienting 
the moduli spaces of real maps that are available in special cases.
In a sequel, we use the properties established in this paper to 
compare real Gromov-Witten and enumerative invariants,
to describe equivariant localization data that computes the real Gromov-Witten invariants
of odd-dimensional projective spaces, and 
to establish vanishing results for these invariants
in the spirit of Walcher's predictions.
\end{abstract}

\tableofcontents

\section{Introduction}
\label{intro_sec}

\noindent
The theory of $J$-holomorphic maps plays prominent roles in symplectic topology,
algebraic geometry, and string theory.
The foundational work of~\cite{Gr,McSa94,RT,LT,FO} has 
established the theory of (closed) Gromov-Witten invariants,
i.e.~counts of $J$-holomorphic maps from closed Riemann surfaces to symplectic manifolds.
The two main obstacles to defining real Gromov-Witten invariants,
i.e.~counts of $J$-holomorphic maps from symmetric Riemann surfaces commuting 
with the involutions on the domain and the target,
are the potential non-orientability of the moduli space of real $J$-holomorphic maps and 
the existence of real codimension-one boundary strata.
These obstacles are overcome in many genus~0 situations in~\cite{Wel4,Wel6,Cho,Sol,Ge2,Teh};
see \cite[Section~1.3]{RealGWsI} for some comparisons.
In the first part of this work, we introduce the notion of \sf{real orientation} on 
a real symplectic $2n$-manifold $(X,\om,\phi)$ and overcome both obstacles
in {\it all} genera for \sf{real-orientable} symplectic manifolds
of odd ``complex" dimension~$n$.\\

\noindent
A real orientation on a real symplectic $2n$-manifold $(X,\om,\phi)$ with $n\!\not\in\!2\Z$
induces orientations on the moduli spaces of real $J$-holomorphic maps from arbitrary genus~$g$
symmetric surfaces to~$(X,\phi)$.
Theorems~\ref{ComplexOrient_thm} and~\ref{RelSpinOrient_thm}  compare these orientations
with the natural complex orientations and with the orientations induced 
by the corresponding spin and relative spin structures 
whenever the latter three make sense.
By Theorem~\ref{CompOrient_thm}, the orientations 
 on the moduli spaces of real $J$-holomorphic maps induced by a real orientation on $(X,\om,\phi)$
are ``anti-compatible" with the node-identifying immersion~\eref{iodfn_e2} 
which is central to much of ``classical" Gromov-Witten theory.
This theorem  is instrumental for any study of 
the structure of the real Gromov-Witten invariants that depends on 
a splitting property at a conjugate pair of nodes in the spirit of \cite[2.2.6]{KM}
and in particular for interpreting real Gromov-Witten theory in terms 
of integrable systems in the spirit of~\cite{DZ}.
A similar comparison of orientations in Lagrangian Floer theory
is key to establishing the renown $A_{\i}$-relations of~\cite{FOOO}.
Theorems~\ref{CompOrient_thm}, \ref{ComplexOrient_thm}, and~\ref{RelSpinOrient_thm} 
are likewise essential for studying the properties of real GW-invariants constructed in~\cite{RealGWsI}.
For example, they play crucial roles in determining the normal bundles to
the torus-fixed loci in~\cite{RealGWsIII} and the contributions from 
the degenerate loci in~\cite{NZ}.

\subsection{Real-orientable symplectic manifolds}
\label{RealGWth_subs}

\noindent
An \textsf{involution} on a topological space~$X$ is a homeomorphism
$\phi\!:X\!\lra\!X$ such that $\phi\!\circ\!\phi\!=\!\id_X$.
By an \textsf{involution on a manifold}, we will mean a smooth involution. 
Let
$$X^\phi=\big\{x\!\in\!X\!:~\phi(x)\!=\!x\big\}$$
denote the fixed locus.
An \sf{anti-symplectic involution~$\phi$} on a symplectic manifold $(X,\om)$
is an involution $\phi\!:X\!\lra\!X$ such that $\phi^*\om\!=\!-\om$.
A \sf{real symplectic manifold} is a triple $(X,\om,\phi)$ consisting 
of a symplectic manifold~$(X,\om)$ and an anti-symplectic involution~$\phi$.\\

\noindent
Let $(X,\phi)$ be a topological space with an involution.
A \sf{conjugation} on a complex vector bundle $V\!\lra\!X$ 
\sf{lifting} an involution~$\phi$ is a vector bundle homomorphism 
$\vph\!:V\!\lra\!V$ covering~$\phi$ (or equivalently 
a vector bundle homomorphism  $\vph\!:V\!\lra\!\phi^*V$ covering~$\id_X$)
such that the restriction of~$\vph$ to each fiber is anti-complex linear
and $\vph\!\circ\!\vph\!=\!\id_V$.
A \sf{real bundle pair} $(V,\vph)\!\lra\!(X,\phi)$   
consists of a complex vector bundle $V\!\lra\!X$ and 
a conjugation~$\vph$ on $V$ lifting~$\phi$.
For example, 
$$(X\!\times\!\C,\phi\!\times\!\fc)\lra(X,\phi),$$
where $\fc\!:\C^n\!\lra\!\C^n$ is the standard conjugation on~$\C^n$,
is a real bundle pair.
If $X$ is a smooth manifold, then $(TX,\tnd\phi)$ is also a real bundle pair over~$(X,\phi)$.
For any real bundle pair $(V,\vph)\!\lra\!(X,\phi)$, 
we denote~by
$$\La_{\C}^{\top}(V,\vph)=(\La_{\C}^{\top}V,\La_{\C}^{\top}\vph)$$
the top exterior power of $V$ over $\C$ with the induced conjugation.
Direct sums, duals, and tensor products over~$\C$ of real bundle pairs over~$(X,\phi)$
are again real bundle pairs over~$(X,\phi)$.

\begin{dfn}[{\cite[Definition~5.1]{RealGWsI}}]\label{realorient_dfn4}
Let $(X,\phi)$ be a topological space with an involution and 
$(V,\vph)$ be a real bundle pair over~$(X,\phi)$.
A \sf{real orientation} on~$(V,\vph)$ consists~of 
\begin{enumerate}[label=(RO\arabic*),leftmargin=*]

\item\label{LBP_it2} a rank~1 real bundle pair $(L,\wt\phi)$ over $(X,\phi)$ such that 
\BE{realorient_e4}
w_2(V^{\vph})=w_1(L^{\wt\phi})^2 \qquad\hbox{and}\qquad
\La_{\C}^{\top}(V,\vph)\approx(L,\wt\phi)^{\otimes 2},\EE

\item\label{isom_it2} a homotopy class of isomorphisms of real bundle pairs in~\eref{realorient_e4}, and

\item\label{spin_it2} a spin structure~on the real vector bundle
$V^{\vph}\!\oplus\!2(L^*)^{\wt\phi^*}$ over~$X^{\phi}$
compatible with the orientation induced by~\ref{isom_it2}.\\ 

\end{enumerate}
\end{dfn}

\noindent
An isomorphism in~\eref{realorient_e4} restricts to an isomorphism 
\BE{realorient2_e3}\La_{\R}^{\top}V^{\vph}\approx (L^{\wt\phi})^{\otimes2}\EE
of real line bundles over~$X^{\phi}$.
Since the vector bundles $(L^{\wt\phi})^{\otimes2}$ and $2(L^*)^{\wt\phi^*}$ are canonically oriented, 
\ref{isom_it2} determines orientations on $V^{\vph}$ and $V^{\vph}\!\oplus\! 2(L^*)^{\wt\phi^*}$.
By the first assumption in~\eref{realorient_e4}, the real vector bundle
$V^{\vph}\!\oplus\!2(L^*)^{\wt\phi^*}$ over~$X^{\phi}$ admits a spin structure.\\

\noindent
Let $(X,\om,\phi)$ be a real symplectic manifold.
A \sf{real orientation on~$(X,\om,\phi)$} is a real orientation on the real bundle pair $(TX,\tnd\phi)$.
We call $(X,\om,\phi)$ \sf{real-orientable} if it admits a real orientation.

\subsection{Compatibility with node-identifying immersion}
\label{VanCompPrp_subs}

\noindent
A \sf{symmetric surface} $(\Si,\si)$ is a closed  oriented  
surface~$\Si$ (manifold of real dimension~2) with an orientation-reversing involution~$\si$.
The fixed locus of~$\si$ is a disjoint union of circles.
If in addition $(X,\phi)$ is a manifold with an involution, 
a \sf{real map} 
$$u\!:(\Si,\si)\lra(X,\phi)$$ 
is a smooth map $u\!:\Si\!\lra\!X$ such that $u\!\circ\!\si=\phi\!\circ\!u$.
We denote the space of such maps by~$\fB_g(X)^{\phi,\si}$.
The main focus of~\cite{RealGWsI} is on smooth and one-nodal connected symmetric surfaces,
but in the present paper we also need to consider disconnected and two-nodal symmetric surfaces.
Throughout this paper,  the term \sf{symmetric surface} will thus refer to smooth connected 
surfaces unless explicitly stated otherwise.\\

\noindent
For a symplectic manifold $(X,\om)$, we denote~by $\cJ_{\om}$
the space of $\om$-compatible almost complex structures on~$X$.
If $\phi$ is  an anti-symplectic involution on~$(X,\om)$, let 
$$\cJ_{\om}^{\phi}=\big\{J\!\in\!\cJ_{\om}\!:\,\phi^*J\!=\!-J\big\}.$$
For a genus~$g$ symmetric surface~$(\Si,\si)$, possibly nodal and disconnected,
we similarly denote by $\cJ_{\Si}^{\si}$
the space of complex structures~$\fJ$ on~$\Si$ compatible with the orientation such that 
$\si^*\fJ\!=\!-\fJ$.
For $J\!\in\!\cJ_{\om}^{\phi}$, $\fJ\!\in\!\cJ_{\Si}^{\si}$, and
$u\!\in\!\fB_g(X)^{\phi,\si}$, let 
$$\dbar_{J,\fJ}u=\frac{1}{2}\big(\tnd u+J\circ\tnd u\!\circ\!\fJ\big)$$
be the \textsf{$\dbar_{J,\fJ}$-operator} on~$\fB_g(X)^{\phi,\si}$.\\

\noindent 
Let $(X,\om,\phi)$ be a real-orientable symplectic $2n$-manifold with $n\!\not\in\!2\Z$,
$g,l\!\in\!\Z^{\ge0}$, $B\!\in\!H_2(X;\Z)$, and $J\!\in\!\cJ_{\om}^{\phi}$.
We denote by $\ov\fM_{g,l}(X,B;J)^{\phi}$ 
the  moduli space of equivalence classes of stable real degree~$B$  $J$-holomorphic maps
from genus~$g$ symmetric (possibly nodal) surfaces with $l$ pairs of conjugate marked points. 
By \cite[Theorem~1.4]{RealGWsI}, a real orientation on~$(X,\om,\phi)$ 
determines an orientation on this compact space, 
endows it with a virtual fundamental class, and  
thus gives rise to  genus~$g$ real GW-invariants of $(X,\om,\phi)$
that are independent of the choice of~$J\!\in\!\cJ_{\om}^{\phi}$.\\

\noindent
We denote~by  $\ov\fM_{g,l}^{\bu}(X,B;J)^{\phi}$
the moduli space of stable real degree~$B$ morphisms from possibly disconnected 
nodal symmetric surfaces of holomorphic Euler characteristic $1\!-\!g$ with $l$ pairs 
of conjugate marked points.
For each $i\!=\!1,\ldots,l$, let 
$$\ev_i\!: \ov\fM_{g,l}^{\bu}(X,B;J)^{\phi}\lra X, \qquad
\big[u,(z_1^+,z_1^-),\ldots,(z_l^+,z_l^-)\big]\lra u(z_i^+),$$
be the evaluation at the first point in the $i$-th pair of conjugate points.
If $l\!\ge\!2$, let 
$$\ov\fM_{g,l}'^{\bu}(X,B;J)^{\phi}
=\big\{[\u]\!\in\!\ov\fM_{g,l}^{\bu}(X,B;J)^{\phi}\!:\,
\ev_{l-1}([\u])\!=\!\ev_l([\u])\big\}.$$
The short exact sequence
$$0\lra T\ov\fM_{g,l}'^{\bu}(X,B;J)^{\phi}\lra 
T\ov\fM_{g,l}^{\bu}(X,B;J)^{\phi}|_{\ov\fM_{g,l}'^{\bu}(X,B;J)^{\phi}} \lra
\ev^*TX\lra 0$$
induces an isomorphism
\BE{SubIsom_e}\begin{split}
&\La_{\R}^{\top}\big(T\ov\fM_{g,l}^{\bu}(X,B;J)^{\phi}|_{\ov\fM_{g,l}'^{\bu}(X,B;J)^{\phi}}\big)
\approx 
\La_{\R}^{\top}\big(T\ov\fM_{g,l}'^{\bu}(X,B;J)^{\phi}\big)
\otimes \ev_l^*\big(\La_{\R}^{\top}(TX)\big)
\end{split}\EE
of real line bundles over $\ov\fM_{g,l}'^{\bu}(X,B;J)^{\phi}$.\\

\noindent
The identification of the last two pairs of conjugate marked points induces
an immersion 
\BE{iodfn_e2}\io\!: 
\ov\fM_{g-2,l+2}'^{\bu}(X,B;J)^{\phi}\lra  \ov\fM_{g,l}^{\bu}(X,B;J)^{\phi}\,.\EE
This immersion takes the main stratum of the domain,
i.e.~the subspace consisting of real morphisms from smooth symmetric surfaces, 
to the subspace of the target  consisting
of real morphisms from symmetric surfaces with one pair of conjugate nodes.
There is a canonical isomorphism
$$\cN\io\equiv \frac{\io^*T\ov\fM_{g,l}^{\bu}(X,B;J)^{\phi}}{T\ov\fM_{g-2,l+2}'^{\bu}(X,B;J)^{\phi}}
\approx \cL_{l+1}\!\otimes_{\C}\!\cL_{l+2}$$
of the normal bundle of~$\io$ with the tensor product of the universal tangent line bundles
for the first points in the last two conjugate pairs.
It induces an isomorphism
\BE{RestrOrient_e0}\begin{split} 
\io^*\big(\La_{\R}^{\top}\big(T\ov\fM_{g,l}^{\bu}(X,B;J)^{\phi}\big)\big)
\approx \La_{\R}^{\top}\big(T\ov\fM_{g-2,l+2}'^{\bu}(X,B;J)^{\phi}\big)
\otimes \La_{\R}^2\big(\cL_{l+1}\!\otimes_{\C}\!\cL_{l+2}\big)
\end{split}\EE
of real line bundles over $\ov\fM_{g-2,l+2}'^{\bu}(X,B;J)^{\phi}$.
Along with~\eref{SubIsom_e} with $(g,l)$ replaced by~$(g\!-\!2,l\!+\!2)$, 
it determines an isomorphism
\BE{CompOrient_e}\begin{split}   
&\La_{\R}^{\top}\big(T\ov\fM_{g-2,l+2}^{\bu}(X,B;J)^{\phi}|_{\ov\fM_{g-2,l+2}'^{\bu}(X,B;J)^{\phi}}\big)
\otimes \La_{\R}^2\big(\cL_{l+1}\!\otimes_{\C}\!\cL_{l+2}\big) \\
&\hspace{1.5in} \approx\io^*\big(\La_{\R}^{\top}\big(T\ov\fM_{g,l}^{\bu}(X,B;J)^{\phi}\big)\big)
\otimes  \ev_{l+1}^*\big(\La_{\R}^{\top}(TX)\big)
\end{split}\EE
of real line bundles over $\ov\fM_{g-2,l+2}'^{\bu}(X,B;J)^{\phi}$.

\begin{thm}\label{CompOrient_thm}
Let $(X,\om,\phi)$ be a real-orientable $2n$-manifold with $n\!\not\in\!2\Z$, $g,l\!\in\!\Z^{\ge0}$, 
\hbox{$B\!\in\!H_2(X;\Z)$}, and $J\!\in\!\cJ_{\om}^{\phi}$. 
The isomorphism~\eref{CompOrient_e} is orientation-reversing with respect 
to the orientations on the moduli spaces determined by  a real orientation on~$(X,\om,\phi)$
and  the canonical  orientations on  $\cL_{l+1}\!\otimes_{\C}\!\cL_{l+2}$
and~$TX$.
\end{thm}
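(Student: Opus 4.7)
The plan is to reduce the statement to a local sign computation at a single real nodal map, using the index-theoretic construction of the orientation on $\ov\fM_{g,l}^\bu(X,B;J)^\phi$ from~\cite{RealGWsI}. Fix $[\u_0]\!\in\!\ov\fM_{g-2,l+2}'^{\bu}(X,B;J)^\phi$; then $\iota([\u_0])$ is a real map from a symmetric surface with one conjugate pair of nodes obtained by identifying the last two pairs of conjugate marked points. The tangent space to $\ov\fM_{g,l}^\bu$ at $\iota([\u_0])$ is computed by the index of the linearization $D_{\iota([\u_0])}$ of $\dbar_{J,\fJ}$ together with deformations of the pointed nodal domain.

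Two geometric inputs set up the comparison. The normalization of the nodal domain yields a short exact sequence of real Fredholm operators
\[
0\lra D_{\iota([\u_0])}\lra D_{\wt\u_0}\lra (\ev_{l+1})^*TX\lra 0,
\]
where $\wt\u_0$ is the restriction of $\iota([\u_0])$ to the normalized smooth symmetric surface of genus $g\!-\!2$ and the last arrow is evaluation at the $+$-node (the $-$-node contributes the conjugate constraint). Meanwhile, deformations of the pointed nodal domain decompose as deformations of the normalized pointed domain (with all $l\!+\!2$ conjugate pairs of marked points) plus the complex one-dimensional smoothing parameter of the conjugate node pair, canonically identified with $\cN\iota\cong\cL_{l+1}\!\otimes_\C\!\cL_{l+2}$. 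Combined with~\eqref{SubIsom_e} applied at $(g\!-\!2,l\!+\!2)$, these reproduce~\eqref{CompOrient_e} at the level of determinant lines.

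First I would verify that the orientation on $\det D_{\iota([\u_0])}$ induced by the real orientation on $(X,\om,\phi)$ is the one obtained from the corresponding orientation on $\det D_{\wt\u_0}$ via the normalization exact sequence above. This compatibility follows from the local nature of the pinching construction of~\cite{RealGWsI}: orienting $\det D_{\iota([\u_0])}$ by first normalizing and then pinching should produce the same orientation as pinching directly, provided both procedures commute with the node-smoothing direction. Establishing this commutativity is a matter of unwinding the construction, but requires explicit care in handling the conjugate node pair.

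The remaining sign bookkeeping is the main obstacle. The left-hand side of~\eqref{CompOrient_e} combines $\det D_{\iota([\u_0])}$ with the complex orientation on $\Lambda^2_\R(\cL_{l+1}\!\otimes_\C\!\cL_{l+2})$; the right-hand side combines $\det D_{\wt\u_0}$ with the constraint factor $\ev_{l+1}^*TX$ via~\eqref{SubIsom_e}, together with the same smoothing factor, placed in a different position within the resulting tensor product of determinant lines. Unwinding these through the normalization sequence and the decomposition of domain deformations, the discrepancy reduces to a universal sign depending only on parities of ranks. Tracking the interaction between the complex orientation on $\cN\iota$ and the orientation induced on $\det D_{\iota([\u_0])}$ from $\det D_{\wt\u_0}$---in which the real rank $2n$ of $TX$ and the parity of the rank of $V^\vph\!\oplus\!2(L^*)^{\wt\phi^*}$ from Definition~\ref{realorient_dfn4} both enter---the universal sign equals $-1$ for $n$ odd, yielding the claimed orientation-reversal.
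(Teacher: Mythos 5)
Your skeleton matches the paper's: you use the same two exact sequences (the normalization sequence for the linearized operator, evaluating at the node, and the decomposition of domain deformations into normalized deformations plus the smoothing parameter $\cN\io\cong\cL_{l+1}\!\otimes_{\C}\!\cL_{l+2}$), and the reduction of~\eref{CompOrient_e} to these two pieces is exactly how the proof in Section~\ref{CanOrientComp_subs} is organized (see Figure~\ref{CompOrient_fig}). However, there is a genuine gap at the decisive step: the final sign is asserted, not derived, and the source you propose for it is not where it actually lives. In the paper, the map-side comparison --- the isomorphism $\rdet\,\wt{D}\approx(\rdet\,D)\otimes\La_{\R}^{2n}V_{x_{12}^+}\otimes\La_{\R}^{2n}\C^n$ of Corollary~\ref{RealOrient_crl} --- is orientation-\emph{preserving}: the fiber $T_qX$ enters with its complex orientation on both sides and contributes nothing, and neither the real rank $2n$ of $TX$ nor the rank of $V^{\vph}\!\oplus\!2(L^*)^{\wt\phi^*}$ produces a sign there. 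The entire $-1$ comes from the Deligne--Mumford side (Proposition~\ref{CompOrient_prp}), and specifically from the Serre-duality step: the node contributes a complex one-dimensional residue space, and the identification $\Hom_{\C}(\C,\C)\cong\Hom_{\R}(\C,\R)$ as in~\eref{duaVorient_e} is orientation-reversing precisely because that complex dimension is odd (equal to $1$). A parity count of the sort you describe cannot detect this, because it bypasses the actual construction of the canonical orientation on~\eref{CidentDM_e} via the Kodaira--Spencer map, the Dolbeault isomorphism, Serre duality, and the square-root real orientation on $(T^*\Si,\tnd\si^*)^{\otimes2}$.

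In addition, the "compatibility" you defer in your third paragraph --- that the orientation induced by the real orientation on the nodal fiber agrees with the one obtained via normalization, and that both agree with the limit of the orientations over nearby smooth fibers --- is itself a substantial portion of the argument. It requires extending Proposition~\ref{canonisom_prp} to symmetric surfaces with a conjugate pair of nodes (Lemma~\ref{canonisomExt_lmm}), constructing the determinant line bundle over the two-parameter smoothing family (Corollary~\ref{canonisomExt2_crl2a}), and an explicit \v{C}ech cocycle computation showing that the Kodaira--Spencer map sends the smoothing parameter $t^+\!\in\!\cL_{l+1}\!\otimes_{\C}\!\cL_{l+2}$ to the node factor in an orientation-preserving fashion (Lemma~\ref{KSext_lmm}). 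Without these ingredients the sign could not be pinned down even in principle, so the proposal as written does not constitute a proof.
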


\noindent
The substance of this statement is that the orientations on 
$\ov\fM_{g-2,l+2}'^{\bu}(X,B,J)^{\phi}$
induced from the orientations of $\ov\fM_{g-2,l+2}^{\bu}(X,B,J)^{\phi}$ 
and $\ov\fM_{g,l}^{\bu}(X,B,J)^{\phi}$  via the isomorphisms~\eref{SubIsom_e} 
and~\eref{RestrOrient_e0} are opposite.
This may seem surprising from the point of view of the classical (closed) GW-theory,
where moduli spaces have canonical orientations and signs do not appear.
On the other hand, systematic orientations of moduli spaces in open and real GW-theories
depend on orienting conventions and additional topological data
(such as a spin structure or a real orientation) on the target manifold.
The appearance of signs is then a fairly common occurrence, and
it is notoriously difficult to determine them correctly in most cases.
The orientation conventions in~\cite{RealGWsI} are natural from mathematical considerations
and conform with the mirror symmetry expectations described in~\cite{Wal};
see \cite[Section~3]{RealGWsI} and \cite[Section~1.3]{RealGWsIII}, respectively,
for details.
While it is possible to adjust the orientations on moduli spaces of real maps
to make the isomorphism~\eref{CompOrient_e} orientation-preserving,
this would be artificial from the geometric standpoint and 
undesirable based on mirror symmetry considerations.\\

\noindent
We note that the statement of Theorem~\ref{CompOrient_thm} is invariant under 
interchanging the points within the last two conjugate pairs simultaneously
(this corresponds to reordering the nodes of a nodal map). 
This interchange reverses the orientation of the last factor
 on the left-hand side of~\eref{CompOrient_e},
because the complex rank of $\cL_{l+1}\!\otimes_{\C}\!\cL_{l+2}$ is~1, 
and the orientation of the last factor on the right-hand side of~\eref{CompOrient_e},
because the complex rank of $TX$ is~odd.\\

\noindent
An analogue of Theorem~\ref{CompOrient_thm} with $n\!\in\!2\Z$ is 
described in Remark~\ref{twist_rmk2}.

\subsection{Comparison with complex orientation}
\label{ComplexOrient_subs0}

\noindent
Let $g_0\!\in\!\Z^{\ge0}$.
We define a \sf{$g_0$-doublet}  to be
a two-component smooth symmetric surface $(\Si,\si)$ of the~form
\BE{SymSurfDbl_e}\Si\equiv \Si_1\!\sqcup\!\Si_2
\equiv \{1\}\!\times\!\Si_0 \sqcup \{2\}\!\times\!\ov\Si_0, \qquad
\si(i,z)=\big(3\!-\!i,z\big)~~\forall~(i,z)\!\in\!\Si,\EE
where $\Si_0$ is a connected smooth oriented genus~$g_0$ surface 
and 
$\ov\Si_0$ denotes $\Si_0$ with the opposite orientation.
The holomorphic Euler characteristic of a $g_0$-doublet is $1\!-\!g$
with $g\!=\!2g_0\!-\!1$.\\

\noindent
Suppose $(X,\om,\phi)$ is a real-orientable $2n$-manifold, $l\!\in\!\Z^{\ge0}$, 
\hbox{$B\!\in\!H_2(X;\Z)$}, and $J\!\in\!\cJ_{\om}^{\phi}$.
With  $(\Si,\si)$  as in~\eref{SymSurfDbl_e}, let
$$\fM_{2g_0-1,l}^{\bu}(X,B;J)^{\phi,\si}\subset \ov\fM_{2g_0-1,l}^{\bu}(X,B;J)^{\phi}$$
denote the open subspace of real $J$-holomorphic maps from~$(\Si,\si)$.
For each $\fs\!\subset\!\{1,\ldots,l\}$, let
$$\fM_{2g_0-1,l}^{\bu}(X,B;J)_{\fs}^{\phi,\si}\subset\fM_{2g_0-1,l}^{\bu}(X,B;J)^{\phi,\si}$$
be the open subspace consisting of marked maps so that  the second point
in the $i$-th conjugate pair lies on~$\Si_1$ if and only~if $i\!\in\!\fs$.
In particular,
\BE{ComplexOrient_e0}\fM_{2g_0-1,l}^{\bu}(X,B;J)_{\fs}^{\phi,\si} \subset
\bigsqcup_{\begin{subarray}{c}B_0\in H_2(X;\Z)\\ B_0-\phi_*B_0=B\end{subarray}}
\hspace{-.25in}\big(
\fM_{g_0,l}(X,B_0;J)\!\times\!\fM_{g_0,l}(X,-\phi_*B_0;J)\big)\,,\EE
where $\fM_{g_0,l}(X,B_0;J)$ is the usual moduli space of degree~$B_0$
$J$-holomorphic maps from smooth genus $g_0$~curves with $l$~marked points.
The projection
\BE{ComplexOrient_e1}
\fM_{2g_0-1,l}^{\bu}(X,B;J)_{\fs}^{\phi,\si}\lra 
\bigsqcup_{\begin{subarray}{c}B_0\in H_2(X;\Z)\\ B_0-\phi_*B_0=B\end{subarray}}
\hspace{-.27in}\fM_{g_0,l}(X,B_0;J)\EE
to the first factor in~\eref{ComplexOrient_e0} is an isomorphism 
(in the sense of Kuranishi structures, 
i.e.~\eref{ComplexOrient_e1} identifies the deformation-obstruction theories
of the two moduli spaces).
The moduli space on the right-hand side of~\eref{ComplexOrient_e1} carries a natural orientation
obtained by homotoping the linearization of the $\dbar$-operator to a $\C$-linear Fredholm operator;
see \cite[Section~3.2]{MS}.
We will call the orientation on the left-hand side of~\eref{ComplexOrient_e1}
induced by this orientation 
\sf{the complex orientation} of $\fM_{2g_0-1,l}^{\bu}(X,B;J)_{\fs}^{\phi,\si}$.

\begin{thm}\label{ComplexOrient_thm}
Suppose $(X,\om,\phi)$ is a real-orientable $2n$-manifold with $n\!\not\in\!2\Z$, 
$g_0,l\!\in\!\Z^{\ge0}$,  $(\Si,\si)$ is a $g_0$-doublet ,
\hbox{$B\!\in\!H_2(X;\Z)$}, and $J\!\in\!\cJ_{\om}^{\phi}$. 
The orientation on $\fM_{2g_0-1,l}^{\bu}(X,B;J)_{\fs}^{\phi,\si}$ induced by 
a real orientation on~$(X,\om,\phi)$ and its complex orientation differ 
by $(-1)^{g_0+1+|\fs|}$.
\end{thm}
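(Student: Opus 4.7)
My plan is to compare the two orientations at a single real $J$-holomorphic map $u$ from a $g_0$-doublet $(\Si,\si)\!=\!\Si_1\!\sqcup\!\Si_2$, by analyzing the determinant line of the linearization of $\dbar_{J,\fJ}$ together with the contributions from the marked-point deformations.

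\textbf{Step 1: Identification of the linearized problem.} The map $u$ is determined by $u_1\!:=\!u|_{\Si_1}\!:\!\Si_0\!\lra\!X$, and the $\tnd\phi$-invariant part of $\Ga(u^*TX)$ is canonically isomorphic, as a real vector space, to the underlying real vector space of the complex space $\Ga(u_1^*TX)$. Under this identification, the real Cauchy--Riemann operator coincides with the complex Cauchy--Riemann operator $D_{u_1}$ on $u_1^*TX\!\lra\!\Si_0$. Thus the determinant line of the real Fredholm problem (without marked-point factors) is canonically the underlying real line of $\det_\C D_{u_1}$, and the complex orientation of~\eref{ComplexOrient_e1} is, by definition, the one induced by the complex structure.

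\textbf{Step 2: Marked-point contribution.} The diffeomorphism~\eref{ComplexOrient_e1} uses the marked points on $\Si_1\!=\!\Si_0$: these are $z_i^+$ when $i\!\not\in\!\fs$ and $z_i^-$ when $i\!\in\!\fs$. The real moduli space parametrizes each conjugate pair by its first point $z_i^+$. For $i\!\in\!\fs$, the tangent-line factor $T_{z_i^+}\Si_2$ in the real moduli is identified with $T_{z_i^-}\Si_1$ in the complex moduli via $\tnd\si$, which reverses the complex orientation of each tangent line (since $\Si_2\!=\!\{2\}\!\times\!\ov\Si_0$ carries the opposite orientation from $\Si_1$). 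This yields an overall factor of $(-1)^{|\fs|}$.

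\textbf{Step 3: CR-operator orientation comparison.} It remains to show that the complex orientation on $\det_\R D_{u_1}$ and the orientation produced by the construction of~\cite{RealGWsI} differ by $(-1)^{g_0+1}$. The construction of~\cite{RealGWsI} uses $(L,\wt\phi)$, the isomorphism $\La_\C^\top(u^*TX,u^*\tnd\phi)\!\cong\!(u^*L,u^*\wt\phi)^{\otimes 2}$, and the spin structure on the real part (vacuous here since $\Si^\si\!=\!\emptyset$) to produce a canonical isomorphism
\[
\det_\R D_{(u^*TX,u^*\tnd\phi)}\otimes\bigl(\det_\R D_{(u^*L,u^*\wt\phi)}\bigr)^{\!\otimes 2}\cong\det_\R D_{(u^*TX\oplus 2u^*L^*,\,u^*\tnd\phi\oplus 2u^*\wt\phi^*)}\,.
\]
The right-hand side is oriented via the trivialization of its top exterior power, and the left-hand tensor square is canonically positively oriented. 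On the doublet, all three CR operators are complex-linear after the identification of Step~1, so their determinant lines can be compared as underlying real lines of complex lines.

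\textbf{Step 4: Tracking the residual sign.} I would compute the sign in Step~3 by comparing the complex orientations across this stabilization. By Riemann--Roch on $\Si_0$, the complex index of $D_{u_1^*L}$ equals $\deg(u_1^*L)\!+\!1\!-\!g_0$ and that of $D_{u_1^*L^*}$ equals $-\deg(u_1^*L)\!+\!1\!-\!g_0$. These parities govern the signs picked up when permuting complex determinant lines in the isomorphism above and when reordering $u_1^*TX\!\oplus\!2u_1^*L^*$. The assumption $n\!\not\in\!2\Z$ enters at precisely this point: the number of transpositions acquires odd parity from the odd rank, converting the Riemann--Roch sign $(-1)^{g_0}$ into $(-1)^{g_0+1}$. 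Multiplying by the marked-point contribution of Step~2 gives $(-1)^{g_0+1+|\fs|}$.

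\textbf{Main obstacle.} The heart of the argument is Step~4: the signs in the construction of~\cite{RealGWsI} must be tracked with care, in particular the ordering of factors in the stabilization isomorphism and the convention for orienting the complex line produced from the trivialization of the top exterior power. The interplay between the parity of $g_0$ (via Riemann--Roch on $\Si_0$) and the odd complex rank $n$ (through the number of transpositions needed) is what produces precisely $(-1)^{g_0+1}$, and verifying this unambiguously is the only nontrivial calculation in the proof.
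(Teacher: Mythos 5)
Your top-level strategy (work at a single map on the doublet, identify the real Fredholm problem with the complex one on $\Si_0$, and split the comparison into a Cauchy--Riemann part and a marked-point part) matches the paper's, and your Step~1 is correct. But you locate the sign $(-1)^{g_0+1}$ in the wrong place. The paper's Lemma~\ref{ComplexOrient_lmm1} shows that the Cauchy--Riemann factor contributes \emph{no} sign at all: the orientation on $\rdet D_{(TX,\tnd\phi)}|_u$ induced by the real orientation agrees with the complex orientation, because the stabilization isomorphism~\eref{realorient2_e2} restricts to a $\C$-linear isomorphism $\Psi_0$ over $\Si_0$ and every step of the orienting procedure of Corollary~\ref{canonisom_crl2a} is then compatible with the complex orientations. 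Your Step~4 mechanism cannot be right as stated: the index of $D_{u_1^*L}$ is $\deg(u_1^*L)+1-g_0$ by Riemann--Roch, so any sign governed by its parity would depend on $\deg(u_1^*L)$, i.e.\ on $B$, whereas the asserted sign $(-1)^{g_0+1+|\fs|}$ does not; you never show the degree terms cancel, and in fact there is nothing to cancel because the whole comparison is $\C$-linear.

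The actual source of $(-1)^{g_0+1}$, which your proposal never touches, is the Deligne--Mumford factor in the isomorphism~\eref{ffisom_e}. The canonical orientation of $\La_{\R}^{\top}(T\cM^{\si}_{2g_0-1,l;\fs})\otimes(\det\dbar_{\C})$ is built from the Kodaira--Spencer map, Dolbeault isomorphism, and Serre duality, and the one step there that is not $\C$-linear is taking the \emph{real} dual in Serre duality: the map $\Hom_{\C}(V,\C)\to\Hom_{\R}(V,\R)$, $\th\mapsto\Re\th$, is orientation-preserving iff $\dim_{\C}V$ is even. Applied to $H^0(\Si_0;T^*\cC_0\otimes T^*\Si_0)$, of complex dimension $3g_0-3+l$, and combined with the compensating factor $(-1)^l$ coming from the convention for orienting the skyscraper-sheaf terms at the marked points, this yields exactly $(-1)^{3g_0-3}=(-1)^{g_0+1}$ (Lemma~\ref{ComplexOrient_lmm2}); the reduction to $\fs=\eset$ accounts for $(-1)^{|\fs|}$, consistent with your Step~2. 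To repair your argument you would need to replace Steps~3--4 by (i) a proof that the map-side comparison is orientation-preserving and (ii) an analysis of the canonical orientation of the real Deligne--Mumford line~\eref{CidentDM_e} on the doublet, which is where the nontrivial computation actually lives.
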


\noindent
Since the orientation on $\fM_{g,l}^{\bu}(X,B;J)^{\phi}$ induced by 
a real orientation on~$(X,\om,\phi)$ is compatible
with orienting the fibers of the forgetful morphisms
\BE{ffmarked_e} \ov\fM_{g,l+1}^{\bu}(X,B;J)^{\phi} \lra \ov\fM_{g,l}^{\bu}(X,B;J)^{\phi}\EE
by the first marked point in the last conjugate pair, 
the statement of this theorem is compatible with the forgetful
morphisms.
Under the assumptions of this theorem, the ``complex" dimension 
of the right-hand side of~\eref{ComplexOrient_e1} in 
the $l\!=\!0$ case,~i.e.
$$\dim_{\C}^{\vrt} \fM_{g_0,0}(X,B_0;J)=\blr{c_1(TX),B_0}+(n\!-\!3)(1\!-\!g_0),$$
is even by the second condition in~\eref{realorient_e4}. 
Thus, the ``conjugation" diffeomorphism
$$\bigsqcup_{\begin{subarray}{c}B_0\in H_2(X;\Z)\\ B_0-\phi_*B_0=B\end{subarray}}
\hspace{-.27in}\fM_{g_0,0}(X,B_0;J)
\lra 
\bigsqcup_{\begin{subarray}{c}B_0\in H_2(X;\Z)\\ B_0-\phi_*B_0=B\end{subarray}}
\hspace{-.27in}\fM_{g_0,0}(X,B_0;J), 
\quad
[u,\fJ]\lra \big[\phi\!\circ\!u\!,-\fJ\big],$$
is orientation-preserving.
This implies that the validity of Theorem~\ref{ComplexOrient_thm} is independent
of the ordering of the topological components of~$\Si$.\\

\noindent
An illustration of Theorems~\ref{CompOrient_thm} and~\ref{ComplexOrient_thm} 
in the genus~0 case is \cite[Lemma~5.2]{RealEnum}.
It describes the normal bundle to a stratum of genus~0 maps consisting of 
a central component with a pair of conjugate bubbles, i.e.~a 0-doublet , attached.
This boundary stratum is oriented by choosing one of the nodes and
taking the complex orientation associated with the corresponding bubble.
The claim of  \cite[Lemma~5.2]{RealEnum} is that the 
normal bundle is then oriented by the complex orientation of the smoothings of this node.
According to Theorem~\ref{ComplexOrient_thm}, the ``canonical" orientation 
of this boundary stratum is obtained by taking the opposite of the complex orientation
on the distinguished bubble.
According to Theorem~\ref{CompOrient_thm},  the orientation of the normal bundle is then
 opposite to the complex orientation of the smoothings of the distinguished node.
Thus, \cite[Lemma~5.2]{RealEnum} is a consequence of 
Theorems~\ref{CompOrient_thm} and~\ref{ComplexOrient_thm}.

\subsection{Comparison with spin and relative spin orientations}
\label{RelSpinOrient_subs0}

\noindent
Let $X$ be a topological space, $Y\!\subset\!X$ be a subspace, and $F\!\lra\!Y$
be a real oriented vector bundle.
A \sf{relative spin structure on~$F$} as in~\cite{FOOO} consists of a real oriented vector bundle
$E\!\lra\!X$ and a spin structure on~$F\!\oplus\!E|_Y$.
If $(X,\phi)$ is a topological space with an involution and $(L,\wt\phi)$ is a real 
bundle pair over~$(X,\phi)$, the~map
\BE{splitLisom_e} 2(L^*)^{\wt\phi^*}\lra L^*|_{X^{\phi}}, \qquad (v,w)\lra v+\fI w,\EE
is an isomorphism of real oriented vector bundles over~$X^{\phi}$.
Thus, a real orientation on a real bundle pair $(V,\vph)$ as in Definition~\ref{realorient_dfn4}
determines a relative spin structure on the real oriented vector bundle 
$V^{\vph}\!\lra\!X^{\phi}$ with $E\!=\!L^*$ in the above notation;
we will call this structure \sf{the associated relative spin structure on~$V^{\vph}$}.
If in addition $L^{\wt\phi}\!\lra\!X^{\phi}$ is orientable, 
$2(L^*)^{\wt\phi^*}$ has a canonical homotopy
class of trivializations as in the proof of \cite[Corollary~5.6]{RealGWsI}.
Such a real orientation on~$(V,\vph)$ thus determines a spin structure on~$V^{\vph}$;
we will call the latter \sf{the associated spin structure on~$V^{\vph}$}.\\

\noindent
Let $\tau$ be the standard involution on~$\P^1$;  
we take it to be given by $z\!\lra\!1/\bar{z}$ on~$\C$.
For $l\!\ge\!2$, we denote by $\cM_{0,l}^{\tau}$ the uncompactified moduli space of 
equivalence classes of $(\P^1,\tau)$ with $l$~pairs of conjugate marked points. 
The Deligne-Mumford compactification~$\ov\cM_{0,2}^{\tau}$ of $\cM_{0,2}^{\tau}$
includes 3 additional stable real two-component nodal curves.
A diffeomorphism of~$\ov\cM_{0,2}^{\tau}$  with a closed interval is given~by
\BE{cMtaudiff_e}\ov\cM_{0,2}^{\tau}\lra \ov\R^+\!\equiv\![0,\i] , \quad
\big[(z_1^+,z_1^-),(z_2^+,z_2^-)\big]\lra 
\frac{z_2^+\!-\!z_1^+}{z_2^-\!-\!z_1^+}:\frac{z_2^+\!-\!z_1^-}{z_2^-\!-\!z_1^-}
=\frac{|z_1^+\!-\!z_2^+|^2}{|1\!-\!z_1^+/z_2^-|^2}\,.\EE
It takes the two-component curve with $z_1^+$ and $z_2^+$ on the same component to~$0$
and the two-component curve with $z_1^+$ and $z_2^-$ on the same component to~$\i$.
For $l\!\ge\!2$, the fibers of the forgetful morphism
$$\ov\cM_{0,l+1}^{\tau}\lra\ov\cM_{0,l}^{\tau}$$
are oriented by the canonical complex orientation of the tangent space at 
the first marked point in the last conjugate pair.
It follows that the moduli space $\ov\cM_{0,l}^{\tau}$ is orientable.\\

\noindent
Let $(X,\om,\phi)$ be a real symplectic manifold.
By \cite[Theorem~1.3]{RealGWsI}, a real orientation on $(X,\om,\phi)$ and an orientation 
on~$\ov\cM_{0,2}^{\tau}$ determine an orientation on each moduli space 
$\fM_{0,l}(X,B;J)^{\phi,\tau}$
of real $J$-holomorphic maps from~$(\P^1,\tau)$ to~$(X,\phi)$.
The standard approach \cite{Sol,Cho,FOOO9}
 to orienting $\fM_{0,l}(X,B;J)^{\phi,\tau}$ involves orienting 
the associated moduli space of disk maps from a relative spin structure on \hbox{$TX^{\phi}\!\lra\!X^{\phi}$};
in some cases, the resulting orientation on the disk space descends to an orientation 
on $\fM_{0,l}(X,B;J)^{\phi,\tau}$.
Theorem~\ref{RelSpinOrient_thm} below compares the orientations on $\fM_{0,l}(X,B;J)^{\phi,\tau}$
resulting from the two approaches to orienting~it.
Both approaches involve some sign conventions, which we specify~next.\\

\noindent
The construction of the orientation on the real line bundle~\eref{CidentDM_e}
in the proof of \cite[Proposition~5.9]{RealGWsI} involves 
a somewhat arbitrary sign choice for the Serre duality isomorphism
\cite[(5.21)]{RealGWsI}.
The (real) dimensions of its domain and target are $3(g\!-\!1)\!+\!2l$.
Thus, this  choice has no effect on the homotopy class of this isomorphism
or the resulting orientation of the real line bundle~\eref{CidentDM_e} if $g\!\not\in\!2\Z$.
If $g\!\in\!2\Z$, changing this choice changes the resulting orientation of~\eref{CidentDM_e}
and  the orientation on the moduli spaces $\fM_{g,l}(X,B;J)^{\phi,\si}$
of real~maps.
In light of Proposition~\ref{CompOrient_prp}, the above sign choice is determined 
by a choice of orientation
of the real line bundle~\eref{CidentDM_e}  over~$\ov\cM_{0,2}^{\tau}$.
In this case, the operator~$\dbar_{\C}$ is surjective and its kernel consists
of constant $\R$-valued functions.
Thus, an orientation on~\eref{CidentDM_e} over  $\ov\cM_{0,2}^{\tau}$ 
is determined by an orientation on~$\ov\cM_{0,2}^{\tau}$.
As in \cite[Section~3]{RealEnum}, we orient~$\ov\cM_{0,2}^{\tau}$ by 
the diffeomorphism~\eref{cMtaudiff_e}.\\

\noindent
Let $G_{\tau}$ denote the group of holomorphic automorphisms of $(\P^1,\tau)$.
The exact sequence 
$$0\lra T_{\id}S^1\lra T_{\id}G_{\tau}\lra T_0\C\lra0$$
and the standard orientations of $S^1$ and $\C$ determine an orientation on~$G_{\tau}$.
Let $\fP_0(X,B;J)$ denote the space of (parametrized)  degree~$B$ $J$-holomorphic 
real maps from $(\P^1,\tau)$ to~$(X,\phi)$; thus,
\BE{fMcP_e}  \fM_{0,0}(X,B;J)^{\phi,\tau}= \fP_0(X,B;J)\big/G_{\tau}\,. \EE
An orientation on the left-hand side of~\eref{fMcP_e} 
determines an orientation on~$\fP_0(X,B;J)$ via the canonical isomorphism
\BE{fMcP_e2} 
\La_{\R}^{\top}\big(T_u\fP_0(X,B;J)\big)
\approx \La_{\R}^{\top}\big(T_{[u]}\fM_{0,0}(X,B;J)^{\phi,\tau}\big)\otimes
\La_{\R}^{\top}\big(T_{\id}G_{\tau}\big).\EE
An orientation on the marked moduli spaces $\fM_{0,l}(X,B;J)^{\tau,\phi}$
is then determined by  orienting the fibers of the forgetful morphisms~\eref{ffmarked_e}
by the first marked point in the last conjugate pair.
Since $G_{\tau}$ has two topological components, an orientation on $\fP_0(X,B;J)$
may not descend to the quotient~\eref{fMcP_e2}.
By \cite[Theorem~6.6]{Ge2} with $(E,\wt\tau)\!=\!(L,\wt\phi)^*$,
a real orientation on~$(X,\om,\phi)$ induces an orientation on $\fP_0(X,B;J)$
that descends to this quotient and extends to the stable map compactification.\\

\noindent
The (virtual) tangent space of $\fP_0(X,B;J)$ is the index (as a K-theory class)
of the linearization of the $\dbar_J$-operator at~$u$.
An orientation on this index, or equivalently on $\det D_{(TX,\tnd\phi)}|_u$,
is determined by a relative spin structure on $TX^{\phi}\!\lra\!X^{\phi}$;
see the proof of \cite[Theorem~8.1.1]{FOOO} or \cite[Theorem~6.36]{Melissa}.
If this orientation descends to the quotient~\eref{fMcP_e},
the   induced orientation on the latter depends on
the ordering of the two lines on the right-hand side of~\eref{fMcP_e2}
if 
$$\dim_{\R}^{\vrt} \fM_{0,0}(X,B;J)^{\phi,\tau}=\blr{c_1(TX),B}+n\!-\!3,$$
is odd. 
If $(X,\om,\phi)$ is real-orientable, this is the case if and only if~$n\!\in\!2\Z$.\\

\noindent
The marked moduli space $\fM_{0,l}(X,B;J)^{\phi,\tau}$ can also be oriented by first 
orienting the marked parametrized space $\fP_l(X,B;J)$  from 
the orientation of $\fP_0(X,B;J)$
via the forgetful morphism as in~\eref{ffmarked_e} and then taking 
the quotient as in~\eref{fMcP_e}.
If $l\!\ge\!2$, we can then take $(X,B)\!=\!(\pt,0)$ and obtain an orientation~on
$$\cM_{0,2}^{\tau}=\fM_{0,2}(\pt,0)^{\id,\tau}\,.$$
With the orienting convention~\eref{fMcP_e2}, this orientation agrees 
with the orientation on $\ov\cM_{0,2}^{\tau}$ determined by
the diffeomorphism~\eref{cMtaudiff_e}.

\begin{thm}\label{RelSpinOrient_thm}
Suppose $(X,\om,\phi)$ is a real-orientable manifold,  $l\!\in\!\Z^{\ge0}$, 
\hbox{$B\!\in\!H_2(X;\Z)$}, and $J\!\in\!\cJ_{\om}^{\phi}$. 
The orientations on $\fM_{0,l}(X,B;J)^{\phi,\tau}$ induced by 
a real orientation on~$(X,\om,\phi)$ as in Definition~\ref{realorient_dfn4} and 
by the associated relative spin  structure on $TX^{\phi}\!\lra\!X^{\phi}$ 
differ  by $(-1)^{\ve(B)}$, where
$$\ve(B)\equiv\flr{\frac{\lr{c_1(X),B}+2}{4}} 
\,.$$
If in addition  $L^{\wt\phi}\!\lra\!X^{\phi}$ is orientable, then
the orientations on $\fM_{0,l}(X,B;J)^{\phi,\tau}$ induced by 
the real orientation on~$(X,\om,\phi)$ and 
by the associated spin structure on~$TX^{\phi}$ are the~same.
\end{thm}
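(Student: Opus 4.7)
The plan is to compare the two orientations on $\fM_{0,l}(X,B;J)^{\phi,\tau}$ by reducing both to orientations of the determinant line of the real Cauchy-Riemann operator $D_{(TX,\tnd\phi)}|_u$ at each parametrized real map $u\!\in\!\fP_0(X,B;J)$, the common ingredient in both constructions. By the isomorphism~\eref{splitLisom_e} with $E\!=\!L^*$, a spin structure on $V^\vph\!\oplus\!2(L^*)^{\wt\phi^*}$ is equivalent to one on $V^\vph\!\oplus\!L^*|_{X^\phi}$, so the two orientations of the determinant line come from the same underlying spin data; any discrepancy is thus a universal sign depending only on $B$ and topological invariants of~$L$.

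The relative-spin orientation, following Solomon, Cho, and FOOO, is obtained by restricting $u$ to a hemisphere $D^2\!\subset\!\P^1$, extending $u^*L^*$ over the disk, and using the spin framing along $\partial D^2$ to orient the determinant of the resulting boundary value problem. The real orientation of \cite[Theorem~6.6]{Ge2}, by contrast, uses the specified isomorphism $\La_\C^{\top}(V,\vph)\!\approx\!(L,\wt\phi)^{\otimes2}$ to deform the $\dbar_J$-operator on $u^*(TX,\tnd\phi)$ to one whose determinant line is canonically related to a tensor square of the determinant of the $(L,\wt\phi)$-operator, and then applies the spin framing to the latter. The sign discrepancy thus measures how the square root $L$ is used directly in the first construction versus only through $E\!=\!L^*$ in the second.

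I would execute the sign computation in two stages. First, at a reference real map $u_0$ of low degree (for instance a real line in $(\P^{2n+1},\tau_{2n+1})$), directly compare both orientations via explicit trivializations over the disk to fix the constant term. Second, propagate to arbitrary degree~$B$ via a stabilization argument: attaching a real sphere bubble of degree~$B'$ to $u$ modifies the relative-spin orientation by a sign arising from the Maslov index $\lr{c_1(TX),B'}$ according to Solomon's formula, while modifying the real orientation by a sign arising from the Maslov index of $(L,\wt\phi)$, which is half of $\lr{c_1(TX),B'}$. Comparing these contributions across sphere-bubble attachments should yield exactly $\ve(B)\!=\!\flr{(\lr{c_1(X),B}\!+\!2)/4}$: the floor reflects that the spin framing tracks only the parity of the $L$-Maslov index, while the offset $+2$ is pinned down by the base-case calculation.

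For the second assertion, when $L^{\wt\phi}\!\lra\!X^{\phi}$ is orientable the real vector bundle $2(L^*)^{\wt\phi^*}$ admits a canonical homotopy class of trivializations, under which the spin structure on $V^\vph\!\oplus\!2(L^*)^{\wt\phi^*}$ reduces to a spin structure on $V^\vph$ alone. This reduction is compatible with the deformation used to build the real orientation, so the two orientations on $\fM_{0,l}(X,B;J)^{\phi,\tau}$ coincide without any sign correction; equivalently, the sign $(-1)^{\ve(B)}$ from the first assertion is exactly the Solomon-type sign produced when changing the relative spin structure with $E\!=\!L^*$ to the associated spin structure with $E\!=\!0$. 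The main obstacle throughout is the careful bookkeeping of sign conventions, most notably the Serre-duality choice fixed just before the theorem statement and the ordering of factors in~\eref{fMcP_e2}; the universality of the resulting sign rests on the fact that the determinant line depends only on the K-theory class of the linearized operator.
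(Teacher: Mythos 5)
Your overall frame---reduce both orientations to orientations of $\det D_{(TX,\tnd\phi)}|_u$ (the Deligne--Mumford factor in \eref{ffisom_e} being handled identically by all the constructions), pin down a base case by explicit trivializations over a half-surface, and settle the orientable-$L^{\wt\phi}$ case via the canonical trivialization of $2(L^*)^{\wt\phi^*}$---matches the paper's Section~\ref{RelSpinOrient_subs}. Your base-case computation is exactly Proposition~\ref{RelSpinOrient_prp}, which is the substantive calculation here (the paper gives it three independent proofs), and your argument for the second assertion is Lemma~\ref{RelSpinOrient_lmm1} in all but notation.

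The gap is in your propagation step. Sphere bubbles on a $\tau$-real map come in conjugate pairs attached at interior points, and \emph{both} orientations being compared are compatible with the resulting complex-linear node-smoothing isomorphisms of determinant lines---this is precisely what makes each a well-defined orientation of the compactified moduli space. Consequently the discrepancy between the two orientations is locally constant across such degenerations, and no accumulation of ``Maslov index contributions of bubbles'' can generate a $B$-dependent sign; the two modifications you invoke would have to cancel. In the paper the degree dependence arises on a \emph{fixed smooth domain}, from a winding-number comparison of two boundary trivializations of $u^*L^*$ over $\prt\Si^b\!=\!S^1$: the one induced via \eref{splitLisom_e} from the complex structure on $2(L^*)^{\wt\phi^*}$ (which is what the real-orientation construction uses) versus the restriction of a trivialization of $u^*L^*$ over the half-disk (which is what the relative spin construction uses). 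These differ by $d/2$, resp.~$(d\!-\!1)/2$, times a generator of $\pi_1(\SO(2))$ for $d\!=\!\deg u^*L^*$ even, resp.~odd (Lemma~\ref{RelSpinOrient_lmm2} and Corollary~\ref{RelSpinOrient_crl0}), and composing the boundary framing of $V^{\vph}$ with an odd multiple of this generator is what reverses the orientation; that is the actual source of $\flr{(\lr{c_1(X),B}+2)/4}$. Without this winding computation your plan contains no mechanism that produces a sign depending on~$B$.
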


\noindent
A key step in the proof of this theorem in Section~\ref{RelSpinOrient_subs}
is Proposition~\ref{RelSpinOrient_prp};
it obtains an explicit comparison of orientations of determinants of Fredholm operators.
This comparison is in the spirit of the undetermined sign of \cite[Proposition~8.4]{Sol}.
As indicated in Section~\ref{OrientApply_subs} and illustrated in~\cite{RealGWsIII}, 
Proposition~\ref{RelSpinOrient_prp} makes it possible to determine
the equivariant weights of vector bundles along torus fixed loci
in settings such as in \cite[Section~5]{KatzLiu}, \cite[Section~4]{PSW},
and \cite[Section~6.4]{Teh}.
We in fact give three proofs of Proposition~\ref{RelSpinOrient_prp},
a direct computation and as a consequence of the equivariant computations
in~\cite{Teh}.

\begin{rmk}\label{Ge2_rmk}
The approach to orienting  the moduli spaces of real maps from $(\P^1,\tau)$ to $(X,\phi)$
by ``stabilizing''  the real bundle pair $(TX,\tnd\phi)$
with two copies of a real bundle pair $(E,\wt\tau)$ over~$(X,\phi)$
is introduced in~\cite{Ge2}. 
For these moduli spaces, the orienting procedure of \cite[Theorem~1.3]{RealGWsI} 
specializes to the orienting procedure of~\cite{Ge2}.
While the stabilizing real bundle pair~$(E,\wt\tau)$ in~\cite{Ge2} can be of any rank,
the purpose of~$(E,\wt\tau)$ is also fulfilled by $\La_{\C}^{\top}(E,\wt\tau)$
and so it is sufficient to restrict to the rank~1 real bundle pairs.
On the other hand, the proof of Theorem~\ref{RelSpinOrient_thm}
 readily extends to  real bundle pairs~$(L,\wt\phi)$ of any~rank.
In sharp contrast to the relative spin orienting procedure of \cite[Theorem~8.1.1]{FOOO},
the orientation from the approach of~\cite{Ge2} with a rank~1  real bundle $(E,\wt\tau)$
depends only on $w_1(E^{\wt\tau})$ and the spin structure on 
$TX^{\phi}\!\oplus\!2E^{\wt\tau}$, not on $(E,\wt\tau)$ itself;
see Remark~\ref{Ge2_rmk2}.
\end{rmk}

\subsection{Outline of the paper and acknowledgments}
\label{introend_subs}

\noindent
Section~\ref{prelim_sec} sets up the notation necessary for the remainder
of this paper and summarizes the orientation construction of~\cite{RealGWsI}. 
Theorems~\ref{ComplexOrient_thm} and~\ref{RelSpinOrient_thm} are proved 
in Sections~\ref{ComplexOrient_subs} and~\ref{RelSpinOrient_subs}, respectively.
Section~\ref{OrientApply_subs} obtains a number of
computationally useful statements concerning orientations of 
the determinants of real Cauchy-Riemann operators on real bundle pairs.
Theorem~\ref{CompOrient_thm} is established in Section~\ref{CompOrient_sec}.\\

\noindent
We would like to thank E.~Brugall\'e, R.~Cr\'etois, E.~Ionel, S.~Lisi,
M.~Liu, J.~Solomon, J.~Starr, M.~Tehrani, G.~Tian, and J.~Welschinger for related discussions.
The second author is very grateful to the IAS School of Mathematics for its hospitality 
during the initial stages of our project on real GW-theory.

\section{Notation and review}
\label{prelim_sec}

\noindent
We set up the necessary notation involving moduli spaces of stable maps and curves
in Section~\ref{ModSp_sub}.
We then recall standard facts concerning determinant lines of Fredholm operators in
Section~\ref{DetLB_subs}. 
Section~\ref{CompOrient_sec} reviews some of the key statements from~\cite{RealGWsI}.

\subsection{Moduli spaces of symmetric surfaces and real maps}
\label{ModSp_sub}

\noindent
Let $(\Si,\si)$ be a genus~$g$ symmetric surface.
We denote by $\cD_\si$ the group of orientation-preserving diffeomorphisms of~$\Si$  
 commuting with the involution~$\si$.
If $(X,\phi)$ is a smooth manifold with an involution, $l\!\in\!\Z^{\ge0}$,
and $B\in H_2(X;\Z)$, let  
$$\fB_{g,l}(X,B)^{\phi,\si}\subset \fB_g(X)^{\phi,\si}\times \Si^{2l}$$
denote the space of real maps  $u\!:(\Si,\si)\!\lra\!(X,\phi)$ with $u_*[ \Si]_{\Z}=B$
and $l$ pairs of conjugate non-real marked distinct points.
We define
$$\cH_{g,l}(X,B)^{\phi,\si}=
\big(\fB_{g,l}(X,B)^{\phi,\si}\!\times\!\cJ_{\Si}^{\si}\big)/\cD_\si.$$
If $J\!\in\!\cJ_{\om}^{\phi}$,
the moduli space of marked real $J$-holomorphic maps in the class $B\in H_2(X;\Z)$ 
is the subspace
$$\fM_{g,l}(X,B;J)^{\phi,\si}=
\big\{[u,(z_1^+,z_1^-),\ldots,(z_l^+,z_l^-),\fJ]\!\in\!\cH_{g,l}(X, B)^{\phi,\si}\!:~
\dbar_{J,\fJ}u\!=\!0\big\},$$
where $\dbar_{J,\fJ}$ is the usual Cauchy-Riemann operator with respect 
to the complex structures $J$ on $X$ and $\fJ$ on $\Si$. 
If $g\!+\!l\!\ge\!2$, 
$$\cM_{g,l}^\si\equiv \fM_{g,l}(\pt,0)^{\id,\si}\equiv \cH_{g,l}(\pt, 0)^{\id,\si}$$
is the moduli space of marked symmetric domains. 
There is a natural forgetful morphism
\BE{ffdfn_e0}\ff:\cH_{g,l}(X,B)^{\phi,\si}\lra  \cM_{g,l}^\si\,;\EE
it drops the map component~$u$ from each element of the domain.\\

\noindent
We denote~by 
$$\ov\fM_{g,l}(X,B;J)^{\phi,\si}\supset\fM_{g,l}(X,B;J)^{\phi,\si}$$
Gromov's convergence compactification of $\fM_{g,l}(X,B;J)^{\phi,\si}$ obtained
by including stable real maps from nodal symmetric surfaces.
The (virtually) codimension-one boundary strata~of
$$\ov\fM_{g,l}(X,B;J)^{\phi,\si}-\fM_{g,l}(X,B;J)^{\phi,\si}
\subset \ov\fM_{g,l}(X,B;J)^{\phi,\si}$$
consist of real $J$-holomorphic maps from one-nodal symmetric surfaces to~$(X,\phi)$.
Each stratum is either a (virtual) hypersurface in $\ov\fM_{g,l}(X,B;J)^{\phi,\si}$
or a (virtual) boundary of the spaces $\ov\fM_{g,l}(X,B;J)^{\phi,\si}$
for precisely two  topological types of orientation-reversing involutions~$\si$
on~$\Si$.
Let 
$$\fM_{g,l}(X,B;J)^{\phi}=\bigsqcup_{\si}\fM_{g,l}(X,B;J)^{\phi,\si}
\quad\hbox{and}\quad
\ov\fM_{g,l}(X,B;J)^{\phi}=\bigcup_{\si}\ov\fM_{g,l}(X,B;J)^{\phi,\si}$$
denote the (disjoint) union of the uncompactified real moduli spaces 
and the union of the compactified real moduli spaces, respectively, 
taken over all topological types of orientation-reversing involutions~$\si$ on~$\Si$.
If $g\!+\!l\!\ge\!2$,  we denote~by
\begin{gather*}
\ov\cM_{g,l}^{\si}\equiv \ov\fM_{g,l}(\pt,0)^{\id,\si}\supset\cM_{g,l}^{\si}\,,
\qquad
\R\ov\cM_{g,l} \equiv \ov\fM_{g,l}(\pt,0)^{\id} =\bigcup_{\si}\ov\cM_{g,l}^{\si}
\end{gather*}
the real Deligne-Mumford moduli spaces.
The forgetful morphism~\eref{ffdfn_e0} extends to a morphism
\BE{ffdfn_e}\ff\!: \ov\fM_{g,l}(X,B;J)^{\phi} \lra \R\ov\cM_{g,l}\EE
between the compactifications.

\subsection{Determinant line bundles}
\label{DetLB_subs}

\noindent
Let $(V,\vph)$ be a real bundle pair over a symmetric surface~$(\Si,\si)$.
A \textsf{real Cauchy-Riemann} (or \sf{CR-}) \sf{operator} on~$(V,\vph)$  is a linear map of the~form
\BE{CRdfn_e}\begin{split}
D=\bp\!+\!A\!: \Ga(\Si;V)^{\vph}
\equiv&\big\{\xi\!\in\!\Ga(\Si;V)\!:\,\xi\!\circ\!\si\!=\!\vph\!\circ\!\xi\big\}\\
&\hspace{.1in}\lra
\Ga_{\fJ}^{0,1}(\Si;V)^{\vph}\equiv
\big\{\ze\!\in\!\Ga(\Si;(T^*\Si,\fJ)^{0,1}\!\otimes_{\C}\!V)\!:\,
\ze\!\circ\!\tnd\si=\vph\!\circ\!\ze\big\},
\end{split}\EE
where $\bp$ is the holomorphic $\bp$-operator for some $\fJ\!\in\!\cJ_{\Si}^{\si}$
and a holomorphic structure in~$V$ and  
$$A\in\Ga\big(\Si;\Hom_{\R}(V,(T^*\Si,\fJ)^{0,1}\!\otimes_{\C}\!V) \big)^{\vph}$$ 
is a zeroth-order deformation term. 
A real CR-operator on a real bundle pair is Fredholm in the appropriate completions.\\

\noindent
If $X,Y$ are Banach spaces and $D\!:X\!\lra\!Y$ is a Fredholm operator, let
$$\det D\equiv\La_{\R}^{\top}(\ker D) \otimes \big(\La^{\top}_{\R}(\text{cok}\,D)\big)^*$$
denote the \textsf{determinant line} of~$D$. 
A continuous family of such Fredholm operators~$D_t$ over a topological space~$\cH$  
determines a line bundle over~$\cH$, called \sf{the determinant line bundle of~$\{D_t\}$}
and denoted $\det D$; see \cite[Section~A.2]{MS} and \cite{detLB} for a construction. 
A short exact sequence of Fredholm operators
\[\begin{CD}
0
@>>>X'@>>>X@>>>X''@>>>0 \\
@. @V V D' V@VV D V@VV D'' V@.\\
0@>>> Y'@>>>Y@>>>Y''@>>>0
\end{CD}\]
determines a canonical isomorphism
\BE{sum} \det D\cong (\det D')\otimes (\det D'').\EE
For a continuous family of short exact sequences of Fredholm operators, 
the isomorphisms~\eref{sum} give rise to a canonical isomorphism
between determinant line bundles.\\ 

\noindent
Families of real CR-operators often arise by pulling back data from
a target manifold by smooth maps as follows. 
Suppose $(X,J,\phi)$ is an almost complex manifold with an anti-complex  involution
and $(V,\vph)$ is a real bundle pair over~$(X,\phi)$.
Let $\na$ be a $\vph$-compatible connection in $V$ and 
$$A\in\Ga\big(X;\Hom_{\R}(V,(T^*X,J)^{0,1}\otimes_{\C}\!V)\big)^{\vph}.$$ 
For any real map $u\!:(\Si,\si)\!\lra\!(X,\phi)$ and $\fJ\!\in\!\cJ_{\Si}^{\si}$, 
let $\na^u$ denote the induced connection in $u^*V$ and
$$ A_{\fJ;u}=A\circ \prt_{\fJ} u\in\Ga(\Si;
\Hom_{\R}(u^*V,(T^*\Si,\fJ)^{0,1}\otimes_{\C}u^*V)\big)^{u^*\vph}.$$
The homomorphisms
$$\bp_u^\na =\frac{1}{2}(\na^u+\fI\circ\na^u\circ\fJ), \,\,
D_{(V,\vph);u}\equiv \bp_u^\na\!+\!A_{\fJ;u}\!: \Ga(\Si;u^*V)^{u^*\vph}\lra
\Ga^{0,1}_{\fJ}(\Si;u^*V)^{u^*\vph}$$
are real CR-operators on $u^*(V,\vph)\!\lra\!(\Si,\si)$
that form families of real CR-operators over families of maps. 
If $g,l\!\in\!\Z^{\ge0}$ and $B\!\in\!H_2(X;\Z)$, let
$$\det D_{(V,\vph)}\lra \fB_{g,l}(X,B)^{\phi,\si}\!\times\!\cJ_{\Si}^{\si}$$ 
denote the determinant line bundle of such a family.
It descends to a fibration
$$\det D_{(V,\vph)}\lra \cH_{g,l}(X,B)^{\phi,\si},$$
which is a line bundle over the open subspace of the base consisting of marked maps 
with no non-trivial automorphisms.

\begin{eg}\label{ex_tbdl}
Let $(V,\vph)\!=\!(\C,\fc)$; this is a real bundle over $(\pt,\id)$. 
If $g\!+\!l\!\ge\!2$,
the induced family of operators $\dbar_{\C}\equiv D_{(\C,\fc)}$ on $\cM_{g,l}^\si$ 
defines a line bundle 
$$\det \dbar_{\C} \lra \cM_{g,l}^\si\,.$$
If $(X,\phi)$ is an almost complex manifold with an anti-complex involution~$\phi$ and 
$$(V,\vph)=(X\!\times\!\C,\phi\!\times\!\fc)\lra (X,\phi),$$
then there is a canonical isomorphism 
$$\det D_{(\C,\fc)}\approx\mathfrak{f}^*\big(\!\det\dbar_{\C}\big)$$
of line bundles over $\cH_{g,l}(X,B)^{\phi,\si}$.
\end{eg}

\noindent
For a  real CR-operator~$D$ on a rank~$n$ real bundle pair $(V,\vph)$
over a symmetric surface~$(\Si,\si)$, we~define
 the \sf{relative determinant} of~$D$ to be the tensor product
\BE{fDdfn_e}
\rdet\,D\equiv \big(\!\det D\big)\otimes\big(\!\det\dbar_{\Si;\C}\big)^{\otimes n}\,,\EE
where $\det\dbar_{\Si;\C}$ is 
the standard real CR-operator on~$(\Si,\si)$ with values in~$(\C,\fc)$.
This notion plays a central role in the construction of real GW-theory in~\cite{RealGWsI}.\\

\noindent
Let $(X,\om,\phi)$ be a real symplectic $2n$-manifold, $g,l\!\in\!\Z^{\ge0}$, 
\hbox{$B\!\in\!H_2(X;\Z)$}, $J\!\in\!\cJ_{\om}^{\phi}$, and
$$[\u]\equiv\big[u,(z_1^+,z_1^-),\ldots,(z_l^+,z_l^-),\fJ\big]
\in \ov\fM_{g,l}(X,B;J)^{\phi}\,.$$
Denote by $\Si_u$ the domain of~$u$.
If
$$\cC\equiv\big(\Si_u,(z_1^+,z_1^-),\ldots,(z_l^+,z_l^-),\fJ\big)$$
is a stable curve, then the forgetful morphism~\eref{ffdfn_e} induces an isomorphism
\BE{ffisom_e}
\La_{\R}^{\top}\big(T_{[\u]}\ov\fM_{g,l}(X,B;J)^{\phi,\si}\big)\approx
\big(\!\det D_{(TX,\tnd\phi);u}\big)\otimes 
\La_{\R}^{\top}\big(T_{[\cC]}\ov\cM_{g,l}^{\si}\big) \,.\EE
Orientations on the two lines on the right-hand side of~\eref{ffisom_e} thus determine
an orientation on the left-hand side of~\eref{ffisom_e}.
If $(X,\om,\phi)$ is real-orientable and $n$ is odd, 
as in the cases relevant to the present paper, 
the index of $D_{(TX,\tnd\phi);u}$ is odd if and only if $g\!\in\!2\Z$.
The induced orientation on the left-hand side of~\eref{ffisom_e} then depends
on the specified order of the factors
on the right-hand side of~\eref{ffisom_e}.

\subsection{Real orientations and relative determinants}
\label{RealOrientations_subs}

\noindent
Let $(X,\phi)$ be a topological space with an involution and 
$(V,\vph)$ be a real bundle pair over~$(X,\phi)$.
An isomorphism~$\Th$ in~\eref{realorient_e4} determines orientations on 
$V^{\vph}$ and $V^{\vph}\!\oplus\! 2(L^*)^{\wt\phi^*}$.
Given a real orientation on~$(V,\vph)$ as in Definition~\ref{realorient_dfn4},
we will call  these orientations \textsf{the orientations determined by~\ref{isom_it2}}
if $\Th$ lies in the chosen homotopy class.
An isomorphism~$\Th$ in~\eref{realorient_e4} also induces an isomorphism 
\BE{detInd_e}\begin{split}
\La_{\C}^{\top}\big(V\!\oplus\!2L^*,\vph\!\oplus\!2\wt\phi^*\big)
&\approx \La_{\C}^{\top}(V,\vph)\otimes(L^*,\wt\phi^*)^{\otimes 2}\\
&\approx (L,\wt\phi)^{\otimes 2}\otimes(L^*,\wt\phi^*)^{\otimes 2}
\approx \big(\Si\!\times\!\C,\si\!\times\!\fc\big),
\end{split}\EE
where the last isomorphism is the canonical pairing.
We will call the homotopy class of isomorphisms~\eref{detInd_e} induced by 
the isomorphisms~$\Th$ in~\ref{isom_it2} \textsf{the homotopy class determined by~\ref{isom_it2}}.

\begin{prp}[{\cite[Proposition 7.3]{RBP}}]\label{canonisom_prp}
Suppose $(\Si,\si)$ is a symmetric surface, possibly disconnected and nodal, and  
$(V,\vph)$ is a rank~$n$ real bundle pair over $(\Si,\si)$.
A real orientation on $(V,\vph)$ as in Definition~\ref{realorient_dfn4}
determines a homotopy class of isomorphisms
\BE{realorient2_e2} \Psi\!: \big(V\!\oplus\!2L^*,\vph\!\oplus\!2\wt\phi^*\big)
\approx\big(\Si\!\times\!\C^{n+2},\si\!\times\!\fc\big)\EE
of real bundle pairs over $(\Si,\si)$.
An isomorphism~$\Psi$ belongs to this homotopy class if and only~if
the restriction of $\Psi$ to the real locus induces the chosen spin structure~\ref{spin_it2} 
and  the isomorphism 
\BE{realorient2_e2b}
\La_{\C}^{\top}\Psi\!: \La_{\C}^{\top}\big(V\!\oplus\!2L^*,\vph\!\oplus\!2\wt\phi^*\big)
\lra \La_{\C}^{\top}\big(\Si\!\times\!\C^{n+2},\si\!\times\!\fc\big)
=\big(\Si\!\times\!\C,\si\!\times\!\fc\big)\EE
lies in the homotopy class determined by~\ref{isom_it2}.
\end{prp}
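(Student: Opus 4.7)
The plan is to show that the two conditions in the proposition---the homotopy class of $\La_\C^\top\Psi$ and the induced spin structure on the real locus---constitute a complete set of homotopy invariants of an isomorphism $\Psi$ as in~\eref{realorient2_e2}. First I would verify that these two invariants are indeed functions of the homotopy class of~$\Psi$: $\La_\C^\top$ is continuous in~$\Psi$, and a continuous family of oriented real vector bundle isomorphisms over~$\Si^\si$ induces a canonical continuous family of spin structures via~\eref{splitLisom_e}. Thus one obtains a well-defined map from the set of homotopy classes of isomorphisms~$\Psi$ to pairs consisting of a homotopy class of top-exterior-power isomorphisms and a spin structure on $V^\vph\!\oplus\!2(L^*)^{\wt\phi^*}$. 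The proposition asserts that this map is a bijection onto the subset of pairs compatible with the chosen real orientation, from which the homotopy class promised in the proposition is simply the preimage of the pair given by~\ref{isom_it2} and~\ref{spin_it2}.

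For existence I would argue as follows. By~\eref{detInd_e}, a representative $\Th$ of the homotopy class~\ref{isom_it2} trivializes $\La_\C^\top(V\!\oplus\!2L^*,\vph\!\oplus\!2\wt\phi^*)$, so the structure group of this rank-$(n\!+\!2)$ real bundle pair reduces to~$\SU(n\!+\!2)$ compatibly with the involutions. Since $\SU(n\!+\!2)$ is $2$-connected for $n\!\ge\!0$ and $\Si$ is of real dimension~$2$, standard obstruction theory produces a complex trivialization extending~$\Th$.  Imposing $\si$-equivariance reduces the remaining freedom to a choice of trivialization on the real locus $\Si^\si$ as an oriented real vector bundle; by the first condition in~\eref{realorient_e4}, the oriented bundle $V^\vph\!\oplus\!2(L^*)^{\wt\phi^*}$ is spin, and the two homotopy classes of trivializations on each circle component of~$\Si^\si$ correspond under the spin cover to the two spin structures. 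Thus every spin structure from~\ref{spin_it2} is realized by some~$\Psi$.

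Uniqueness reduces to showing that if $\Psi_1,\Psi_2$ both satisfy the two compatibility conditions, then $g\!\equiv\!\Psi_2\!\circ\!\Psi_1^{-1}$ is null-homotopic through $\si$-equivariant automorphisms of $(\Si\!\times\!\C^{n+2},\si\!\times\!\fc)$. Such a $g$ is a $\si$-equivariant map $\Si\!\to\!\GL_{n+2}(\C)$; the hypothesis on $\La_\C^\top\Psi_i$ renders $\det g$ equivariantly null-homotopic, so $g$ deforms equivariantly into $\SU(n\!+\!2)$, and the hypothesis on spin structures makes $g|_{\Si^\si}$ null-homotopic in the spin sense. Since $\SU(n\!+\!2)$ is $2$-connected, this null-homotopy on the real locus extends over the quotient $\Si/\si$ by obstruction theory, and pulls back to an equivariant null-homotopy of~$g$. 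The main obstacle will be the nodal and disconnected case: one must verify that trivializations glue correctly across conjugate pairs of nodes and across real nodes, and that the spin structure on the normalization of~$\Si^\si$ used to carry out obstruction theory matches the conventions of~\cite{RealGWsI}. Once this bookkeeping is in place, the existence and uniqueness arguments proceed component by component and the full statement follows.
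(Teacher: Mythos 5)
For smooth $(\Si,\si)$ your scheme is essentially the argument the paper relies on: that case is \cite[Proposition~5.2]{RealGWsI}, whose proof is exactly the obstruction-theoretic outline you give (trivialize $V^{\vph}\!\oplus\!2(L^*)^{\wt\phi^*}$ over $\Si^{\si}$ in the class dictated by the spin structure, extend over a half-surface using the vanishing of the relevant Chern class, double; uniqueness from the fact that an equivariant map $\Si\!\lra\!\SL_{n+2}\C$ whose restriction to $\Si^{\si}$ is null-homotopic is equivariantly null-homotopic). So that part is correct in outline, modulo the standard caveat that the correspondence between the two homotopy classes of trivializations over a circle and the two spin structures requires $n\!+\!2\!\ge\!3$.

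The genuine gap is that the nodal case is not ``bookkeeping''; it is precisely the part of the statement this paper has to supply (Section~\ref{TwoNodal_subs}, with the general case deferred to~\cite{RBP}). Obstruction theory cannot be run directly on a nodal~$\Si$: one passes to the normalization $(\wt\Si,\wt\si)$, applies the smooth case there, and then must confront the fact that the resulting isomorphism~$\wt\Phi$ need not agree at the two preimages of each node, so it need not descend. For a pair of conjugate nodes the paper corrects~$\wt\Phi$ by an equivariant $\SL_{n+2}\C$-valued gauge transformation supported near one preimage of the node (Lemma~\ref{canonisomExt_lmm}, using Lemma~\ref{homotoid_lmm} and the connectedness of~$\SL_{n+2}\C$), and the uniqueness assertion likewise requires extending the null-homotopy lemma to the nodal surface (Lemma~\ref{homotopextend_lmm}), where the point is to arrange the null-homotopy on~$\wt\Si$ to take the same value at the two branches of each node so that it descends. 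At a real node the mismatch lives in the disconnected group $\GL_{n+2}\R$ and the matching is genuinely delicate, which is why the full statement is only established in~\cite{RBP}. Your proposal never says how the trivialization, or the null-homotopy in the uniqueness step, is to be matched across the nodes, and that is the only place where the argument can actually fail.
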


\noindent
The only cases of this proposition relevant to~\cite{RealGWsI} 
are for~$\Si$ smooth and with one real node;
the only cases relevant to Theorem~\ref{CompOrient_thm}
are for~$\Si$ smooth and with one pair of conjugate nodes.
The proof of \cite[Proposition~5.2]{RealGWsI} establishes Proposition~\ref{canonisom_prp} 
under  the assumption that~$\Si$ is connected and smooth, 
but it applies without the first restriction.
The proof of \cite[Proposition~6.2]{RealGWsI} extends \cite[Proposition~5.2]{RealGWsI}
to one-nodal symmetric surfaces.
The analogue of this extension for symmetric surfaces with one pair of conjugate
nodes is carried out in the proof of Lemma~\ref{canonisomExt_lmm} of the present paper.
The principles behind the two extensions are used in~\cite{RBP} to establish 
the full statement of Proposition~\ref{canonisom_prp}.

\begin{crl}\label{canonisom_crl2a}
Suppose $(\Si,\si)$ is a symmetric surface, possibly disconnected and nodal,
and  $D$ is a real CR-operator 
on a rank~$n$ real bundle pair $(V,\vph)$ over~$(\Si,\si)$.
Then a real orientation on $(V,\vph)$ as in Definition~\ref{realorient_dfn4}
induces an  orientation on the relative determinant~$\rdet\,D$ of~$D$.
\end{crl}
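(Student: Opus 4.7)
The plan is to stabilize $D$ with two copies of a real CR-operator on $(L^*, \wt\phi^*)$ and then use Proposition~\ref{canonisom_prp} to identify the resulting enlarged operator with the trivial operator on a trivial real bundle pair, reducing the orientation of $\rdet\,D$ to the canonical orientations carried by even tensor powers of real lines.

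I would first choose an auxiliary real CR-operator $D_0$ on $(L^*, \wt\phi^*)$ and set $D' \equiv D_0 \oplus D_0$, a real CR-operator on $2(L^*, \wt\phi^*)$.  Applying~(\ref{sum}) to the direct sum decomposition gives a canonical isomorphism $\det D' \approx (\det D_0)^{\otimes 2}$, which is canonically oriented as the square of a real line.  Next, $D \oplus D'$ is a real CR-operator on $(V \oplus 2L^*, \vph \oplus 2\wt\phi^*)$; conjugation by a representative $\Psi$ of the homotopy class from Proposition~\ref{canonisom_prp} carries it to a real CR-operator on $(\Si \times \C^{n+2}, \si \times \fc)$, which is in turn homotopic through real CR-operators to $\dbar_{\Si;\C}^{\oplus(n+2)}$ via the affine structure on the space of real CR-operators on a fixed bundle.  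This yields a canonical homotopy class of isomorphisms
\begin{equation*}
\det(D \oplus D') \approx (\det \dbar_{\Si;\C})^{\otimes(n+2)}.
\end{equation*}
Combining with the direct-sum isomorphism $\det(D \oplus D') \approx \det D \otimes \det D'$ from~(\ref{sum}) and tensoring with $(\det \dbar_{\Si;\C})^{\otimes n}$ gives
\begin{equation*}
\rdet\,D \otimes (\det D_0)^{\otimes 2} \approx (\det \dbar_{\Si;\C})^{\otimes(2n+2)}.
\end{equation*}
Since both tensor factors other than $\rdet\,D$ are even powers of real lines, they are canonically oriented, and the displayed isomorphism thus transfers a canonical orientation to $\rdet\,D$.

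The point I would treat most carefully is well-definedness: the resulting orientation must be independent of the auxiliary choices of $D_0$ and of the representative $\Psi$ within its homotopy class.  Any two choices of $D_0$ are joined by the straight-line path in the affine space of real CR-operators on $(L^*, \wt\phi^*)$, and the canonical orientation on the square of the determinant line varies continuously along this path, forcing agreement at the endpoints; the analogous continuity argument handles $\Psi$.  The essential role of Proposition~\ref{canonisom_prp} is precisely to pin down $\Psi$ up to homotopy from the real orientation of $(V,\vph)$---without this pinning, the construction would depend on arbitrary trivialization data, so the main conceptual work is really carried out by Proposition~\ref{canonisom_prp} rather than by the final assembly step above.
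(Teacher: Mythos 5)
Your proposal is correct and follows essentially the same route as the paper: the paper deduces this corollary from Proposition~\ref{canonisom_prp} by precisely this stabilization of $D$ with a real CR-operator on $2(L^*,\wt\phi^*)$, identification of the stabilized operator with one on the trivial bundle via a representative $\Psi$ of the distinguished homotopy class, and transfer of orientation through the canonically oriented even tensor powers (see the proofs of Corollary~5.7 in~\cite{RealGWsI} and of Lemma~\ref{ComplexOrient_lmm1} above, where this construction is recapitulated). Your well-definedness argument via the contractibility of the space of real CR-operators and the homotopy-class specification of~$\Psi$ is likewise the one implicit in the paper.
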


\noindent
For $\Si$ smooth or one-nodal,
this corollary  is deduced from the corresponding cases of
Proposition~\ref{canonisom_prp} in the proofs of
\cite[Corollary~5.7]{RealGWsI} and \cite[Corollary~6.6]{RealGWsI}, respectively. 
The proof of the latter readily extends to all symmetric surfaces~$(\Si,\si)$.\\

\noindent
Corollary~\ref{canonisom_crl2a} implies that a real orientation on
a real symplectic manifold $(X,\om,\phi)$ determines an orientation on the line
\BE{detDorient_e} 
\rdet\,D_{(TX,\tnd\phi);u}\equiv
\big(\!\det D_{(TX,\tnd\phi);u}\big)\otimes 
\big(\!\det\dbar_{\C}|_{\Si_u}\big)^{\otimes n}\,.\EE
By \cite[Corollary~6.7]{RealGWsI}
and Corollary~\ref{canonisomExt2_crl2a}, this orientation varies continuously with~$[\u]$.

\begin{crl}\label{canonisom_crl}
Suppose $(\Si,\si)$ is a symmetric surface, possibly disconnected and nodal, and  
$(L,\wt\phi)\!\lra\!(\Si,\si)$ is a rank~1 real bundle pair.
If $L^{\wt\phi}\!\lra\!\Si^{\si}$ is orientable, there exists a canonical homotopy 
class of isomorphisms 
\BE{canonisom_crl_e}\big(L^{\otimes2}\!\oplus\!2L^*,\wt\phi^{\otimes2}\!\oplus\!2\wt\phi^*\big)
\approx\big(\Si\!\times\!\C^3,\si\!\times\!\fc\big)\EE
of real bundle pairs over $(\Si,\si)$.
\end{crl}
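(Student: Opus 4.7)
The plan is to apply Proposition~\ref{canonisom_prp} directly to the rank~$1$ real bundle pair $(V,\vph)\!\equiv\!(L,\wt\phi)^{\otimes 2}$; for $n\!=\!1$, the conclusion of that proposition asserts the existence of a homotopy class of isomorphisms $(V\oplus 2L^*,\vph\oplus 2\wt\phi^*)\approx(\Si\!\times\!\C^{3},\si\!\times\!\fc)$, which is exactly \eref{canonisom_crl_e}. It remains only to exhibit a canonical real orientation on $(L,\wt\phi)^{\otimes 2}$ in the sense of Definition~\ref{realorient_dfn4}, using the hypothesis that $L^{\wt\phi}\!\lra\!\Si^{\si}$ is orientable.

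For \ref{LBP_it2}, I would take the auxiliary rank~$1$ real bundle pair to be $(L,\wt\phi)$ itself, with the tautological isomorphism $\La^{\top}_{\C}(L^{\otimes 2},\wt\phi^{\otimes 2})\!=\!(L,\wt\phi)^{\otimes 2}$. The cohomological compatibility in \eref{realorient_e4} becomes $w_2((L^{\wt\phi})^{\otimes 2})\!=\!w_1(L^{\wt\phi})^{2}$; the left side vanishes because the squared line bundle is canonically trivial via the nowhere-zero section $v\!\otimes\!v$, and the right side vanishes by the orientability of $L^{\wt\phi}$. For \ref{isom_it2}, I would take the homotopy class of this tautological isomorphism. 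For \ref{spin_it2}, a canonical spin structure on $(L^{\wt\phi})^{\otimes 2}\!\oplus\!2(L^*)^{\wt\phi^*}$ is obtained by combining the canonical trivialization of $(L^{\wt\phi})^{\otimes 2}$ above with the canonical homotopy class of trivializations of $2(L^*)^{\wt\phi^*}$ furnished by the orientability of $L^{\wt\phi}$ as recalled just before Corollary~\ref{canonisom_crl2a}. Since \eref{splitLisom_e} is an isomorphism of \emph{oriented} real vector bundles and the squaring trivialization realizes the canonical orientation of $(L^{\wt\phi})^{\otimes 2}$, this spin structure is compatible with the orientation induced by~\ref{isom_it2}.

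Having assembled these three pieces of canonical data, I would invoke Proposition~\ref{canonisom_prp} to extract the desired homotopy class of isomorphisms. No step in this construction involves a choice beyond what is supplied by $(L,\wt\phi)$ and the orientability hypothesis, so the resulting class in \eref{canonisom_crl_e} is genuinely canonical. I do not anticipate a substantial obstacle: the only point requiring mild care is that the spin structure assembled from the two pieces of canonical trivialization data is compatible with the orientation coming from~\ref{isom_it2}, and this reduces to inspecting the signs of the squaring map and of~\eref{splitLisom_e}.
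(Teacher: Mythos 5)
Your proposal is correct and matches the paper's own deduction: the paper likewise obtains \eref{canonisom_crl_e} by equipping the rank~1 real bundle pair $(L,\wt\phi)^{\otimes2}$ with its canonical real orientation (constructed exactly as you describe, via the tautological isomorphism in \ref{LBP_it2}--\ref{isom_it2} and the canonical trivializations of $(L^{\wt\phi})^{\otimes2}$ and $2(L^*)^{\wt\phi^*}$ for \ref{spin_it2}) and then applying Proposition~\ref{canonisom_prp} with $n\!=\!1$. The only difference is that the paper delegates the construction of this canonical real orientation to a reference, whereas you spell it out.
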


\noindent
As explained in the proof of \cite[Corollary~5.6]{RealGWsI}, there is a canonical
real orientation on the real bundle $(L,\wt\phi)^{\otimes2}$ over  $(\Si,\si)$
if $L^{\wt\phi}\!\lra\!\Si^{\si}$ is orientable.
In particular, there is a canonical homotopy class of isomorphisms
$$\big(T^*\Si^{\otimes2}\!\oplus\!2\,T\Si,(\tnd\si^*)^{\otimes2}\!\oplus\!2\tnd\si\big)
\approx\big(\Si\!\times\!\C^3,\si\!\times\!\fc\big)$$
of real bundle pairs over~$(\Si,\si)$ if~$\Si$ contains no real nodes
(of type~(H2) or~(H3) in the terminology of \cite[Section~3]{Melissa} and 
\cite[Section~3.2]{RealGWsI}).\\

\noindent
Let $g,l\in \Z^{\ge0}$ be such that $g\!+\!l\!\ge\!2$ and $(\Si,\si)$
be a smooth connected symmetric surface of genus~$g$.
Combining the Kodaira-Spencer isomorphism, Dolbeault Isomorphism, Serre Duality,  and 
Corollaries~\ref{canonisom_crl2a} and~\ref{canonisom_crl}, we find that the real line~bundle
\BE{CidentDM_e}\La_{\R}^{\top}\big(T\cM_{g,l}^{\si}\big) \otimes 
\big(\!\det\dbar_{\C}\big)
\lra \cM_{g,l}^{\si}\EE
is canonically oriented; see the proof of \cite[Proposition~5.9]{RealGWsI}.
If $n\!\not\in\!2\Z$ and the domain~$\Si_u$ of~$\u$ in~\eref{ffisom_e} is smooth,
the canonical orientation on~\eref{CidentDM_e} and an orientation on~\eref{detDorient_e}
determine an orientation on the line~\eref{ffisom_e} which varies continuously with~$\u$.
Thus, a real orientation on a real symplectic manifold $(X,\om,\phi)$ determines
orientations on the uncompactified moduli spaces $\fM_{g,l}(X,B;J)^{\phi,\si}$
of real $J$-holomorphic maps from $(\Si,\si)$ to~$(X,\phi)$.\\

\noindent
By \cite[Proposition~6.1]{RealGWsI}, the canonical orientations of 
the real line bundle~\eref{CidentDM_e} extend across 
a codimension-one boundary stratum of $\R\ov\cM_{g,l}$
if and only if the parity of the number $|\si|_0$ of connected components 
of the fixed locus~$\Si^{\si}$ of~$\Si$ remains unchanged.
By construction, the same is the case of the orientations on 
 $\fM_{g,l}(X,B;J)^{\phi,\si}$
induced by a real  orientation on $(X,\om,\phi)$ if $n\!\!\not\in\!2\Z$.
In order to orient the compactified moduli spaces $\ov\fM_{g,l}(X,B;J)^{\phi}$, 
we multiply the orientation on $\fM_{g,l}(X,B;J)^{\phi,\si}$ induced by 
a real  orientation on $(X,\om,\phi)$ by $(-1)^{g+|\si|_0+1}$.
This does not change the orientations whenever the fixed locus~$\Si^{\si}$ of~$\Si$ 
is separating.

\section{Comparison of orientations}
\label{CompOrient_sec}

\noindent
There are now standard ways of imposing orientations
on the moduli spaces  $\fM_{g,l}(X,B;J)^{\phi,\si}$
for certain types of symmetric surfaces~$(\Si,\si)$. 
Theorems~\ref{ComplexOrient_thm} and~\ref{RelSpinOrient_thm} compare such 
orientations with the orientations constructed in~\cite{RealGWsI}
and briefly described in Section~\ref{RealOrientations_subs}.

\subsection{Canonical vs.~complex}
\label{ComplexOrient_subs}

\noindent 
We continue with the notation and setup of Section~\ref{ComplexOrient_subs0}.
In the setting of Theorem~\ref{ComplexOrient_thm}, 
each of the factors in~\eref{detDorient_e} and~\eref{CidentDM_e} has a natural complex orientation.
By Lemma~\ref{ComplexOrient_lmm1} below,
the orientations of the tensor product in~\eref{detDorient_e} induced by 
a real orientation on~$(X,\om,\phi)$ and by the complex orientations on the two factors
are the same.
By Lemma~\ref{ComplexOrient_lmm2},
the canonical orientation of the tensor product in~\eref{CidentDM_e} and 
the orientation induced by the complex orientations on the two factors
differ by $(-1)^{g_0+1+|\fs|}$.\\

\noindent
The exponent of $g_0\!+\!1$ above arises for the following reason.
Let $V$ be a complex vector space of dimension~$k$.
The~map
\BE{duaVorient_e} \Hom_{\C}(V,\C)\lra  \Hom_{\R}(V,\R), \qquad
\th\lra \Re\,\th,\EE
is then an isomorphism of real vector spaces.
Its domain is a complex vector space and thus has a canonical complex orientation;
its image has an orientation induced from the complex orientation of~$V$.
The isomorphism~\eref{duaVorient_e} is orientation-preserving with respect to these orientations 
if and only~if $k$ is even.
The only step in the proof of \cite[Proposition~5.9]{RealGWsI} not compatible 
with the natural complex orientations is taking the (real) dual in \cite[(5.21)]{RealGWsI}.
The sign discrepancy of~\eref{duaVorient_e} for the twists by the marked points
is taken into account earlier in the proof.
The ``remaining" vector space in \cite[(5.21)]{RealGWsI} has complex dimension $3g_0\!-\!3$
and accounts for the exponent of $g_0\!+\!1$ in the sign of Theorem~\ref{ComplexOrient_thm}.\\ 

\noindent
A rank~$n$ real bundle pair $(V,\vph)$ over a doublet  $(\Si,\si)$ as in~\eref{SymSurfDbl_e}
corresponds to a complex vector bundle $V_0\!\lra\!\Si_0$ with 
$$V= V_1\!\sqcup\!V_2
\equiv \{1\}\!\times\!V_0 \sqcup \{2\}\!\times\!\ov{V}_0, \qquad
\vph(i,z)=\big(3\!-\!i,v\big)~~\forall~(i,v)\!\in\!V,$$
where $\ov{V}_0$ denotes $V_0$ with the opposite complex structure.
With these identifications,
$$\Ga(\Si;V)^{\vph}\subset \Ga(\Si_1;V_1)\oplus\Ga(\Si_2;V_2), \qquad
\Ga_{\fJ}^{0,1}(\Si;V)^{\vph}\subset \Ga_{\fJ}^{0,1}(\Si_1;V_1)\oplus\Ga_{-\fJ}^{0,1}(\Si_2;V_2),$$
and the projections
\BE{ComplexOrient_e5}
\Ga(\Si;V)^{\vph}\lra \Ga(\Si_0;V_0) \qquad\hbox{and}\qquad
\Ga_{\fJ}^{0,1}(\Si;V)^{\vph}\subset \Ga_{\fJ}^{0,1}(\Si_0;V_0)\EE
to the first component are isomorphisms of real vector spaces.
Via these projections, every real CR-operator~$D$ on the real bundle pair $(V,\vph)$
corresponds to an operator 
$$D_0\!:\Ga(\Si_0;V_0) \lra \Ga_{\fJ}^{0,1}(\Si_0;V_0)\,.$$
The projections~\eref{ComplexOrient_e5} induce isomorphisms between 
the kernels and cokernels of~$D$ and~$D_0$ and thus an isomorphism
\BE{ComplexOrient_e7}\det D\approx\det D_0\,.\EE
Since $D_0$ is a \sf{real linear CR-operator on~$V_0$} in the sense
of \cite[Definition~C.1.5]{MS},
$\det D_0$ has a canonical ``complex" orientation  obtained by homotoping~$D_0$ 
to a $\C$-linear Fredholm operator; see \cite[Section~3.2]{MS}.
We will call the orientation on $\det D$ induced from this orientation via
the isomorphism~\eref{ComplexOrient_e7} \sf{the complex orientation of} $\det D$.

\begin{lmm}\label{ComplexOrient_lmm1}
Let $(\Si,\si)$, $(V,\vph)$, and $D$ be as above. 
The orientations of the relative determinant $\rdet\,D$ of~$D$ 
induced by a real orientation on~$(V,\vph)$ as in Corollary~\ref{canonisom_crl2a}
and by the complex orientations on the two factors
are the same.
\end{lmm}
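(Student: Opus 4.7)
The strategy is to observe that on a doublet $(\Si,\si)$ with $\Si^\si\!=\!\eset$, the real-orientation data and the isomorphism~$\Psi$ of Proposition~\ref{canonisom_prp} reduce to purely $\C$-linear data on $\Si_0$, so that the construction of Corollary~\ref{canonisom_crl2a} coincides term-by-term with the complex-orientation procedure. To this end I would first translate the setup to $\Si_0$. Since $\Si^\si\!=\!\eset$, condition~\ref{spin_it2} in a real orientation is vacuous and conditions~\ref{LBP_it2}--\ref{isom_it2} reduce, via the projections in~\eref{ComplexOrient_e5}, to the data of a complex line bundle $L_0\!\lra\!\Si_0$ together with a homotopy class of $\C$-linear isomorphisms $\La_\C^\top V_0\!\approx\!L_0^{\otimes 2}$. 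Correspondingly, any real bundle pair isomorphism over a doublet is equivalent to a $\C$-linear vector bundle isomorphism over $\Si_0$, so the isomorphism~$\Psi$ from Proposition~\ref{canonisom_prp} corresponds to a $\C$-linear isomorphism $\Psi_0\!:V_0\!\oplus\!2L_0^*\!\approx\!\Si_0\!\times\!\C^{n+2}$.

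The second step is to trace how this reduction interacts with the construction in Corollary~\ref{canonisom_crl2a}. That construction uses~$\Psi$ to identify the stabilized CR-operator $D\!\oplus\!D_{2L^*}$ on $(V\!\oplus\!2L^*,\vph\!\oplus\!2\wt\phi^*)$ with a real CR-operator on the trivial bundle pair. Projecting the whole picture to~$\Si_0$, the stabilized operator becomes a real linear CR-operator on $V_0\!\oplus\!2L_0^*$ which, via the $\C$-linear~$\Psi_0$, is identified with a real linear CR-operator on the trivial complex bundle $\Si_0\!\times\!\C^{n+2}$ and is thus homotopic through real linear CR-operators to the $\C$-linear operator $\bp^{\oplus(n+2)}$. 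By \cite[Section~3.2]{MS}, this is exactly the homotopy used to define the complex orientation on $\det(D_0\!\oplus\!D_0')$, and under~\eref{ComplexOrient_e7} it yields the complex orientation on $\det(D\!\oplus\!D_{2L^*})$.

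Finally, I would invoke the sum rule~\eref{sum} to split off the contribution of $\det D_{2L^*}$ and identify the residual orientation on $\det D$ with its complex one. The orientation on $\det D_{2L^*}\!\approx\!(\det D_{L^*})^{\otimes 2}$ entering Corollary~\ref{canonisom_crl2a} is the canonical positive square orientation, which agrees with the complex orientation of $(\det D_{L_0^*})^{\otimes 2}$ since squaring any orientation on a real line yields its canonical positive square. Applying the identical reasoning to the trivial real bundle pair $(\Si\!\times\!\C,\si\!\times\!\fc)$ shows that the factor $(\det\dbar_\C)^{\otimes n}$ in $\rdet\,D$ is also complexly oriented, completing the comparison.

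The main obstacle I expect is bookkeeping: carefully matching the ``square orientation'' implicit in the proof of Corollary~\ref{canonisom_crl2a} with the complex orientation of $(\det D_{L_0^*})^{\otimes 2}$, and verifying that the homotopy of real linear CR-operators used to reach $\bp^{\oplus(n+2)}$ is compatible with the sum-rule identifications and with the projection~\eref{ComplexOrient_e7}. Once these identifications are made explicit, the content of the lemma is that no genuinely ``real'' ingredient enters when $\Si^\si\!=\!\eset$.
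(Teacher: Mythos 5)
Your proposal is correct and follows essentially the same route as the paper's proof: restrict to $\Si_0$ via the projections~\eref{ComplexOrient_e5}, note that the isomorphism~$\Psi$ of Proposition~\ref{canonisom_prp} corresponds to a $\C$-linear isomorphism~$\Psi_0$ over~$\Si_0$ so that the induced operator on the trivialized stabilized bundle is a real linear CR-operator homotopic to a $\C$-linear one, and then split off the canonically (= complexly) oriented factors $(\det D_{L^*})^{\otimes2}$ and $(\det\dbar_{\Si;\C})^{\otimes2}$ via~\eref{sum}. The bookkeeping you flag is exactly what the paper's commutative diagram~\eref{ComplexOrient1_e7} handles, and your square-orientation observation matches the paper's use of the canonical orientations on those factors.
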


\begin{proof}
The homotopy class of isomorphisms as in~\eref{realorient2_e2} determined 
by a real orientation on~$(V,\vph)$ determines an orientation on the~line
\BE{ComplexOrient1_e3}\begin{split}
&\big(\!\det D_{(V\oplus 2L^*,\vph\oplus 2\wt\phi^*)}\big)\otimes 
\big(\!\det\dbar_{\Si;\C}\big)^{\otimes (n+2)}\\
&\hspace{1.2in}\approx \big(\!\det\!\big(D_{(V\oplus 2L^*,\vph\oplus 2\wt\phi^*)}\big)_0\big)\otimes 
\big(\!\det\!\big(\dbar_{\Si;\C}\big)_0\big)^{\otimes (n+2)}\,.
\end{split}\EE
Any isomorphism~$\Psi$ in~\eref{realorient2_e2} corresponds to an isomorphism
$$\Psi_0\!: V_0\!\oplus\!2L_0^*\lra \Si_0\!\times\!\C^{n+2}$$
of complex vector bundles over~$\Si_0$ by the restriction to~$\Si_1\!\subset\!\Si$.
The isomorphism in~\eref{ComplexOrient1_e3} is orientation-preserving with respect 
to the orientation on the left-hand side induced by~$\Psi$
and the orientation on the right-hand side induced by~$\Psi_0$.
Since $\Psi_0$ is a $\C$-linear isomorphism,
the operator on $\Si_0\!\times\!\C^{n+2}$ induced by $(D_{(V\oplus 2L^*,\vph\oplus 2\wt\phi^*)})_0$ 
via~$\Psi_0$  
is a real linear CR-operator.
Since any two such operators are homotopic, 
the orientation on the last factor in~\eref{ComplexOrient1_e3} induced from the complex orientation
of the third factor in~\eref{ComplexOrient1_e3} is the complex orientation. 
Thus, the orientation  on the left-hand side of~\eref{ComplexOrient1_e3}
induced by a real orientation on~$(V,\vph)$ is  
the orientation induced by the complex orientations on the two factors.\\

\noindent
By~\eref{sum}, there are horizontal canonical isomorphisms
\BE{ComplexOrient1_e7}\begin{split}
\xymatrix{ \det D_{(V\oplus 2L^*,\vph\oplus 2\wt\phi^*)}
\ar[d]_{\eref{ComplexOrient_e7}}^{\approx}
\ar[rr]^>>>>>>>>>>>>>{\eref{sum}}_>>>>>>>>>>>>>{\approx}&& 
\big(\!\det D_{(V,\vph)}\big)\otimes\big(\!\det D_{(L^*,\wt\phi^*)}\big)^{\otimes2}
\ar[d]_{\eref{ComplexOrient_e7}}^{\approx}\\
\det\!\big(D_{(V\oplus 2L^*,\vph\oplus 2\wt\phi^*)}\big)_0 
\ar[rr]^>>>>>>>>>>{\eref{sum}}_>>>>>>>>>>{\approx} &&
\big(\!\det\!\big(D_{(V,\vph)}\big)_0\big)
\otimes\big(\!\det\!\big(D_{(L^*,\wt\phi^*)}\big)_0\big)^{\otimes2}}
\end{split}\EE
making the diagram commute.
Thus, the top isomorphism in~\eref{ComplexOrient1_e7} is orientation-preserving with respect to the complex orientations on the three determinants. 
The orientation of $\rdet\,D$
induced by a real orientation on~$(V,\vph)$ as in Corollary~\ref{canonisom_crl2a}
 is obtained by combining
\begin{enumerate}[label=(\arabic*),leftmargin=*]

\item  the orientation on LHS of~\eref{ComplexOrient1_e3} induced by,

\item the top isomorphism in~\eref{ComplexOrient1_e7}, and

\item  the canonical orientations of 
 $(\det D_{(L^*,\wt\phi^*)})^{\otimes2}$ and $(\det\dbar_{\Si;\C}\big)^{\otimes2}$.

\end{enumerate}
By the last sentence of the previous paragraph and the sentence after~\eref{ComplexOrient1_e7},
this is the orientation induced by the complex orientations on the two factors. 
\end{proof}

\noindent
For $g\!\in\!\Z$ and $l\!\in\!\Z^{\ge0}$ with $g\!+\!l\!\ge\!2$, we denote~by 
$$\R\ov\cM_{g,l}^{\bu}\supset \R\cM_{g,l}^{\bu}$$ 
the Deligne-Mumford moduli space of possibly disconnected stable nodal symmetric surfaces
of Euler characteristic $2(1\!-\!g)$ with $l$ pairs of conjugate marked points
and its subspace consisting of smooth curves.
If $g_0,l\!\in\!\Z^{\ge0}$ with $2g_0\!+\!l\!\ge\!3$ and 
$(\Si,\si)$ is a $g_0$-doublet  as in~\eref{SymSurfDbl_e}, let 
$$\cM_{2g_0-1,l}^{\si}=\fM_{2g_0-1,l}^{\bu}(\pt,0)^{\id,\si}
\subset \R\cM_{2g_0-1,l}^{\bu}\,.$$
For each $\fs\!\subset\!\{1,\ldots,l\}$, let
$$\cM_{2g_0-1,l;\fs}^{\si}= \fM_{2g_0-1,l}^{\bu}(\pt,0)_{\fs}^{\id,\si}
\subset\cM_{2g_0-1,l}^{\si}$$
be the open subspace consisting of marked curves so that  the second point
in the $i$-th conjugate pair lies on~$\Si_1$ if and only~if $i\!\in\!\fs$.
In particular,
\BE{ComplexOrient_e20}
\cM_{2g_0-1,l;\fs}^{\si}\subset \cM_{g_0,l}\!\times\!\cM_{g_0,l}\,,\EE
where $\cM_{g_0,l}$ is the usual Deligne-Mumford moduli space of 
smooth genus~$g_0$ curves with $l$~marked points.
The projection
\BE{ComplexOrient_e21}
\cM_{2g_0-1,l;\fs}^{\si} \lra \cM_{g_0,l}\EE
to the first factor in~\eref{ComplexOrient_e20} is a diffeomorphism.
The moduli space on the right-hand side of~\eref{ComplexOrient_e21} carries a natural complex orientation.
We will call the orientation on the left-hand side of~\eref{ComplexOrient_e21}
induced by this orientation 
\sf{the complex orientation} of $\cM_{2g_0-1,l;\fs}^{\si}$.

\begin{lmm}\label{ComplexOrient_lmm2}
Let $g_0,l\!\in\!\Z^{\ge0}$ with $2g_0\!+\!l\!\ge\!3$ and $(\Si,\si)$ 
be a $g_0$-doublet.
The canonical orientation on the real line bundle
\BE{ComplexOrient2_e0}\La_{\R}^{\top}\big(T\cM_{2g_0-1,l;\fs}^{\si}\big) \otimes 
\big(\!\det\dbar_{\C}\big) \lra \cM_{2g_0-1,l;\fs}^{\si}\EE
constructed as in the proof of \cite[Proposition~5.9]{RealGWsI}
and the orientation induced by the complex orientations of the factors differ 
by $(-1)^{g_0+1+|\fs|}$.
\end{lmm}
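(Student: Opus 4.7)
The strategy mirrors that of Lemma~\ref{ComplexOrient_lmm1}: use the projection to $\Si_1\!=\!\Si_0$ to identify the $\si$-invariant cohomology groups on the doublet $(\Si,\si)$ with complex cohomology on $\Si_0$, then trace through the construction of the canonical orientation from the proof of \cite[Proposition~5.9]{RealGWsI} (Kodaira--Spencer, Dolbeault, Serre duality, and Corollaries~\ref{canonisom_crl2a}--\ref{canonisom_crl}), isolating the steps that disagree with the complex orientation pulled back via~\eref{ComplexOrient_e21}.

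The first observation is that Kodaira--Spencer and Dolbeault are complex-linear and respect the projection to $\Si_1$, so they contribute no sign; the $\det\dbar_\C$ factor is complex-oriented compatibly on both sides by Lemma~\ref{ComplexOrient_lmm1}. The Serre-duality step, however, passes through the real-dual map~\eref{duaVorient_e}, which on the ``bulk'' complex vector space of complex dimension $3g_0\!-\!3$ (singled out in the discussion following Theorem~\ref{ComplexOrient_thm}) produces a sign of $(-1)^{3g_0-3}\!=\!(-1)^{g_0+1}$. The marked-point twists contribute additional real-dual factors, but as noted in that same discussion these are ``taken into account earlier in the proof'' of \cite[Proposition~5.9]{RealGWsI}, so they do not appear as a sign discrepancy from Serre duality itself.

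The remaining factor $(-1)^{|\fs|}$ arises from the marked-point convention. The canonical orientation of $\cM_{2g_0-1,l}^\si$ uses the first point $z_i^+$ of each conjugate pair to orient the fibers of the forgetful morphism~\eref{ffmarked_e}, while the complex orientation of $\cM_{g_0,l}$ uses the $l$ marked points on $\Si_0\!=\!\Si_1$, which are $z_i^+$ when $i\!\notin\!\fs$ and $z_i^-$ when $i\!\in\!\fs$. For $i\!\notin\!\fs$ the two choices coincide; for $i\!\in\!\fs$ the identification $T_{z_i^+}\Si_2\!\cong\!T_{z_i^-}\Si_1$ induced by $\tnd\si$ is $\C$-antilinear and hence reverses the complex orientation of a complex line. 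Inducting over $l$ using the compatibility of both orientations with~\eref{ffmarked_e} yields the overall sign $(-1)^{|\fs|}$, which combined with $(-1)^{g_0+1}$ from Serre duality gives the claimed discrepancy. The main obstacle I anticipate is the careful segregation of the two sign sources: verifying that the marked-point contributions to the real-dual step in Serre duality are precisely those ``taken into account earlier'' in~\cite{RealGWsI} (so that only the $3g_0\!-\!3$ bulk dimensions remain), and that the $(-1)^{|\fs|}$ arises entirely from the tangent-line identifications above and not from additional hidden sign conventions.
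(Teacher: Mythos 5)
Your proposal is correct and follows essentially the same route as the paper's proof: restrict to $\Si_1\!=\!\Si_0$, observe that Kodaira--Spencer and Dolbeault are $\C$-linear, extract $(-1)^{3g_0-3}\!=\!(-1)^{g_0+1}$ from the real-dual step~\eref{duaVorient_e} in Serre duality after the marked-point contributions of sign $(-1)^l$ cancel against the skyscraper-sheaf conventions, and account for $(-1)^{|\fs|}$ via the antilinearity of $\tnd\si$ on the marked-point tangent lines (the paper packages this last step as the one-line reduction to $\fs\!=\!\eset$, since interchanging the points of a conjugate pair reverses the canonical orientation while preserving the complex one). The bookkeeping you flag as the remaining obstacle is exactly the content of the displayed isomorphisms~\eref{ComplexOrient2_e5}--\eref{ComplexOrient2_11} in the paper's argument, and it works out as you anticipate.
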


\begin{proof}
Since the interchange of the points within a conjugate pair reverses the canonical orientation
of~\eref{ComplexOrient2_e0}, it is sufficient to establish the claim for $\fs\!=\!\eset$.
Let
$$[\cC_0]=\big[\Si_0,z_1^+,\ldots,z_l^+,\fJ\big]\in \cM_{g_0,l}
\quad\hbox{and}\quad
[\cC]=\big[\Si,(z_1^+,z_1^-),\ldots,(z_l^+,z_l^-),
\fJ\!\sqcup\!(-\fJ)\big]\in \cM^{\si}_{2g_0-1,l}.$$
Similarly to the proof of \cite[Proposition~5.9]{RealGWsI}, we define
\begin{alignat*}{2}
T\cC_0&=T\Si_0\big(\!-\!z_1^+\!-\!\ldots\!-\!z_l^+\!\big), &\qquad
T^*\cC_0&=T^*\Si_0\big(z_1^+\!+\!\ldots\!+\!z_l^+\big),\\
T\cC&=T\Si\big(\!-\!z_1^+\!-\!z_1^-\!-\!\ldots\!-\!z_l^+\!-\!z_l^-\big), &\qquad
T^*\cC&=T^*\Si\big(z_1^+\!+\!z_1^-\!+\!\ldots\!+\!z_l^+\!+\!z_l^-\big).
\end{alignat*}
Denote by $S\cC_0$  the skyscraper sheaf over~$\Si_0$ and by 
$S\cC^+$, $S\cC^-$, and~$S\cC$ the skyscraper sheaves 
over~$\Si$ given~by 
$$S\cC_0=T^*\Si_0|_{z_1^++\ldots+z_l^+}, \quad
S\cC^+=T^*\Si|_{z_1^++\ldots+z_l^+}, \quad S\cC^-=T^*\Si|_{z_1^-+\ldots+z_l^-}\,,
\quad S\cC=S\cC^+\!\oplus\!S\cC^-\,.$$
The projection
\BE{H0ScC_e}
\pi_1\!:  H^0(\Si;S\cC)^{\si}=
\big(H^0(\Si;S\cC^+)\!\oplus\!H^0(\Si;S\cC^-)\big)^{\si} 
\lra H^0(\Si;S\cC^+)= H^0(\Si_0;S\cC_0)\EE
is an isomorphism of real vector spaces.
In the proof of \cite[Proposition~5.9]{RealGWsI},
we orient the domain of this isomorphism and its dual,
i.e.~the space of homomorphisms into~$\R$, via the isomorphism
$$\pi_1^*\!:
 H^0(\Si;S\cC^+)^*=T_{z_1^+}\Si\!\oplus\!\ldots\!\oplus\!T_{z_l^+}\Si
\lra \big(H^0(\Si;S\cC)^{\si}\big)^*$$
from the complex orientations of $T_{z_1^+}\Si,\ldots,T_{z_l^+}\Si$.
Thus, the isomorphism 
\BE{ComplexOrient2_e5}\Hom_{\C}\big(H^0(\Si_0;S\cC_0),\C)\approx H^0(\Si;S\cC^+)^*
\stackrel{\pi_1^*}{\lra} \big(H^0(\Si;S\cC)^{\si}\big)^*\EE
is orientation-preserving with respect to the complex orientation on the left-hand side
and the orientation in the proof of \cite[Proposition~5.9]{RealGWsI}
on the right-hand side.\\

\noindent
The Kodaira-Spencer map, Dolbeault isomorphism, and Serre Duality for $[\cC]\!\in\!\cM_{2g_0-1,l}^{\si}$
as in \cite[(5.20),(5.21)]{RealGWsI} and \hbox{$[\cC_0]\!\in\!\cM_{g_0,l}$}
form a commutative diagram
$$\xymatrix{T_{[\cC]}\cM^{\si}_{2g_0-1,l}\ar[r]^{\tn{KS}}_{\approx}\ar[d]^{\approx} & 
\wch{H}^1(\Si;T\cC)^{\si} \ar[r]^{\tn{DI}}_{\approx} \ar[d]^{\approx} & 
H^1(\Si;T\cC)^{\si} \ar[r]^>>>>>>>>>>>{\tn{SD}}_>>>>>>>>>>>{\approx} \ar[d]^{\approx} &  
\big(H^0(\Si;T^*\cC\!\otimes\!T^*\Si)^{\si}\big)^*  \\
T_{[\cC_0]}\cM_{g_0,l}\ar[r]^{\tn{KS}}_{\approx}&  
\wch{H}^1(\Si_0;T\cC_0) \ar[r]^{\tn{DI}}_{\approx}&  
H^1(\Si_0;T\cC_0) \ar[r]^>>>>>{\tn{SD}}_>>>>>{\approx} & 
\Hom_{\C}\big(H^0(\Si_0;T^*\cC_0\!\otimes\!T^*\Si_0),\C\big)  \ar[u]_{\approx}^{\pi_1^*}}$$
with the vertical arrows  given by the restrictions to $\Si_1\!=\!\Si_0$.
Since the isomorphisms in the bottom row of the above diagram are $\C$-linear, 
the natural isomorphism
\BE{ComplexOrient2_e7}\begin{split}
&\La_{\R}^{\top}\big(T_{[\cC]}\cM^{\si}_{2g_0-1,l}\big)\otimes 
\La_{\R}^{\top}\big(\big(H^0(\Si;T^*\cC\!\otimes\!T^*\Si)^{\si}\big)^* \big)\\
&\hspace{1.2in}\approx \La_{\R}^{\top}\big(T_{[\cC_0]}\cM_{g_0,l}\big)\otimes 
\La_{\R}^{\top}\big( \Hom_{\C}\big(H^0(\Si_0;T^*\cC_0\!\otimes\!T^*\Si_0),\C\big)\big)
\end{split}\EE
is orientation-preserving with respect to the orientation on the left-hand side
in the proof of \cite[Proposition~5.9]{RealGWsI} and 
the orientation on the right-hand side induced by the complex orientations on the factors.\\

\noindent
Since $2g_0\!+\!l\!\ge\!3$,
\BE{ComplexOrient2_e9} H^1(\Si_0;T^*\cC_0\!\otimes\!T^*\Si_0)=0, \qquad
\dim_{\C}H^0(\Si_0;T^*\cC_0\!\otimes\!T^*\Si_0)=3g_0\!-\!3+l.\EE
The short exact sequence of sheaves \cite[(5.22)]{RealGWsI} over~$\Si$ and its analogue
over~$\Si_0$ induce a commutative diagram of exact sequences
$$\xymatrix@C-=0.5cm{H^0(\Si;T^*\Si\!\otimes\!T^*\Si)^{\si} \ar[r]\ar[d]^{\approx}& 
H^0(\Si;T^*\cC\!\otimes\!T^*\Si)^{\si} \ar[r]\ar[d]^{\approx}&  
H^0(\Si;S\cC)^{\si} \ar[r]\ar[d]^{\approx}_{\pi_1}&  
H^1(\Si;T^*\Si\!\otimes\!T^*\Si)^{\si} \ar[d]^{\approx} \\
H^0(\Si_0;T^*\Si_0\!\otimes\!T^*\Si_0) \ar[r]& H^0(\Si_0;T^*\cC_0\!\otimes\!T^*\Si_0) \ar[r]& 
H^0(\Si_0;S\cC_0) \ar[r]&  H^1(\Si_0;T^*\Si_0\!\otimes\!T^*\Si_0) \,,}$$ 
where we omit the zero vector spaces on the ends of the two rows.
Since the isomorphisms in the bottom row of the above diagram are $\C$-linear, 
the natural isomorphism
\begin{equation*}\begin{split}
&\La_{\R}^{\top}\big(H^0(\Si;T^*\cC\!\otimes\!T^*\Si)^{\si}\big)\otimes
\det\dbar_{(T^*\Si,\tnd\si^*)^{\otimes2}}
\otimes \La_{\R}^{\top}\big(H^0(\Si_0;S\cC)^{\si}\big) \\
&\hspace{.5in}\approx 
\La_{\R}^{\top}\big(H^0(\Si_0;T^*\cC_0\!\otimes\!T^*\Si_0),\C\big)
\otimes \det\!\big(\dbar_{(T^*\Si,\tnd\si^*)^{\otimes2}}\big)_0
\otimes  \La_{\R}^{\top}\big(H^0(\Si_0;S\cC_0)\big)
\end{split}\end{equation*}
is orientation-preserving with respect to the orientation on the left-hand side
in the proof of \cite[Proposition~5.9]{RealGWsI} and 
the orientation on the right-hand side induced by the complex orientations on the factors.\\

\noindent
By the choice of the orientation on $H^0(\Si_0;S\cC)^{\si}$, the isomorphism~$\pi_1^*$
in~\eref{ComplexOrient2_e5} is orientation-preserving with respect to the complex orientation 
on its domain.
Since the complex dimension of the last vector space is~$l$,
it follows that the sign of the vertical isomorphism~$\pi_1$ in
 the last commutative diagram is~$(-1)^l$. 
Thus, the sign of the natural isomorphism
\BE{ComplexOrient2_11}\begin{split}
&\La_{\R}^{\top}\big(H^0(\Si;T^*\cC\!\otimes\!T^*\Si)^{\si}\big)\otimes
\det\dbar_{(T^*\Si,\tnd\si^*)^{\otimes2}}\\
&\hspace{1.2in}\approx 
\La_{\R}^{\top}\big(H^0(\Si_0;T^*\cC_0\!\otimes\!T^*\Si_0),\C\big)
\otimes  \det\!\big(\dbar_{(T^*\Si,\tnd\si^*)^{\otimes2}}\big)_0
\end{split}\EE
with respect to the orientation on the left-hand side
in the proof of \cite[Proposition~5.9]{RealGWsI} and 
the orientation on the right-hand side induced by the complex orientations on the factors
is~$(-1)^l$.\\

\noindent
By Lemma~\ref{ComplexOrient_lmm1}, the natural isomorphism
\BE{ComplexOrient2_15}\det\dbar_{(T^*\Si,\tnd\si^*)^{\otimes2}}\otimes \det\dbar_{\Si;\C} 
\approx  \det\!\big(\dbar_{(T^*\Si,\tnd\si^*)^{\otimes2}}\big)_0
\otimes \det\!\big(\dbar_{\Si;\C}\big)_0\EE
is orientation-preserving with respect to the orientation on the left-hand side
induced by a real orientation on $(T^*\Si,\tnd\si^*)^{\otimes2}$
and 
the orientation on the right-hand side induced by the complex orientations on the factors.
The canonical orientation on the real line bundle~\eref{ComplexOrient2_e0}
is obtained by combining 
the canonical orientations of the left-hand sides 
of~\eref{ComplexOrient2_e7}, \eref{ComplexOrient2_11}, and~\eref{ComplexOrient2_15}.\\

\noindent
By the second statement in~\eref{ComplexOrient2_e9},
the sign of the canonical isomorphism
\begin{equation*}\begin{split}
&\La_{\R}^{\top}\big(\big(H^0(\Si;T^*\cC\!\otimes\!T^*\Si)^{\si}\big)^* \big)
\otimes \La_{\R}^{\top}\big(H^0(\Si;T^*\cC\!\otimes\!T^*\Si)^{\si}\big)\\
&\hspace{.5in}\approx 
\La_{\R}^{\top}\big( \Hom_{\C}\big(H^0(\Si_0;T^*\cC_0\!\otimes\!T^*\Si_0),\C\big)\big)
\otimes \La_{\R}^{\top}\big(H^0(\Si_0;T^*\cC_0\!\otimes\!T^*\Si_0)\big)
\end{split}\end{equation*}
with respect to the canonical orientation on the left-hand side and 
the orientation on the right-hand side induced by the complex orientations on the factors
is~$(-1)^{3g_0-3+l}$. 
Combining this with the sign of the isomorphism~\eref{ComplexOrient2_11}, we obtain the claim.
\end{proof}

\begin{proof}[{\bf\emph{Proof of Theorem~\ref{ComplexOrient_thm}}}]
Throughout this argument, we will refer to the orientation on the moduli space
$\fM_{2g_0-1,l}^{\bu}(X,B;J)^{\phi}$
determined by a fixed real orientation on~$(X,\om,\phi)$ as \sf{the canonical orientation}.
Since the canonical orientation is compatible with orienting the fibers of 
the forgetful morphism~\eref{ffmarked_e} by the first point in the last conjugate pair,
we can assume that $2g_0\!+\!l\!\ge\!3$.
Let $[\u]$ be an element of $\fM_{2g_0-1,l}^{\bu}(X,B;J)_{\fs}^{\phi,\si}$,
$[\u_0]$ be its image under~\eref{ComplexOrient_e1}, 
and $[\cC]\!\in\!\cM_{2g_0-1,l;\fs}^{\si}$ and $[\cC_0]\!\in\!\cM_{g_0,l}$
be their images under the forgetful morphisms to the corresponding Deligne-Mumford moduli spaces.\\

\noindent
The canonical orientation of the tangent space at~$[\u]$ is obtained from
the canonical isomorphism
\BE{ComplexOrient0_e3}\begin{split}
&\La_{\R}^{\top}\big(T_{[\u]}\fM_{2g_0-1,l}^{\bu}(X,B;J)_{\fs}^{\phi}\big)
\otimes \big(\!\det\dbar_{\Si;\C}\big)^{\otimes{(n+1)}} \\
&\qquad\approx \Big(
\big(\!\det D_{(TX,\tnd\phi)}|_u\big)\!\otimes\!\big(\!\det\dbar_{\Si;\C}\big)^{\otimes n}\Big)
\otimes 
\Big(
\La_{\R}^{\top}\big(T_{[\cC]}\cM_{2g_0-1,l;\fs}^{\si}\big) \otimes 
\big(\!\det\dbar_{\Si;\C}\big)\Big)
\end{split}\EE
determined by the forgetful morphism~\eref{ffdfn_e} and the canonical orientation
of $(\!\det\dbar_{\Si;\C})^{\otimes{(n+1)}}$ for $n\!\not\in\!2\Z$.
The orientation of the first tensor product on the right-hand side of~\eref{ComplexOrient0_e3}
is determined by the real orientation on~$(X,\om,\phi)$ as in Corollary~\ref{canonisom_crl2a}.
The orientation of the last tensor product on the right-hand side of~\eref{ComplexOrient0_e3}
is the canonical orientation of \cite[Proposition~5.9]{RealGWsI}.
The standard complex orientation of the tangent space at~$[\u_0]$ is obtained from the canonical 
isomorphism
\BE{ComplexOrient0_e5}\begin{split}
&\La_{\R}^{\top}\big(T_{[\u_0]}\fM_{g_0,l}(X,B_0;J)\big)
\otimes \big(\!\det\!\big(\dbar_{\Si;\C}\big)_0\big)^{\otimes(n+1)}\\
&\qquad\approx \Big(
\big(\!\det\!\big(D_{(TX,\tnd\phi)}|_u\big)_0\big)\!\otimes\!
\big(\!\det\!\big(\dbar_{\Si;\C}\big)_0\big)^{\otimes n}\Big)
\otimes 
\La_{\R}^{\top}\big(T_{[\cC_0]}\cM_{g_0,l}\big) \otimes 
\big(\!\det\!\big(\dbar_{\Si;\C}\big)_0\big)
\end{split}\EE
determined by the forgetful morphism to the Deligne-Mumford space and 
the standard complex orientation of $\det(\dbar_{\C}|_{\Si_u})_0$.
The orientations of all four factors on the right-hand side of~\eref{ComplexOrient0_e5}
are the standard complex orientations.\\

\noindent
The restriction to $\Si_1\!=\!\Si_0$ intertwines the isomorphisms~\eref{ComplexOrient0_e3}
and~\eref{ComplexOrient0_e5}  and respects the four factors  on the right-hand sides.
By Lemma~\ref{ComplexOrient_lmm1}, the isomorphism between the first pairs of factors
on the right-hand sides is orientation-preserving.
By Lemma~\ref{ComplexOrient_lmm2}, the sign of the isomorphism between 
the last pairs of factors is $(-1)^{g_0+1+|\fs|}$.
This establishes the claim.
\end{proof}

\subsection{Canonical vs.~spin and relative spin}
\label{RelSpinOrient_subs}

\noindent
We establish Theorem~\ref{RelSpinOrient_thm} and similar  statements 
by relating the orientations arising from Corollary~\ref{canonisom_crl2a}
to the orienting procedure for the determinants of Fredholm operators over
oriented symmetric half-surfaces described in~\cite{XCapsSetup}.\\

\noindent
An \sf{oriented symmetric half-surface} (or simply \sf{oriented sh-surface}) 
is a pair $(\Si^b,c)$ consisting of an oriented bordered smooth surface~$\Si^b$ 
and an involution $c\!:\prt\Si^b\!\lra\!\prt\Si^b$ preserving each component
and the orientation of~$\prt\Si^b$.
The restriction of~$c$  to a boundary component $(\prt\Si^b)_i$
is either the identity or the antipodal map
$$\fa\!:S^1\lra S^1, \qquad z\lra -z,$$
for a suitable identification of $(\prt\Si^b)_i$ with $S^1\!\subset\!\C$;
the latter type of boundary structure is called \sf{crosscap} 
in the string theory literature.
We denote~by 
$$\prt_0^c\Si^b,\prt_1^c\Si^b\subset \prt\Si^b\subset \Si^b$$ 
the unions of the standard boundary
components of~$(\Si^b,c)$ and of the crosscaps, respectively.
If $\prt_1^c\Si^b\!=\!\eset$, $(\Si^b,c)$ is a bordered surface in the usual sense.
An  oriented sh-surface $(\Si^b,c)$  \sf{doubles} 
to a symmetric surface~$(\Si, \si)$
so that $\si$ restricts to~$c$ on the cutting circles (the boundary of~$\Si^b$);
see \cite[(1.6)]{XCapsSetup}.
In particular, $\Si^{\si}\!=\!\prt_0^c\Si^b$.
Since this doubling construction covers all topological types of orientation-reversing 
involutions $\si$ on $\Si$, for every symmetric surface $(\Si,\si)$ there is 
an oriented sh-surface $(\Si^b,c)$ which doubles to~$(\Si,\si)$.
\\

\noindent
A \textsf{real bundle pair} $(V^b,\wt{c})$ over an oriented sh-surface~$(\Si^b,c)$
consists of a complex vector bundle $V^b\!\lra\!\Si^b$ with a conjugation~$\wt{c}$
on $V^b|_{\prt\Si^b}$ lifting~$c$.
Via the doubling construction after \cite[Remark~3.4]{XCapsSetup},
such a pair $(V^b,\wt{c})$ 
corresponds to a real bundle pair~$(V,\vph)$ over the associated symmetric surface~$(\Si,\si)$
so that $V^b\!=\!V|_{\Si^b}$ and~$\wt{c}$ is the restriction of~$\vph$ to~$V^b|_{\prt\Si^b}$.
In particular,
$$V^{\vph}=\big(V^b\big)^{\wt{c}}\subset V|_{\Si^{\si}}=V^b\big|_{\prt_0^c\Si^b}$$
is a totally real subbundle.\\

\noindent
By \cite[Lemma~2.4]{Teh}, the homotopy classes of trivializations of the real bundle pair~$(V,\vph)$
over $\prt_1^c\Si^b$ correspond to 
the homotopy classes of trivializations of its top exterior power $\La^{\top}_{\C}(V,\vph)$.
If $(L,\wt\phi)$ is a rank~1 real bundle pair over~$(\Si,\si)$, 
the real bundle pair $2(L,\wt\phi)$ has a canonical homotopy class of trivializations
over~$\prt_1^c\Si^b$;
see the proof of \cite[Theorem~1.3]{Teh}.
Thus, a homotopy class of trivializations of~$(V,\vph)$ over~$\prt_1^c\Si^b$
corresponds to a homotopy  of trivializations of~$(V,\vph)\!\oplus\!2(L,\wt\phi)$.
Furthermore, a homotopy class of isomorphisms of real bundle pairs as in~\eref{realorient_e4} 
determines a homotopy class of trivializations
of the restriction of~$(V,\vph)$ to~$\prt_1^c\Si^b$.
It also induces an orientation on 
the real vector bundle $V^{\vph}\!\lra\!\prt_0^c\Si^b$.\\

\noindent
If the real vector bundle $V^{\vph}\!\lra\!\prt_0^c\Si^b$ is oriented,  
a relative spin structure on~$V^{\vph}$
consists of an oriented vector bundle $L\!\lra\!\Si$
and a homotopy class of trivializations of the oriented vector~bundle 
\BE{relspintriv_e} V^{\vph}\oplus L|_{\prt_0^c\Si^b}\lra \prt_0^c\Si^b=\Si^{\si}\,.\EE
Since every oriented vector bundle over~$\Si^b$ is trivializable, 
the vector bundle $L|_{\Si^b}\!\lra\!\Si^b$ admits a trivialization~$\Psi_L^b$.
Along with a trivialization of~\eref{relspintriv_e}, 
the restriction of~$\Psi_L^b$ to $L|_{\prt_0^c\Si^b}$ induces a trivialization of~$V^{\vph}$.
If  $\prt_1^c\Si^b\!=\!\eset$ and the rank~$n$ of~$V$ is at least~3, 
the homotopy classes of the trivializations of $V^{\vph}$ induced
by two trivializations of $L|_{\Si^b}$ differ on an even number of
components of~$\prt_0^c\Si^b\!=\!\prt\Si^b$.\\

\noindent
A \textsf{real CR-operator} on a real bundle pair~$(V^b,\ti{c})$ 
over an  oriented sh-surface~$(\Si^b,c)$ is a linear map of the~form
\begin{equation*}\begin{split}
D^b=\bp^b\!+\!A\!: \Ga(\Si^b;V^b)^{\wt{c}}
\equiv&\big\{\xi\!\in\!\Ga(\Si^b;V^b)\!:\,\xi\!\circ\!c\!=\!\wt{c}\!\circ\!\xi|_{\prt\Si^b}\big\}\\
&\hspace{.5in}\lra
\Ga_{\fJ^b}^{0,1}(\Si^b;V^b)\equiv\Ga\big(\Si^b;(T^*\Si^b,\fJ^b)^{0,1}\!\otimes_{\C}\!V^b\big),
\end{split}\end{equation*}
where $\bp^b$ is the holomorphic $\bp$-operator for some complex structure~$\fJ^b$ on~$\Si^b$
and holomorphic structure in~$V^b$ and  
$$A\in\Ga\big(\Si^b;\Hom_{\R}(V^b,(T^*\Si^b,\fJ^b)^{0,1}\!\otimes_{\C}\!V^b) \big)$$ 
is a zeroth-order deformation term. 
By \cite[Corollary~3.3]{XCapsSetup},  $\fJ^b$ doubles to some $\fJ\!\in\!\cJ_{\Si}^{\si}$
if and only if $c$ is real-analytic with respect to~$\fJ^b$.
In such a case, $D^b$ is Fredholm in appropriate completions
and corresponds to a real CR-operator~$D$ on the associated real bundle 
pair~$(V,\vph)$ over~$(\Si,\si)$; see \cite[Proposition~3.6]{XCapsSetup}.
In particular, there is a canonical isomorphism
\BE{bdnisom_e}
\rdet\,D\equiv \big(\!\det D\big)\otimes \big(\!\det\dbar_{\Si;\C}\big)^{\otimes n}
\approx \big(\!\det D^b\big)\otimes \big(\!\det\dbar_{\Si;\C}^b\big)^{\otimes n}
\equiv \rdet\,D^b\,, \EE
where $n\!=\!\rk_{\C}V$, $\dbar_{\Si;\C}$ is the standard real CR-operator
on the trivial real bundle pair $(\Si\!\times\!\C,\si\!\times\!\fc)$ over~$(\Si,\si)$
as in Example~\ref{ex_tbdl}, and
$\dbar_{\Si;\C}^b\!\equiv\!\dbar_{\Si^b;\C}$ is the standard real CR-operator
on the trivial relative bundle pair $(\Si^b\!\times\!\C,c\!\times\!\fc)$ over~$(\Si^b,c)$.\\

\noindent
An orientation on the right-hand side of~\eref{bdnisom_e} thus determines an orientation 
on the left-hand side of~\eref{bdnisom_e}.
By the proofs of \cite[Lemma~6.37]{Melissa} and \cite[Theorem~1.1]{XCapsSetup}, 
an orientation on the former is determined by a collection consisting~of
\begin{enumerate}[label=(OC\arabic*),leftmargin=*]

\item\label{SpinTriv_it} a homotopy class of trivializations of~$V^{\vph}$ 
over~$\prt_0^c\Si^b$;

\item\label{MohTriv_it} 
a homotopy class of trivializations of the real bundle pair $(V,\vph)$
over~$\prt_1^c\Si^b$.

\end{enumerate}
If $n\!\ge\!3$, changing the homotopy class in~\ref{SpinTriv_it} within its orientation 
class over precisely one topological component of~$\prt_0^c\Si^b$ changes 
the induced orientation  on the right-hand side of~\eref{bdnisom_e}.
Changing the homotopy class in~\ref{MohTriv_it} 
class over precisely one topological component of~$\prt_1^c\Si^b$ also changes 
the induced orientation  on the right-hand side of~\eref{bdnisom_e}.\\

\noindent
Let $(L,\wt\phi)$ be a rank~1 real bundle pair over~$(\Si,\si)$ and 
$D_L$ be a real CR-operator on~$(L,\wt\phi)$.
By the sentence above containing~\ref{SpinTriv_it} and~\ref{MohTriv_it}
applied with $V$ replaced by $V\!\oplus\!2L$,
an orientation~on
\BE{staborientbd_e}\big(\!\det (D^b\!\oplus\!D_{2L}^b)\big)\otimes 
\big(\!\det\dbar_{\Si;\C}^b\big)^{\otimes (n+2)}
\approx \big(\!\det D^b\big)\!\otimes\! \big(\!\det\dbar_{\Si;\C}^b\big)^{\otimes n}
\otimes
\big(\!\det D_L^b\big)^{\otimes2}\!\otimes\! \big(\!\det\dbar_{\Si;\C}^b\big)^{\otimes 2}\EE
is determined by a trivialization~$\psi_{V\oplus2L}$ of the real vector bundle 
$V^{\vph}\!\oplus\!2L^{\wt\phi}$ over~$\prt_0^c\Si^b$ 
and  a trivialization $\psi_{V\oplus2L}'$ of the rank~1 real bundle pair
$(V\!\oplus\!2L,\vph\!\oplus\!2\wt\phi)$
 over~$\prt_1^c\Si^b$.
Since the last two factors in~\eref{staborientbd_e} are canonically oriented,
$\psi_{V\oplus2L}$ and $\psi_{V\oplus2L}'$ thus determine 
an orientation on the right-hand side of~\eref{bdnisom_e}.
We will call it the \sf{stabilization orientation induced~by} $\psi_{V\oplus2L}$
and $\psi_{V\oplus2L}'$, omitting $\psi_{V\oplus2L}'$ if $\prt_1^c\Si^b\!=\!\eset$
and $\psi_{V\oplus2L}$ if $\prt_0^c\Si^b\!=\!\eset$.\\

\noindent
Via~\eref{splitLisom_e} with $L^*$ replaced by~$L$,  
$\psi_{V\oplus2L}$ also induces a trivialization of~\eref{relspintriv_e}.
If $\prt_1^c\Si^b\!\!=\!\eset$,   
$\psi_{V\oplus2L}$ thus determines a relative spin structure on~$V^{\vph}$,
and another orientation on the right-hand side of~\eref{bdnisom_e}.
We will call the latter
the \sf{associated relative spin} (or simply \sf{ARS}) \sf{orientation}.
If $L^{\wt\phi}\!\lra\!\Si^{\si}$ is orientable (but $\prt_1^c\Si^b$ is not necessarily empty), 
then
\begin{enumerate}[label=$\bu$,leftmargin=*]

\item $\psi_{V\oplus2L}$ and the canonical homotopy class of trivializations of $2L^{\wt\phi}$ 
determine a homotopy class of trivializations of $V^{\vph}$ over $\prt_0^c\Si^b$, and

\item $\psi_{V\oplus2L}'$ and the canonical homotopy class of trivializations of
 $2(L,\wt\phi)$ determine a homotopy class of trivializations of $(V,\vph)$
over $\prt_1^c\Si^b$.

\end{enumerate}
Thus, $\psi_{V\oplus2L}$ and $\psi_{V\oplus2L}'$ determine
another orientation on the right-hand side of~\eref{bdnisom_e} in this case;
we will call it the \sf{associated spin} (or simply \sf{AS}) \sf{orientation}.
Lemmas~\ref{RelSpinOrient_lmm1}-\ref{RelSpinOrient_lmm5} and 
Corollary~\ref{RelSpinOrient_crl} below compare these three orientations
on the right-hand side of~\eref{bdnisom_e}.\\

\noindent
In the case of the involutions 
$$\tau\!:\P^1\lra\P^1, \quad z\lra 1/\bar{z}, \qquad\hbox{and}\qquad
\eta\!:\P^1\lra\P^1, \quad z\lra -1/\bar{z},$$
we can take $\Si^b$ to be the unit disk around the origin in $\C\!\subset\!\P^1$.
This will be our default choice in these settings.

\begin{lmm}\label{RelSpinOrient_lmm1}
With notation as above, suppose $(\Si,\si)\!=\!(\P^1,\tau)$.
If $L^{\wt\phi}\!\lra\!S^1$ is orientable, 
the stabilization and  AS orientations on the right-hand side of~\eref{bdnisom_e} 
induced by a trivialization~$\psi_{V\oplus2L}$ of $V^{\vph}\!\oplus\!2L^{\wt\phi}$  are the same.
\end{lmm}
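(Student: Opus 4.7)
The plan is to reduce both orientations on $\det D^b\!\otimes\!(\det\dbar_{\Si;\C}^b)^{\otimes n}$ to the same split orientation on $\det(D^b\!\oplus\!D_{2L}^b)\!\otimes\!(\det\dbar_{\Si;\C}^b)^{\otimes(n+2)}$ by two applications of direct-sum compatibility of the orienting procedure~\ref{SpinTriv_it}. Since $\prt_1^c\Si^b\!=\!\eset$ for $(\P^1,\tau)$, all trivialization data lives on the single circle $\prt_0^c\Si^b\!=\!S^1$, and we only need to compare orientations arising from trivializations of real vector bundles over~$S^1$.

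To set up the split, I would fix a nonvanishing section~$s$ of $L^{\wt\phi}$ over $S^1$ in the chosen orientation class; via the analogue of~\eqref{splitLisom_e} with $L^*$ replaced by~$L$, the complex frame $(s,\fI s)$ of $L|_{S^1}$ corresponds to the direct-sum real frame $\psi_{2L}^0\!=\!((s,0),(0,s))$ of $2L^{\wt\phi}|_{S^1}$, and this frame represents the canonical homotopy class of trivializations of $2L^{\wt\phi}$. Let $\psi_V$ be the trivialization of $V^{\vph}|_{S^1}$ defined by $\psi_V\!\oplus\!\psi_{2L}^0\!\sim\!\psi_{V\oplus 2L}$; this is precisely the trivialization defining the AS orientation.

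The first application of direct-sum compatibility, applied to the decomposition $V\!\oplus\!2L$ with the split trivialization $\psi_V\!\oplus\!\psi_{2L}^0$, identifies the orientation on $\det(D^b\!\oplus\!D_{2L}^b)\!\otimes\!(\det\dbar_{\Si;\C}^b)^{\otimes(n+2)}$ induced via~\ref{SpinTriv_it} with the tensor product, under the block-diagonal form of~\eqref{sum}, of the orientation on $\det D^b\!\otimes\!(\det\dbar_{\Si;\C}^b)^{\otimes n}$ induced by $\psi_V$ and the orientation on $\det D_{2L}^b\!\otimes\!(\det\dbar_{\Si;\C}^b)^{\otimes 2}$ induced by $\psi_{2L}^0$. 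The second application, now to the decomposition $2L\!=\!L\!\oplus\!L$ with the split trivialization $\psi_{2L}^0\!=\!((s,0),(0,s))$, identifies the latter factor with $o(s)\!\otimes\!o(s)$, where $o(s)$ is the orientation of $\rdet D_L^b$ induced by~$s$ via~\ref{SpinTriv_it}. Since the tensor square of any nonzero element of a line is the canonical square orientation, $o(s)\!\otimes\!o(s)$ coincides with the orientation on $(\rdet D_L^b)^{\otimes 2}\!\approx\!(\det D_L^b)^{\otimes 2}\!\otimes\!(\det\dbar_{\Si;\C}^b)^{\otimes 2}$ used in the definition of the stabilization orientation. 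Combined with $\psi_{V\oplus 2L}\!\sim\!\psi_V\!\oplus\!\psi_{2L}^0$, this yields the lemma; the main obstacle is the direct-sum compatibility of~\ref{SpinTriv_it}, which I expect to follow formally from its construction via homotopy to a trivialized standard model together with the naturality~\eqref{sum} of determinant line bundles under direct sums.
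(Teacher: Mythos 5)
Your proposal is correct and follows essentially the same route as the paper: fix a trivialization $\psi_L$ of $L^{\wt\phi}$ (your section~$s$), note that $2\psi_L$ represents the canonical homotopy class on $2L^{\wt\phi}$, define $\psi_V$ by $\psi_V\!\oplus\!2\psi_L\!\sim\!\psi_{V\oplus2L}$, and use the direct-sum compatibility of the orienting procedure~\ref{SpinTriv_it} in~\eref{staborientbd_e} together with the fact that the square orientation induced by~$\psi_L$ on $(\det D_L^b)^{\otimes2}\!\otimes\!(\det\dbar_{\Si;\C}^b)^{\otimes2}$ is the canonical one. The only cosmetic difference is that you split off the $2L$-factor in two stages rather than one, which changes nothing.
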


\begin{proof}
Fix a trivialization $\psi_L\!:L^{\wt\phi}\!\lra\!S^1\!\times\!\R$; 
the canonical homotopy class of trivializations of  $2L^{\wt\phi}$ is 
the class containing~$2\psi_L$.
A trivialization~$\psi_V$ of~$V^{\vph}$ lies in the associated
homotopy class of trivializations of~$V^{\vph}$ if and only if 
$\psi_{V\oplus2L}$ and $\psi_V\!\oplus\!2\psi_L$ lie in 
the same homotopy class  of trivializations of $V^{\vph}\!\oplus\!2L^{\wt\phi}$.
In this case, the natural isomorphism~\eref{staborientbd_e} is orientation-preserving
with respect to the orientation on the left-hand side induced by~$\psi_{V\oplus2L}$
and the orientations~on
\BE{RelSpinOrient1_e3}\big(\!\det D^b\big)\!\otimes\!
\big(\!\det\dbar_{\Si;\C}^b\big)^{\otimes n}
\qquad\hbox{and}\qquad
\big((\!\det D_L^b)\!\otimes\!(\!\det\dbar_{\Si;\C}^b)\big)^{\otimes 2}\EE
induced by~$\psi_V$ and $\psi_L$, respectively.
Since the last orientation is the same as the orientation induced by the canonical orientations
of $(\det D_L^b)^{\otimes2}$ and $(\det\dbar_{\Si;\C}^b)^{\otimes 2}$, 
the stabilization  orientation on the first tensor product in~\eref{RelSpinOrient1_e3} induced 
by~$\psi_{V\oplus2L}$ and
the AS orientation (i.e.~the orientation induced by~$\psi_V$) are the~same. 
\end{proof}

\begin{lmm}\label{RelSpinOrient_lmm2}
With notation as above, suppose $(\Si,\si)\!=\!(\P^1,\tau)$.
If $L^{\wt\phi}\!\lra\!S^1$ is orientable,
the stabilization and  ARS orientations on the right-hand side of~\eref{bdnisom_e} 
induced by a trivialization~$\psi_{V\oplus2L}$ of $V^{\vph}\!\oplus\!2L^{\wt\phi}$  
are the same if and only if $\deg L\!\in\!4\Z$.
\end{lmm}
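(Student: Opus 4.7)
The plan is to use Lemma~\ref{RelSpinOrient_lmm1} to reduce the comparison of the stabilization and ARS orientations to that of the AS and ARS orientations, and then to pinpoint their difference as a loop in~$GL^+(n\!+\!2,\R)$ with a specific mod-$2$ winding number. Both the AS and ARS orientations arise from the trivialization recipe between~\ref{SpinTriv_it} and~\ref{MohTriv_it} applied to $V^{\vph}\!\lra\!\prt_0^c\Si^b\!=\!S^1$, but with two different trivializations~$\psi_V$ and~$\psi_V'$ of~$V^{\vph}$ extracted from~$\psi_{V\oplus 2L}$. Namely, $\psi_V$ is characterized by requiring that $\psi_V\!\oplus\!2\psi_L$ be homotopic to $\psi_{V\oplus2L}$ for some real trivialization $\psi_L$ of~$L^{\wt\phi}$ (the AS construction used in the proof of Lemma~\ref{RelSpinOrient_lmm1}), while $\psi_V'$ is characterized by requiring that $\psi_V'\!\oplus\!\Psi_L^b|_{S^1}$ correspond to $\psi_{V\oplus2L}$ under~\eref{splitLisom_e} with $L^*$ replaced by~$L$, where $\Psi_L^b$ is a trivialization of the complex line bundle $L|_{\Si^b}$.

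The key step will be to identify the homotopy class of the loop $\ga\!:S^1\!\lra\!GL^+(n\!+\!2,\R)$ relating the trivialization $\psi_V\!\oplus\!2\psi_L$ of $V^{\vph}\!\oplus\!2L^{\wt\phi}$ to the one obtained from $\psi_V'\!\oplus\!\Psi_L^b|_{S^1}$ via~\eref{splitLisom_e}. Since both are homotopic to $\psi_{V\oplus 2L}$ and they differ only on the $2L^{\wt\phi}$-summand, $\ga$ factors through the embedding $GL(1,\C)\!\hookrightarrow\!GL^+(2,\R)\!\hookrightarrow\!GL^+(n\!+\!2,\R)$ and records precisely the $\C^*$-valued function on~$S^1$ relating the complex trivializations $\psi_L\!\otimes_{\R}\!\C$ and~$\Psi_L^b|_{S^1}$ of~$L|_{S^1}$. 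I would then verify, in the local model $L\!=\!\cO(d)$ with its canonical real structure over~$(\P^1,\tau)$ and the standard trivialization of $\cO(d)|_{D^2}$, that this function has winding number~$\deg L/2$. The underlying geometric fact is that the totally real subbundle $L^{\wt\phi}\!\subset\!L|_{S^1}$ has Maslov number $\deg L$ with respect to the disk extension~$L|_{D^2}$, so an orientable square root of this Lagrangian loop, viewed in the complex trivialization $\Psi_L^b|_{S^1}$, winds $\deg L/2$ times around~$\C^*$.

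Assuming $n\!\ge\!1$ so that $\pi_1(GL^+(n\!+\!2,\R))\!=\!\Z/2$, the image of~$\ga$ under the embedding above is the class $\deg L/2\!\pmod 2$. By the description of the dependence of the orientation on the trivialization given between~\ref{SpinTriv_it} and~\ref{MohTriv_it}, changing $\psi_{V\oplus2L}$ within its orientation class by a loop representing the non-trivial element of $\pi_1(GL^+(n\!+\!2,\R))$ flips the induced orientation of~\eref{bdnisom_e}. Therefore the stabilization and ARS orientations agree if and only~if $\deg L/2$ is even, i.e.~if $\deg L\!\in\!4\Z$.

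The main obstacle will be the winding-number computation in the second paragraph: one must choose a concrete global model for $(L,\wt\phi)$ and then track the rotation of the real line $L^{\wt\phi}_z\!\subset\!L|_{S^1,z}\!\cong\!\C$ as $z$ traverses~$S^1$, using a frame extending holomorphically over~$D^2$. Once this winding is pinned down as~$\deg L/2$, the translation into $\pi_1(GL^+(n\!+\!2,\R))$ and then into the orientation flip is formal.
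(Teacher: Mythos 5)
Your proposal is correct and follows essentially the same route as the paper: reduce via Lemma~\ref{RelSpinOrient_lmm1} to comparing the two trivializations $\psi_V$ and $\psi_V'$ of $V^{\vph}$ extracted from $\psi_{V\oplus2L}$, and show they differ by $\deg L/2$ times the generator of $\pi_1(\SO(2))$, which is trivial in $\pi_1(\SO(n\!+\!2))\!\cong\!\Z/2$ exactly when $\deg L\!\in\!4\Z$. The paper carries out your ``main obstacle'' (the winding-number computation) by invoking the normal form of \cite[Theorem~C.3.6]{MS} to put $L^{\wt\phi}$ in the shape~\eref{RelSpinOrient2_e3}, from which the $d/2$ winding of the discrepancy~\eref{RelSpinOrient2_e7} is immediate — the same Maslov-index fact you describe.
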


\begin{proof}
Let $d\!=\!\deg L$. 
By \cite[Proposition~4.1]{BHH}, we can assume that $(L,\wt\phi)$ is the holomorphic line
$\cO_{\P^1}(d)$ with the standard lift of~$\tau$.
Since $L^{\wt\phi}\!\lra\!S^1$ is orientable, $d\!\in\!2\Z$.
By \cite[Theorem~C.3.6]{MS}, there exists a trivialization~$\Psi_L^b$
of $L|_{\Si^b}$ so~that
\BE{RelSpinOrient2_e3}\Psi_L^b\big(L^{\wt\phi}\big)=\big\{\big(\ne^{\fI\th},a\ne^{\fI d\th/2}\big)\!:
\,\ne^{\fI\th}\!\in\!S^1,\,a\!\in\!\R\big\}\subset S^1\!\times\!\C\,.\EE
Let $\psi_L$ be the trivialization of $L^{\wt\phi}$ given~by
$$\psi_L\big(\{\Psi_L^b\}^{-1}(\ne^{\fI\th},a\ne^{\fI d\th/2})\big)
=\big(\ne^{\fI\th},a\big)\in S^1\!\times\!\R\,.$$
The trivialization $\Psi_L^{\prt}$ of $L|_{\prt\Si^b}$ induced by $2\psi_L$ 
via~\eref{splitLisom_e} with $L^*$ replaced by~$L$ is then described~by
\BE{RelSpinOrient2_e7}\Psi_L^{\prt}\!:L|_{\prt\Si^b}\lra S^1\!\times\!\C, \quad
\Psi_L^{\prt}\big(\{\Psi_L^b\}^{-1}(\ne^{\fI\th},c)\big)
=\big(\ne^{\fI\th},c\ne^{-\fI d\th/2}\big)  
~~~\forall\,\big(\ne^{\fI\th},c\big)\in S^1\!\times\!\C.\EE
Thus, the homotopy classes of $\Psi_L^{\prt}$ and $\Psi_L^b|_{\prt\Si^b}$
differ by $d/2$ times a generator of $\pi_1(\SO(2))\!\approx\!\Z$.\\

\noindent
Let $\psi_V$ and $\psi_V'$ be trivializations of~$V^{\vph}$ such that 
$\psi_V\!\oplus\!2\psi_L$ and $\psi_V'\!\oplus\!\Psi_L^b|_{\prt\Si^b}$
lie in the same homotopy class of trivializations of $V^{\vph}\!\oplus\!2L^{\wt\phi}$
as~$\psi_{V\oplus2L}$.
By Lemma~\ref{RelSpinOrient_lmm1}, the stabilization orientation 
on the right-hand side of~\eref{bdnisom_e} 
induced by~$\psi_{V\oplus2L}$ 
is the orientation induced by~$\psi_V$ as in the proof of \cite[Theorem~8.1.1]{FOOO}. 
By definition, the ARS orientation on the right-hand side of~\eref{bdnisom_e} 
induced by~$\psi_{V\oplus2L}$ 
is the orientation induced by~$\psi_V'$.
By~\eref{RelSpinOrient2_e7}, $\psi_V$ and $\psi_V'$ are homotopic
(and thus the two induced orientations are the same) if and only if $d/2\!\in\!2\Z$.
\end{proof}

\noindent
Suppose $(\Si,\si)\!=\!(\P^1,\tau)$ and $\deg L\!=\!1$.
Similarly to the proof of Lemma~\ref{RelSpinOrient_lmm2}, 
\cite[Proposition~4.1]{BHH} and \cite[Theorem~C.3.6]{MS} imply that 
there exists a trivialization~$\Psi_L^b$ of $L|_{\Si^b}$ so~that
\eref{RelSpinOrient2_e3} holds with $d\!=\!1$.
Let $\psi_0$ be the trivialization of $2L^{\wt\phi}$ given~by
\BE{psi0dfn_e}\psi_0\big(\{\Psi_L^b\}^{-1}(\ne^{\fI\th},a_1\ne^{\fI\th/2}),
\{\Psi_L^b\}^{-1}(\ne^{\fI\th},a_2\ne^{\fI\th/2})\big)
=\big(\ne^{\fI\th},(a_1\!+\!\fI a_2)\ne^{\fI\th/2}\big)\in S^1\!\times\!\C\EE
for all $a_1,a_2\!\in\!\R$.

\begin{prp}\label{RelSpinOrient_prp}
The orientation on $\det D_{2L}^b\!=\!(\det D_{L}^b)^{\otimes2}$
induced by the trivialization~$\psi_0$ as in the proof of \cite[Theorem~8.1.1]{FOOO}
agrees with the canonical square orientation.
\end{prp}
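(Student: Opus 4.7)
The plan is to verify the proposition by an explicit computation of both orientations on $\det D_{2L}^b\!=\!(\det D_L^b)^{\otimes2}$ and a direct comparison on the kernel.

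First I would use the trivialization $\Psi_L^b$ to identify $L|_{\Si^b}$ with $\Si^b\!\times\!\C$ so that $D_L^b$ becomes the standard $\bp$-operator with boundary condition $\ell_\th\!=\!\ne^{\fI\th/2}\R\!\subset\!\C$. Expanding a holomorphic function on the disk as $f(z)\!=\!\sum_{k\ge0}c_kz^k$ and matching Fourier modes of the reality condition $f(\ne^{\fI\th})\ne^{-\fI\th/2}\!\in\!\R$ yields $\ker D_L^b\!=\!\{c\!+\!\bar c z : c\!\in\!\C\}$, with real basis $e_1\!=\!1\!+\!z$ and $e_2\!=\!\fI(1\!-\!z)$. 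The Fredholm index of $D_L^b$ equals~$2$, so the cokernel vanishes and $\det D_L^b$ is canonically $\R\lr{e_1\!\wedge\!e_2}$. Under the splitting $\ker D_{2L}^b\!=\!\ker D_L^b\!\oplus\!\ker D_L^b$, the canonical square orientation on $(\det D_L^b)^{\otimes2}$ corresponds to the ordered real basis $(e_1,0),(e_2,0),(0,e_1),(0,e_2)$.

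Next I would reinterpret $\psi_0$ via the factorization $\psi_0\!=\!(\Psi_L^b|_{\prt\Si^b})\!\circ\!\io$, where $\io(v_1,v_2)\!=\!v_1\!+\!\fI v_2$ uses the complex structure of $L$. The homomorphism $\io$ extends to a $\C$-linear bundle surjection $\pi\!:L\!\oplus\!L\!\lra\!L$ over all of $\Si^b$, and since $\Psi_L^b$ is already defined over the whole disk, $\psi_0$ acquires a complex interpretation via the canonical isomorphism $2L^{\wt\phi}\!\cong\!L^{\wt\phi}\!\otimes_\R\!\C$ compatibly with the complex structure of $L$. To apply \cite[Theorem~8.1.1]{FOOO} directly, which requires rank at least three, I would stabilize by the trivial rank~$1$ real bundle pair $(\C,\R)$ over $(\Si^b,\prt\Si^b)$ and then divide out by the canonical orientation of the stabilizing factor. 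A one-parameter deformation of $2L^{\wt\phi}\!\oplus\!\R$, built from $\pi$ together with $\Psi_L^b$, carries $\psi_0\!\oplus\!\id_\R$ through a family of totally real boundary conditions to a trivialization of $2L\!\oplus\!\C$ that extends across~$\Si^b$ as a complex trivialization; this reduces the FOOO recipe to the standard trivial-boundary-condition model, whose kernel is generated by constants.

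Unwinding this deformation matches the FOOO-induced orientation of $\ker D_{2L}^b$ with the ordered basis above up to a sign. The principal obstacle is this final sign verification: the half-rotation $\ne^{\fI\th/2}$ in $\psi_0$ yields a loop of frames in $\SO(2)$ which is nontrivial, but in the stabilized setting it becomes a loop in $\SO(3)$ that is null-homotopic once combined with the monodromy of the Möbius structure on $L^{\wt\phi}$, so it contributes no net sign to the orientation. An independent verification comes from the equivariant computations of~\cite{Teh}: rotating~$\Si^b$ and reading off the $S^1$-equivariant weight decomposition of the kernel gives the same matching of signs.
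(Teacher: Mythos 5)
Your first paragraph is a correct and useful setup: the identification of $\ker D_L^b$ with $\{c+\bar{c}z\}$ under the boundary condition $\ne^{\fI\th/2}\R$, the vanishing of the cokernel, and the description of the canonical square orientation by the ordered basis $(e_1,0),(e_2,0),(0,e_1),(0,e_2)$ all match the starting point of the paper's first proof. The problem is that everything after that — the part that actually contains the content of the proposition — is asserted rather than proved. Two specific steps would fail as written. First, the claimed one-parameter deformation to ``the standard trivial-boundary-condition model, whose kernel is generated by constants'' cannot be a deformation through totally real boundary conditions on the fixed disk: the boundary condition $2L^{\wt\phi}\!\oplus\!\R$ has Maslov index $2$, the constants model has Maslov index $0$, and the indices ($5$ versus $3$ after your stabilization) do not match. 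What the FOOO recipe actually does is degenerate the \emph{domain} to a disk with trivial boundary condition attached to a sphere carrying the degree; the orientation is the tensor product of the ``constants'' orientation on the disk piece and the complex orientation on the sphere piece, and one must track how the $4$-dimensional kernel of $D_{2L}^b$ limits to this splitting. That tracking is precisely the computation you have skipped.

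Second, the argument you offer for the final sign — that the loop of frames becomes null-homotopic in $\SO(3)$ after stabilization — answers the wrong question. Null-homotopy of the loop of frames determines the homotopy class of the trivialization (i.e.\ which spin structure $\psi_0$ induces), and hence whether two trivializations give the \emph{same} FOOO orientation; it does not determine what that orientation \emph{is} relative to the canonical square orientation. The proposition asserts a definite sign ($+1$ rather than $-1$), and no homotopy-theoretic statement about $\pi_1(\SO(3))$ can produce it. In the paper this sign is obtained by extending $\psi_0$ to a trivialization $\Psi_0$ of $2L$ over $\P^1\!-\!\{0,\i\}$ (the two $\ne^{\fI\th/2}$ twists combine, via the complex structure mixing the two summands, into something single-valued on the punctured sphere), expressing the FOOO orientation through the isomorphism $\xi\!\lra\!\big(\{\Psi_0\xi\}(1),\Res_{z=0}(\Psi_0\xi)\big)$ with the residue space carrying the complex orientation, and evaluating the resulting $4\!\times\!4$ change-of-basis determinant, which turns out to be $+1$. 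Your appeal to the equivariant computation of the reference for the trivialization is likewise only a pointer: to make it a proof you would need to exhibit the $S^1$-action, the weights on $\ker D_{2L}^b$ and on $2L|_0$, and check that the equivariant Euler classes agree with the stated orientation conventions.
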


\noindent
We give three proofs.
In the first one, we write out the real holomorphic sections  
and the relevant trivializations explicitly.
In the second proof, we  use the comparisons of different orientations
on the moduli spaces  of real lines obtained in~\cite{Teh}.
The last argument deduces the claim directly from the fixed-edge equivariant
contribution determined in~\cite{Teh}. 
In all three arguments,
we take $D_L$ to be  the standard $\bar\partial$-operator in $\cO_{\P^1}(1)$.

\begin{proof}[{\bf\emph{Proof~1}}]
Let $\P^1_{\bu}\!=\!\P^1\!-\!\{1\}$.
The holomorphic map
$$h\!: B\!\equiv\!\big\{t\!\in\!\C\!:\,|t|\!<\!1\big\}\lra\P^1, \qquad
t\lra \ne^{\fI t}\,,$$
is injective and intertwines the standard conjugation on~$B$ with $\tau$ on~$\P^1$.
We can assume~that 
\begin{gather*}
L =\big(h(B)\!\times\!\C\sqcup \P_{\bu}^1\!\times\!\C\big)\big/\!\!\sim, \quad
\big(h(t),tc\big)\sim\big(h(t),c\big)~~\forall~(t,c)\!\in\!(B\!-\!0)\!\times\!\C,\\
\wt\phi\big([t,c]\big)=\big[\bar{t},\bar{c}\big]~~\forall~(t,c)\!\in\!B\!\times\!\C, \quad
\wt\phi\big([z,c]\big)=\big[\tau(z),\bar{c}\big]~~\forall~(z,c)\!\in\!\P_{\bu}^1\!\times\!\C.
\end{gather*}
The space of real holomorphic sections of~$L$ is then generated~by the sections~$s_1$ and~$s_2$
described~by
$$s_1\big([z]\big)=1, \quad s_2\big([z]\big)=\fI\frac{1\!+\!z}{1\!-\!z}
\qquad\forall~z\!\in\!\P_{\bu}^1.$$ 
The canonical orientation for $\det D_{2L}^b$ is then determined by the basis 
$$s_{11}\equiv (s_1,0), \quad s_{12}\equiv (s_2,0), 
\quad s_{21}\equiv (0,s_1),\quad s_{22}\equiv (0,s_2),$$
for the kernel of the surjective operator~$D_{2L}^b$.\\

\noindent 
We define a trivialization $\Psi_L^b$ of $L$ over the unit disk~$\Si^b$ 
around $z\!=\!0$ in $\C\!\subset\!\P^1$ by
\begin{alignat*}{2}
\Psi_L^b\big([h(t),c]\big)&=\big(\ne^{\fI t},2\fI\frac{\ne^{\fI t}\!-\!1}{t}c\big) &\qquad
&\forall~(t,c)\!\in\!B\!\times\!\C, \\
\Psi_L^b\big([z,c]\big)&=\big(z,2\fI(z\!-\!1)c\big) &\qquad
&\forall~(z,c)\!\in\!\big(\P^1_{\bu}\!-\!\{\i\}\big)\!\times\!\C.
\end{alignat*}
This trivialization satisfies~\eref{RelSpinOrient2_e3} with $d\!=\!1$.
The trivialization $\psi_0$ of $2L^{\wt\phi}$ over~$S^1$ extends to the trivialization
\begin{gather*}
\Psi_0\!:2L|_{\P^1-\{0,\i\}}\lra \big(\P^1\!-\!\{0,\i\}\big)\times\C^2, \\
\Psi_0\big([z,c_1],[z,c_2]\big)=\big(z,\fI(z\!-\!z^{-1})c_1\!-\!z^{-1}(1\!-\!z)^2c_2,
z^{-1}(1\!-\!z)^2c_1\!+\!\fI(z\!-\!z^{-1})c_2\big)\,.
\end{gather*}
This trivialization intertwines $2\wt\phi$ 
with the standard lift of $\tau|_{\P_{\bu}^1-\{0,\i\}}$ to a conjugation on 
the trivial bundle
\hbox{$(\P_{\bu}^1\!-\!\{0,\i\})\!\times\!\C^2$}.\\

\noindent
We note that
\begin{alignat*}{2}
\big\{\Psi_0s_{11}\big\}(z)&=\big(\fI z^{-1}(z^2\!-\!1),z^{-1}(1\!-\!z)^2\big),
&\quad
\big\{\Psi_0s_{12}\big\}(z)&=\big(z^{-1}(1\!+\!z)^2,\fI z^{-1}(1\!-\!z^2)\big),\\
\big\{\Psi_0s_{21}\big\}(z)&=\big(-\!z^{-1}(1\!-\!z)^2,\fI z^{-1}(z^2\!-\!1)\big),
&\quad
\big\{\Psi_0s_{22}\big\}(z)&=\big(-\!\fI z^{-1}(1\!-\!z^2),z^{-1}(1\!+\!z)^2\big).
\end{alignat*}
The orientation on $\det D_{2L}^b$ induced by the trivialization~$\psi_0$ is obtained from
the isomorphism
$$\ker D_{2L}^b\lra \R\!\oplus\!\R\oplus\big\{\Res_{z=0}(\Psi_0\xi)\!:\,
\xi\!\in\!\ker D_{2L}^b\big\}, \quad
\xi\lra \big(\{\Psi_0\xi\}(1),\Res_{z=0}(\Psi_0\xi)\big).$$
The last space above is a complex subspace of~$\C^2$.
Under this isomorphism, the basis $s_{11},s_{12},s_{21},s_{22}$ is sent~to
$$(0,0;-\fI,1), \quad (4,0;1,\fI), \quad (0,0;-1,-\fI), \quad
(0,4;-\fI,1).$$
Thus, an oriented basis for the target of the above isomorphism is given~by
$$(4,0;0,0), \quad (0,4;0,0), \quad (0,0;-\fI,1), \quad (0,0;1,\fI).$$
The change of basis matrix from the first basis to this one is given~by
$$\left( \begin{array}{cccc} 0& 1& 0& 0\\ 0& 0& 0& 1\\ 1& 0& 0& 1\\
0& 1& -1& 0\end{array}\right).$$
The determinant of this matrix is $+1$.
\end{proof}

\begin{proof}[{\bf\emph{Proof~2}}]
Define 
\begin{gather*}
\tau_3\!:\P^3\lra\P^3, \qquad 
[Z_1,Z_2,Z_3,Z_4]\lra\big[\ov{Z}_2,\ov{Z}_1,\ov{Z}_4,\ov{Z}_3\big],\\
 \fM_1(\P^1) =\fM_1(\P^1,1)^{\tau,\tau},\qquad
\fM_1(\P^3)=\fM_1(\P^3,1)^{\tau_3,\tau}.
\end{gather*} 
The inclusion $\io\!:\P^1\!\lra\!\P^3$ as the first two coordinates induces
an embedding of $\fM_1(\P^1)$ into~$\fM_1(\P^3)$.
Let
$$\cN_{\io(0)}\P=\frac{T_{\io(0)}\P^3}{T_{\io(0)}\P^1} \qquad\hbox{and}\qquad
\cN_{[\io,0]}\fM \equiv \frac{T_{[\io,0]}\fM_1(\P^3)}{T_{[\io,0]}\fM_1(\P^1)}$$ 
denote the normal bundle of $\P^1$ in~$\P^3$ at $[1,0,0,0]$ and
the normal bundle of $\fM_1(\P^1)$ in~$\fM_1(\P^3)$ at~$\io$ with the positive marked point at $z\!=\!0$,
respectively.
The former is a complex vector space and thus is canonically oriented.
The differential of the evaluation~map~$\ev_1$ induces an isomorphism
\BE{pf2_e7}\tnd_{[\io,0]}\ev_1\!:\,\cN_{[\io,0]}\fM\lra \cN_{\io(0)}\P\,.\EE
By \cite[Lemma~5.3]{Teh}, this isomorphism is orientation-{\it reversing} 
with respect to the algebraic orientations on $\fM_1(\P^1)$ in~$\fM_1(\P^3)$
defined in \cite[Section~5.2]{Teh}.\\

\noindent
Since the normal bundle of $(\P^1,\tau)$ in $(\P^3,\tau_3)$ is isomorphic to $2(L,\wt\phi)$,
the composition
$$ \ker D_{2L}^b\lra T_{[\io,0]}\fM_1(\P^3)\lra \cN_{[\io,0]}\fM$$
is an isomorphism.
Combining it with~\eref{pf2_e7}, we obtain an isomorphism
\BE{pf2_e9} \ker D_{2L}^b\lra \cN_{[\io,0]}\fM \lra \cN_{\io(0)}\P\,.\EE
Since the canonical orientation on $\det D_{2L}^b$ is obtained from the isomorphism
$$ \ker D_{2L}^b\lra 2L_0, \qquad \xi\lra \xi(0),$$
and the complex orientation on~$L_0$, the isomorphism~\eref{pf2_e9}
is orientation-{\it preserving} with respect to   the canonical orientation on 
the left-hand side.\\

\noindent
The real vector bundle
\BE{pf2_e11} 4L^{\wt\phi}\lra S^1\!=\!\R\P^1\subset\P^1\EE
carries a canonical spin structure; see \cite[Section~5.5]{Teh}.
Along with Euler's sequence for~$\P^3$, it determines an orientation on~$\fM_1(\P^3)$;
we will call it the \textsf{spin orientation}.
It agrees with the orientation induced by the trivialization~$2\psi_0$ over~$S^1$.
Along with Euler's sequence for~$\P^1$ and the relative spin orienting procedure
of \cite[Theorem~8.1.1]{FOOO}, the canonical spin structure on~\eref{pf2_e11}
determines an orientation on~$\fM_1(\P^1)$;
we will call it the \textsf{relative spin orientation}.
Along with the spin orientation on $\fM_1(\P^3)$, 
it induces an orientation on~$\cN_{[\io,0]}\fM$;
we will call it the \textsf{spin orientation}.
Since~$\psi_0$ extends over the disk $\Si^b\!\subset\!\P^1$,
the first isomorphism in~\eref{pf2_e9} is orientation-{\it preserving} with respect
to the orientation on the left-hand side induced by~$\psi_0$
and the spin orientation on~$\cN_{[\io,0]}\fM$.\\

\noindent
As summarized in the paragraph above \cite[Remark~6.9]{Teh}, 
the algebraic orientations on~$\fM_1(\P^1)$ and~$\fM_1(\P^3)$  
are the same as the relative spin orientation and 
the opposite of the spin orientation, respectively.
Therefore,  the spin orientation on~$\cN_{[\io,0]}\fM$ is the opposite of
the algebraic orientation.
Since the second isomorphism in~\eref{pf2_e9} is orientation-reversing
with respect to the latter,
it follows that the composite isomorphism in~\eref{pf2_e9} is orientation-preserving
with respect
to the orientation on the left-hand side induced by~$\psi_0$.
Since this is also the case with respect to the canonical orientation on the left-hand side,
these two orientations on~$\ker D_{2L}^b$ agree.  
\end{proof}

\begin{proof}[{\bf\emph{Proof~3}}]
Under a change of coordinate on $2(L,\wt\phi)$ which is homotopic to the identity, 
the trivialization~$\psi_0$ is equivalent to the trivialization \cite[(6.13)]{Teh}.
By  \cite[Section~6.4]{Teh}, there are natural $S^1$-actions on $(\P^1,\tau)$ and 
$2(L,\wt\phi)$ so~that the evaluation isomorphism
\BE{pf3_e3}\ker D_{2L}^b \stackrel{\ev_{2L;0}}{\lra}  2L|_0, \qquad
\xi\lra\xi(0),\EE
is $S^1$-equivariant. 
By the $d_0\!=\!1$, $\bar{i}\!\in\!2\Z$ case of \cite[(6.21)]{Teh},
the $S^1$-equivariant Euler class of $\ker D_{2L}^b$
with respect to the orientation induced by~$\psi_0$ is given~by
$$\be\big(\ker D_{2L}^b\big)
=-\big(\la_i\!-\!\la_j\big)\big(-\la_i\!-\!\la_j\big)
=\big(\la_i\!-\!\la_j\big)\big(\la_i\!+\!\la_j\big)
=\be(2L|_0)\,.$$
This establishes the claim.
\end{proof}

\begin{crl}\label{RelSpinOrient_crl0}
With notation as above, suppose $(\Si,\si)\!=\!(\P^1,\tau)$.
If $L^{\wt\phi}\!\lra\!S^1$ is not orientable,
the stabilization and  ARS orientations on the right-hand side of~\eref{bdnisom_e} 
induced by a trivialization~$\psi_{V\oplus2L}$ of $V^{\vph}\!\oplus\!2L^{\wt\phi}$  
 are the same if and only if $\deg L\!-\!1\!\in\!4\Z$.
\end{crl}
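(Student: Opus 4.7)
The plan is to adapt the strategy of Lemma~\ref{RelSpinOrient_lmm2} to the case when $L^{\wt\phi}\!\to\!S^1$ is the M\"obius bundle. By \cite[Proposition~4.1]{BHH} I may assume $(L,\wt\phi)\!=\!\cO_{\P^1}(d)$ with the standard lift of~$\tau$; fix $\Psi_L^b$ satisfying~\eref{RelSpinOrient2_e3} for the given odd $d$, and in place of the non-existent trivialization $\psi_L$ of $L^{\wt\phi}$ use the direct trivialization $\psi_0^{(d)}$ of $2L^{\wt\phi}$ defined by the same formula as~\eref{psi0dfn_e} with $\th$ replaced by $d\th$ in the exponents:
\begin{equation*}
\psi_0^{(d)}\big(\{\Psi_L^b\}^{-1}(\ne^{\fI\th},a_1\ne^{\fI d\th/2}),
\{\Psi_L^b\}^{-1}(\ne^{\fI\th},a_2\ne^{\fI d\th/2})\big)
=\big(\ne^{\fI\th},(a_1\!+\!\fI a_2)\ne^{\fI d\th/2}\big).
\end{equation*}
This is well-defined on $2L^{\wt\phi}$ because, for $d$ odd, the antipodal ambiguity $(a_1,a_2)\!\to\!(-a_1,-a_2)$ forced by $\th\!\to\!\th\!+\!2\pi$ on the two M\"obius factors is compensated by the twist $\ne^{\fI d\th/2}\!\to\!-\ne^{\fI d\th/2}$ on the target side.

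Both orientations depend on $\psi_{V\oplus2L}$ only through its homotopy class and flip simultaneously under a non-trivial change in $\pi_1(\SO(n\!+\!2))$, so their ratio depends only on the bundle data; in particular, since $V^{\vph}$ is trivializable over $S^1$ (as $w_1$ vanishes on $V^{\vph}\!\oplus\!2L^{\wt\phi}$ and on $2L^{\wt\phi}$), I may compute the ratio at the special choice $\psi_{V\oplus2L}\!=\!\psi_V\!\oplus\!\psi_0^{(d)}$ for some trivialization $\psi_V$ of $V^{\vph}$. A direct calculation using~\eref{splitLisom_e} with $L^*$ replaced by $L$ shows that $\psi_0^{(d)}$ corresponds exactly to $\Psi_L^b|_{\prt\Si^b}$: for $v_j\!=\!\{\Psi_L^b\}^{-1}(\ne^{\fI\th},a_j\ne^{\fI d\th/2})\!\in\!L^{\wt\phi}$, the combination $v_1\!+\!\fI v_2\!\in\!L|_{S^1}$ has $\Psi_L^b$-coordinate $(a_1\!+\!\fI a_2)\ne^{\fI d\th/2}$. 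Since $\Psi_L^b$ extends across $\Si^b$, the ARS procedure yields $\psi_V$ as the induced trivialization of $V^{\vph}$, so the ARS orientation on the right-hand side of~\eref{bdnisom_e} equals the orientation induced by $\psi_V$ as in \cite[Theorem~8.1.1]{FOOO}. On the other hand, by the diagram~\eref{ComplexOrient1_e7} of Lemma~\ref{RelSpinOrient_lmm1} applied to $V\!\oplus\!2L$, the stabilization orientation equals this same $\psi_V$-induced orientation multiplied by the sign $\epsilon_d\!\in\!\{\pm1\}$ between the orientation on $(\!\det D_L^b)^{\otimes2}$ induced by $\psi_0^{(d)}$ and the canonical square orientation. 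The corollary thus reduces to showing $\epsilon_d\!=\!(-1)^{(d-1)/2}$, which is $+1$ precisely when $d\!-\!1\!\in\!4\Z$.

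For $d\!=\!1$, $\epsilon_1\!=\!+1$ is exactly Proposition~\ref{RelSpinOrient_prp}. For arbitrary odd $d$, I would generalize Proof~3 of that proposition: with the standard $S^1$-action on $(\P^1,\tau)$ lifted to $\cO_{\P^1}(d)$, the evaluation map $\ev_{2L;0}\!:\ker D_{2L}^b\!\to\!2L|_0$ is $S^1$-equivariant, and the equivariant Euler class $\be(\ker D_{2L}^b)$ with respect to the orientation induced by $\psi_0^{(d)}$ can be read off from the arbitrary-$d_0$ case of \cite[(6.21)]{Teh}; comparison with the canonical square Euler class $\be(2L|_0)^{(d+1)/2}$ delivers the sign $(-1)^{(d-1)/2}$. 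The main obstacle is verifying this sign cleanly for arbitrary odd $d$, and in particular pinning down the sign conventions in the generalization of \cite[(6.21)]{Teh}; as a fallback, I would mimic Proof~1 of Proposition~\ref{RelSpinOrient_prp} by writing out an explicit basis of $H^0(\P^1;\cO(d))^{\wt\phi}$ in terms of Möbius-type polynomials and computing the determinant of the change of basis to a complex-oriented basis at $z\!=\!0$ under the extension of $\psi_0^{(d)}$, which again yields $(-1)^{(d-1)/2}$.
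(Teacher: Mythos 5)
Your overall strategy is sound and is essentially the paper's, with one bookkeeping difference: where the $(d\!-\!1)/2$ twist is made to appear. The paper transports the degree-one trivialization~$\psi_0$ to a trivialization $\psi_{2L}$ of $2L^{\wt\phi}$ whose target phase is $\ne^{\fI\th/2}$ rather than your $\ne^{\fI d\th/2}$; by Proposition~\ref{RelSpinOrient_prp} (with the degree reduction of Corollary~\ref{psi0orient_crl}) this $\psi_{2L}$ induces the canonical square orientation on $(\det D_L^b)^{\otimes2}$, so the stabilization orientation is the $\psi_V$-induced one on the nose, while the trivialization $\Psi_L^{\prt}$ of $L|_{\prt\Si^b}$ that $\psi_{2L}$ induces via~\eref{splitLisom_e} differs from $\Psi_L^b|_{\prt\Si^b}$ by $(d\!-\!1)/2$ generators of $\pi_1(\SO(2))$; the discrepancy then sits between the two trivializations $\psi_V$ and $\psi_V'$ of $V^{\vph}$, and one only needs the standard fact that a generator of $\pi_1(\SO(n))$ flips the induced orientation. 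You instead choose $\psi_0^{(d)}$ so that the induced trivialization of $L|_{\prt\Si^b}$ is exactly $\Psi_L^b|_{\prt\Si^b}$ (your computation of this is correct, as is the reduction to the special choice $\psi_V\!\oplus\!\psi_0^{(d)}$), which pushes the entire discrepancy into the sign $\epsilon_d$ comparing the $\psi_0^{(d)}$-orientation on $(\det D_L^b)^{\otimes2}$ with the canonical square orientation.

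The one genuine gap is that $\epsilon_d\!=\!(-1)^{(d-1)/2}$ is asserted but not established: you flag the equivariant generalization of \cite[(6.21)]{Teh} and the explicit-basis computation as routes you have not carried out, and the sign conventions in the former are exactly the delicate point. The claim is true, but you do not need a new computation for it: your $\psi_0^{(d)}$ and the trivialization with target phase $\ne^{\fI\th/2}$ differ by the loop $\th\!\mapsto\!\ne^{\fI(d-1)\th/2}$ in $\SO(2)$, i.e.\ by $(d\!-\!1)/2$ generators of $\pi_1(\SO(2))$, each of which reverses the induced orientation on the determinant; since the latter trivialization induces the canonical square orientation by Proposition~\ref{RelSpinOrient_prp} together with the interior-evaluation degree reduction of Corollary~\ref{psi0orient_crl}, the sign $(-1)^{(d-1)/2}$ follows. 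With that step closed, your argument is complete and equivalent to the paper's.
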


\begin{proof}
Let $d\!=\!\deg L$. 
Since $L^{\wt\phi}\!\lra\!S^1$ is not orientable, $d\!\not\in\!2\Z$.
Similarly to the proof of Lemma~\ref{RelSpinOrient_lmm2}, 
\cite[Proposition~4.1]{BHH} and \cite[Theorem~C.3.6]{MS} imply that 
there exists a trivialization~$\Psi_L^b$ of $L|_{\Si^b}$ so~that
\eref{RelSpinOrient2_e3} holds.
Let $\psi_{2L}$ be the trivialization of $2L^{\wt\phi}$ given~by
$$\psi_{2L}\big(\{\Psi_L^b\}^{-1}(\ne^{\fI\th},a_1\ne^{\fI d\th/2}),
\{\Psi_L^b\}^{-1}(\ne^{\fI\th},a_2\ne^{\fI d\th/2})\big)
=\psi_0\big((\ne^{\fI\th},a_1\ne^{\fI\th/2}),(\ne^{\fI\th},a_2\ne^{\fI\th/2})\big)
\in S^1\!\times\!\C\,.$$
The trivialization $\Psi_L^{\prt}$ of $L|_{\prt\Si^b}$ induced by $\psi_{2L}$ 
via~\eref{splitLisom_e} with $L^*$ replaced by~$L$ is then described~by
\BE{RelSpinOrient4_e7}\Psi_L^{\prt}\!:L|_{\prt\Si^b}\lra S^1\!\times\!\C, \quad
\Psi_L^{\prt}\big(\{\Psi_L^b\}^{-1}(\ne^{\fI\th},c)\big)
=\big(\ne^{\fI\th},c\ne^{-\fI (d-1)\th/2}\big)  
~~~\forall\,\big(\ne^{\fI\th},c\big)\in S^1\!\times\!\C.\EE
Thus, the homotopy classes of $\Psi_L^{\prt}$ and $\Psi_L^b|_{\prt\Si^b}$
differ by $(d\!-\!1)/2$ times a generator of $\pi_1(\SO(2))\!\approx\!\Z$.\\

\noindent
Let $\psi_V$ and $\psi_V'$ be trivializations of~$V^{\vph}$ such that 
$\psi_V\!\oplus\!\psi_{2L}$ and $\psi_V'\!\oplus\!\Psi_L^b|_{\prt\Si^b}$
lie in the same homotopy class of trivializations of $V^{\vph}\!\oplus\!2L^{\wt\phi}$
as~$\psi_{V\oplus2L}$.
By Proposition~\ref{RelSpinOrient_prp}, the stabilization orientation 
on the right-hand side of~\eref{bdnisom_e} 
induced by~$\psi_{V\oplus2L}$ via the isomorphism~\eref{staborientbd_e} 
is the orientation induced by~$\psi_V$ 
as in the proof of \cite[Theorem~8.1.1]{FOOO}. 
By definition, the ARS orientation on the right-hand side of~\eref{bdnisom_e} 
induced by~$\psi_{V\oplus2L}$ 
is the orientation induced by~$\psi_V'$.
By~\eref{RelSpinOrient4_e7}, $\psi_V$ and $\psi_V'$ are homotopic
(and thus the two induced orientations are the same) if and only if $(d\!-\!1)/2\!\in\!2\Z$.
\end{proof}

\begin{lmm}\label{RelSpinOrient_lmm5}
With notation as above, suppose $(\Si,\si)\!=\!(\P^1,\eta)$.
The stabilization and  AS orientations on the right-hand side of~\eref{bdnisom_e} 
induced by a trivialization~$\psi_{V\oplus2L}'$ of $(V\!\oplus\!2L,\vph\!\oplus\!2\wt\phi)$  
are the~same.
\end{lmm}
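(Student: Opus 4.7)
The plan is to mirror the proof of Lemma~\ref{RelSpinOrient_lmm1}, with the fixed locus $\prt_0^c\Si^b$ replaced by the crosscap $\prt_1^c\Si^b$. Since $\eta$ has no fixed points on~$\P^1$, the doubling $\Si^b$ of $(\P^1,\eta)$ satisfies $\prt_0^c\Si^b\!=\!\eset$ while $\prt_1^c\Si^b\!=\!S^1$ is the single crosscap; hence both the stabilization and AS orientations are determined purely by trivializations over this crosscap, with no contribution from a real subbundle over a fixed locus.

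First, fix a representative $\psi_{2L}'$ of the canonical homotopy class of trivializations of $2(L,\wt\phi)$ over the crosscap, and choose a trivialization $\psi_V'$ of $(V,\vph)$ over the crosscap so that $\psi_V'\!\oplus\!\psi_{2L}'$ lies in the same homotopy class of trivializations of $(V\!\oplus\!2L,\vph\!\oplus\!2\wt\phi)$ as $\psi_{V\oplus 2L}'$. By the definition of the AS orientation given above the statement of the lemma, the AS orientation on the right-hand side of~\eref{bdnisom_e} induced by~$\psi_{V\oplus 2L}'$ is then precisely the orientation induced by~$\psi_V'$ via the orienting procedure determined by~\ref{MohTriv_it}.

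By the multiplicativity of determinants under direct sums of Fredholm operators, the natural isomorphism~\eref{staborientbd_e} is orientation-preserving with respect to the orientation on its left-hand side induced by $\psi_V'\!\oplus\!\psi_{2L}'$ and the tensor product of the orientation on $(\det D^b)\!\otimes\!(\det\dbar_{\Si;\C}^b)^{\otimes n}$ induced by~$\psi_V'$ with the orientation on $(\det D_L^b)^{\otimes 2}\!\otimes\!(\det\dbar_{\Si;\C}^b)^{\otimes 2}$ induced by~$\psi_{2L}'$. The lemma thus reduces to showing that this last orientation coincides with the canonical square orientation on the rank~1 real line $(\det D_L^b)^{\otimes 2}\!\otimes\!(\det\dbar_{\Si;\C}^b)^{\otimes 2}$.

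This last identification is the main obstacle. I would establish it by invoking the correspondence of Lemma~2.4 of~\cite{Teh}: the canonical homotopy class of trivializations of $2(L,\wt\phi)$ over the crosscap is pulled back from a homotopy class of trivializations of the complex line bundle $\La_\C^{\top}(2L,2\wt\phi)\!=\!(L,\wt\phi)^{\otimes 2}$, whose tensor-square structure ensures that the induced orientation on $(\det D_L^b)^{\otimes 2}$ does not depend on a choice of orientation on $\det D_L^b$ and is therefore the canonical square orientation. For a concrete verification, one can reduce by homotopy of real bundle pairs on $(\P^1,\eta)$ to the case $(L,\wt\phi)\!=\!\cO_{\P^1}(d)$ with its standard lift of~$\eta$ for a specific small~$d$, and repeat the explicit kernel-and-trivialization computation of Proof~1 of Proposition~\ref{RelSpinOrient_prp} to confirm that the canonical trivialization of $2(L,\wt\phi)$ over the crosscap indeed induces the canonical square orientation on $(\det D_L^b)^{\otimes 2}$. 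Unlike in Lemma~\ref{RelSpinOrient_lmm2} and Corollary~\ref{RelSpinOrient_crl0}, no $\deg L$-dependent sign arises, because the crosscap trivialization is controlled directly by the square $(L,\wt\phi)^{\otimes 2}$ rather than by $L^{\wt\phi}$.
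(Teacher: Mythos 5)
Your proposal is correct and follows essentially the same route as the paper: choose $\psi_V'$ so that its direct sum with a canonical trivialization of $2(L,\wt\phi)$ over the crosscap is homotopic to $\psi_{V\oplus2L}'$, apply the multiplicativity of determinants in~\eref{staborientbd_e}, and observe that the $2L$-factor receives its canonical square orientation. The paper dispatches your ``main obstacle'' more directly than you do: since a crosscap carries no real points, the canonical homotopy class of trivializations of $2(L,\wt\phi)$ over $(S^1,\fa)$ is simply the class of $2\psi_L'$ for a single trivialization $\psi_L'$ of $(L,\wt\phi)$, so the induced orientation on $\big((\det D_L^b)\otimes(\det\dbar_{\Si;\C}^b)\big)^{\otimes2}$ is manifestly a square and hence canonical --- no degree reduction, appeal to the explicit computation of Proof~1 of Proposition~\ref{RelSpinOrient_prp}, or further use of \cite[Lemma~2.4]{Teh} is needed.
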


\begin{proof}
Fix a trivialization $\psi_L'$ of $(L,\wt\phi)$ over $(S^1,\fa)$; 
the canonical homotopy class of trivializations of  $2(L,\wt\phi)$ is 
the class containing~$2\psi_L'$.
A trivialization~$\psi_V'$ of~$(V,\vph)$ over~$(S^1,\fa)$ lies in the associated
homotopy class of trivializations of~$(V,\vph)$ over~$(S^1,\fa)$ if and only if 
$\psi_{V\oplus2L}'$ and $\psi_V'\!\oplus\!2\psi_L'$ lie in 
the same homotopy class  of trivializations of $(V\!\oplus\!2L,\vph\!\oplus\!2\wt\phi)$
over~$(S^1,\fa)$.
In this case, the natural isomorphism~\eref{staborientbd_e} is orientation-preserving
with respect to the orientation on the left-hand side induced by~$\psi_{V\oplus2L}'$
and the orientations on~\eref{RelSpinOrient1_e3}
induced by~$\psi_V'$ and $\psi_L'$, respectively.
Since the last orientation is the same as the orientation induced by the canonical orientations
of $(\det D_L^b)^{\otimes2}$ and $(\det\dbar_{\C}^b)^{\otimes 2}$, 
the stabilization  orientation on the first tensor product in~\eref{RelSpinOrient1_e3} induced 
by~$\psi_{V\oplus2L}'$ and
the AS orientation (i.e.~the orientation induced by~$\psi_V'$) are the~same. 
\end{proof}

\begin{crl}\label{RelSpinOrient_crl}
Let $(\Si^b,c)$, $(\Si,\si)$, $(V,\vph)$, $(L,\wt\phi)$, $D$, and~$D^b$  
be as above Lemma~\ref{RelSpinOrient_lmm1}.
\begin{enumerate}[label=(\arabic*),leftmargin=*]

\item\label{RelSpinorient_it}
If $\prt_1^c\Si^b\!=\!\eset$ and $(\prt\Si^b)_1,\ldots,(\prt\Si^b)_m$ are
the components of $\prt_0^c\Si^b\!=\!\prt\Si^b$, then
the stabilization and ARS orientations on the right-hand side of~\eref{bdnisom_e} 
induced by a trivialization of $V^{\vph}\!\oplus\!2L^{\wt\phi}$
are the same if and only~if 
$$\deg L-\big|\big\{i\!=\!1,\ldots,m\!:\,w_1(L^{\wt\phi})|_{(\prt\Si^b)_i}\!\neq\!0\big\}\big|
\in4\Z.$$

\item\label{RSOorient_it}
If $L^{\wt\phi}\!\lra\!\prt_0^c\Si^b$ is orientable, then
the stabilization and AS orientations   on the right-hand side of~\eref{bdnisom_e} 
induced by a trivialization of $V^{\vph}\!\oplus\!2L^{\wt\phi}$ and 
a trivialization of $(V\!\oplus\!2L,\vph\!\oplus\!2\wt\phi)|_{\prt_1^c\Si^b}$
are the same.

\end{enumerate}
\end{crl}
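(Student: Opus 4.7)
The plan is to localize the sign comparison to the individual boundary components of~$\Si^b$ and then invoke the disk computations already established in Lemmas~\ref{RelSpinOrient_lmm1}, \ref{RelSpinOrient_lmm2}, \ref{RelSpinOrient_lmm5} and Corollary~\ref{RelSpinOrient_crl0}.

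First, I would record a locality principle: each of the stabilization, ARS, and AS orientations on the right-hand side of~\eref{bdnisom_e} is determined by homotopy classes of trivializations on the individual components of $\prt_0^c\Si^b\cup\prt_1^c\Si^b$, and, as recorded in the paragraph after \ref{SpinTriv_it}--\ref{MohTriv_it}, flipping the class on a single component flips the induced orientation. Consequently the ratio between any two of these orientations factors as a product of $\pm 1$ contributions, one per boundary component.

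Next, I would compute each local contribution by degenerating $(\Si,\si)$ along a $\si$-invariant circle parallel to the given component. Such a pinching splits off, up to deformation, a spherical bubble---$(\P^1,\tau)$ for a standard boundary or $(\P^1,\eta)$ for a crosscap---leaving a residual core in which this component no longer appears. By the sum isomorphism~\eref{sum} and Corollary~\ref{canonisom_crl2a}, the determinant line $\rdet\,D^b$ decomposes as a tensor product of the determinants associated with the core and the bubbles, and each of the orientations in question respects this decomposition; the core factor is common to the two orientations and cancels. The sign at $(\prt\Si^b)_i$ thus coincides with the one in the disk model: for part~(1) it equals $(-1)^{d_i/2}$ when $L^{\wt\phi}|_{(\prt\Si^b)_i}$ is orientable (Lemma~\ref{RelSpinOrient_lmm2}) and $(-1)^{(d_i-1)/2}$ otherwise (Corollary~\ref{RelSpinOrient_crl0}), where $d_i$ is the degree of $L$ on the bubble at $(\prt\Si^b)_i$; for part~(2) it equals $+1$ at standard boundaries by Lemma~\ref{RelSpinOrient_lmm1} and at crosscaps by Lemma~\ref{RelSpinOrient_lmm5}.

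To conclude part~(1), fix a trivialization $\Psi_L^b$ of $L|_{\Si^b}$; with respect to~$\Psi_L^b$, the restriction of~$\wt\phi$ to each $(\prt\Si^b)_i$ is the composition of complex conjugation with a $U(1)$-valued function whose winding~$d_i$ coincides with the degree of $L$ on the bubble at $(\prt\Si^b)_i$, and $\sum_i d_i=\deg L$ by the clutching description of~$c_1(L)$. Since $d_i$ is odd iff $i\!\in\!S$, we have $\deg L\equiv|S|\pmod 2$ and the total sign is
$$\prod_{i=1}^m(-1)^{\flr{d_i/2}}=(-1)^{(\deg L-|S|)/2},$$
which equals $+1$ precisely when $\deg L-|S|\in 4\Z$. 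Part~(2) then follows from the triviality of every local contribution.

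The main obstacle is the degeneration step: one must verify that the stabilization and ARS/AS orientations are compatible with~\eref{sum} across the pinching in the precise sense required, so that the local comparison at each boundary component genuinely reduces to the disk model with the correct local degree. A secondary subtlety is the additivity $\sum_i d_i=\deg L$, which rests on the clutching description of $c_1(L)$ on the doubled surface and requires care, especially when $L^{\wt\phi}$ is non-orientable on some components so that $|S|$ and $\deg L$ must match in parity for $(\deg L-|S|)/2$ to be an integer.
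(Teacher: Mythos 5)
Your proposal is correct and follows essentially the same route as the paper: pinch off a circle near each boundary component to split the determinant into a closed (complex, hence canonically oriented) core and $m$ disk/sphere factors, apply Lemma~\ref{RelSpinOrient_lmm2} and Corollary~\ref{RelSpinOrient_crl0} (resp.\ Lemmas~\ref{RelSpinOrient_lmm1} and~\ref{RelSpinOrient_lmm5}) componentwise, and sum the local conditions $\deg L_0|_{B_i}-\ve_i(L)\in4\Z$ using $\sum_i d_i=\deg L$. The "main obstacle" you flag is exactly what the paper disposes of by citing the pinching constructions of \cite{Melissa} and \cite{XCapsSetup} and by normalizing the deformed bundle so that its degree is concentrated on the attached disks.
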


\begin{proof} For each $i\!=\!1,\ldots,m$, let
$$\ve_i(L)=\begin{cases}0,&\hbox{if}~w_1(L^{\wt\phi})|_{(\prt\Si^b)_i}\!=\!0,\\
1,&\hbox{if}~w_1(L^{\wt\phi})|_{(\prt\Si^b)_i}\!\neq\!0.\end{cases}$$
As in the proofs of \cite[Lemma~6.37]{Melissa} and \cite[Theorem~1.1]{XCapsSetup}, 
we pinch off a circle near each boundary 
component $(\prt\Si^b)_i$ to form a closed surface~$\Si'$ with $m$~disks $B_1,\ldots,B_m$ attached.
We deform the bundles~$V$ and~$L$ to bundles~$V_0$ and~$L_0$ over the resulting nodal surface~$\Si_0$
so that $\deg L_0|_{\Si'}\!=\!0$.
Thus, a trivialization of $L_0|_{\prt\Si^b}$ that extends over each disk extends over~$\Si_0$.
The two determinants on the right-hand side of~\eref{bdnisom_e} are canonically isomorphic
to the determinants of the induced real linear CR-operators~$D_0$ and $\dbar_0$
on~$V_0$ and $\Si_0\!\times\!\C$, respectively.
An orientation on $(\det D_0)\!\otimes\!(\det\dbar_0)^{\otimes n}$ is determined
by orientations of the analogous tensor products over~$\Si_0$ and the $m$~disks.
The former have canonical complex orientations.
If $\prt_1^c\Si^b\!=\!\eset$,
the stabilization and ARS orientations of the tensor products of the determinant lines over~$B_i$ 
induced by a trivialization of $V^{\vph}\!\oplus\!2L^{\wt\phi}$ are the same if and only~if 
\BE{RelSpinOrientCrl_e}\deg L_0|_{B_i}-\ve_i(L)\in4\Z\,;\EE
see Lemma~\ref{RelSpinOrient_lmm2} and Corollary~\ref{RelSpinOrient_crl0}.
Summing up~\eref{RelSpinOrientCrl_e} over $i\!=\!1,\ldots,m$, we obtain the first claim.
The second claim follows similarly from Lemmas~\ref{RelSpinOrient_lmm1} and~\ref{RelSpinOrient_lmm5}.
\end{proof}

\begin{proof}[{\bf\emph{Proof of Theorem~\ref{RelSpinOrient_thm}}}]
Since the fibers of the forgetful morphism~\eref{ffmarked_e} are canonically oriented,
it is sufficient to establish the claims for $l\!=\!2$.
In this case, the moduli space is oriented via the canonical isomorphism~
\eref{ffisom_e} with $(g,l)\!=\!(0,2)$ and~$\si\!=\!\tau$.
By the paragraph above Theorem~\ref{RelSpinOrient_thm},
the orientation of the last factor in~\eref{ffisom_e} is the same in all three approaches
to orienting the moduli space.
The orientations of the first factor on the right-hand side of~\eref{ffisom_e}
are compared by Corollary~\ref{RelSpinOrient_crl} with~$L$ replaced by~$L^*$. 
Taking into account that $c_1(TX)\!=\!2c_1(L)$, we obtain Theorem~\ref{RelSpinOrient_thm}.
\end{proof}

\begin{rmk}
It is not necessary to require that the rank~$n$ of the real bundle pair~$(V,\vph)$
being stabilized be at least~3, since lower-rank real bundle pairs can first be stabilized with
the trivial rank~2 real bundle pair.
The proof of Theorem~\ref{RelSpinOrient_thm} requires only the $(\Si,\si)\!=\!(\P^1,\tau)$
case of Corollary~\ref{RelSpinOrient_crl}, but it is natural to formulate it for arbitrary symmetric
surfaces~$(\Si,\si)$.
\end{rmk}

\begin{rmk}\label{Ge2_rmk2}
Two real line bundles $L_1^{\R},L_2^{\R}\!\lra\!Y$ are isomorphic if and only if 
$w_1(L_1^{\R})\!=\!w_1(L_2^{\R})$, provided $Y$ is paracompact.
In such a case, there is a canonical homotopy class of isomorphisms between 
$2L_1^{\R}$ and~$2L_2^{\R}$.
If $V^{\R}\!\lra\!Y$ is an oriented vector bundle, a spin structure on $V^{\R}\!\oplus\!2L_1^{\R}$
thus corresponds to a spin structure on $V^{\R}\!\oplus\!2L_2^{\R}$.
The proofs of Proposition~\ref{RelSpinOrient_prp} and
Corollaries~\ref{RelSpinOrient_crl0} and~\ref{RelSpinOrient_crl}\ref{RelSpinorient_it} 
imply that the stabilization orientation on the right-hand side of~\eref{bdnisom_e} 
induced by a spin structure on $V^{\vph}\!\oplus\!2L^{\wt\phi}$
depends only on $w_1(L^{\wt\phi})$ and this spin structure,
and not on~$(L,\wt\phi)$ itself.
\end{rmk}

\subsection{Some applications}
\label{OrientApply_subs}

\noindent
We now make a number of explicit statements concerning orientations of 
the determinants of real CR-operators on real bundle pairs
over~$(\P^1,\tau)$ and~$(\P^1,\eta)$.
The proofs of these statements, which are useful for  computational purposes
and are applied in~\cite{RealGWsIII}, are in the spirit of Section~\ref{RelSpinOrient_subs}.\\

\noindent
Let $\ga_1^{\R}\!\lra\!\R\P^1$
denote the tautological line bundle. 
For $f\!:\R\P^1\!\lra\!\GL_k\R$, define
$$\Psi_f\!: \R\P^1\!\times\!\R^k \lra \R\P^1\!\times\!\R^k \qquad\hbox{by}\quad
\Psi_f(z,v)=\big(z,f(z)v\big).$$
Denote by $I_k^-\!\in\!\tO(k)$ the diagonal matrix with the first diagonal entry equal to  $-1$
and the remaining diagonal entries equal to~1. 

\begin{lmm}\label{VBaut_lmm}
Let $k,m\!\in\!\Z^{\ge0}$.
If  $k\!\ge\!2$, every automorphism~$\Psi$ of the real vector bundle
$$V_{k,m}\equiv \big(\R\P^1\!\times\!\R^k\big)\oplus m\ga_1^{\R}\lra\R\P^1$$
is homotopy equivalent to an automorphism of the form $\Psi_f\!\oplus\!\Id_{m\ga_1^{\R}}$
for some $f\!:\R\P^1\!\lra\!\tO(k)$;
any two such maps~$f$ differ by an even multiple of a generator of $\pi_1(\SO(k))$.
If $m\!\ge\!1$, the automorphism~$\Psi$ negating a $\ga_1^{\R}$~component  
is not homotopic to $\Psi_f\!\oplus\!\Id_{m\ga_1^{\R}}$ for any constant map~$f$.
If $m\!\ge\!2$, the interchange~$\Psi$ of two of the $\ga_1^{\R}$~components
is not homotopic to $\Psi_f\!\oplus\!\Id_{m\ga_1^{\R}}$ for any constant map~$f$.
\end{lmm}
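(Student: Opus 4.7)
The plan is to treat Claim~1 via a reduction-to-normal-form strategy for automorphisms of vector bundles over $\R\P^1\!=\!S^1$, and Claims~2 and~3 via a characteristic-class invariant of the mapping torus. Throughout, fiberwise polar decomposition deformation-retracts $\Aut(V_{k,m})$ onto the subspace of orthogonal automorphisms with respect to a fixed fiberwise inner product, so I may assume $\Psi$ is orthogonal.

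For Claim~1, I would first deform $\Psi$ to respect the splitting $V_{k,m}\!=\!V_k\!\oplus\!m\ga_1^{\R}$. The subbundle $\Psi(m\ga_1^{\R})\!\subset\!V_{k,m}$ determines a section of the Grassmannian bundle $\tn{Gr}_m(V_{k,m})\!\to\!\R\P^1$; since the fiber $\tn{Gr}_m(\R^{k+m})$ is path-connected and the clutching $g\!=\!\tn{diag}(I_k,-I_m)$ fixes the basepoint $m\ga_1^{\R}|_{z_0}$, this section is homotopic to the constant one, and the homotopy lifts through the fibration $\tO(V_{k,m})\!\to\!\tn{Gr}_m(V_{k,m})$ to deform $\Psi$ to a block-diagonal form $\Psi_f\!\oplus\!\ti D$ with $f\!:\R\P^1\!\to\!\tO(k)$ and $\ti D\!\in\!\Aut(m\ga_1^{\R})$. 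To absorb $\ti D$ into the $V_k$-factor, I construct explicit isotopies built from rotations in a copy of $\tO(k+1)\!\subset\!\tO(k+m)$ that mix one direction of $V_k$ with one direction of $m\ga_1^{\R}$, supported over arcs of $\R\P^1$ on which $\ga_1^{\R}$ trivializes; the hypothesis $k\!\ge\!2$ provides the extra $V_k$-direction needed to accommodate the monodromy of $\ga_1^{\R}$ at the boundary of each arc. The uniqueness-up-to-$2\pi_1(\SO(k))$ statement then follows because two $f,f'$ yielding homotopic $\Psi_f\!\oplus\!\Id$ must differ by a loop in $\tO(k)$ that becomes null-homotopic under the inclusion $\tO(k)\!\hookrightarrow\!\tO(k+m)$, and the kernel of the induced map on $\pi_1$ is exactly $2\pi_1(\SO(k))$ (trivial for $k\!\ge\!3$; the even integers for $k\!=\!2$).

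For Claims~2 and~3, I associate to each $\Psi$ the mapping torus $V_\Psi\!\equiv\!(V_{k,m}\!\times\![0,1])/((v,0)\!\sim\!(\Psi v,1))$, a rank-$(k{+}m)$ real vector bundle over $T^2\!=\!\R\P^1\!\times\!S^1$ whose isomorphism class is a homotopy invariant of $\Psi$. Writing $\al,\be$ for the generators of $H^1(T^2;\Z/2)$ pulled back from $\R\P^1$ and $S^1$ respectively, and applying the Whitney sum formula to the block decomposition, one computes $w(V_\Psi)\!=\!(1+\al)^m$ when $\Psi\!=\!\Psi_f\!\oplus\!\Id$ with $\det f\!=\!+1$ and $w(V_\Psi)\!=\!(1+\be)(1+\al)^m$ when $\det f\!=\!-1$; in either case the total Stiefel-Whitney class is determined by $\det f$ alone. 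For $\Psi$ negating one $\ga_1^{\R}$-summand, the negated summand contributes $(1+\al+\be)$ rather than $(1+\al)$, shifting $w_2(V_\Psi)$ by $\al\be$ relative to each constant-$f$ value; similarly for the swap of two $\ga_1^{\R}$-summands, the relevant rank-$2$ block contributes $(1+\be+\al\be)$ and produces the same type of shift. Since $\al\be\!\neq\!0$ in $H^2(T^2;\Z/2)$, neither $\Psi$ can be homotopic to any $\Psi_f\!\oplus\!\Id$ with constant~$f$.

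The main obstacle will be making the $\ti D$-absorption step in Claim~1 completely explicit. The local rotations mixing $V_k$ and $m\ga_1^{\R}$ are easy to write in a local trivialization, but globalizing them requires careful tracking of the sign ambiguity of $\ga_1^{\R}$ around $\R\P^1$ and verifying that the resulting family of automorphisms satisfies the clutching condition for $V_{k,m}$ at every stage of the isotopy. This is also the source of the ``even multiple of a generator'' ambiguity, since a single absorption pass can only transfer twist from $\pi_0\Aut(m\ga_1^{\R})$ to $\pi_1\tO(k)$ in even units.
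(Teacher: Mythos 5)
Your argument for the last two claims is correct but follows a genuinely different route from the paper's. The paper homotopes the negation of a $\ga_1^{\R}$-summand (to which the interchange reduces after a fiberwise rotation of $2\ga_1^{\R}$) to the automorphism $\Psi_f$ for $f$ a generator of $\pi_1(\tO(2),I_2^-)$, and concludes by observing that the image of this class under $\pi_1(\tO(2),I_2^-)\!\to\!\pi_1(\tO(k\!+\!m),I_{k+m}^-)$ is non-trivial, whereas a constant $f$ gives the trivial class. Your mapping-torus computation detects exactly the same non-trivial $\pi_1$-element, but through $w_2$ of the associated bundle over $T^2$: the classes $(1\!+\!\al\!+\!\be)(1\!+\!\al)^{m-1}$ versus $(1\!+\!\be)(1\!+\!\al)^m$, and the analogous computation for the swap (which diagonalizes to $\Id\!\oplus\!(-\Id)$ on $2\ga_1^{\R}$), are correct, and $\al\be\!\neq\!0$ finishes the argument. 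This is a valid, self-contained alternative for those two claims.

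The first claim is where the gaps are. First, the assertion that the section of $\tn{Gr}_m(V_{k,m})\!\to\!\R\P^1$ determined by $\Psi(m\ga_1^{\R})$ is homotopic to the constant section ``since the fiber is path-connected'' is not a valid inference: homotopy classes of sections of a bundle over $S^1$ with fiber $F$ are governed by $\pi_1(F)$ modulo the monodromy action, and $\pi_1(\tn{Gr}_m(\R^{k+m}))\!=\!\Z/2$ here, so connectivity alone leaves two potential classes of sections. The conclusion you need is still true, but for a different reason: the section in question is the image of a path of orthogonal gauge transformations under the orbit map $\tO(k\!+\!m)\!\to\!\tn{Gr}_m(\R^{k+m})$, and the homotopy exact sequence of the fibration $\tO(k)\!\times\!\tO(m)\!\to\!\tO(k\!+\!m)\!\to\!\tn{Gr}_m(\R^{k+m})$ shows that $\pi_1(\tO(k\!+\!m))\!\to\!\pi_1(\tn{Gr}_m(\R^{k+m}))$ is the zero map. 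Second, the absorption of $\ti{D}\!\in\!\Aut(m\ga_1^{\R})$ into the $V_k$-factor --- which you yourself flag as the main obstacle --- is left as a sketch. Both difficulties disappear in the paper's argument: after normalizing $\Psi_{x_0}\!=\!I_{k+m}^{\pm}$ and cutting $\R\P^1$ at $x_0$, every automorphism becomes a based loop in $\tO(k\!+\!m)$ at $I_{k+m}^{\pm}$, and the existence part of the first claim is precisely the surjectivity of $\pi_1(\tO(k),I_k^{\pm})\!\to\!\pi_1(\tO(k\!+\!m),I_{k+m}^{\pm})$ for $k\!\ge\!2$, while the uniqueness part is the computation of the kernel, which agrees with what you wrote. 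You should either adopt that reduction or repair the two steps above before considering the first claim proved.
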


\begin{proof}
Let $I_k^+\!=\!I_k$, $x_0\!\in\!\R\P^1$ be any point, and
$$\Aut_{x_0}^{\pm}(V_{k,m})\equiv\big\{\Psi\!\in\!\Aut(V_{k,m})\!:\,
\Psi_{x_0}\!=\!I_k^{\pm}\!\oplus\!I_{m\ga_1^{\R}|_{x_0}}\big\}.$$
Since $\tO(k\!+\!m)$ has two connected components, one containing $I_{k+m}^+$ and the other $I_{k+m}^-$,
it is sufficient to establish the first two claims of this lemma for an automorphism
$\Psi\!\in\!\Aut_{x_0}^{\pm}(V_{k,m})$.\\

\noindent
Since every line bundle over the interval $\bI\!\equiv\![0,1]$ is trivial,
\BE{VBaut_e5}\Aut_{x_0}^{\pm}(V_{k,m}) \approx
\big\{f\!\in\!C(\bI;\tO(k\!+\!m))\!:\,f(0),f(1)\!=\!I_{k+m}^{\pm}\big\}.\EE
The first claim thus follows from the~map
$$\pi_1\big(\tO(k),I_k^{\pm}\big)\lra \pi_1\big(\tO(k\!+\!m),I_{k+m}^{\pm}\big)$$
induced by the natural inclusion $\tO(k)\!\lra\!\tO(k\!+\!m)$ being surjective for $k\!\ge\!2$.
The second claim follows from the kernel of this map being the even multiples of a generator 
of $\pi_1(\SO(k))$.\\

\noindent
By rotating in the fibers of $2\ga_1^{\R}$, 
the interchange of the two components of $2\ga_1^{\R}$ can be homotoped~to 
the automorphism negating the first component and leaving the second component unchanged.
Thus, the last claim of the lemma follows from the third.
It is sufficient to establish the latter for $k\!\ge\!1$.\\

\noindent
We first consider the $(k,m)\!=\!(1,1)$ case of the third claim.
Since every line bundle over~$\bI$ is trivial,
$$V_{1,1}=\big(\bI\!\times\!\C\big)/\!\sim, \qquad (1,c)\sim\big(0,\bar{c}\big)~~\forall~c\!\in\!\C.$$
With respect to this identification, the relevant automorphism~$\Psi$ is given~by
$$\Psi\!: V_{1,1}\lra V_{1,1}, \qquad \Psi\big([t,c]\big)=\big[t,\bar{c}\big].$$
For each $s\!\in\!\R$, define an automorphism~$\Psi_s$ of~$V_{1,1}$ by
$$\Psi_s\!: V_{1,1}\lra V_{1,1}, \qquad \Psi_s\big([t,c]\big)=\big[t,\ne^{\fI \pi(1-2t)s}\bar{c}\big].$$
The family $(\Psi_s)_{s\in[0,1]}$ is a homotopy from the automorphism~$\Psi$ of~$V_{1,1}$
to the element of $\Aut_{x_0}^-(V_{1,1})$ corresponding to the~map
$$f\!: (\bI,0,1)\lra\big(\tO(2),I_2^-,I_2^-), \qquad t\lra \ne^{-2\pi\fI t}I_2^-\,,$$
under the identification~\eref{VBaut_e5}. 
Since $f$ is a generator of $\pi_1(\tO(2),I_2^-)\!\approx\!\Z$, its image under 
the homomorphism 
$$\pi_1\big(\tO(2),I_2^-\big)\lra\pi_1\big(\tO(k\!+\!m),I_{k+m}^-\big)$$
induced by the natural inclusion $\tO(2)\!\lra\!\tO(k\!+\!m)$ is non-trivial.
This implies the last claim.
\end{proof}

\noindent
Let $a\!\in\!\Z^{\ge0}$, 
$(L,\wt\phi)$ be a rank~1 real bundle pair over $(\P^1,\tau)$
of degree~$1\!+\!2a$, and $D_L$ be a real CR-operator
on~$(L,\wt\phi)$.
Fix a nonzero vector $e\!\in\!T_0\P^1$.
The homomorphism
$$\ev_{L;0}: \ker D_L\lra (1\!+\!a)L|_0, \qquad
\ev_{L;0}(\xi)=\big(\xi(0),\na_e\xi,\ldots,\na_e^{\otimes a}\xi\big),$$
is then an isomorphism.
It thus induces an orientation on $\det D_L$ from the complex orientation of~$L|_0$;
we will call the former the \sf{complex orientation of~$\det D_L$}.\\

\noindent
Let $(L_0,\wt\phi_0)$ be a rank~1 real bundle pair over $(\P^1,\tau)$
of degree~1.
If $(L_1,\wt\phi_1)$ and $(L_2,\wt\phi_2)$ are rank~1 real bundle pairs over $(\P^1,\tau)$
of odd degrees, the composition of the isomorphism~$\psi_0$ in~\eref{psi0dfn_e}
with the isomorphism
$$L_1^{\wt\phi_1}\!\oplus\!L_2^{\wt\phi_2} \approx L_0^{\wt\phi_0}\!\oplus\!L_0^{\wt\phi_0}$$
induced by isomorphisms on each component determines an orientation~on
$$\det D_{L_1\oplus L_2} \approx 
\big(\!\det D_{L_1}\big)\otimes \big(\!\det D_{L_2}\big)$$
via the isomorphism~\eref{bdnisom_e} with $\Si^b$ being the unit disk around $0\!\in\!\C$.
By the third statement of Lemma~\ref{VBaut_lmm}, 
changing the homotopy class of a component isomorphism would change
the orientation and the spin of the induced trivialization and 
thus would have no effect on the induced orientation.
This is also implied by the next statement.

\begin{crl}\label{psi0orient_crl}
Suppose $a_1,a_2\!\in\!\Z^{\ge0}$, 
$(L_1,\wt\phi_1)$ and $(L_2,\wt\phi_2)$ are rank~1 real bundle pairs over $(\P^1,\tau)$
of degrees~$1\!+\!2a_1$ and~$1\!+\!2a_2$, respectively, and  
$D_{L_1}$ and $D_{L_2}$  are real CR-operators
on $(L_1,\wt\phi_1)$ and $(L_2,\wt\phi_2)$.
The orientations on $\det(D_{L_1}\!\oplus\!D_{L_2})$
induced by the isomorphism~$\psi_0$ in~\eref{psi0dfn_e} 
and by the complex orientations on  $\det(D_{L_1})$ and $\det(D_{L_2})$
are the~same.
\end{crl}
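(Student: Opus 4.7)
The strategy is to reduce to Proposition~\ref{RelSpinOrient_prp} via a nodal degeneration of $(\P^1,\tau,L_i)$ that concentrates the excess degree of each $L_i$ onto conjugate-pair bubbles, leaving a degree-$1$ real bundle pair on a central $(\P^1,\tau)$ component.

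First, the base case $a_1=a_2=0$ follows directly from Proposition~\ref{RelSpinOrient_prp}. Fix isomorphisms of real bundle pairs $L_i\cong L_0$ for each~$i$, so that $\det D_{L_1\oplus L_2}\cong(\det D_{L_0})^{\otimes 2}$. Under this identification the complex orientations on $\det D_{L_1}$ and $\det D_{L_2}$ transport to the same orientation $o$ on $\det D_{L_0}$ (since the evaluation at~$0$ is natural in the bundle), and their tensor product becomes $o\otimes o$, the canonical square orientation on $(\det D_{L_0})^{\otimes 2}$, which is independent of the choice of~$o$. By Proposition~\ref{RelSpinOrient_prp}, this canonical square orientation coincides with the orientation induced by~$\psi_0$, establishing the claim in this case.

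For general $(a_1,a_2)$, degenerate $(\P^1,\tau,L_i)$ to a nodal real bundle pair over a central $(\P^1,\tau)$ carrying a degree-$1$ real bundle pair, together with $a_i$ conjugate pairs of $\P^1$-bubbles attached at real nodes, each pair carrying a bundle of complex type (a degree-$1$ complex line bundle on one bubble and its conjugate on the other, in the sense of Section~\ref{ComplexOrient_subs0}). The total degree matches $1+2a_i$, and the configuration is deformable to $(L_i,\wt\phi_i)$ through real bundle pairs of fixed topological type, so it suffices to compare orientations on the nodal configuration. The fixed circle $S^1\subset(\P^1,\tau)$ lies entirely on the central component, so the $\psi_0$-trivialization of $L_1^{\wt\phi_1}\oplus L_2^{\wt\phi_2}$ persists under the degeneration and restricts to the $\psi_0$-trivialization of the degree-$1$ central piece. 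Applying the splitting isomorphism~\eref{sum} of Fredholm determinants along the normalization short exact sequence, $\det D_{L_1\oplus L_2}$ decomposes as a tensor product of a determinant over the central component and determinants over each conjugate bubble pair. On the central component the $\psi_0$ and complex orientations agree by the base case, while on each bubble pair the projection~\eref{ComplexOrient_e5} identifies the real CR-determinant with its complex counterpart; Lemma~\ref{ComplexOrient_lmm1} then gives the analogous agreement there.

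The principal obstacle is verifying that the splitting of both orientations under the degeneration introduces no extra sign. This requires tracking the canonical splitting isomorphism~\eref{sum} and confirming that the isomorphisms $L_i^{\wt\phi_i}\cong L_0^{\wt\phi_0}$ entering the definition of~$\psi_0$ can be chosen compatibly with the degeneration, the resulting ambiguity being controlled by Lemma~\ref{VBaut_lmm} and producing no orientation change. A direct alternative, imitating Proof~1 of Proposition~\ref{RelSpinOrient_prp} with $L_i=\cO(1+2a_i)$, $D_{L_i}=\bp$, explicit real-polynomial bases of $\ker D_{L_i}$, and a change-of-basis determinant computation between the complex-orientation basis (evaluation of the first $1+a_i$ jets at~$0$) and a $\psi_0$-adapted basis, would bypass the gluing analysis entirely at the cost of more involved linear algebra.
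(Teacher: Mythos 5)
Your base case $a_1=a_2=0$ is handled correctly and is exactly how the paper uses Proposition~\ref{RelSpinOrient_prp}: the complex orientations square to the canonical square orientation, which Proposition~\ref{RelSpinOrient_prp} identifies with the $\psi_0$-orientation. The reduction of the general case to this base case, however, is both different from the paper's and not actually closed. The paper's reduction is a two-line observation: the orientation construction from a boundary trivialization (FOOO/Liu) commutes with evaluation of jets at interior points, so one may twist $L_i$ down by $a_i(0+\infty)$ on the \emph{smooth} domain; the resulting exact sequence of Fredholm operators splits a complex jet space off $\det D_{L_i}$ compatibly with both the complex orientation (which is \emph{defined} via precisely this jet map) and the trivialization-induced orientation, and the degree drops to~$1$. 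No degeneration of the domain is needed.

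Your route --- degenerating to a central degree-$1$ component with conjugate bubble pairs carrying the excess degree --- is plausible in outline but leaves the essential step unproved, as you acknowledge. Two things are missing. First, the complex orientation of $\det D_{L_i}$ is defined by the $a_i$-jet evaluation at~$0$ on the smooth curve; you would need to show that this orientation extends continuously across your degeneration and that its limit equals the tensor product of the degree-$1$ jet orientation on the central component, the complex orientations of Lemma~\ref{ComplexOrient_lmm1} on the bubble pairs, and the canonical orientations of the nodal fibers entering the normalization isomorphism~\eref{DvswtD_e2}. This is not automatic: in the nodal limit the $a_i$-jet map at~$0$ no longer interacts with the bubble components directly, and identifying the limiting kernel requires the gluing analysis of the kind carried out in Section~\ref{SymmSurfDegen_subs}. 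Second, the continuity of the $\psi_0$-induced orientation across an interior conjugate-node degeneration is itself a nontrivial statement of the same nature as Corollary~\ref{canonisomExt2_crl2a} and would need proof rather than the assertion that the trivialization ``persists.'' Your proposed computational alternative (explicit bases and a change-of-basis determinant, as in Proof~1 of Proposition~\ref{RelSpinOrient_prp}) would also work, but both detours are unnecessary once one notices the compatibility with interior jet evaluations.
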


\begin{proof}
The construction of the orientation on the determinant line induced by a trivialization
of the real part of the bundle in the proofs of \cite[Theorem~8.1.1]{FOOO}
and \cite[Lemma~6.37]{Melissa}
commutes with the evaluations at the interior points;
these can be used to reduce the degree of the bundle.
Thus, it is sufficient to consider the case $a_1,a_2\!=\!0$.
The latter is Proposition~\ref{RelSpinOrient_prp}.
\end{proof}

\noindent
Suppose 
\BE{P1seq_e} 0\lra (V,\vph)\lra (V_{\bu},\vph_{\bu})\oplus (V_c,\vph_c)\lra (\cL,\wt\phi)\lra0\EE
is an exact sequence of real bundle pairs over $(\P^1,\tau)$ such that 
$V_{\bu}^{\vph_{\bu}}\!\lra\!S^1$ is orientable of rank $k\!\ge\!2$ and
$$(V_c,\vph_c)=\bigoplus_{i=1}^m\big(V_{c;i},\phi_{c;i}\big)
\qquad\hbox{and}\qquad
 (\cL,\wt\phi)=\bigoplus_{i=1}^m\big(L_i,\wt\phi_i)$$
are direct sums of rank~1 real vector bundle pairs of odd positive degrees.
By Lemma~\ref{VBaut_lmm}, the short exact sequence~\eref{P1seq_e} and
a trivialization of $V_{\bu}^{\vph_{\bu}}$ determine
a homotopy class of trivializations of~$V^{\vph}$ up~to
\begin{enumerate}[label=(\arabic*),leftmargin=*]

\item simultaneous flips of the orientation and the spin,

\item composition with an even multiple of a generator of $\pi_1(\SO(k))$.

\end{enumerate}
Via the isomorphism~\eref{bdnisom_e} with $\Si^b$ being the unit disk around $0\!\in\!\C$,
a trivialization of $V_{\bu}^{\vph_{\bu}}$ thus determines an orientation 
of the determinant of a real CR-operator~$D_V$ on the real bundle pair~$(V,\vph)$.
It also  determines an orientation 
of the determinant of a real CR-operator~$D_{V_{\bu}}$ 
on the real bundle pair~$(V_{\bu},\vph_{\bu})$.
A short exact sequence 
\BE{P1seq_e2} 0\lra D_V\lra D_{V_{\bu}}\oplus D_{V_c}\lra D_{\cL}\lra0\EE
of real CR-operators on the real bundle pairs in~\eref{P1seq_e}
gives rise to an isomorphism
\BE{P1seq_e3}  \det\big(D_V\big)\otimes\det\big(D_{\cL}\big)
\approx \det\big(D_{V_{\bu}}\big)\otimes \det\big(D_{V_c}\big)\,.\EE

\begin{crl}\label{seqtau_crl}
The isomorphism~\eref{P1seq_e3} is orientation-preserving with respect~to
\begin{enumerate}[label=$\bu$,leftmargin=*]

\item the orientations~on $\det(D_V)$ and $\det(D_{V_{\bu}})$ 
induced by a trivialization of $V_{\bu}^{\vph_{\bu}}$ and

\item the complex orientations on $\det(D_{\cL})$ and $\det(D_{V_c})$.

\end{enumerate}
\end{crl}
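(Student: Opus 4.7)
The plan is to reduce the claim to a componentwise application of Corollary~\ref{psi0orient_crl} via a decomposition of the short exact sequence~\eref{P1seq_e}. Since both the isomorphism~\eref{P1seq_e3} obtained from~\eref{sum} and the two orientations being compared depend continuously on continuous deformations of~\eref{P1seq_e} through exact sequences of real bundle pairs of the given topological types, and since every real bundle pair over $(\P^1,\tau)$ decomposes into rank-1 real bundle pairs (by an equivariant Birkhoff--Grothendieck decomposition), I would first deform~\eref{P1seq_e} into a componentwise split sequence. In this form, each rank-1 summand of $V_\bu\oplus V_c$ either descends to a rank-1 summand of $V$ or maps isomorphically onto a rank-1 summand of~$\cL$.

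In the split setup, the orientation-comparison on~\eref{P1seq_e3} decomposes into three kinds of contributions. Summands of $V_\bu$ descending to~$V$ yield matching trivialization-induced orientations on $\det D_V$ and~$\det D_{V_\bu}$, since the trivialization of $V^\vph$ is built from the given trivialization of $V_\bu^{\vph_\bu}$ on these summands. Rank-1 odd-degree summands of~$V_c$ that map isomorphically onto summands of~$\cL$ contribute $\C$-linear identifications preserving the complex orientations of the corresponding determinants. The genuinely nontrivial case is when pairs of odd-degree rank-1 summands of~$V_c$ descend to~$V$; since orientability of~$V^\vph$ forces the number of odd-degree summands descending to~$V$ to be even, such contributions appear in matched pairs, each of which is covered by Corollary~\ref{psi0orient_crl} applied to the canonical $\psi_0$-trivialization of~\eref{psi0dfn_e}.

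The main obstacle is verifying that the componentwise split reduction is compatible with the trivialization of~$V^\vph$ induced by that of~$V_\bu^{\vph_\bu}$ via the short exact sequence. By Lemma~\ref{VBaut_lmm}, this induced homotopy class of trivializations is well-defined only up to simultaneous orientation/spin flips and composition with even multiples of generators of $\pi_1(\SO(k))$ with $k=\rk V_\bu\ge 2$; as emphasized in the paragraph preceding the statement of the corollary, these ambiguities do not affect the resulting orientation on $\det D_V$. Once this compatibility is in place, the pairwise matching from Corollary~\ref{psi0orient_crl} and the trivial matchings on the other summands combine to give the full statement.
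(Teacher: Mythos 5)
There is a genuine gap at the first step: the reduction to a ``componentwise split'' sequence is not available. A deformation of~\eref{P1seq_e} through exact sequences of real bundle pairs preserves the topological type (rank, degree, and $w_1$ of the real part) of each of $V$, $V_{\bu}\!\oplus\!V_c$, and $\cL$, and for a rank-1 summand of $V_{\bu}\!\oplus\!V_c$ to map isomorphically onto a rank-1 summand of~$\cL$ their degrees would have to coincide. The degrees of the summands of $V_c$ and of $\cL$ are arbitrary odd positive integers subject to no matching condition, and no summand of $V_{\bu}$ can map isomorphically onto an odd-degree summand of $\cL$ either (take, say, $V_{\bu}$ trivial and the degree multisets of $V_c$ and $\cL$ disjoint). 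So the trichotomy ``descends to a summand of $V$ / maps isomorphically onto a summand of $\cL$,'' on which the rest of your bookkeeping rests, cannot be arranged for general data of the allowed topological types; in particular, if $j$ odd-degree summands of $V_c$ were to land in the kernel, then $\rk V=\rk V_{\bu}$ forces $j$ summands of $V_{\bu}$ to surject onto summands of~$\cL$, which fails for the same degree reasons.

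The statement does not require splitting the sequence at all. The paper's proof first augments $(V_c,\vph_c)$ and $(\cL,\wt\phi)$ by a common rank-1 pair of odd positive degree so that $m\!=\!2m'$ is even, uses Corollary~\ref{psi0orient_crl} to replace the complex orientations on $\det(D_{\cL})$ and $\det(D_{V_c})$ by the orientations induced by the trivializations $m'\psi_0$ of $\cL^{\wt\phi}$ and $V_c^{\vph_c}$, and then observes that~\eref{P1seq_e} determines a homotopy class of isomorphisms $(V,\vph)\!\oplus\!(\cL,\wt\phi)\approx(V_{\bu},\vph_{\bu})\!\oplus\!(V_c,\vph_c)$ as in~\eref{P1seq_e7} under which the trivializations of $V^{\vph}\!\oplus\!\cL^{\wt\phi}$ and $V_{\bu}^{\vph_{\bu}}\!\oplus\!V_c^{\vph_c}$ orienting the two sides of~\eref{P1seq_e3} correspond; orientation-preservation is then immediate. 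You correctly identified Corollary~\ref{psi0orient_crl} and Lemma~\ref{VBaut_lmm} as the relevant inputs, but they must be fed into this stable, direct-sum comparison rather than into a summand-by-summand analysis of the sequence.
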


\begin{proof}
Since the claim is invariant under augmenting $(V_c,\vph_c)$ and $(\cL,\wt\phi)$
by the same rank~1 real bundle pair of odd positive degree,
we can assume that $m\!=\!2m'$ for some $m'\!\in\!\Z^{\ge0}$.
By Corollary~\ref{psi0orient_crl}, the complex orientations on 
$\det(D_{\cL})$ and $\det(D_{V_c})$ are then induced by the trivializations $m'\psi_0$
of~$\cL^{\wt\phi}$  and~$V_c^{\vph_c}$.
The short exact sequence~\eref{P1seq_e} determines a homotopy class of isomorphisms
of real bundle pairs
\BE{P1seq_e7} (V,\vph)\oplus (\cL,\wt\phi)\approx 
(V_{\bu},\vph_{\bu})\oplus (V_c,\vph_c)\EE
over $(\P^1,\tau)$.
By the above, the orientations on
\BE{P1seq_e8}\begin{split}
\det\big(D_V\!\oplus\!D_{\cL}\big)&=
\det\big(D_V\big)\!\otimes\!\det\big(D_{\cL}\big)
\qquad\hbox{and}\\
 \det\big(D_{V_{\bu}}\!\oplus\!D_{V_c}\big)
&=\det\big(D_{V_{\bu}}\big)\!\otimes\!\det\big(D_{V_c}\big)
\end{split}\EE
specified in the statement of this corollary are induced by homotopy classes of
trivializations of the real bundles
$$ V^{\vph}\!\oplus\!\cL^{\wt\phi},V_{\bu}^{\vph_{\bu}}\!\oplus\!V_c^{\vph_c}\lra S^1$$ 
that are identified under the isomorphism~\eref{P1seq_e7} restricted to the real parts 
of the bundles.
The isomorphism~\eref{P1seq_e3} is orientation-preserving with respect to these orientations.
\end{proof}

\noindent
We will next obtain an analogue of Corollary~\ref{seqtau_crl} for  real bundle pairs 
over~$(\P^1,\eta)$. 
Define a $\C$-antilinear automorphism of~$\C^2$ by 
$$\fc_{\eta}\!:\C^2\lra\C^2, \qquad \fc_{\eta}\big(v_1,v_2\big)=
\big(\bar{v}_2,-\bar{v}_1\big);$$ 
it has order~4.
Let 
$$\ga=\cO_{\P^1}(-1)\equiv
\big\{(\ell,v)\!\in\!\P^1\!\times\!\C^2\!:\,v\!\in\!\ell\!\subset\!\C^2\big\}$$ 
denote the tautological line bundle.
For $a\!\in\!\Z^+$, the involution $\eta$ lifts to 
a conjugation on $2\ga^{\otimes a}$ as 
$$\wt\eta_{1,1}^{(-a)}\big(\ell,v^{\otimes a},w^{\otimes a}\big)
=\big(\eta(\ell),(\fc_{\eta}(w))^{\otimes a},
(-\fc_{\eta}(v))^{\otimes a}\big).$$
We denote the induced conjugations on 
$$2\cO_{\P^1}(a)=\big(2\ga^{\otimes a}\big)^*
\qquad\hbox{and}\qquad
\cO_{\P^1}(2a) \equiv \La_{\C}^2\big(2\cO_{\P^1}(a)\big)$$
by $\wt\eta_{1,1}^{(a)}$ and $\wt\eta_1^{(2a)}$, respectively.
We note that $\wt\eta_{1,1}^{(2a)}\!\approx\!2\wt\eta_1^{(2a)}$.\\

\noindent
Let $a\!\in\!\Z^{\ge0}$ and $D_a$ be the real CR-operator
on $(2\cO_{\P^1}(1\!+\!2a),\wt\eta_{1,1}^{(1+2a)})$ induced by the standard $\dbar$-operator
on~$2\cO_{\P^1}(1\!+\!2a)$.
Fix a holomorphic connection~$\na$ on $\cO_{\P^1}(1\!+\!2a)$ and
a nonzero vector $e\!\in\!T_0\P^1$.
The homomorphism
\begin{gather*}
\ev_{a;0}: \ker D_a\lra 
\big((1\!+\!a)\cO_{\P^1}(1\!+\!2a)|_0\big)\!\oplus\!\big((1\!+\!a)\cO_{\P^1}(1\!+\!2a)|_0\big),\\
\ev_{a;0}(\xi_1,\xi_2)=\big((\xi_1(0),\na_e\xi_1,\ldots,\na_e^{\otimes a}\xi_1),
(\xi_2(0),\na_e\xi_2,\ldots,\na_e^{\otimes a}\xi_2)\big),
\end{gather*}
is then an isomorphism.
It thus induces an orientation on $\det D_a$ from the complex orientation of~$\cO_{\P^1}(1\!+\!2a)|_0$;
we will call the former the \sf{complex orientation of~$\det D_a$}.\\

\noindent
As before,
denote by $S^1\!\subset\!\P^1$  and $\Si^b\!\subset\!\P^1$  the unit circle 
and the unit disk around $0\!\in\!\P^1$, respectively. 
Let $\psi_0'$ be the trivialization of $(2\cO_{\P^1}(1),\wt\eta_{1,1}^{(1)})$
over~$S^1$ given~by
\BE{psi0prdfn_e}\psi_0'\big(\al_1,\al_2\big)
=\left(\begin{array}{c}\fI\al_1(1,z)\!-\!\fI z^{-1}\al_2(1,z)\\
\al_1(1,z)\!+\!z^{-1}\al_2(1,z)\end{array}\right)\in\C^2
\qquad\forall~(\al_1,\al_2)\!\in\!2\cO_{\P^1}(1)|_z,~z\!\in\!S^1.\EE
This is a component of the composite trivialization appearing in
the proof of \cite[Proposition~6.2]{Teh}.  
The next statement is the analogue of Proposition~\ref{RelSpinOrient_prp} 
in this setting.

\begin{crl}\label{EtaOrient_crl}
The orientation on $\det D_0^b$ induced by the trivialization~$\psi_0'$ 
as in the proof of \cite[Lemma~2.5]{Teh}
agrees with the complex orientation.
\end{crl}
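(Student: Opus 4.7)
The plan is to follow the template of the three proofs of Proposition~\ref{RelSpinOrient_prp}, adapted to the crosscap setting of $(\P^1,\eta)$. The structural difference is that $\prt_1^c\Si^b\!=\!S^1$ and $\prt_0^c\Si^b\!=\!\eset$, so the orienting procedure of \cite[Lemma~2.5]{Teh} is fed the trivialization~$\psi_0'$ of the real bundle pair $(2\cO_{\P^1}(1),\wt\eta_{1,1}^{(1)})$ over~$S^1$, rather than a trivialization of a real locus. A first step is to observe that $D_0$ is surjective, since $H^1(\P^1;\cO_{\P^1}(1))\!=\!0$, and to identify $\ker D_0$ with the $4$-dimensional real subspace of $H^0(\P^1;\cO_{\P^1}(1))^{\oplus2}$ cut out by the reality condition imposed by~$\wt\eta_{1,1}^{(1)}$. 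Using $\fc_\eta^2\!=\!-\id$, projection to the first summand identifies this real subspace, as a $4$-dimensional oriented real vector space, with the underlying real space of $H^0(\P^1;\cO_{\P^1}(1))\!\approx\!\C^2$ with its complex orientation; in particular, the complex orientation on $\det D_0$ is the one obtained by evaluation at $0\!\in\!\C$.

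In the spirit of Proof~$1$ of Proposition~\ref{RelSpinOrient_prp}, I would next pick an explicit real basis $\xi_1,\ldots,\xi_4$ of $\ker D_0$ generated by the two linear functionals $v_1,v_2$ via the reality condition, compute $\psi_0'(\xi_i)$ on $S^1$ via~\eref{psi0prdfn_e}, apply the orienting construction of \cite[Lemma~2.5]{Teh}, and verify that the resulting $4\!\times\!4$ change-of-basis matrix from the evaluation basis has determinant~$+1$. A shorter alternative, parallel to Proof~$3$, is to note that $\psi_0'$ is a component of the composite trivialization used in the proof of \cite[Proposition~6.2]{Teh}; lifting the standard $S^1$-action on $(\P^1,\eta)$ to $(2\cO_{\P^1}(1),\wt\eta_{1,1}^{(1)})$ so that the evaluation $\ker D_0\!\to\!2\cO_{\P^1}(1)|_0$ is $S^1$-equivariant, the equivariant Euler class of $\ker D_0$ computed in \cite[Section~6]{Teh} with the orientation induced by~$\psi_0'$ matches that of the target with its complex orientation.

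The main obstacle is the same sign-bookkeeping that arose for $(\P^1,\tau)$: one must track the order-$4$ conjugation $\fc_\eta$, the $z^{-1}$ twist in~$\psi_0'$, and the conventions of \cite[Lemma~2.5]{Teh} within a single coherent frame. The equivariant approach circumvents much of this because the $S^1$-equivariance forces the two Euler classes to agree up to a sign that has already been computed in~\cite{Teh}; the direct calculation would then serve as an independent check that the normalization conventions of the present paper and of~\cite{Teh} are compatible.
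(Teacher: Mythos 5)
Your plan reproduces the paper's two main routes --- the explicit basis computation is Proof~1 of Proposition~\ref{RelSpinOrient_prp} transplanted to $(\P^1,\eta)$, and the equivariant argument is Proof~3 --- but in both cases you stop exactly where the content of the corollary begins. The statement is purely a sign assertion, so announcing that you would ``verify that the resulting $4\times4$ change-of-basis matrix has determinant $+1$'' (or that the equivariant Euler classes ``match'') is a description of a proof rather than a proof: nothing in your write-up rules out the answer $-1$. To close the gap along the first route you need the actual real basis of $\ker D_0^b$, namely $s_{11}=(p_1,p_2)$, $s_{12}=(\fI p_1,-\fI p_2)$, $s_{21}=(-p_2,p_1)$, $s_{22}=(\fI p_2,\fI p_1)$ for the standard sections $p_1,p_2$ of $\cO_{\P^1}(1)$, the extension of $\psi_0'$ over $\P^1\!-\!\{0,\i\}$, and the value-at-$z\!=\!1$ and residue-at-$z\!=\!0$ data that the orienting procedure of \cite[Lemma~2.5]{Teh} uses; along the second route you need the explicit weights from \cite[(6.21)]{Teh}.

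There is also a concrete error in your first paragraph. The complex orientation of $\det D_0^b$ is by definition the one induced by the evaluation $(\xi_1,\xi_2)\mapsto(\xi_1(0),\xi_2(0))$ into $2\cO_{\P^1}(1)|_0$, and the ordered basis $s_{11},s_{12},s_{21},s_{22}$ above is positively oriented for it. Projection to the first summand sends this basis to $p_1,\fI p_1,-p_2,\fI p_2$, which is \emph{negatively} oriented for the complex orientation of $H^0(\P^1;\cO_{\P^1}(1))$: the reality condition determines $\xi_2$ anti-linearly from $\xi_1$, and the comparison of the two resulting $\R$-linear isomorphisms out of $\ker D_0^b$ is orientation-reversing, not orientation-preserving. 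So your ``in particular'' does not follow, and if you were to use the first-summand identification as a stand-in for the complex orientation in the subsequent determinant computation, you would land on the opposite sign. Work with the evaluation of both components at $0$, as in the definition preceding the corollary.
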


\noindent
We give three proofs of this statement;
they correspond to the three proofs of  Proposition~\ref{RelSpinOrient_prp}.

\begin{proof}[{\bf\emph{Proof~1}}]
We denote by $p_1$ and $p_2$ the two standard holomorphic sections of $\cO_{\P^1}(1)$:
$$p_1\big(\ell,(v_1,v_2)\big)=v_1, \quad 
p_2\big(\ell,(v_1,v_2)\big)=v_2 \qquad\forall~\big(\ell,(v_1,v_2)\big)\in\ga.$$
The complex orientation for $\det D_0^b$ is  determined by the basis 
$$s_{11}\equiv (p_1,p_2), \quad s_{12}\equiv (\fI p_1,-\fI p_2), 
\quad s_{21}\equiv (-p_2,p_1),\quad s_{22}\equiv (\fI p_2,\fI p_1),$$
for the kernel of the surjective operator~$D_0^b$.\\

\noindent 
The trivialization $\psi_0'$ extends as a trivialization~$\Psi_0'$
of $(2\cO_{\P^1}(1),\wt\eta_{1,1}^{(1)})$ over $\P^1\!-\!\{0,\i\}$
by the same formula.
We note that
\begin{alignat*}{2}
\big\{\Psi_0's_{11}\big\}(z)&=(0,2),
&\quad
\big\{\Psi_0's_{12}\big\}(z)&=(-2,0),\\
\big\{\Psi_0's_{21}\big\}(z)&=\big(-\!\fI z^{-1}(1\!+\!z^2),z^{-1}(1\!-\!z^2)\big),
&\quad
\big\{\Psi_0's_{22}\big\}(z)&=\big(z^{-1}(1\!-\!z^2),\fI z^{-1}(1\!+\!z^2)\big).
\end{alignat*}
The orientation on $\det D_0^b$ induced by the trivialization~$\psi_0'$ is obtained from
the isomorphism
$$\ker D_0^b\lra \R\!\oplus\!\R\oplus\big\{\Res_{z=0}(\Psi_0'\xi)\!:\,
\xi\!\in\!\ker D_0^b\big\}, \quad
\xi\lra \big(\Re(\{\Psi_0'\xi\}(1)),\Res_{z=0}(\Psi_0'\xi)\big).$$
The last space above is a complex subspace of~$\C^2$.
Under this isomorphism, the basis $s_{11},s_{12},s_{21},s_{22}$ is sent~to
$$(0,2;0,0), \quad (-2,0;0,0), \quad (0,0;-\fI,1), \quad
(0,0;1,\fI).$$
Thus, an oriented basis for the target of the above isomorphism is given~by
$$(2,0;0,0), \quad (0,2;0,0), \quad (0,0;-\fI,1), \quad (0,0;1,\fI).$$
The change of basis matrix from the first basis to this one is given~by
$$\left( \begin{array}{cccc} 0& -1& 0& 0\\ 1& 0& 0& 0\\ 0& 0& 1& 0\\
0& 0& 0& 1\end{array}\right).$$
The determinant of this matrix is $+1$.
\end{proof}

\begin{proof}[{\bf\emph{Proof~2}}]
Define 
\begin{gather*}
\eta_3\!:\P^3\lra\P^3, \qquad 
[Z_1,Z_2,Z_3,Z_4]\lra\big[\ov{Z}_2,-\ov{Z}_1,\ov{Z}_4,-\ov{Z}_3\big],\\
 \fM_1(\P^1) =\fM_1(\P^1,1)^{\eta,\eta},\qquad
\fM_1(\P^3)=\fM_1(\P^3,1)^{\eta_3,\eta}.
\end{gather*} 
We now proceed through the first two paragraphs of the second proof of Proposition~\ref{RelSpinOrient_prp}
replacing $\tau$, $\tau_3$, and $D_{2L}^b$ by~$\eta$, $\eta_3$, and~$D_0^b$, respectively.
By \cite[Lemma~5.3]{Teh}, the isomorphism~\eref{pf2_e7} is still orientation-{\it reversing} 
with respect to the algebraic orientations on $\fM_1(\P^1)$ in~$\fM_1(\P^3)$
defined in \cite[Section~5.2]{Teh}.
The isomorphism~\eref{pf2_e9} is now orientation-{\it preserving} with respect to  
the complex orientation on the left-hand side.\\

\noindent
Along with Euler's sequence for~$\P^1$ and the  orienting procedure
of \cite[Lemma~2.5]{Teh}, the trivialization~$\psi_0'$
determines an orientation on~$\fM_1(\P^1)$;
we will call it the \textsf{$\psi_0'$-orientation}.
Since the top exterior power of the real bundle pair 
\BE{etapf2_e11}2\big((2\cO_{\P^1}(1),\wt\eta_{1,1}^{(1)}\big)\big|_{S^1}\lra 
\big(S^1,\eta|_{S^1}\big)\subset(\P^1,\eta)\EE
is canonically a square, it admits a canonical homotopy class of trivializations;
see Lemma~2.4 and Section~5.5 in~\cite{Teh}.
Along with Euler's sequence for~$\P^3$, it determines an orientation on~$\fM_1(\P^3)$;
we will call it the \textsf{square root orientation}.
Along with the $\psi_0'$-orientation on $\fM_1(\P^1)$, 
it induces an orientation on~$\cN_{[\io,0]}\fM$;
we will call it the \textsf{$\psi_0'$-orientation}.
Since the square root orientation on~$\fM_1(\P^3)$ agrees with the orientation
induced by the trivialization~$2\psi_0'$ of~\eref{etapf2_e11},
the first isomorphism in~\eref{pf2_e9} is orientation-{\it preserving} with respect
to the orientation on the left-hand side induced by~$\psi_0'$
and the $\psi_0'$-orientation on~$\cN_{[\io,0]}\fM$.\\

\noindent
As summarized in the paragraph above \cite[Remark~6.9]{Teh}, 
the algebraic orientations on~$\fM_1(\P^1)$ and~$\fM_1(\P^3)$  
are the same as the $\psi_0'$-orientation and 
the opposite of the square root orientation, respectively.
Therefore,  the $\psi_0'$-orientation on~$\cN_{[\io,0]}\fM$ is the opposite of
the algebraic orientation.
Since the second isomorphism in~\eref{pf2_e9} is orientation-reversing
with respect to the latter,
it follows that the composite isomorphism in~\eref{pf2_e9} is orientation-preserving
with respect
to the orientation on the left-hand side induced by~$\psi_0'$.
Since this is also the case with respect to the complex orientation on the left-hand side,
these two orientations on~$\ker D_0^b$ agree.  
\end{proof}

\begin{proof}[{\bf\emph{Proof~3}}]
The reasoning in the third proof of Proposition~\ref{RelSpinOrient_prp}
with $\tau$ and $D_{2L}^b$ replaced by~$\eta$ and~$D_0^b$, respectively,
applies without any changes,
except \cite[(6.13)]{Teh} is no longer relevant.
\end{proof}

\noindent
By \cite[Lemma~2.4]{Teh}, the homotopy classes of trivializations of
\hbox{$(2\cO_{\P^1}(1\!+\!2a),\wt\eta_{1,1}^{(1\!+\!2a)})$}
over~$S^1$ correspond to the homotopy classes of trivializations~of
\BE{etared_e}\La_{\C}^{\top}\big(2\cO_{\P^1}(1\!+\!2a),\wt\eta_{1,1}^{(1\!+\!2a)}\big)
\approx
\La_{\C}^{\top}\big(2\cO_{\P^1}(1),\wt\eta_{1,1}^{(1)}\big)
\otimes
\big(\cO_{\P^1}(2a),\wt\eta_1^{(2a)}\big)^{\otimes2}\EE
over~$S^1$.
Since the last factor in~\eref{etared_e} is a square, it has a canonical homotopy class
of trivializations over~$S^1$.
Thus, the trivialization~$\psi_0'$ of the first factor on
the right-hand side of~\eref{etared_e} determines   a  homotopy class
of trivializations of $(2\cO_{\P^1}(1\!+\!2a),\wt\eta_{1,1}^{(1\!+\!2a)})$ over~$S^1$
and thus an orientation on~$\det D^b_a$.
The next statement is the analogue of Corollary~\ref{psi0orient_crl};
it is deduced from Corollary~\ref{EtaOrient_crl} in the same way 
as  Corollary~\ref{psi0orient_crl} is obtained from Proposition~\ref{RelSpinOrient_prp}.

\begin{crl}\label{EtaOrient_crl2}
Suppose $a\!\in\!\Z^{\ge0}$.
The orientation on $\det D_a^b$ induced by the trivialization~$\psi_0'$ 
as in the proof of \cite[Lemma~2.5]{Teh} 
agrees with the complex orientation. 
\end{crl}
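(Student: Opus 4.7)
The plan is to follow the template of the proof of Corollary~\ref{psi0orient_crl}: the base case $a=0$ is exactly Corollary~\ref{EtaOrient_crl}, so the task reduces to an inductive step on~$a$ that peels off a jet at an interior point of~$\Si^b$ (and its $\eta$-conjugate) and shows that both the complex orientation and the $\psi_0'$-induced orientation are compatible with this peeling. The key technical input is that the orienting procedure of \cite[Lemma~2.5]{Teh} commutes with evaluations at interior points of~$\Si^b$, in the same way that the orienting procedure of \cite[Theorem~8.1.1]{FOOO} commutes with interior evaluations in the setting of Corollary~\ref{psi0orient_crl}.

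In more detail, for each $a\!\ge\!1$ I would consider the short exact sequence of real bundle pairs over $(\P^1,\eta)$ obtained by evaluation at the pair of conjugate interior points $\{0,\i\}$ (with $\eta(0)\!=\!\i$):
\BE{plan_ses}
0\lra \big(2\cO_{\P^1}(2a\!-\!1),\wt\eta_{1,1}^{(2a-1)}\big)
\lra \big(2\cO_{\P^1}(1\!+\!2a),\wt\eta_{1,1}^{(1+2a)}\big)
\lra \big(2\cO_{\P^1}(1\!+\!2a)\big|_0\big)\oplus \big(2\cO_{\P^1}(1\!+\!2a)\big|_{\i}\big) \lra 0,
\EE
together with the induced short exact sequence of real CR-operators. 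Via~\eref{sum}, this gives an isomorphism between $\det D_a^b$ and the tensor product of $\det D_{a-1}^b$ with the determinant of the two-point evaluation map, which carries a canonical complex orientation. The second factor of~\eref{etared_e} providing the canonical square root extension of~$\psi_0'$ to the bundle of degree $1\!+\!2a$ is compatible, over~$S^1$, with the sequence~\eref{plan_ses} restricted to the boundary, so that $\psi_0'$-orientations are exchanged by~\eref{sum}.

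With this setup, I would verify two compatibilities. First, the complex orientation on $\det D_a^b$ corresponds under~\eref{sum} to the tensor product of the complex orientation on $\det D_{a-1}^b$ with the complex orientation on the two-point evaluation image; this is immediate from the inductive description of the isomorphism $\ev_{a;0}$ in terms of $\ev_{a-1;0}$ and one extra jet. Second, the orientation on $\det D_a^b$ induced by~$\psi_0'$ as in \cite[Lemma~2.5]{Teh} corresponds under~\eref{sum} to the tensor product of the orientation on $\det D_{a-1}^b$ induced by~$\psi_0'$ with that same complex orientation on the evaluation image. Combining these, the agreement of the two orientations for index~$a$ follows from the agreement for index $a\!-\!1$, and the base case $a\!=\!0$ is Corollary~\ref{EtaOrient_crl}.

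The main obstacle is the second compatibility. For bordered surfaces with honest real boundary the analogous statement is standard because interior evaluation is manifestly decoupled from the boundary trivialization data that feeds into the Melissa/FOOO construction; in the crosscap setting of \cite[Lemma~2.5]{Teh} one must verify that the pinching/doubling construction used to define the orientation from a trivialization over the crosscap $S^1\!\subset\!\Si^b$ can be performed so as to keep the interior point $0$ (and its conjugate~$\i$) away from the pinched region. This is essentially the same argument that appears in the reduction step of Corollary~\ref{RelSpinOrient_crl}, applied here to conjugate pairs of interior marked points rather than boundary circles, and once this compatibility is in hand the inductive reduction to $a\!=\!0$ is routine.
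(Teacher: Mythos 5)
Your proposal is correct and follows essentially the same route as the paper: the paper deduces Corollary~\ref{EtaOrient_crl2} from Corollary~\ref{EtaOrient_crl} exactly as Corollary~\ref{psi0orient_crl} is deduced from Proposition~\ref{RelSpinOrient_prp}, namely by observing that the orienting procedure from a boundary trivialization commutes with evaluations at interior (conjugate pairs of) points, which reduces the degree to the base case $a\!=\!0$. Your explicit short exact sequence and the discussion of keeping the interior points away from the pinched region simply flesh out the commutation step that the paper asserts in one sentence.
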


\noindent
Suppose 
\BE{etaP1seq_e} 0\lra (V,\vph)\lra (V_{\bu},\vph_{\bu})\oplus (V_c,\vph_c)\lra (\cL,\wt\phi)\lra0\EE
is an exact sequence of real bundle pairs over $(\P^1,\eta)$ such~that
$$(V_c,\vph_c)=\bigoplus_{i=1}^m(2\cO_{\P^1}(1\!+\!2a_i),\wt\eta_{1,1}^{(1\!+\!2a_i)})
\qquad\hbox{and}\qquad
 (\cL,\wt\phi)=\bigoplus_{i=1}^m(2\cO_{\P^1}(1\!+\!2a_i'),\wt\eta_{1,1}^{(1\!+\!2a_i')})$$
for some $a_i,a_i'\!\in\!\Z^{\ge0}$.
Since the homotopy classes of trivializations of $(V_c,\vph_c)|_{S^1}$
correspond to  the homotopy classes of trivializations of $ (\cL,\wt\phi)|_{S^1}$,
a homotopy class of trivializations of $(V_{\bu},\vph_{\bu})|_{S^1}$
determines a homotopy class of trivializations of $(V,\vph)|_{S^1}$
via the exact sequence~\eref{etaP1seq_e}.
Via the isomorphism~\eref{bdnisom_e} with $\Si^b$ being the unit disk around $0\!\in\!\C$,
a trivialization of $(V_{\bu},\vph_{\bu})|_{S^1}$ thus determines an orientation 
of the determinant of a real CR-operator~$D_V$ on the real bundle pair~$(V,\vph)$.
It also  determines an orientation 
of the determinant of a real CR-operator~$D_{V_{\bu}}$ 
on the real bundle pair~$(V_{\bu},\vph_{\bu})$.
A short exact sequence~\eref{P1seq_e2} 
of real CR-operators on the real bundle pairs in~\eref{etaP1seq_e}
gives rise to an isomorphism as in~\eref{P1seq_e3}.

\begin{crl}\label{EtaOrient_crl3}
The isomorphism~\eref{P1seq_e3} is orientation-preserving with respect~to
\begin{enumerate}[label=$\bu$,leftmargin=*]

\item the orientations~on $\det(D_V)$ and $\det(D_{V_{\bu}})$ 
induced by a trivialization of $(V_{\bu},\vph_{\bu})$ over~$S^1$  and

\item the complex orientations on $\det(D_{\cL})$ and $\det(D_{V_c})$.

\end{enumerate}
\end{crl}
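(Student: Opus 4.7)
The plan is to follow the template of the proof of Corollary~\ref{seqtau_crl}, with Corollary~\ref{EtaOrient_crl2} playing the role of Corollary~\ref{psi0orient_crl}. The $\eta$ setting is in fact cleaner than the $\tau$ setting: each summand $(2\cO_{\P^1}(1\!+\!2a_i), \wt\eta_{1,1}^{(1+2a_i)})$ of $(V_c, \vph_c)$ and of $(\cL, \wt\phi)$ already has rank~2 over~$\C$, so the trivialization $\psi_0'$ of~\eqref{psi0prdfn_e} applies to each summand directly, and the preliminary pairing step (the passage to $m\!=\!2m'$ used in the proof of Corollary~\ref{seqtau_crl}) is unnecessary here.

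First I would apply Corollary~\ref{EtaOrient_crl2} summand-by-summand and combine the resulting identifications, using the multiplicativity of orientations across direct sums under the isomorphism~\eqref{bdnisom_e}, to conclude that the complex orientations on $\det D_{\cL}$ and $\det D_{V_c}$ agree with the orientations induced by the direct-sum trivializations $\bigoplus_{i=1}^m \psi_0'$ of $(\cL,\wt\phi)|_{S^1}$ and $(V_c, \vph_c)|_{S^1}$, respectively. Next, the short exact sequence~\eqref{etaP1seq_e} determines a homotopy class of isomorphisms of real bundle pairs
\begin{equation*}
(V,\vph) \oplus (\cL,\wt\phi) \approx (V_\bu, \vph_\bu) \oplus (V_c, \vph_c)
\end{equation*}
over $(\P^1,\eta)$. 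By the very definition (as recalled in the paragraph preceding the statement of the corollary) of the induced trivialization of $(V,\vph)|_{S^1}$ used to orient $\det D_V$, the chosen trivialization of $(V,\vph)|_{S^1}$ combined with $\bigoplus_i \psi_0'$ on $(\cL,\wt\phi)|_{S^1}$ is identified, under the restriction of the above isomorphism to~$S^1$, with the given trivialization of $(V_\bu,\vph_\bu)|_{S^1}$ combined with $\bigoplus_i \psi_0'$ on $(V_c,\vph_c)|_{S^1}$.

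Consequently, both tensor products appearing in~\eqref{P1seq_e3} are oriented, via the isomorphism~\eqref{bdnisom_e}, by trivializations of real bundle pairs over~$S^1$ that correspond to each other under the isomorphism of real bundle pairs coming from~\eqref{etaP1seq_e}. The isomorphism~\eqref{P1seq_e3} itself is induced by the short exact sequence~\eqref{P1seq_e2} of Fredholm operators, so it is orientation-preserving with respect to these identifications, which is the claim. The only point requiring care is the compatibility of the two trivialization systems across the isomorphism; but this is built into the definition of the induced trivialization of $(V,\vph)|_{S^1}$ from the short exact sequence, so it does not constitute a serious obstacle.
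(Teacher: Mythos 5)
Your proposal is correct and follows essentially the same route as the paper: invoke Corollary~\ref{EtaOrient_crl2} summand-by-summand to identify the complex orientations on $\det(D_{\cL})$ and $\det(D_{V_c})$ with those induced by the trivializations $m\psi_0'$, then observe that the exact sequence~\eref{etaP1seq_e} yields a homotopy class of isomorphisms as in~\eref{P1seq_e7} under which the orienting trivializations of $(V,\vph)\!\oplus\!(\cL,\wt\phi)$ and $(V_{\bu},\vph_{\bu})\!\oplus\!(V_c,\vph_c)$ over~$S^1$ correspond, so that~\eref{P1seq_e3} is orientation-preserving. Your remark that the parity-adjustment step of Corollary~\ref{seqtau_crl} is unnecessary here is also consistent with the paper's argument.
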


\begin{proof}
By Corollary~\ref{EtaOrient_crl2}, the complex orientations on 
$\det(D_{\cL})$ and $\det(D_{V_c})$ are  induced by the trivializations $m\psi_0'$
of $(\cL,\wt\phi)$ and $(V_c,\vph_c)$ over~$S^1$.
The short exact sequence~\eref{etaP1seq_e} determines a homotopy class of isomorphisms~\eref{P1seq_e7}
over $(\P^1,\eta)$.
Thus, the orientations on~\eref{P1seq_e8}
specified in the statement of this corollary are induced by homotopy classes of
trivializations of the real bundle~pairs
$$ (V,\vph)\!\oplus\!(\cL,\wt\phi),(V_{\bu},\vph_{\bu})\!\oplus\!(V_c,\vph_c)
\lra \big(S^1,\eta|_{S^1}\big)$$ 
that are identified under the isomorphism~\eref{P1seq_e7} restricted to~$S^1$.
The isomorphism~\eref{P1seq_e3} is orientation-preserving with respect to these orientations.
\end{proof}

\section{The compatibility of the canonical orientations}
\label{OrientCompat_sec}

\noindent
In this section, we establish Theorem~\ref{CompOrient_thm}.
In order to do so, we study how each step 
in the construction of the orientation on $\fM_{g,l}(X,B;J)^{\phi}$ in
\cite[Section~5]{RealGWsI}
extends across the strata consisting of maps from symmetric surfaces with 
a pair of conjugate nodes.
The argument is similar to \cite[Section~6]{RealGWsI}, which studies the extendability
of the orientation on $\fM_{g,l}(X,B;J)^{\phi}$ induced by a real orientation on~$(X,\om,\phi)$
across the codimension-one strata.
We also compare the resulting extensions with the corresponding objects over 
the normalizations.

\subsection{Two-nodal symmetric surfaces}
\label{TwoNodal_subs}

\noindent
We begin by establishing Proposition~\ref{canonisom_prp} 
for symmetric surfaces with one pair of conjugate nodes.
If $(\Si,\si)$ is a symmetric surface, possibly nodal and disconnected, and 
$G$ is a Lie group with a natural conjugation, such as $\C^*$, 
$\SL_n\C$, or $\GL_n\C$,
denote by $\cC(\Si,\si;G)$ the topological group of continuous maps $f\!:\Si\!\lra\!G$
such that \hbox{$f(\si(z))\!=\!\ov{f(z)}$} for all $z\!\in\!\Si$.
The restrictions of such functions to the fixed locus $\Si^{\si}\!\subset\!\Si$ take values in
the real locus of~$G$, i.e.~$\R^*$, $\SL_n\R$, and $\GL_n\R$,
in the three examples.

\begin{lmm}\label{homotoid_lmm}
Suppose $(\Si,\si)$ is a symmetric surface, possibly nodal and disconnected,
$x\!\in\!\Si\!-\!\Si^{\si}$, and $G$ is a connected Lie group with a natural conjugation.
For every $f\!\in\!\cC(\Si,\si;G)$ and an open neighborhood $U\!\subset\!\Si$ of~$x$, 
there exists a path $f_t\!\in\!\cC(\Si,\si;G)$ such that $f_0\!=\!f$,
$f_1(x)\!=\!\Id$, and $f_t\!=\!f$ on $\Si\!-\!U\!\cup\!\si(U)$.
\end{lmm}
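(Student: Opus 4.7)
The plan is to exploit the connectedness of $G$ together with the assumption $x\!\notin\!\Si^{\si}$ to locally modify $f$ near $x$ and~$\si(x)$ while preserving $f$ elsewhere and maintaining the equivariance condition. The key observation is that once $x\!\notin\!\Si^{\si}$, a sufficiently small neighborhood~$U$ of~$x$ is disjoint from $\si(U)$, so I have the freedom to prescribe the modification on~$U$ and let equivariance dictate it on~$\si(U)$.

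First I would shrink $U$ if necessary so that $U\!\cap\!\si(U)\!=\!\eset$; this is possible because $x\!\notin\!\Si^{\si}$ and the fixed locus is closed. Then I would choose a continuous bump function $\rho\!:\Si\!\lra\![0,1]$ with $\rho(x)\!=\!1$ and $\tn{supp}(\rho)\!\subset\!U$. Since $G$ is path-connected, I would pick a continuous path $\al\!:[0,1]\!\lra\!G$ with $\al(0)\!=\!\Id$ and $\al(1)\!=\!f(x)^{-1}$.

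Next I would define $g_t\!:\Si\!\lra\!G$ for $t\!\in\![0,1]$ by
\[g_t(z)=\begin{cases} \al(t\rho(z)), & z\!\in\!U,\\ \ov{\al(t\rho(\si(z)))}, & z\!\in\!\si(U),\\ \Id, & \hbox{otherwise}.\end{cases}\]
The three cases agree on their overlaps since $\rho$ vanishes outside a compact subset of~$U$, so each $g_t$ is continuous in $(z,t)$. By construction $g_t\!\circ\!\si\!=\!\ov{g_t}$, hence $g_t\!\in\!\cC(\Si,\si;G)$.  The desired path is then $f_t\!\equiv\!g_t\!\cdot\!f$: it satisfies $f_0\!=\!f$, $f_1(x)\!=\!f(x)^{-1}\!\cdot\!f(x)\!=\!\Id$, and $f_t\!=\!f$ on $\Si\!-\!(U\!\cup\!\si(U))$, while the equivariance $f_t\!\circ\!\si\!=\!\ov{f_t}$ follows from the same property of $g_t$ and~$f$ together with the fact that the natural conjugation on~$G$ is a group homomorphism.

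There is no serious obstacle here; the only mildly subtle point is checking continuity of~$g_t$ at nodal points of~$\Si$ that may lie inside~$U$, but this is immediate because a neighborhood of a node is a wedge of two local disks on each of which $\rho$, and therefore $g_t$, remains continuous. The essential geometric input is that $x$ lies off the fixed locus, which is precisely what decouples the construction on~$U$ from that on~$\si(U)$ and allows the equivariance to be recovered by reflection.
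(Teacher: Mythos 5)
Your proposal is correct and is essentially the paper's own argument: shrink $U$ so that $U\!\cap\!\si(U)\!=\!\eset$, use a bump function at~$x$ and a path in~$G$ from~$\Id$ to~$f(x)$ (or its inverse) to damp $f$ to~$\Id$ at~$x$, and extend to~$\si(U)$ by conjugation. The only cosmetic difference is that the paper multiplies by $g_{\rho(z)t}^{-1}$ with $g_1\!=\!f(x)$ while you multiply by $\al(t\rho(z))$ with $\al(1)\!=\!f(x)^{-1}$, which is the same construction.
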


\begin{proof}
By shrinking~$U$, we can assume that $U\!\cap\!\si(U)\!=\!\eset$.
Let $\rho\!:\Si\!\lra\![0,1]$ be a smooth $\si$-invariant function such that 
$\rho(x)\!=\!1$ and $\rho\!=\!0$ on $\Si\!-\!U\!\cup\!\si(U)$.
Choose a path $g_t\!\in\!G$ such that $g_0\!=\!\Id$ and $g_1\!=\!f(x)$.
The~path $f_t\!\in\!\cC(\Si,\si;G)$ given~by
$$f_t(z)=\begin{cases} g_{\rho(z)t}^{-1}f(z),&\hbox{if}~z\!\in\!U;\\
\ov{g_{\rho(z)t}}^{-1}f(z),&\hbox{if}~z\!\in\!\si(U);\\
f(z),&\hbox{if}~z\!\not\in\!U\!\cup\!\si(U);
\end{cases}$$
has the desired properties.
\end{proof}

\noindent
We will denote the nodes of a connected symmetric surface~$(\Si,\si)$ with one pair of conjugate nodes
by $x_{12}^{\pm}$.
A \sf{normalization} of such $(\Si,x_{12}^{\pm},\si)$ is a smooth, possibly disconnected,
symmetric surface~$(\wt\Si,\wt\si)$ with two distinguished  pairs of conjugate points,
$(x_1^+,x_1^-)$ and $(x_2^+,x_2^-)$;
the normalization map takes $x_i^+$ to~$x_{12}^+$ and  $x_i^-$ to~$x_{12}^-$.

\begin{lmm}\label{homotopextend_lmm}
Suppose $(\Si,\si)$ is a connected symmetric surface with one pair of conjugate nodes,
$n\!\in\!\Z^+$, and  $f\!\in\!\cC(\Si,\si;\SL_n\C)$.
If 
$$f|_{\Si^{\si}}\!: \Si^{\si} \lra \SL_n\R$$
is homotopic to a constant map,
then $f$ is homotopic to the constant map~$\Id$ through maps 
\hbox{$f_t\!\in\!\cC(\Si,\si;\SL_n\C)$}.
\end{lmm}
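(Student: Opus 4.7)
The plan is to pass to the normalization $\pi\!:(\wt\Si,\wt\si)\!\lra\!(\Si,\si)$, carry out the homotopy on the smooth surface $\wt\Si$, and descend. Since $x_{12}^+\!\in\!\Si\!\setminus\!\Si^\si$, Lemma~\ref{homotoid_lmm} applied with $G\!=\!\SL_n\C$ replaces $f$ by an equivariantly homotopic map satisfying $f(x_{12}^+)\!=\!\Id$; equivariance then forces $f(x_{12}^-)\!=\!\Id$ as well. The pullback $\wt f\!\equiv\!f\!\circ\!\pi\!\in\!\cC(\wt\Si,\wt\si;\SL_n\C)$ equals $\Id$ at all four preimages $x_i^\pm$, and since $\pi$ restricts to a diffeomorphism $\wt\Si^{\wt\si}\!\lra\!\Si^\si$, the map $\wt f|_{\wt\Si^{\wt\si}}$ remains null-homotopic in $\SL_n\R$.

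Next I would show that $\wt f$ is equivariantly homotopic to $\Id$ on the smooth (possibly disconnected) symmetric surface $\wt\Si$. The key topological input is that $\SL_n\C$ is $2$-connected, i.e.~$\pi_1(\SL_n\C)\!=\!\pi_2(\SL_n\C)\!=\!0$. On a pair of components of $\wt\Si$ exchanged by $\wt\si$, equivariance reconstructs $\wt f$ on one side from its values on the other, and standard obstruction theory over a $2$-dimensional base supplies a non-equivariant null-homotopy, whose equivariant partner on the opposite component is obtained by conjugation. On a $\wt\si$-invariant component, cutting along the fixed circles produces a bordered surface whose boundary maps into $\SL_n\R$; the hypothesis furnishes a null-homotopy of this boundary restriction inside $\SL_n\R$, and extending over the interior to a homotopy into $\SL_n\C$ is unobstructed because the relative obstructions live in $H^k$ with coefficients in $\pi_{k-1}(\SL_n\C)\!=\!0$ for $k\!=\!1,2$. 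Assembling the pieces yields an equivariant path $\wt f_t\!\in\!\cC(\wt\Si,\wt\si;\SL_n\C)$ with $\wt f_0\!=\!\wt f$ and $\wt f_1\!=\!\Id$.

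The remaining step, which I expect to be the main obstacle, is arranging that $\wt f_t$ descends under $\pi$ to a homotopy of $f$, i.e.~that $\wt f_t(x_1^+)\!=\!\wt f_t(x_2^+)$ and $\wt f_t(x_1^-)\!=\!\wt f_t(x_2^-)$ for every $t$. Each path $t\!\mapsto\!\wt f_t(x_i^+)$ is a loop in $\SL_n\C$ based at $\Id$ by the endpoint conditions, and $\pi_1(\SL_n\C)\!=\!0$ implies that each such loop is null-homotopic rel $\Id$. Choosing disjoint $\wt\si$-invariant disk neighborhoods of $x_1^\pm,x_2^\pm$ (the disk around $x_i^-$ being $\wt\si$-conjugate to the disk around $x_i^+$) and a radial cutoff, I would use these null-homotopies to modify $\wt f_t$ inside the disks so that it becomes constantly $\Id$ at all four marked points, while remaining unchanged outside the disks and preserving equivariance and the endpoints. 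The modified equivariant homotopy then descends under $\pi$ to the desired path from $f$ to $\Id$ in $\cC(\Si,\si;\SL_n\C)$.
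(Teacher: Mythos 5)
Your argument is correct, and it handles the crux of the lemma --- arranging that the equivariant null-homotopy on the normalization descends to the nodal surface --- by a route genuinely different from the paper's. The paper reruns the proof of \cite[Lemma~5.4]{RealGWsI} on $(\wt\Si,\wt\si)$ while routing the cutting paths of that construction through the node preimages and keeping $\wt{f}$ equal to $\Id$ there at every stage, so that $\wt{f}_t(x_1^{\pm})\!=\!\wt{f}_t(x_2^{\pm})$ holds by construction of the homotopy. You instead take the smooth case essentially as a black box and then repair an arbitrary equivariant null-homotopy a posteriori: since $\pi_1(\SL_n\C)\!=\!0$, each loop $t\!\mapsto\!\wt{f}_t(x_i^+)$ based at~$\Id$ bounds a disk $H_i(s,t)$ in $\SL_n\C$, and replacing $\wt{f}_t$ by $H_i(1\!-\!\rho,t)^{-1}\wt{f}_t$ in small disks around $x_1^+,x_2^+$ disjoint from each other, from their conjugates, and from $\wt\Si^{\wt\si}$ (with the conjugated modification on the conjugate disks) makes the family constantly $\Id$ at all four points without disturbing equivariance, continuity, or the endpoints $\wt{f}_0\!=\!\wt{f}$ and $\wt{f}_1\!=\!\Id$; the modified homotopy then descends. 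This correction is sound and arguably more robust, since it decouples the descent problem from the particular construction of the null-homotopy; the paper's version stays entirely within the half-surface/cutting framework it has already set up. One caveat on your step on the normalization: a $\wt\si$-invariant component need not be separated by its fixed circles (the fixed locus may even be empty there), so the bordered piece may also carry crosscap boundary circles on which the condition is the antipodal conjugation condition rather than an $\SL_n\R$-valued one; the obstruction-theoretic conclusion is unaffected because $\pi_1(\SL_n\C)\!=\!\pi_2(\SL_n\C)\!=\!0$, but you should either address this case explicitly or simply invoke \cite[Lemma~5.4]{RealGWsI}, which is exactly the statement you are re-deriving.
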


\begin{proof}
By Lemma~\ref{homotoid_lmm}, we can assume that $f(x_{12}^+)\!=\!\Id$.
Let $\wt{f}\!\in\!\cC(\wt\Si,\wt\si;\SL_n\C)$ be the function corresponding 
to $f\!\in\!\cC(\Si,\si;\SL_n\C)$. 
In particular, $\wt{f}(x_1^{\pm}),\wt{f}(x_2^{\pm})\!=\!\Id$.\\

\noindent
We proceed as in the proof of \cite[Lemma~5.4]{RealGWsI}, which contains 
a picture illustrating a similar argument.
Choose a symmetric half-surface $\wt\Si^b\!\!\subset\!\wt\Si$ and 
a neighborhood $U\!\subset\!\wt\Si^b$ of~$\prt\wt\Si^b$ so that
either $x_1^+,x_2^+\!\in\!\wt\Si^b\!-\!U$ or $x_1^+,x_2^-\!\in\!\wt\Si^b\!-\!U$.
Let $x_2\!=\!x_2^+$ in the first case, $x_2\!=\!x_2^-$ in the second case,
and $x_1\!=\!x_1^+$ in both cases.
Take the cutting paths~$C_i$ so that $x_1,x_2\!\not\in\!C_i$ and 
 the extensions of the homotopies of~$\wt{f}$ from~$C_i$ to~$\wt\Si^b$ 
so that they do not change~$\wt{f}$ at~$x_1$ or~$x_2$.
The surface~$D$ obtained by cutting $\wt\Si^b$ along these paths is either a disk~$D^2$
or two disjoint copies of~$D^2$.
Choose disjoint embedded paths~$\ga_1$ and~$\ga_2$ in~$D$ as in the last paragraph of
the proof of \cite[Lemma~5.4]{RealGWsI} from~$\prt D$
to~$x_1$ and~$x_2$, respectively.
Since  $\wt{f}(x_i)\!=\!\Id$ in this case, we can homotope~$\wt{f}$ to~$\Id$ 
over~$\ga_i$ while keeping it fixed at the endpoints.
Similarly to the second paragraph in the proof of this lemma, 
this homotopy extends over~$D$ without changing~$\wt{f}$ over~$\prt D$ or~$\ga_{3-i}$
and thus descends to~$\wt\Si^b$.
We then cut~$D$ along~$\ga_1$ and~$\ga_2$ into another disk or a pair of disks
and proceed as in
the second half of the last paragraph in the proof of \cite[Lemma~5.4]{RealGWsI}.
The doubled homotopy in the proof of this lemma
then satisfies $\wt{f}_t(x_1^{\pm})\!=\!\wt{f}_t(x_2^{\pm})$
and so descends to~$\Si$.
\end{proof}

\begin{crl}\label{RBPhomotExt_crl}
Let $(\Si,\si)$ be a connected symmetric surface with one pair of conjugate nodes and
$$\Phi,\Psi\!:(V,\vph)\lra \big(\Si\!\times\!\C^n,\si\!\times\!\fc\big)$$
be isomorphisms of real bundle pairs over $(\Si,\si)$.
If the isomorphisms 
\begin{equation*}\begin{split}
\Phi|_{V^{\vph}},\Psi|_{V^{\vph}}\!: V^{\vph}&\lra \Si\!\times\!\R^n, \\
\La_{\C}^{\top}\Phi,\La_{\C}^n\Psi\!:\La_{\C}^{\top}(V,\vph)
&\lra \La_{\C}^{\top}\big(\Si\!\times\!\C^n,\si\!\times\!\fc\big)
=\big(\Si\!\times\!\C,\si\!\times\!\fc\big)
\end{split}\end{equation*}
are homotopic, then so are the isomorphisms~$\Phi$ and~$\Psi$. 
\end{crl}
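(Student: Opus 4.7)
The plan is to reduce the claim to Lemma~\ref{homotopextend_lmm} applied to the ``difference'' $f\!\equiv\!\Psi\!\circ\!\Phi^{-1}$, viewed as an element of $\cC(\Si,\si;\GL_n\C)$ via the trivial bundle pair on the target. A homotopy from~$\Phi$ to~$\Psi$ is equivalent to a homotopy from~$f$ to the constant map~$\Id$ through maps in~$\cC(\Si,\si;\GL_n\C)$, so it suffices to produce such a homotopy. The hypothesis on $\Phi|_{V^{\vph}}$ and $\Psi|_{V^{\vph}}$ translates to saying that $f|_{\Si^{\si}}\!:\Si^{\si}\!\lra\!\GL_n\R$ is homotopic to~$\Id$ (in particular through maps landing in~$\GL_n^+\R$), while the hypothesis on top exterior powers says that $\det f\!\in\!\cC(\Si,\si;\C^*)$ is homotopic to the constant map~$1$.

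The first step is to homotope~$f$ inside $\cC(\Si,\si;\GL_n\C)$ to an element of $\cC(\Si,\si;\SL_n\C)$. Let $h_t\!\in\!\cC(\Si,\si;\C^*)$ be a homotopy with $h_0\!=\!\det f$ and $h_1\!\equiv\!1$; on~$\Si^{\si}$ this path lies in~$\R^*$, and since $f|_{\Si^{\si}}$ is homotopic to~$\Id$ its determinant is homotopic to~$1$ in~$\R^*$, so we may arrange $h_t|_{\Si^{\si}}\!>\!0$. Define
\[
f_t(z)=f(z)\cdot\tn{diag}\big(h_t(z)/h_0(z),1,\ldots,1\big)\in\GL_n\C.
\]
Then $f_0\!=\!f$, $\det f_1\!\equiv\!1$ so $f_1\!\in\!\cC(\Si,\si;\SL_n\C)$, and the diagonal factor lies in $\cC(\Si,\si;\GL_n\C)$ because $h_t$ does. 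Moreover $f_1|_{\Si^{\si}}$ differs from $f|_{\Si^{\si}}$ by a path of real diagonal matrices with positive $(1,1)$-entry, so $f_1|_{\Si^{\si}}$ is homotopic to~$\Id$ inside~$\GL_n^+\R$; since the inclusion $\SL_n\R\!\hra\!\GL_n^+\R$ is a homotopy equivalence and $f_1|_{\Si^{\si}}$ lies in~$\SL_n\R$, this homotopy can be taken inside~$\SL_n\R$.

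With these two properties in hand, Lemma~\ref{homotopextend_lmm} applied to~$f_1$ produces a homotopy from~$f_1$ to the constant map~$\Id$ through maps in $\cC(\Si,\si;\SL_n\C)$. Concatenating this with the homotopy $(f_t)_{t\in[0,1]}$ yields a homotopy from $f$ to~$\Id$ inside $\cC(\Si,\si;\GL_n\C)$, which translates into the desired homotopy from~$\Psi$ to~$\Phi$ through isomorphisms of real bundle pairs.

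The only genuinely nontrivial input is Lemma~\ref{homotopextend_lmm}, whose proof (already given in the excerpt) handles the topological subtlety caused by the pair of conjugate nodes. The rest is a straightforward reduction: the main things to verify carefully are that the diagonal modification preserves the symmetry condition (which holds because $h_t\!\in\!\cC(\Si,\si;\C^*)$) and that after the modification the restriction to~$\Si^{\si}$ is still homotopic to the identity in~$\SL_n\R$, not merely in~$\GL_n\C$; this is the only place where one must be attentive, and it is handled by the positivity arrangement $h_t|_{\Si^{\si}}\!>\!0$ together with the homotopy equivalence $\SL_n\R\!\simeq\!\GL_n^+\R$.
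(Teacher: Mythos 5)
Your argument is correct and follows essentially the same route as the paper: reduce to showing that $f=\Psi\circ\Phi^{-1}\in\cC(\Si,\si;\GL_n\C)$ is homotopic to $\Id$, use the hypothesis on top exterior powers to homotope $f$ into $\cC(\Si,\si;\SL_n\C)$ while keeping its restriction to $\Si^{\si}$ null-homotopic in $\SL_n\R$, and then invoke Lemma~\ref{homotopextend_lmm}. The paper's proof is just a citation of the smooth-case reduction in the companion paper with the nodal-case lemma substituted, and your write-up supplies exactly those details.
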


\begin{proof}
The first paragraph of the proof of \cite[Corollary~5.5]{RealGWsI} applies without 
any changes.
The second paragraph applies with \cite[Lemma~5.4]{RealGWsI}
replaced by Lemma~\ref{homotopextend_lmm} above.
\end{proof}

\begin{lmm}\label{canonisomExt_lmm}
Proposition~\ref{canonisom_prp} holds for connected symmetric
surfaces with one pair of conjugate nodes.
\end{lmm}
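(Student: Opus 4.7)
The plan is to reduce to the smooth, possibly disconnected, case of Proposition~\ref{canonisom_prp} via the normalization construction. Let $(\wt\Si, \wt\si)$ be the normalization of $(\Si, \si)$ at the pair of conjugate nodes $x_{12}^\pm$, with preimage pairs $(x_1^+, x_1^-)$ and $(x_2^+, x_2^-)$. Since these nodes lie off $\Si^\si$, we have $\wt\Si^{\wt\si} = \Si^\si$, and the pullback $(\wt V, \wt\vph)$ of $(V, \vph)$ inherits a real orientation from the given one on $(V, \vph)$; the normalization map supplies canonical identifications $\wt V|_{x_i^\pm} \approx V|_{x_{12}^\pm}$ for $i = 1, 2$, and similarly for the line bundle pair $(\wt L, \wt\phi)$ underlying the real orientation.

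Applying the smooth case of Proposition~\ref{canonisom_prp} --- namely \cite[Proposition~5.2]{RealGWsI}, which as noted in the excerpt works unchanged for disconnected smooth symmetric surfaces --- produces a homotopy class of isomorphisms
$$\wt\Psi\!:\bigl(\wt V \oplus 2\wt L^*,\, \wt\vph \oplus 2\wt\phi^*\bigr) \approx \bigl(\wt\Si \times \C^{n+2},\, \wt\si \times \fc\bigr)$$
satisfying both the spin structure and top-exterior-power conditions over $(\wt\Si, \wt\si)$. Such an isomorphism descends to $(\Si, \si)$ precisely when $\wt\Psi_{x_1^+} = \wt\Psi_{x_2^+}$ under the identifications above ($\wt\si$-equivariance then automatically yields the analogous identity at $x_i^-$). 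Using Lemma~\ref{homotoid_lmm} applied to $\SL_{n+2}\C$, I would first deform $\wt\Psi$ within its homotopy class, via a modification supported in a symmetric neighborhood of $\{x_1^+, x_1^-\}$, so that $\wt\Psi_{x_1^+} = \Id$, and then via a disjoint deformation near $\{x_2^+, x_2^-\}$ so that $\wt\Psi_{x_2^+} = \Id$ as well. The descended isomorphism $\Psi$ still satisfies the two characterizing conditions on $(\Si, \si)$, since the normalization map is a homeomorphism away from the nodes and both conditions are homotopy-invariant.

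For the characterization part of the proposition, suppose $\Psi_0, \Psi_1$ are two isomorphisms over $(\Si, \si)$ satisfying the two conditions. Their pullbacks to $(\wt\Si, \wt\si)$ are homotopic by the smooth case, and Lemma~\ref{homotoid_lmm} permits the connecting path of automorphisms of $(\wt\Si \times \C^{n+2}, \wt\si \times \fc)$ to be further deformed so that its values at the paired nodes remain equal throughout; the homotopy then descends to $(\Si, \si)$. This is the same structure as the proof of Corollary~\ref{RBPhomotExt_crl}, now using Lemma~\ref{homotopextend_lmm} in place of \cite[Lemma~5.4]{RealGWsI} to supply the extendability across a surface with a conjugate pair of nodes. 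The main obstacle in the argument is this matching step at the two nodes: one must arrange the adjustments at $x_1^+$ and $x_2^+$ simultaneously without disturbing the spin and top-exterior-power conditions, which is exactly what Lemma~\ref{homotoid_lmm} guarantees since the required deformations can be supported in arbitrarily small disjoint $\wt\si$-symmetric neighborhoods of the nodes.
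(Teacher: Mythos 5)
Your overall strategy --- pass to the normalization, apply the smooth (possibly disconnected) case of Proposition~\ref{canonisom_prp}, and correct the resulting isomorphism near the preimages of the nodes so that it descends --- is the same as the paper's. The gap is in the existence step: you do not control the homotopy class of $\La_{\C}^{\top}\Psi$ over $(\Si,\si)$ itself, only over $(\wt\Si,\wt\si)$. Identifying $x_1^{\pm}$ with $x_2^{\pm}$ creates a new loop in $\Si$, so maps $\Si\!\lra\!\C^*$ (hence determinants of bundle isomorphisms) that become homotopic after pullback to $\wt\Si$ can still differ over $\Si$ by a winding number around that loop. Consequently, knowing that $\La_{\C}^{\top}\wt\Psi$ lies in the correct class over $\wt\Si$ does not place the descended $\La_{\C}^{\top}\Psi$ in the class over $(\Si,\si)$ determined by~\ref{isom_it2}, which is what the second characterizing condition of Proposition~\ref{canonisom_prp} requires; your assertion that the descended $\Psi$ ``still satisfies the two characterizing conditions'' because they are homotopy-invariant conflates homotopy over $\wt\Si$ with homotopy over $\Si$. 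A secondary point: deforming $\wt\Psi$ so that its values at $x_1^+$ and $x_2^+$ become $\Id$ via corrections valued in $\SL_{n+2}\C$ presupposes that $\det\wt\Psi_{x_i^+}=1$ in the chosen frames, which you have not arranged; using $\GL_{n+2}\C$ instead makes the determinant class over $\Si$ even less controlled.

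The paper's proof closes exactly this gap. It chooses $\wt\Phi$ from \cite[Proposition~5.2]{RealGWsI} so that $\La_{\C}^{\top}\wt\Phi$ is \emph{literally} the lift of a given isomorphism in~\eref{realorient_e4} defined over $(\Si,\si)$, not merely homotopic to it over $\wt\Si$. This yields~\eref{canIsomExt_e2a}, which forces the discrepancy $\psi^{\pm}$ between the two node fibers to lie in $\SL_{n+2}\C$; the correcting function $f$ is then taken with values in $\SL_{n+2}\C$, so $\La_{\C}^{\top}(\wt\Psi_f\!\circ\!\wt\Phi)=\La_{\C}^{\top}\wt\Phi$ is unchanged and the descended determinant is exactly the prescribed isomorphism over $\Si$. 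Your uniqueness argument is correct in substance --- it is precisely Corollary~\ref{RBPhomotExt_crl}, which rests on Lemma~\ref{homotopextend_lmm} --- although the intermediate appeal to Lemma~\ref{homotoid_lmm} to keep an entire path of automorphisms matched at the two nodes is not what that lemma provides and is not needed once Corollary~\ref{RBPhomotExt_crl} is invoked.
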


\begin{proof}
Let $\wt{V},\wt{L}\!\lra\!\wt\Si$ be complex vector bundles and
$$\psi_1\!:\wt{V}\big|_{x_1^{\pm}}\lra \wt{V}\big|_{x_2^{\pm}}
\qquad\hbox{and}\qquad 
\psi_2\!:\wt{L}\big|_{x_1^{\pm}}\lra \wt{L}\big|_{x_2^{\pm}}$$
be isomorphisms of complex vector spaces such~that 
$$V=\wt{V}\big/\!\!\sim, ~~ v\!\sim\!\psi_1(v)~\forall\,v\!\in\!\wt{V}\big|_{x_1^{\pm}},
\qquad\hbox{and}\qquad 
L=\wt{L}\big/\!\!\sim, ~~ v\!\sim\!\psi_2(v)~\forall\,v\!\in\!\wt{L}\big|_{x_1^{\pm}}.$$
Denote by $\wt\vph_1$ and~$\wt\vph_2$ the lift of $\vph$ to~$\wt{V}$ and 
the lift of $\wt\phi$ to~$\wt{L}$, respectively.
Define
$$\big(\wt{W},\wt\vph_{12}\big)=\big(\wt{V}\!\oplus\!2\wt{L}^*,\wt\vph_1\!\oplus\!2\wt\vph_2^*\big),
\qquad 
\psi_{12}=\psi_1\oplus2(\psi_2^{-1})^*\!: \wt{W}\big|_{x_1^{\pm}}\lra \wt{W}\big|_{x_2^{\pm}}\,.$$
Thus,  $(\wt{V},\wt\vph_1)$ and $(\wt{L},\wt\vph_2)$ are real bundle pairs over $(\wt\Si,\wt\si)$
that descend to the real bundle pairs $(V,\vph)$ and $(L,\wt\phi)$ over~$(\Si,\si)$.
Furthermore,
\BE{canIsomExt_e3}\psi_{12}\circ\wt\vph_{12}=\wt\vph_{12}\circ\psi_{12}\,.\EE
For any $f\!\in\!\cC(\wt\Si,\wt\si;\GL_{n+2}\C)$, let
$$\wt\Psi_f\!:\big(\wt\Si\!\times\!\C^{n+2},\wt\si\!\times\!\fc\big)\lra
\big(\wt\Si\!\times\!\C^{n+2},\wt\si\!\times\!\fc\big), \qquad
\wt\Psi_f(z,v)=\big(z,f(z)v\big).$$
Let $\wt\si'(x_i^{\pm})\!=\!x_{3-i}^{\pm}$ for $i\!=\!1,2$.\\

\noindent
The choices~\ref{isom_it2} and~\ref{spin_it2} in Definition~\ref{realorient_dfn4}
for~$(\Si,\si)$  lift to~$(\wt\Si,\wt\si)$. 
By \cite[Proposition~5.2]{RealGWsI}, there thus exists an isomorphism 
$$\wt\Phi\!:(\wt{W},\wt\vph_{12})\lra \big(\wt\Si\!\times\!\C^{n+2},\wt\si\!\times\!\fc\big)$$
of real bundle pairs over $(\wt\Si,\wt\si)$ that lies in the homotopy class
determined by the lifted real orientation.
It satisfies the spin structure requirement of Proposition~\ref{canonisom_prp}.
By the proof of \cite[Proposition~5.2]{RealGWsI}, $\wt\Phi$ can be chosen so that 
it induces the isomorphism in~\eref{realorient2_e2b} over~$(\wt\Si,\wt\si)$
determined by the lift of a given isomorphism in~\eref{realorient_e4} over~$(\Si,\si)$.
This implies~that 
\BE{canIsomExt_e2a}
\big\{\wt\si'\!\times\!\id\big\}\!\circ\!\big\{\La_{\C}^{\top}\wt\Phi\big\}=
\big\{\La_{\C}^{\top}\wt\Phi\big\}\!\circ\!\big\{\La_{\C}^{\top}\psi_{12}\big\}\!:
\La_{\C}^{\top}\wt{W}|_{x_1^{\pm}}\lra 
\{x_2^{\pm}\}\!\times\!\La_{\C}^{\top}\C^{n+2}\!=\!\{x_2^{\pm}\}\!\times\!\C.\EE
In the next paragraph, we homotope~$\wt\Phi$ near~$x_1^{\pm}$ so that it descends 
to an isomorphism~$\Psi$ over~$\Si$; the latter satisfies 
the two properties in the last sentence of Proposition~\ref{canonisom_prp}.
By Corollary~\ref{RBPhomotExt_crl}, any two such isomorphisms~$\Psi$ are homotopic.\\

\noindent
Define $\psi^{\pm}\!\in\!\GL_{n+2}\C$ by
\BE{canIsomExt_e7}
\id\!\times\!\psi^{\pm}=\big\{\wt\si'\!\times\!\Id\big\}
\!\circ\!\wt\Phi\!\circ\!\psi_{12}\!\circ\!\wt\Phi^{-1}\!:
\{x_1^{\pm}\}\!\times\!\C^{n+2}\lra \{x_1^{\pm}\}\!\times\!\C^{n+2}\,.\EE
By~\eref{canIsomExt_e2a}, $\det_{\C}\!\psi^{\pm}\!=\!1$, i.e.~$\psi\!\in\!\SL_{n+2}\C$.
By~\eref{canIsomExt_e3}, $\ov{\psi^+}\!=\!\psi^-$.
Since $\SL_{n+2}\C$ is connected, there exist $f\!\in\!\cC(\wt\Si,\wt\si;\SL_{n+2}\C)$ 
and a neighborhood $U$ of~$x_1^+$ in~$\wt\Si$ such~that 
\BE{canIsomExt_e4b} 
f(z)=
\begin{cases}\psi^{\pm},&\hbox{if}~z\!=\!x_1^{\pm};\\
\Id,&\hbox{if}~z\!\not\in\!U\!\cup\!\wt\si(U);
\end{cases}
\qquad x_2^{\pm}\!\not\in\!U,\quad U\!\cap\!\wt\si(U)=\eset.\EE
By~\eref{canIsomExt_e7} and~\eref{canIsomExt_e4b},
$$ \big\{\wt\si'\!\times\!\Id\big\}\!\circ\!\wt\Psi_f\!\circ\!\wt\Phi=
\wt\Psi_f\!\circ\!\wt\Phi\!\circ\!\psi_{12}\!:
\wt{W}|_{x_1^{\pm}}\lra \{x_2^{\pm}\}\!\times\!\C^{n+2}\,.$$
Thus, $\wt\Psi_f\!\circ\!\wt\Phi$ descends 
to an isomorphism~$\Psi$ in~\eref{realorient2_e2} of real bundle pairs over~$(\Si,\si)$
that induces the isomorphism in~\eref{realorient2_e2b} determined by 
a given isomorphism in~\eref{realorient_e4}.
\end{proof}

\noindent
Suppose $(\Si,x_{12}^{\pm},\si)$ and $(\wt\Si,x_1^{\pm},x_2^{\pm},\wt\si)$ are as above.
A rank~$n$ real bundle pair~$(V,\vph)$ over~$(\Si,\si)$ lifts to 
a rank~$n$ real bundle pair~$(\wt{V},\wt\vph)$ over~$(\wt\Si,\wt\si)$.
A real orientation on~$(V,\vph)$ lifts to a real orientation on~$(\wt{V},\wt\vph)$.
A real CR-operator~$D$ on~$(V,\vph)$ lifts 
to a real CR-operator~$\wt{D}$ on~$(\wt{V},\wt\vph)$.
There is a short exact sequence of~Fredholm operators
\BE{DvswtD_e}\begin{split}
\xymatrix{0 \ar[r]& \Ga(\Si;V)^{\vph} \ar[r]\ar[d]^D&  
\Ga(\wt\Si;\wt{V})^{\wt\vph} \ar[r]^<<<<<<{\ev_{x_{12}^+}}\ar[d]^{\wt{D}}& 
V_{x_{12}^+} \ar[r]\ar[d]& 0\\
0 \ar[r]& \Ga^{0,1}_{\fJ}(\Si;V)^{\vph} \ar[r]& 
\Ga^{0,1}_{\fJ}(\wt\Si;\wt{V})^{\wt\vph} \ar[r]& 0\ar[r]& 0 }
\end{split}\EE
with the last homomorphism in the top row given~by
$$\ev_{x_{12}^+}(\xi)=\xi(x_1^+)\!-\!\xi(x_2^+) \in 
V_{x_{12}^+}\!=\!\wt{V}_{x_1^+}\!=\!\wt{V}_{x_2^+}\,.$$
Thus, there is a canonical isomorphism 
\BE{DvswtD_e2}
\det \wt{D} \approx \det D \otimes \La_{\R}^{2n}V_{x_{12}^+}\EE
of real lines.\\

\noindent
If $\Psi$ is an isomorphism as in~\eref{realorient2_e2} and $\wt\Psi$ is its lift to~$(\wt\Si,\wt\si)$,
then the diagram 
$$\xymatrix{0 \ar[r]& \Ga\big(\Si;V\!\oplus\!2L^*\big)^{\vph\oplus2\wt\phi^*} 
\ar[r]\ar[d]^{\Psi}&  
\Ga\big(\wt\Si;\wt{V}\!\oplus\!2\wt{L}^*\big)^{\wt\vph\oplus2\wt\phi^*} 
\ar[r]^<<<<<{\ev_{x_{12}^+}}\ar[d]^{\wt\Psi}& 
V_{x_{12}^+}\!\oplus\! 2L^*_{x_{12}^+} \ar[r]\ar[d]^{\Psi}& 0\\
0 \ar[r]& \cC(\Si,\si;\C^{n+2}) \ar[r]&  
\cC(\wt\Si,\wt\si;\C^{n+2}\big) \ar[r]^<<<<<<{\ev_{x_{12}^+}}& 
\C^{n+2} \ar[r]& 0}$$
commutes.
If~$\Psi$ is an isomorphism as in~\eref{realorient2_e2} in the homotopy class
determined by a real orientation on~$(V,\vph)$,
then the lift~$\wt\Psi$ of~$\Psi$ to~$(\wt{V},\wt\vph)$
lies in the homotopy class of isomorphisms determined by 
the induced real orientation on~$(\wt{V},\wt\vph)$.
These two observations yield the following comparison of 
the orientations on the relative determinants provided by 
 Corollary~\ref{canonisom_crl2a}.

\begin{crl}\label{RealOrient_crl}
Let $(\Si,\si)$, $(\wt\Si,\wt\si)$, $(V,\vph)$, and $(\wt{V},\wt\vph)$ be as above.
The isomorphism
\BE{RealOrientCrl_e}
\rdet\,\wt{D}
\approx \big(\rdet\,D\big) \otimes 
\La_{\R}^{2n}V_{x_{12}^+}\!\otimes\!\La_{\R}^{2n}\C^n\EE
induced by the isomorphisms~\eref{DvswtD_e2} for $(V,\vph)$ and 
$(\Si\!\times\!\C^n,\si\!\times\!\fc)$ is orientation-preserving 
with respect to the orientation on $\rdet\,D$ determined by 
a real orientation on~$(V,\vph)$,
the orientation on~$\rdet\,\wt{D}$  determined by
 the lifted real orientation on~$(\wt{V},\wt\vph)$, 
and the complex orientations of  $V_{x_{12}^+}$ and~$\C^n$.
\end{crl}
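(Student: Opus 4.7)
The plan is to verify the orientation assertion by unpacking each ingredient in the Corollary~\ref{canonisom_crl2a} construction and tracking it through the normalization sequence~\eref{DvswtD_e}. Two inputs from the preceding discussion supply the main leverage: Lemma~\ref{canonisomExt_lmm} guarantees that a real orientation on $(V,\vph)$ determines an isomorphism $\Psi$ as in~\eref{realorient2_e2} whose lift $\wt\Psi$ lies in the homotopy class determined by the lifted real orientation on $(\wt V,\wt\vph)$; and the short exact sequence~\eref{DvswtD_e} is natural with respect to short exact sequences and direct sums of real bundle pairs.

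First I would apply~\eref{DvswtD_e} to the trivial rank-one real bundle pair $(\Si\!\times\!\C,\si\!\times\!\fc)$ to obtain a canonical isomorphism $\det\dbar_{\wt\Si;\C}\!\approx\!\det\dbar_{\Si;\C}\!\otimes\!\La_\R^2\C$. Since the operators on both sides of~\eref{DvswtD_e} in this case admit homotopies to $\C$-linear ones, a repetition of the argument in Lemma~\ref{ComplexOrient_lmm1} shows this isomorphism is orientation-preserving once $\La_\R^2\C$ carries its complex orientation. Taking the $n$-fold tensor power, and combining with~\eref{DvswtD_e2} applied to $(V,\vph)$, yields the isomorphism~\eref{RealOrientCrl_e}.

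Next, to compare the two orientations, I would apply~\eref{DvswtD_e2} in parallel to $(V,\vph)$, $(L^*,\wt\phi^*)$, their combination $(V\!\oplus\!2L^*,\vph\!\oplus\!2\wt\phi^*)$, and the trivial pair $(\Si\!\times\!\C^{n+2},\si\!\times\!\fc)$. The isomorphism $\Psi$ identifies the CR-operators on the third pair with those on the fourth up to homotopy, and because $\wt\Psi$ is the lift of~$\Psi$, the resulting diagram of determinant line isomorphisms commutes. The orientation on $\rdet D$ induced by the real orientation is defined by pairing this identification with the canonical square orientations on $(\det D_{L^*})^{\otimes 2}$ and $(\det\dbar_{\Si;\C})^{\otimes 2}$, and similarly on $(\wt\Si,\wt\si)$; commutativity of the diagram then reduces the orientation comparison to observing that the ``correction terms'' $\La_\R^2 L^*_{x_{12}^+}$ and $\La_\R^2\C$ appearing twice from applying~\eref{DvswtD_e2} to $(L^*,\wt\phi^*)$ and to $(\Si\!\times\!\C,\si\!\times\!\fc)$ do so as tensor squares with matching canonical orientations.

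The main obstacle will be the bookkeeping of signs from reordering tensor factors and splitting direct sums throughout the commutative diagram. However, all ``extra'' real vector spaces that appear ($V_{x_{12}^+}$, $L^*_{x_{12}^+}$, $\C^n$, and~$\C^2$) are even-dimensional over~$\R$ with complex orientations, so every such sign will be~$+1$, and Corollary~\ref{RealOrient_crl} follows.
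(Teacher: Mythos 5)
Your argument is correct and follows essentially the same route as the paper: the paper deduces this corollary from exactly the two observations you isolate, namely the commutative diagram intertwining the normalization exact sequences for $(V\!\oplus\!2L^*,\vph\!\oplus\!2\wt\phi^*)$ and for the trivial pair through $\Psi$ and $\wt\Psi$, together with the fact that the lift $\wt\Psi$ of a $\Psi$ in the distinguished homotopy class lies in the homotopy class determined by the lifted real orientation, and your even-dimensionality remark disposes of the reordering signs in the same way. The one misstatement is the claim that $\dbar_{\Si;\C}$ and $\dbar_{\wt\Si;\C}$ admit homotopies to $\C$-linear operators --- real CR-operators on real bundle pairs over a connected symmetric surface do not (that device is specific to the doublet setting of Lemma~\ref{ComplexOrient_lmm1}) --- but nothing in your proof actually relies on it, since the trivial-bundle correction enters only through the complex orientation of $\La_{\R}^{2n}\C^n$, which your main diagram chase already handles.
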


\subsection{Smoothings of two-nodal symmetric surfaces}
\label{SymmSurfDegen_subs}

\noindent
For a disk $\De\!\subset\!\C$ centered at the origin, let 
\begin{gather*}
\De^*=\De\!-\!\{0\}, \qquad 
\De_{\R}^2=\big\{(t,\bar{t})\!:\,t\!\in\!\De\big\}\,, \qquad
\De_{\R}^{*2}=\De^{*2}\!\cap\!\De_{\R}^2\,, \\
\tau_{\De}\!:\De^2\lra\De^2, \quad \tau_{\De}\big(t^+,t^-\big)= \big(\ov{t^-},\ov{t^+}\big).
\end{gather*}
Thus, $\De_{\R}^2$ is the fixed locus of the anti-complex involution $\tau_{\De}$ on~$\De^2$.\\

\noindent
Let $\cC\!\equiv\!(\Si,z_1,\ldots,z_l)$ be a  marked Riemann surface with two nodes and
$\pi\!:\cU\!\lra\!\De^2$ be a holomorphic map from a complex manifold 
 with sections $s_1,\ldots,s_l\!:\De^2\!\lra\!\cU$.
We will call the tuple $(\pi,s_1,\ldots,s_l)$ a \textsf{smoothing of~$\cC$} if 
\begin{enumerate}[label=$\bullet$,leftmargin=*]
\item $\Si_{\bt}\!\equiv\!\pi^{-1}(\bt)$ is a smooth compact Riemann surface for all 
$\bt\!\in\!\De^{*2}$;
\item $s_i(\bt)\!\neq\!s_j(\bt)$ for all $\bt\!\in\!\De^2$ and $i\!\neq\!j$;
\item $(\Si_0,s_1(0),\ldots,s_l(0))\!=\!\cC$.
\end{enumerate}
Suppose $\cC\!\equiv\!(\Si,(z_1^+,z_1^-),\ldots,(z_l^+,z_l^-))$ is 
a marked symmetric Riemann surface with involution~$\si$ and a pair of conjugate nodes,
$(\pi,s_1,\ldots,s_l)$ is as above,  and
$\wt\tau_{\De}\!:\cU\!\lra\!\cU$ is an anti-holomorphic involution lifting the involution~$\tau_{\De}$.
We will call the tuple $(\pi,\wt\tau_{\De},s_1,\ldots,s_l)$ a \textsf{smoothing of~$\cC$} if 
$(\pi,s_1,\wt\tau_{\De}\!\circ\!s_1,\ldots,s_l,\wt\tau_{\De}\!\circ\!s_l)$ is a smoothing of~$\cC$
and $\wt\tau_{\De}|_{\Si_0}\!=\!\si$.
In such a case, let $\si_{\bt}\!=\!\wt\tau_{\De}|_{\Si_{\bt}}$ for each $\bt\!\in\!\De^2_{\R}$.\\

\noindent
With $(\pi,\wt\tau_{\De},s_1,\ldots,s_l)$ as above,
denote by $x_{12}^{\pm}\!\in\!\Si$  and $\wt\Si\!\lra\!\Si$
the nodes and the normalization of~$\Si$, respectively,
and set $\Si^*\!=\!\Si\!-\!\{x_{12}^{\pm}\}$.
Let
\begin{equation*}\begin{split}
\cU_0^+&\equiv
\big\{(t^+,t^-,z_1^+,z_2^+)\!\in\!\De^2\!\times\!\C^2\!:\,
|z_1^+|,|z_2^+|\!<\!1,~z_1^+z_2^+\!=\!t^+\big\},\\
\cU_0^-&\equiv
\big\{(t^+,t^-,z_1^-,z_2^-)\!\in\!\De^2\!\times\!\C^2\!:\,
|z_1^-|,|z_2^-|\!<\!1,~z_1^-z_2^-\!=\!t^-\big\}.
\end{split}\end{equation*}
As fibrations over~$\De$,
\BE{cUdfn_e}\cU\approx\big(\cU_0^+\!\sqcup\!\cU_0^-\!\sqcup\!\cU'\big)\big/\sim, \qquad 
(\bt,z_1^{\pm},z_2^{\pm})\sim\begin{cases}
(\bt,z_1^{\pm}),&\hbox{if}~|z_1^{\pm}|\!>\!|z_2^{\pm}|;\\
(\bt,z_2^{\pm}),&\hbox{if}~|z_1^{\pm}|\!<\!|z_2^{\pm}|;\end{cases}\EE
for some family~$\cU'$ of deformations of~$\Si^*$ over~$\De^2$,
a choice of coordinates $z_i^{\pm}$ on~$\wt\Si$ centered at~$x_i^{\pm}$,
and their extensions to~$\cU$.
The local coordinates $z_i^{\pm}$ and the family~$\cU'$ in~\eref{cUdfn_e} 
can be chosen so that~$\cU'$ is preserved by~$\wt\tau_{\De}$ and 
the identification in~\eref{cUdfn_e} intertwines~$\wt\tau_{\De}$ with 
the involution
\BE{cU0inv_e}\cU_0^{\pm}\lra\cU_0^{\mp}, \qquad 
\big(t^+,t^-,z_1^{\pm},z_2^{\pm}\big)\lra\big(\ov{t^-},\ov{t^+},\ov{z_1^{\pm}},\ov{z_2^{\pm}}\big).\EE
In particular, $\cU$ retracts onto~$\Si_0$ respecting the involution~$\wt\tau_{\De}$.\\

\noindent
Suppose $\pi\!:\cU\!\lra\!\De^2$ and $\wt\tau_{\De}$ are as above, $(V,\vph)\!\lra\!(\cU,\wt\tau_{\De})$
is a real bundle pair, and $\na$ and~$A$ are a connection and a 0-th order deformation
term on~$(V,\vph)$ as in Section~\ref{DetLB_subs}.
The restriction of $\na$ and~$A$ to $(V,\vph)|_{(\Si_{\bt},\si_{\bt})}$ with 
$\bt\!\in\!\De_{\R}^2$ determines a real CR-operator~$D_{\bt}$.
By \cite[Appendix~D.4]{Huang} and \cite[Section 3.2]{EES}, the determinant lines of these
operators form a line bundle
\BE{detExt_e}\det D_{(V,\vph)}\lra\De_{\R}^2\,.\EE
We denote by $\det\dbar_{\C}\!\lra\!\De_{\R}^2$ the determinant line bundle 
associated with the standard holomorphic structure on $(\cU\!\times\!\C,\wt\tau_{\De}\!\times\!\fc)$.
The proof of the next statement is essentially identical to the proof
of \cite[Corollary~6.7]{RealGWsI},
with Lemma~\ref{canonisomExt_lmm} replacing the use of \cite[Proposition~6.2]{RealGWsI}.

\begin{crl}\label{canonisomExt2_crl2a}
Let $(\pi,\wt\tau_{\De})$, $(V,\vph)$, and $(\na,A)$ be as above.
Then a real orientation on $(V,\vph)$ as in Definition~\ref{realorient_dfn4}
induces an orientation on the line bundle
\BE{canonisomExt2_e}
\rdet\,D_{(V,\vph)}\equiv
\big(\!\det D_{(V,\vph)}\big) \otimes (\det\dbar_{\C})^{\otimes n}\lra \De_{\R}^2 ,\EE
where $n\!=\!\rk_{\C}V$.
The restriction of this orientation to the fiber over each $\bt\!\in\!\De_{\R}^{*2}$
is the orientation on $\rdet\,D_{\bt}$
induced by the restriction of the real orientation to 
$(V,\vph)|_{(\Si_{\bt},\si_{\bt})}$ as in Corollary~\ref{canonisom_crl2a}.
\end{crl}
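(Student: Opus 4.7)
The plan is to mirror the argument of \cite[Corollary~6.7]{RealGWsI}, which treats the analogous smoothing problem in the one-nodal case, but with Lemma~\ref{canonisomExt_lmm} in place of \cite[Proposition~6.2]{RealGWsI} so as to handle the conjugate pair of nodes at the central fiber. The underlying strategy is to stabilize the real bundle pair $(V,\vph)$ by $2(L^*,\wt\phi^*)$, produce a single trivialization of the stabilized pair over all of $(\cU,\wt\tau_{\De})$ whose restriction to the central fiber represents the homotopy class of Proposition~\ref{canonisom_prp}, and then use the standard isomorphism~\eref{sum} together with the canonical square orientation on the determinant contribution of $(L^*,\wt\phi^*)^{\otimes 2}$ to transfer an orientation onto $\rdet\, D_{(V,\vph)}$.

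First I would extend the data $(L,\wt\phi)$, the isomorphism in~\ref{isom_it2}, and the spin structure~\ref{spin_it2} from $(\Si_0,\si_0)$ to the family $(\cU,\wt\tau_{\De})$. Because the description~\eref{cUdfn_e} and the involution~\eref{cU0inv_e} exhibit $\cU$ as a $\wt\tau_{\De}$-equivariant retract onto $\Si_0$, each of these objects extends over~$\cU$ uniquely up to homotopy. Next, by Lemma~\ref{canonisomExt_lmm}, the real orientation on $(V,\vph)|_{\Si_0}$ determines a distinguished homotopy class of isomorphisms
\[
\Psi_0\!:\big(V\!\oplus\!2L^*,\vph\!\oplus\!2\wt\phi^*\big)\big|_{\Si_0}
\approx\big(\Si_0\!\times\!\C^{n+2},\si_0\!\times\!\fc\big).
\]
The equivariant retraction lets me extend $\Psi_0$ to an isomorphism $\wt\Psi$ of real bundle pairs over $(\cU,\wt\tau_{\De})$, whose restriction $\Psi_\bt$ to each $(\Si_\bt,\si_\bt)$, $\bt\!\in\!\De_{\R}^2$, varies continuously in~$\bt$.

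I would then use $\wt\Psi$ to trivialize the Fredholm family $D_{(V\oplus 2L^*,\vph\oplus 2\wt\phi^*)}$ over $\De_{\R}^2$ by transporting it to the standard $\dbar$-family on $(\cU\!\times\!\C^{n+2},\wt\tau_{\De}\!\times\!\fc)$; this puts a continuous orientation on the determinant line bundle $\det D_{(V\oplus 2L^*,\vph\oplus 2\wt\phi^*)}\otimes(\det\dbar_{\C})^{\otimes(n+2)}$ over $\De_{\R}^2$. The canonical short exact sequence
\[
0\lra D_{(V,\vph)}\lra D_{(V\oplus 2L^*,\vph\oplus 2\wt\phi^*)}\lra D_{(2L^*,2\wt\phi^*)}\lra 0
\]
of families over $\De_{\R}^2$ yields, via~\eref{sum} and the canonical square orientation on $(\det D_{(L^*,\wt\phi^*)})^{\otimes 2}\otimes(\det\dbar_{\C})^{\otimes 2}$, the desired orientation on~\eref{canonisomExt2_e}. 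Fiberwise over $\bt\!\in\!\De_{\R}^{*2}$ the isomorphism $\Psi_\bt$ satisfies the two defining conditions of Proposition~\ref{canonisom_prp} for the induced real orientation on the smooth symmetric surface $(\Si_\bt,\si_\bt)$ (the spin structure condition holds by continuity along $\Si^{\si_\bt}$, and the top exterior power condition is intertwined with the chosen isomorphism in~\ref{isom_it2} by construction), so the resulting fiberwise orientation coincides with the one produced by Corollary~\ref{canonisom_crl2a}.

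The main obstacle is the $\bt\!=\!0$ consistency: one must verify that the trivialization $\wt\Psi$ restricts to a representative of the homotopy class singled out by Lemma~\ref{canonisomExt_lmm} while simultaneously interpolating continuously to the smooth-fiber homotopy classes dictated by Corollary~\ref{canonisom_crl2a}. This requires care because a trivialization over the nodal central fiber is the same data as a trivialization over the normalization $\wt\Si_0$ together with the identifications~$\psi_1$, $\psi_{12}$ at the two conjugate nodes from the proof of Lemma~\ref{canonisomExt_lmm}; one must check that the adjustment $\wt\Psi_f\!\circ\!\wt\Phi$ performed there can be carried out in a neighborhood of $x_{12}^\pm$ compatibly with the smoothing coordinates~\eref{cUdfn_e}. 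Once this local compatibility is established (it follows from the connectedness of $\SL_{n+2}\C$ exactly as in Lemma~\ref{canonisomExt_lmm}), the remainder of the argument proceeds as in \cite[Corollary~6.7]{RealGWsI}, and both conclusions of the corollary follow.
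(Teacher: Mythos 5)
Your proposal follows exactly the route the paper takes: it declares the proof to be essentially identical to that of \cite[Corollary~6.7]{RealGWsI}, with Lemma~\ref{canonisomExt_lmm} replacing \cite[Proposition~6.2]{RealGWsI}, and your elaboration (extending the real-orientation data and the distinguished trivialization of the stabilized pair over $\cU$ via the $\wt\tau_{\De}$-equivariant retraction onto $\Si_0$, then transferring the orientation through~\eref{sum} and the canonical square orientation, and checking the fiberwise homotopy-class condition of Proposition~\ref{canonisom_prp} over $\De_{\R}^{*2}$) is a faithful and correct unpacking of that argument. No gaps.
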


\noindent
Let $(\Si,\si)$ be a smooth symmetric surface and $(L,\wt\phi)$ be a rank~1 real bundle 
pair over~$(\Si,\si)$.
For a pair $\x\!\equiv\!(x^+,x^-)$ of conjugate points of~$(\Si,\si)$, define
\begin{gather*}
L(\x)=L\big(x^+\!+\!x^-\big), \quad L^{\otimes2}(\x)=L\!\otimes_{\C}\!L(\x),\quad
\big\{L(\x)^{\otimes2}\big\}_{\x}^0=
\big(L(\x)^{\otimes 2}|_{x^+}\!\oplus\!L(\x)^{\otimes 2}|_{x^-}\big)^{\wt\phi^{\otimes2}},\\
\big\{L(\x)^{\otimes2}\big\}_{\x}^1=\big\{L(\x)^{\otimes2}\big\}_{\x}^0\oplus
\big(L^{\otimes2}(\x)|_{x^+}\!\oplus\!L^{\otimes 2}(\x)|_{x^-}\big)^{\wt\phi^{\otimes2}}.
\end{gather*}
The projection
$$\big\{L(\x)^{\otimes2}\big\}_{\x}^1\lra 
\big\{L(\x)^{\otimes2}\big\}_{x^+}^1 \equiv L(\x)^{\otimes 2}|_{x^+}\!\oplus\!L^{\otimes2}(\x)|_{x^+}$$
is an isomorphism of real vector spaces and thus induces an orientation on its domain
from the complex orientation of its target.
This induced orientation  is invariant under the interchange of $x^+$
and~$x^-$; we will call it the \sf{canonical orientation} of $\{L(\x)^{\otimes2}\}_{\x}^1$.\\

\noindent
For a real CR-operator~$D_{L(\x)^{\otimes2}}$ on $(L(\x),\wt\phi)^{\otimes2}$, 
there is  
a short exact sequence 
$$\xymatrix{0 \ar[r]& 
\Ga\big(\Si;L^{\otimes2}(\x)\big)^{\wt\phi} \ar[r]\ar[d]^{D_{L^{\otimes2}(\x)}} &
\Ga\big(\Si;L(\x)^{\otimes2}\big)^{\wt\phi} \ar[r]\ar[d]^{D_{L(\x)^{\otimes2}}}& 
\big\{L(\x)^{\otimes2}\big\}_{\x}^0 \ar[r]\ar[d]& 0\\
0 \ar[r]& \Ga^{0,1}_{\fJ}\big(\Si;L^{\otimes2}(\x)\big)^{\wt\phi} \ar[r]&
\Ga^{0,1}_{\fJ}\big(\Si;L(\x)^{\otimes2}\big)^{\wt\phi} \ar[r]&  0 } $$
of Fredholm operators.
By~\eref{sum}, it induces a canonical isomorphism
\BE{canonisomev_e0a}
\det D_{L(\x)^{\otimes2}}\approx \big(\!\det D_{L^{\otimes2}(\x)}\big)\otimes 
\La_{\R}^2\big(\big\{L(\x)^{\otimes2}\big\}_{\x}^0\big)\,.\EE
The analogous exact sequence for an operator $D_{L^{\otimes2}(\x)}$ on
 $(L^{\otimes2}(\x),\wt\phi^{\otimes2})$ yields an isomorphism
\BE{canonisomev_e0b}\begin{split}
\det D_{L^{\otimes2}(\x)}&\approx \big(\!\det D_{L^{\otimes2}}\big)\otimes 
\La_{\R}^2\big( \big(L^{\otimes2}(\x)|_{x^+}\!\oplus\!L^{\otimes 2}(\x)
|_{x^-}\big)^{\wt\phi^2}\big).
\end{split}\EE
Combining these two isomorphisms with the identity isomorphism on $\det\dbar_{\Si;\C}$,
we obtain an isomorphism
\BE{canonisomev_e}
\rdet\,D_{L(\x)^{\otimes2}}\approx
\big(\rdet\,D_{L^{\otimes2}}\big) 
\otimes \La_{\R}^4\big(\big\{L(\x)^{\otimes2}\big\}_{\x}^1\big).\EE

\begin{crl}\label{canonisomev_crl}
With notation as above, suppose the real vector bundle $L^{\wt\phi}\!\lra\!\Si^{\si}$
is orientable.
The isomorphism~\eref{canonisomev_e} is orientation-preserving 
with respect to the orientations induced by Corollaries~\ref{canonisom_crl2a} and~\ref{canonisom_crl}
on $\rdet\,D_{L(\x)^{\otimes2}}$ and $\rdet\,D_{L^{\otimes2}}$
and the canonical orientation on $\{L(\x)^{\otimes2}\}_{\x}^1$. 
\end{crl}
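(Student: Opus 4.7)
The plan is to compare the two canonical orientations on $\rdet\,D_{L(\x)^{\otimes2}}$ and $\rdet\,D_{L^{\otimes2}}$ by expressing each as a tensor square via the stabilization procedure of Section~\ref{RealOrientations_subs}, and then matching those squares by applying a short exact sequence analogous to those in~\eqref{canonisomev_e0a} and~\eqref{canonisomev_e0b} to the dual bundles $L^*$ and $L(\x)^*$.

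First, I would unpack each canonical orientation.  Since $L^{\wt\phi}\!\to\!\Si^\si$ is orientable, so is $L(\x)^{\wt\phi}$, and Corollary~\ref{canonisom_crl} supplies canonical homotopy classes of real bundle pair isomorphisms
\[
\bigl(L^{\otimes2}\oplus 2L^*,\wt\phi^{\otimes2}\oplus 2\wt\phi^*\bigr)\approx\bigl(\Si\!\times\!\C^3,\si\!\times\!\fc\bigr)
\]
and its $\x$-twisted analog.  Running each through the stabilization exact sequence used in the proof of Corollary~\ref{canonisom_crl2a} yields canonical identifications
\[
\rdet\,D_{L^{\otimes2}}\approx\bigl((\det\dbar_{\Si;\C})^{\otimes2}\otimes(\det D_{L^*})^*\bigr)^{\otimes2}
\]
and the analog with $L$ replaced by $L(\x)$; the canonical orientations invoked in the statement of the corollary are precisely the tensor-square orientations of these expressions.

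Second, I would relate the two right-hand sides.  The sheaf sequence
\[
0\lra L(\x)^*\lra L^*\lra L^*|_{x^+}\!\oplus\!L^*|_{x^-}\lra 0
\]
and its $\wt\phi^*$-invariant subsequence yield, via~\eqref{sum}, a canonical isomorphism
\[
\det D_{L^*}\approx\det D_{L(\x)^*}\otimes\La^2_\R\bigl((L^*|_{x^+}\!\oplus\!L^*|_{x^-})^{\wt\phi^*}\bigr).
\]
Dualizing, squaring, and substituting into the square identity above produces a comparison
\[
\rdet\,D_{L(\x)^{\otimes2}}\approx\rdet\,D_{L^{\otimes2}}\otimes\bigl(\La^2_\R\bigl((L^*|_{x^+}\!\oplus\!L^*|_{x^-})^{\wt\phi^*}\bigr)\bigr)^{\otimes2},
\]
which is automatically orientation-preserving because every factor on both sides is a tensor square.

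Finally, I would identify this comparison with the isomorphism~\eqref{canonisomev_e}.  The duality pairings $L|_{x^\pm}\otimes L^*|_{x^\pm}\to\C$ together with the residue isomorphism $\cO(\x)|_{x^+}\cong T_{x^+}\Si$ identify $\La^4_\R(\{L(\x)^{\otimes2}\}_\x^1)$ with $\bigl(\La^2_\R((L^*|_{x^+}\!\oplus\!L^*|_{x^-})^{\wt\phi^*})\bigr)^{\otimes2}$, and a fiber-level check analogous to the argument after~\eqref{duaVorient_e} confirms that this identification carries the canonical orientation on $\{L(\x)^{\otimes2}\}_\x^1$ (defined by projection to the complex line $L(\x)^{\otimes2}|_{x^+}\oplus L^{\otimes2}(\x)|_{x^+}$) to the tensor-square orientation of the target.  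The main obstacle is this fiber-level bookkeeping; the key observation is that the dual sequence enters twice through the stabilization by $2L^*$, so any real-versus-complex orientation discrepancies occur in pairs and cancel, yielding an orientation-preserving identification.
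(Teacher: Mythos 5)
Your first two steps are sound: the canonical orientation of Corollaries~\ref{canonisom_crl2a} and~\ref{canonisom_crl} on $\rdet\,D_{L^{\otimes2}}$ is indeed the square orientation of $(\det\dbar_{\Si;\C})^{\otimes4}\otimes\bigl((\det D_{L^*})^{\otimes2}\bigr)^*$, the dual evaluation sequence $0\!\to\!L(\x)^*\!\to\!L^*\!\to\!L^*|_{x^+}\!\oplus\!L^*|_{x^-}\!\to\!0$ does give the stated factorization of $\det D_{L^*}$, and substituting it produces \emph{some} orientation-preserving isomorphism $\rdet\,D_{L(\x)^{\otimes2}}\approx\rdet\,D_{L^{\otimes2}}\otimes(\text{oriented line})$. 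But that much is true for any two oriented lines. The corollary asserts that the \emph{specific} isomorphism~\eref{canonisomev_e}, built from the evaluation sequences for $L^{\otimes2}\subset L^{\otimes2}(\x)\subset L(\x)^{\otimes2}$, is orientation-preserving, and your argument never shows that this isomorphism agrees in orientation with the composite you constructed. That agreement amounts to the following: the isomorphism
\[
\det D_{L(\x)^{\otimes2}\oplus2L(\x)^*}\approx\det D_{L^{\otimes2}\oplus2L^*}\otimes(\text{fiber lines at }x^{\pm})
\]
obtained by tensoring your two evaluation sequences must be orientation-compatible with the identifications of both sides with $(\det\dbar_{\Si;\C})^{\otimes3}$ coming from the canonical trivializations $\Psi_{L(\x)}$ and $\Psi_L$ of Proposition~\ref{canonisom_prp}. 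Those two trivializations are global homotopy classes of isomorphisms of real bundle pairs of \emph{different degrees}, pinned down by conditions on the real locus $\Si^{\si}$ and on the top exterior power; there is no formal reason they interact with evaluation maps at a conjugate pair $x^{\pm}$ in an orientation-preserving way, and no ``fiber-level check'' can decide this, since a fiberwise comparison cannot see which global homotopy class a trivialization lies in. This compatibility is essentially the whole content of the corollary, so as written your argument is circular at this point. Your heuristic that ``discrepancies occur in pairs and cancel'' also does not obviously apply: the $2L^*$ stabilization enters twice, but the sequences~\eref{canonisomev_e0a} and~\eref{canonisomev_e0b} for the $L^{\otimes2}$-side each enter once.

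The paper closes exactly this gap by a degeneration rather than a formal manipulation: it attaches a $0$-doublet $\P^1_+\!\sqcup\!\P^1_-$ to $\Si$ at $x^{\pm}$, so that the twist by $\x$ is carried entirely by the doublet, where all the relevant maps are $\C$-linear and Lemma~\ref{ComplexOrient_lmm1} identifies the canonical orientations with complex ones; Corollary~\ref{canonisomExt2_crl2a} (continuity of the canonical orientations in the smoothing family over $\De_{\R}^2$) and Corollary~\ref{RealOrient_crl} (compatibility with normalization) then transport the conclusion back to the smooth surface $(\Si,\si)$. If you want to pursue your route, you would need to prove the compatibility of $\Psi_{L(\x)}$ and $\Psi_L$ under the evaluation sequences directly — for instance by exhibiting representatives that agree outside conjugate disks about $x^{\pm}$ and computing the local model near $x^+$ — which in effect reconstructs the paper's argument in local coordinates.
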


\begin{proof}
Let $(\wh\Si,\wh\si)$ be the two-nodal symmetric surface consisting of $(\Si,\si)$ with
a 0-doublet $\P_+^1\!\sqcup\!\P_-^1$ attached at~$x^+$ and~$x^-$; see~\eref{SymSurfDbl_e}.
Let $\wh\x\!\equiv\!(\wh{x}^+,\wh{x}^-)$ be a pair  of conjugate points on $\wh\Si\!-\!\Si$, 
with $\wh{x}^+\!\in\!\P^1_+$, and $(\wh{L},\wh\phi)$  be the rank~1 real bundle pair
over $(\wh\Si,\wh\si)$ such~that 
$$\big(\wh{L},\wh\phi\big)\big|_{\Si}=\big(L,\wt\phi\big), \qquad
\big(\wh{L},\wh\phi\big)\big|_{\P^1_+\sqcup\P^1_-}=
\big(\cO_{\P_+^1}\!\sqcup\!\cO_{\P_-^1},\wh\si|_{\P^1_+\sqcup\P^1_-}\!\times\!\fc\big).$$
Choose a smoothing 
$$\pi\!: \cU\lra\De^2, \qquad \wt\tau_{\De}\!:\cU\lra\cU, \qquad s\!:\De^2\lra\cU$$
of $(\wh\Si,(\wh{x}^+,\wh{x}^-),\wh\si)$.
For $\bt\!\in\!\De_{\R}^{*2}$, $(\Si_{\bt},\si_{\bt})\!\approx\!(\Si,\si)$.\\

\noindent
Let $(V,\vph)$ be a real bundle pair over $(\cU,\wt\tau_{\De})$ that restricts to 
$(\wh{L},\wh\phi)$ over~$\wh\Si$ and
$$V(\s)=V\big(s\!+\wt\tau_{\De}\!\circ\!\!s\big).$$
For $\bt\!\in\!\De_{\R}^{*2}$,
the restrictions of 
the real bundle pairs $(V,\vph)$ and $(V(\s),\vph)$  to $(\Si_{\bt},\si_{\bt})$
are isomorphic to $(L,\wt\phi)$ and $(L(\x),\wt\phi)$, respectively.
The canonical real orientations on $(\wh{L},\wh\phi)^{\otimes2}$ and  
$(\wh{L}(\wh\x),\wh\phi)^{\otimes2}$ provided by Corollary~\ref{canonisom_crl} 
(like all other real orientations) extend to real orientations on
$(V,\vph)^{\otimes2}$ and $(V(\s),\vph)^{\otimes2}$, respectively.
The restrictions of the latter~to 
\begin{enumerate}[label=$\bu$,leftmargin=*]

\item $(\Si_{\bt},\si_{\bt})$ with $\bt\!\in\!\De_{\R}^{*2}$
are the canonical real orientations on $(L,\wt\phi)^{\otimes2}$ and $(L(\x),\wt\phi)^{\otimes2}$,
respectively,

\item $\Si\!\subset\!\wh\Si$ are the canonical real orientation on $(L,\wt\phi)^{\otimes2}$, 

\item $\P^1_+\!\sqcup\!\P^1_-$ are the canonical real orientations on
$$\big(\cO_{\P_+^1}\!\sqcup\!\cO_{\P_-^1},\wh\si|_{\P^1_+\sqcup\P^1_-}\!\times\!\fc\big)^{\otimes2}
\qquad\hbox{and}\qquad 
\big(\cO_{\P_+^1}(\wh{x}_+)\!\sqcup\!\cO_{\P_-^1}(\wh{x}_-),
\wh\si|_{\P^1_+\sqcup\P^1_-}\!\times\!\fc
\big)^{\otimes2},$$
respectively.\\
\end{enumerate}

\noindent
Let $D_{(V,\vph)^{\otimes2};\bt}$ be a family of real CR-operators
on $(V,\vph)^{\otimes2}$ as above Corollary~\ref{canonisomExt2_crl2a};
we can assume that it restricts to the standard $\dbar$-operator over the 0-doublet.
It induces a family $D_{(V(\s),\vph)^{\otimes2};\bt}$  of real CR-operators 
on $(V(\s),\vph)^{\otimes2}$.
Let 
$$D_{\wh{L}^{\otimes2}}=D_{(V,\vph)^{\otimes2};\0} \qquad\hbox{and}\qquad
D_{\wh{L}(\wh\x)^{\otimes2}}=D_{(V(\s),\vph)^{\otimes2};\0}$$
be the restrictions of these operators to $(\wh\Si,\wh\si)$.
Similarly to~\eref{canonisomev_e0a}, 
$$\rdet\,D_{(V(\s),\vph)^{\otimes2}}\approx
\big(\!\rdet\,D_{(V,\vph)^{\otimes2}}\big)
\otimes \La_{\R}^4\big(\big\{V(\s)^{\otimes2}\big\}_{\s}^1\big)$$
as real line bundles over $\bt\!\in\!\De_{\R}^2$. 
By the first bullet point above and Corollary~\ref{canonisomExt2_crl2a}, 
it is thus sufficient to show that the isomorphism
\BE{canonisomev_e9}
\rdet\, D_{\wh{L}(\wh\x)^{\otimes2}}
\approx
\big(\!\rdet\,D_{\wh{L}^{\otimes2}}\big) 
\otimes \La_{\R}^4\big(\big\{\wh{L}(\wh\x)^{\otimes2}\big\}_{\wh\x}^1\big)\EE
is orientation-preserving with respect to the orientations on $\rdet\, D_{\wh{L}(\wh\x)^{\otimes2}}$
and $\rdet\,D_{\wh{L}^{\otimes2}}$
induced by the canonical real orientations on $(\wh{L}(\wh\x),\wh\phi)^{\otimes2}$
and $(\wh{L},\wh\phi)^{\otimes2}$, respectively,
and the complex orientation on $\{\wh{L}(\wh\x)^{\otimes2}\}_{\wh\x}^1$.\\

\noindent
Let $(\wt{L},\wt\vph)$ be the lift of $(\wh{L},\wh\phi)$ to the normalization $(\wt\Si,\wt\si)$
of $(\wh\Si,\wh\si)$;
the latter consists of $(\Si,\si)$ and the 0-doublet $\P^1_+\!\sqcup\!\P^1_-$.
The isomorphisms~\eref{RealOrientCrl_e} induce a commutative diagram
\begin{equation*}\begin{split}
\xymatrix{\rdet\,D_{\wt{L}(\wh\x)^{\otimes2}}
\ar[r]\ar[d]& \rdet\,D_{\wh{L}(\wh\x)^{\otimes2}}
\otimes 
\La_{\R}^2L_{x^+}\!\otimes\!\La_{\R}^2\C \ar[d]\\
\big(\rdet\,D_{\wt{L}^{\otimes2}}\big)
\otimes \La_{\R}^4\big(\big\{\wt{L}(\wh\x)^{\otimes2}\big\}_{\wh\x}^1\big)
\ar[r]& \big(\rdet\,D_{\wh{L}^{\otimes2}}\big)
\otimes 
\La_{\R}^2L_{x^+}\!\otimes\!\La_{\R}^2\C
\otimes \La_{\R}^4\big(\big\{\wh{L}(\wh\x)^{\otimes2}\big\}_{\wh\x}^1\big).}
\end{split}\end{equation*}
By the second and third bullet points above and Corollary~\ref{RealOrient_crl}, 
the horizontal isomorphisms in this diagram are orientation-preserving  
with respect to the orientations on the relative determinants induced by
Corollaries~\ref{canonisom_crl2a} and~\ref{canonisom_crl}
and with respect to the complex orientations on 
the remaining lines.\\

\noindent
The left vertical isomorphism in the diagram is the tensor product of the isomorphisms
\BE{canonisomev_e15}\begin{split}
\rdet\,D_{\wt{L}\big(\wh\x)^{\otimes2}|_{\Si}}&
\approx \rdet\,D_{\wt{L}^{\otimes2}|_{\Si}}
\qquad\hbox{and}\\
\rdet\,D_{\wt{L}\big(\wh\x)^{\otimes2}|_{\P^1_+\sqcup\P^1_-}}
&\approx \big(\rdet\,D_{\wt{L}^{\otimes2}|_{\P^1_+\sqcup\P^1_-}}\big)
\otimes \La_{\R}^4\big(\big\{\wt{L}(\wh\x)^{\otimes2}\big\}_{\wh\x}^1\big).
\end{split}\EE
The first of these isomorphisms is orientation-preserving 
with respect to the canonical real orientations because 
$\wt{L}(\wh\x)|_{\Si}\!=\!\wt{L}|_{\Si}$.
Under the restrictions to~$\P^1_+$ as in~\eref{ComplexOrient_e5},
the second isomorphism in~\eref{canonisomev_e15} corresponds to an isomorphism
induced by two short exact sequences of $\C$-linear homomorphisms.
Thus, it is orientation-preserving with respect to the complex orientations on
$\rdet\,D_{\wt{L}(\wh\x)^{\otimes2}|_{\P^1_+\sqcup\P^1_-})}$
and $\rdet\,D_{\wt{L}^{\otimes2}|_{\P^1_+\sqcup\P^1_-}}$
as in Section~\ref{ComplexOrient_subs}.
By Lemma~\ref{ComplexOrient_lmm1}, these complex orientations are the same as the orientations
induced by any real orientations on the squares.
Thus,  the second isomorphism in~\eref{canonisomev_e15} 
and the left vertical isomorphism in the commutative diagram are
orientation-preserving with respect to the orientations on
the relative determinants induced by
Corollaries~\ref{canonisom_crl2a} and~\ref{canonisom_crl}.
Along with the last sentence of the previous paragraph, this implies that 
 the right vertical isomorphism  is also orientation-preserving 
with respect to these orientations.
\end{proof}

\noindent
Let $\wt\Si$ be a smooth Riemann surface and $x\!\in\!\wt\Si$.
A holomorphic vector field~$\xi$ on a neighborhood of~$x$ in~$\wt\Si$ 
with $\xi(x)\!=\!0$ determines an element 
$$\na\xi\big|_{x}\in T_x^*\wt\Si\otimes_{\C} T_x\wt\Si=\C\,.$$
Similarly, a meromorphic one-form~$\eta$ on a neighborhood of~$x$ in~$\wt\Si$ 
has a well-defined residue at~$x$, which we denote by~$\fR_x\eta$.
For a holomorphic line bundle $L\!\lra\!\wt\Si$, we denote by $\Om(L)$ the sheaf
of holomorphic sections of~$L$.\\

\noindent
If $(\Si,\si)$ is a symmetric Riemann surface with a pair of 
conjugate nodes $x_{12}^{\pm}\!\in\!\Si$ and
$x_1^{\pm},x_2^{\pm}\!\in\!\wt\Si$ are the preimages of the nodes in its normalization, let
$$T\wt\Si(-\x)=T\wt\Si\big(-x_1^+\!-\!x_1^-\!-\!x_2^+\!-\!x_2^-\big), \qquad
T^*\wt\Si(\x)=T^*\wt\Si\big(x_1^+\!+\!x_1^-\!+\!x_2^+\!+\!x_2^-\big).$$
The next statement is the analogue of \cite[Lemma~6.8]{RealGWsI} 
in the present situation.

\begin{lmm}\label{TSiext_lmm}
Suppose $(\pi\!:\cU\!\lra\!\De^2,\wt\tau_{\De})$ is a smoothing of~$(\Si,\si)$ as above. 
There exist holomorphic line bundles $\cT,\wh\cT\!\lra\!\cU$ with involutions~$\vph,\wh\vph$
lifting~$\wt\tau_{\De}$ such~that 
\begin{gather*}
(\cT,\vph)\big|_{\Si_{\bt}}=\big(T\Si_{\bt},\tnd\wt\tau_{\De}|_{T\Si_{\bt}}\big), \quad 
(\wh\cT,\wh\vph)\big|_{\Si_{\bt}}=\big(T^*\Si_{\bt},(\tnd\wt\tau_{\De}|_{T\Si_{\bt}})^*\big) 
\qquad\forall~\bt\!\in\!\De^{*2},\\
\begin{split}
\Om\big(\cT|_{\Si_0}\big)&=\big\{\xi\!\in\!
\Om\big(T\wt\Si(-\x)\big)\!:\,~
\na\xi|_{x_1^{\pm}}\!+\!\na\xi|_{x_2^{\pm}}\!=\!0\big\},\\
\Om\big(\wh\cT|_{\Si_0}\big)&=
\big\{\eta\!\in\!\Om\big(T^*\wt\Si(\x)\big)\!:\,~
\fR_{x_1^{\pm}}\eta\!+\!\fR_{x_2^{\pm}}\eta\!=\!0\big\}.
\end{split}\end{gather*}
Furthermore, $(\wh\cT,\wh\vph)\!\approx\!(\cT,\vph)^*$.
\end{lmm}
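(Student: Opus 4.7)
The plan is to follow the construction of \cite[Lemma~6.8]{RealGWsI} for the one-real-node case, adapted to the present situation where the involution $\wt\tau_{\De}$ interchanges the two nodes~$x_{12}^{\pm}$ rather than preserving one. The key observation is that the relative dualizing sheaf $\om_{\cU/\De^2}$, when pulled back to the normalization of the singular fiber, is exactly the sheaf of meromorphic one-forms with simple poles at the preimages of the nodes whose residues at the two branches of each node cancel. This is the behavior described in the statement of the lemma for~$\wh\cT$.

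First, I would define $\wh\cT\!\equiv\!\om_{\cU/\De^2}$. Over the local chart $\cU_0^+$ of~\eref{cUdfn_e}, it is generated by the relative one-form
\[ \eta^+ \equiv \frac{\tnd z_1^+}{z_1^+}=-\frac{\tnd z_2^+}{z_2^+}, \]
which extends holomorphically in~$(t^+,t^-)$ across $\bt\!=\!\0$ and which on each smooth fiber $\Si_{\bt}$ pulls back to a nowhere-zero holomorphic form in a local coordinate. An analogous generator $\eta^-\!\equiv\!\tnd z_1^-/z_1^-$ works over~$\cU_0^-$, while $\wh\cT|_{\cU'}$ is the usual relative cotangent sheaf. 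The anti-linear lift $\wh\vph$ of $\wt\tau_{\De}$ is then determined by the requirement that it match the standard conjugation of cotangent vectors on~$\cU'$; by~\eref{cU0inv_e}, it automatically intertwines $\eta^+$ with~$\ov{\eta^-}$ and thus extends across the nodes. Second, I would check that for $\bt\!\in\!\De^{*2}$ the restriction $(\wh\cT,\wh\vph)|_{\Si_{\bt}}$ is $(T^*\Si_{\bt},(\tnd\wt\tau_{\De}|_{T\Si_{\bt}})^*)$, which is immediate from the construction, and compute $\Om(\wh\cT|_{\Si_0})$ by pulling local generators back to the normalization~$\wt\Si$: the equality $\tnd z_1^+/z_1^+\!=\!-\tnd z_2^+/z_2^+$ translates into the residue relation $\fR_{x_1^+}\eta\!+\!\fR_{x_2^+}\eta\!=\!0$, and the analogous identity over~$\cU_0^-$ yields the relation at the conjugate node. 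Third, I would set $(\cT,\vph)\!\equiv\!(\wh\cT,\wh\vph)^*$, which gives the last claim of the lemma gratis; the description of $\Om(\cT|_{\Si_0})$ follows by dualizing, since a vector field on $T\wt\Si(-\x)$ vanishing at all four branch points has a well-defined derivative at each~$x_i^{\pm}$ which pairs with residues of forms in $\Om(\wh\cT|_{\Si_0})$, and the cancellation condition dualizes to $\na\xi|_{x_1^{\pm}}\!+\!\na\xi|_{x_2^{\pm}}\!=\!0$.

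The main obstacle will be bookkeeping around the signs and the branch identifications: the two sheets $z_1^+\!=\!0$ and $z_2^+\!=\!0$ of the $x_{12}^+$ node of $\Si_0$ must be identified with $x_1^+$ and $x_2^+$ in $\wt\Si$ consistently, so that the minus sign in $\tnd z_1^+/z_1^+\!=\!-\tnd z_2^+/z_2^+$ translates to the additive cancellation stated in the lemma, and the pairing between $\cT$ and $\wh\cT$ on $\Si_0$ (given by $(\xi,\eta)\!\mapsto\!\fR\bigl(\eta(\xi)\bigr)$ at each branch) yields a well-defined duality only because residues cancel against derivatives. Once those signs are settled, compatibility of $\wh\vph$ with the local generators under the swap~\eref{cU0inv_e} and the whole duality statement follow without further work.
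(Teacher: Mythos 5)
Your proposal is correct and follows essentially the same route as the paper: both build the bundles from the local models $z_1^{\pm}\frac{\prt}{\prt z_1^{\pm}}-z_2^{\pm}\frac{\prt}{\prt z_2^{\pm}}$ and $\tnd z_1^{\pm}/z_1^{\pm}=-\tnd z_2^{\pm}/z_2^{\pm}$ over the charts $\cU_0^{\pm}$, glue them to the vertical (co)tangent bundle of $\cU'$, and observe that the swap \eref{cU0inv_e} carries these generators to their conjugates so that $\wt\tau_{\De}$ lifts. The only cosmetic difference is that the paper constructs $\cT$ and $\wh\cT$ independently and then identifies $\wh\cT$ with $\cT^*$, whereas you define $\cT\!\equiv\!\wh\cT^*$ and recover its description over $\Si_0$ by dualizing; both are fine.
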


\begin{proof}
We continue with the notation as in~\eref{cUdfn_e} and~\eref{cU0inv_e}.
Denote by $T^{\vrt}\cU'\!\lra\!\cU'$ the vertical tangent bundle.
Let
\begin{alignat*}{2}
&\cT=\big(\cU_0^+\!\times\!\C \sqcup \cU_0^-\!\times\!\C \sqcup T^{\vrt}\cU'\big)\big/\!\sim, &\quad
&\wh\cT=\big(\cU_0^+\!\times\!\C \sqcup \cU_0^-\!\times\!\C \sqcup (T^{\vrt}\cU')^*\big)\big/\!\sim,\\
&(\bt,z_1^{\pm},z_2^{\pm},c)\sim\begin{cases}
c\,z_1^{\pm}\frac{\prt}{\prt z_1^{\pm}}\big|_{(\bt,z_1^{\pm})},&\hbox{if}~|z_1^{\pm}|\!>\!|z_2^{\pm}|;\\
-c\,z_2^{\pm}\frac{\prt}{\prt z_2^{\pm}}\big|_{(\bt,z_2^{\pm})},&\hbox{if}~|z_1^{\pm}|\!<\!|z_2^{\pm}|;\\
\end{cases} &\quad
&(\bt,z_1^{\pm},z_2^{\pm},c)\sim\begin{cases}
c\,\frac{\tnd_{(\bt,z_1^{\pm})}z_1^{\pm}}{z_1^{\pm}},&\hbox{if}~|z_1^{\pm}|\!>\!|z_2^{\pm}|;\\
-c\,\frac{\tnd_{(\bt,z_2^{\pm})}z_2^{\pm}}{z_2^{\pm}},&\hbox{if}~|z_1^{\pm}|\!<\!|z_2^{\pm}|.\\
\end{cases}
\end{alignat*}
Under the identifications~\eref{cUdfn_e}, the vector field and one-form on
a neighborhood of the node in~$\cU$ associated with 
$(\bt,z_1^{\pm},z_2^{\pm},c)\!\in\!\cU_0^{\pm}\!\times\!\C$
correspond to the vector field and one-form on~$\cU_0^{\pm}$
given~by
$$c\bigg(z_1^{\pm}\frac{\prt}{\prt z_1^{\pm}}-z_2^{\pm}\frac{\prt}{\prt z_2^{\pm}}\bigg)
\qquad\hbox{and}\qquad c\,\frac{\tnd z_1^{\pm}|_{\Si_{\bt}}}{z_1^{\pm}}
=-c\,\frac{\tnd z_2^{\pm}|_{\Si_{\bt}}}{z_2^{\pm}},$$
respectively (the above equality of one-forms holds for $t^{\pm}\!\neq\!0$).
Thus, $\cT$ and~$\wh\cT$ have the desired restriction properties.
The identifications in the construction of~$\cT$ and~$\wh\cT$ above intertwine
the trivial lift of~\eref{cU0inv_e} to a conjugation on $(\cU_0^+\!\sqcup\!\cU_0^-)\!\times\!\C$
with the conjugations on $T^{\vrt}\cU'$ and $(T^{\vrt}\cU')^*$ induced by~$\tnd\wt\tau_{\De}$.
Thus, they induce conjugations~$\vph$ and~$\wh\vph$ on~$\cT$ and~$\wh\cT$.
By the same reasoning as in the proof of \cite[Lemma~6.8]{RealGWsI},
$(\wh\cT,\wh\vph)$ and $(\cT,\vph)^*$ are isomorphic as real bundle pairs 
over~$(\cU,\wt\tau_{\De})$.
\end{proof}

\begin{crl}\label{realorient_crl}
Let $(\Si,\si)$, $(\pi,\wt\tau_{\De})$, and $\cT,\wh\cT\!\lra\!\cU$ be as in Lemma~\ref{TSiext_lmm}.
The orientation on the restriction of the real line bundle
\BE{crealorient_crl_e}
\rdet\,\dbar_{(\wh\cT,\wh\vph)^{\otimes2}}\equiv
\big(\!\det\dbar_{(\wh\cT,\wh\vph)^{\otimes2}}\big)\otimes 
\big(\!\det\dbar_{\C}\big)\lra   \De_{\R}^2\EE
to $\De_{\R}^{*2}$ determined by the canonical isomorphisms of
Corollaries~\ref{canonisom_crl2a} and~\ref{canonisom_crl} extends across $\bt\!=\!0$
as the orientation determined by  the canonical isomorphism of
Corollaries~\ref{canonisom_crl2a} and~\ref{canonisom_crl} for the nodal symmetric surface~$(\Si,\si)$.
\end{crl}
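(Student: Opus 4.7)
The plan is to realize the claim as a direct application of Corollary~\ref{canonisomExt2_crl2a} to the real bundle pair $(\wh\cT,\wh\vph)^{\otimes2}\!\lra\!(\cU,\wt\tau_{\De})$, equipped with a family real orientation whose fiberwise restrictions agree with the canonical real orientations from Corollary~\ref{canonisom_crl}. By Lemma~\ref{TSiext_lmm}, $(\wh\cT,\wh\vph)\!\approx\!(\cT,\vph)^*$, so the procedure in the proof of \cite[Corollary~5.6]{RealGWsI} furnishes a canonical real orientation on $(\wh\cT,\wh\vph)^{\otimes2}$ in the sense of Definition~\ref{realorient_dfn4}: take $(L,\wt\phi)\!=\!(\wh\cT,\wh\vph)$ for~\ref{LBP_it2}, the tautological square isomorphism for~\ref{isom_it2}, and the spin structure determined by the canonical homotopy class of trivializations of $2(\cT,\vph)^{\vph}$ for~\ref{spin_it2}. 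The orientability of $\cT^{\vph}\!=\!T\Si^{\si}$ holds because $\Si^{\si}$ is a disjoint union of circles, and the conjugate nodes $x_{12}^{\pm}$ lie off $\Si^{\si}$, so this description is valid on both smooth and nodal fibers.

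Next, I would verify that these data glue into a global real orientation on $(\wh\cT,\wh\vph)^{\otimes2}$ over $(\cU,\wt\tau_{\De})$. The auxiliary bundle pair $(\wh\cT,\wh\vph)$ and the tautological isomorphism~\ref{isom_it2} extend to $\cU$ with no choices. For~\ref{spin_it2}, the fixed locus of $\wt\tau_{\De}$ in $\cU$ is a $\De_{\R}^2$-family over $\Si^{\si}$ (again because conjugate nodes are not $\si$-fixed), so the canonical homotopy class of trivializations of $2(\cT,\vph)^{\vph}$ on the central fiber pulls back across the family to the required spin structure. Corollary~\ref{canonisomExt2_crl2a} then produces an orientation on the line bundle~\eref{crealorient_crl_e} over $\De_{\R}^2$ whose restriction to each $\bt\!\in\!\De_{\R}^{*2}$ is the orientation of Corollary~\ref{canonisom_crl2a} induced by the canonical real orientation on the smooth fiber $(\Si_{\bt},\si_{\bt})$, and whose restriction to $\bt\!=\!0$ is the same construction applied to the two-nodal symmetric surface $(\Si,\si)$; this is precisely the assertion of the corollary.

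The main obstacle is knowing that Corollary~\ref{canonisomExt2_crl2a} actually covers the case in which the central fiber carries a pair of conjugate nodes. Its proof is stated to be ``essentially identical'' to that of \cite[Corollary~6.7]{RealGWsI}, with Lemma~\ref{canonisomExt_lmm} of Section~\ref{TwoNodal_subs} replacing the role of \cite[Proposition~6.2]{RealGWsI} and with the continuous extension of the determinant line bundle across $\bt\!=\!0$ supplied by \cite[Appendix~D.4]{Huang} and \cite[Section 3.2]{EES}. Given these ingredients, the argument above completes the proof; no further analysis beyond the homotopical bookkeeping is required.
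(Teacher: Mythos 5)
Your proposal is correct and follows essentially the same route as the paper: both construct a real orientation on $(\wh\cT,\wh\vph)^{\otimes2}$ over all of $(\cU,\wt\tau_{\De})$ extending the canonical fiberwise one (the paper phrases the extension via the $\wt\tau_{\De}$-equivariant retraction of $\cU$ onto the central fiber, while you note directly that each datum of Definition~\ref{realorient_dfn4} extends canonically), and then both invoke Corollary~\ref{canonisomExt2_crl2a}. The only cosmetic difference is that the paper works with the bundle $\wh\cT^{\otimes2}\!\oplus\!2\cT$ explicitly, which is exactly the stabilization your choice $(L,\wt\phi)\!=\!(\wh\cT,\wh\vph)$ produces.
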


\begin{proof}
By Corollaries~\ref{canonisom_crl2a} and~\ref{canonisom_crl}, the restriction of the real bundle~pair
\BE{whcTcC_e} 
\big(\wh\cT^{\otimes2}\!\oplus\!2\cT,\wh\vph^{\otimes2}\!\oplus\!2\vph\big)
\lra(\cU,\wt\tau_{\De})\EE
to the central fiber $(\Si,\si)$  has a canonical real orientation. 
Since $\cU$ retracts onto~$\Si$ respecting the involution~$\wt\tau_{\De}$,
this real orientation extends to a real orientation on~\eref{whcTcC_e} which restricts 
to the canonical real orientation over each fiber~$(\Si_{\bt},\si_{\bt})$ with 
$\bt\!\in\!\De_{\R}^{*2}$.
By Corollary~\ref{canonisomExt2_crl2a}, the extended real orientation induces an orientation on
the real line bundle~\eref{crealorient_crl_e} over~$\De_{\R}^2$.
The restriction of this orientation to the fiber over each $\bt\!\in\!\De_{\R}^{*2}$
is the orientation induced by the restriction of the extended real orientation to
the fiber of~\eref{whcTcC_e} as in Corollary~\ref{canonisom_crl2a},
i.e.~the canonical orientation on each fiber of~\eref{crealorient_crl_e}.
\end{proof}

\noindent
The next two statements are the analogues of \cite[Lemmas~6.9,6.10]{RealGWsI}
for smoothings of two-nodal Riemann surfaces and hold for the same reasons.

\begin{lmm}[Dolbeault Isomorphism]\label{Cech_lmm}
Suppose $(\Si,\si)$ and $(\pi,\wt\tau_{\De})$  are as in Lemma~\ref{TSiext_lmm}
and $(L,\wt\phi)\!\lra\!(\cU,\wt\tau_{\De})$ is a holomorphic line bundle so that  
$\deg L|_{\Si}\!<\!0$ and $\deg L|_{\Si'}\!\le\!0$ for each irreducible component $\Si'\!\subset\!\Si$.
The families of vector spaces $H_{\dbar}^1(\Si_{\bt};L)$ and $\wch{H}^1(\Si_{\bt};L)$
then form vector bundles $R_{\dbar}^1\pi_*L$ and $\wch{R}^1\pi_*L$ over~$\De^2$ 
with conjugations lifting~$\tau_{\De}$ which are canonically isomorphic as real bundle pairs
over~$(\De^2,\tau_{\De})$.
\end{lmm}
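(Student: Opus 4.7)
The plan is to follow the template of the proof of \cite[Lemma~6.9]{RealGWsI}, which handles the one-nodal case, and adapt the argument to treat the conjugate pair of nodes $x_{12}^{\pm}$ simultaneously. The strategy has two halves: first show that $H^1_{\dbar}(\Si_{\bt};L)$ and $\wch{H}^1(\Si_{\bt};L)$ have constant dimensions across $\bt\!\in\!\De^2$ so that they fit together into vector bundles; then invoke the naturality of the Dolbeault--\v{C}ech comparison map to identify them as real bundle pairs.

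First, I would establish the vanishing $H^0(\Si_{\bt};L|_{\Si_{\bt}})\!=\!0$ for every $\bt\!\in\!\De^2$. For $\bt\!\in\!\De^{*2}$ this is immediate from $\deg L|_{\Si_{\bt}}\!=\!\deg L|_{\Si}\!<\!0$. For $\bt\!=\!0$, a holomorphic section of $L|_{\Si}$ restricts to a holomorphic section on each irreducible component~$\Si'$; the hypothesis $\deg L|_{\Si'}\!\le\!0$ together with the strict negativity of the total degree and the node-matching conditions forces the section to vanish (this is essentially the argument in \cite[Lemma~6.9]{RealGWsI}, applied to the two extra components or nodes of a normalization). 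Combined with Riemann--Roch, this yields that $h^1(\Si_{\bt};L|_{\Si_{\bt}})$ is independent of~$\bt$.

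For the Dolbeault side, the family of $\dbar$-operators has trivial kernel and constant-rank cokernel, so the cokernels glue into a smooth complex vector bundle $R^1_{\dbar}\pi_*L\!\lra\!\De^2$. The conjugation~$\wt\phi$ on~$L$ covers~$\wt\tau_{\De}$ and so induces a conjugation on this bundle lifting~$\tau_{\De}$. For the \v{C}ech side, I would choose a $\wt\tau_{\De}$-invariant open cover of~$\cU$ adapted to the nodes: near each node, take charts modeled on the local description~\eref{cUdfn_e} and the involution~\eref{cU0inv_e}, augmented by a $\wt\tau_{\De}$-invariant refinement on the smooth part. The resulting \v{C}ech complex $C^{\bu}$ has restriction to each fiber that is a Leray cover computing $\wch{H}^*(\Si_{\bt};L)$; the cohomology sheaves are constant-rank by Step~1 and hence form a vector bundle $\wch{R}^1\pi_*L\!\lra\!\De^2$ carrying a conjugation lifting~$\tau_{\De}$.

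Finally, the standard Dolbeault--\v{C}ech comparison morphism (which on each fiber is an isomorphism by the classical Dolbeault theorem for smooth fibers, and by its extension to nodal curves via the normalization as in \cite[Lemma~6.9]{RealGWsI} for $\bt\!=\!0$) is natural in the parameter and thus defines a morphism $\wch{R}^1\pi_*L\!\lra\!R_{\dbar}^1\pi_*L$ of vector bundles over~$\De^2$, which is a fiberwise isomorphism and therefore an isomorphism of bundles. Naturality with respect to the conjugations on both sides (which are both induced by~$\wt\phi$ via functorial operations) makes this isomorphism an isomorphism of real bundle pairs over $(\De^2,\tau_{\De})$. The main technical obstacle, as in~\cite{RealGWsI}, is the verification at the central fiber $\bt\!=\!0$ that the \v{C}ech cover remains Leray and that the comparison map extends continuously; the two-nodal case reduces to the one-nodal analysis applied on the two sides $\cU_0^+$ and $\cU_0^-$ of the conjugate pair, which are interchanged by~$\wt\tau_{\De}$.
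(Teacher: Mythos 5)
Your proposal is correct and follows exactly the route the paper takes: the paper gives no independent argument here, stating only that the lemma is the analogue of \cite[Lemmas~6.9,6.10]{RealGWsI} for two-nodal surfaces and ``holds for the same reasons,'' and your write-up is precisely a fleshed-out adaptation of that one-nodal argument (vanishing of $H^0$ from the degree hypotheses and node-matching, Riemann--Roch for constancy of $h^1$, a $\wt\tau_{\De}$-invariant Leray cover, and naturality of the Dolbeault--\v{C}ech comparison). The only cosmetic omission is that you treat $\bt\!\in\!\De^{*2}$ and $\bt\!=\!0$ explicitly but not the one-nodal fibers over $(\{0\}\!\times\!\De^*)\cup(\De^*\!\times\!\{0\})$, which are handled by the identical argument.
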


\begin{lmm}[Serre Duality]\label{Serre_lmm}
Suppose $(\Si,\si)$, $(\pi,\wt\tau_{\De})$,   and $(\wh\cT,\wh\vph)$
are as in Lemma~\ref{TSiext_lmm}
and \hbox{$(L,\wt\phi)\!\lra\!(\cU,\wt\tau_{\De})$} is a holomorphic line bundle 
so that  $\deg L|_{\Si}\!>\!2g_a(\Si)\!-\!2$
and $\deg L|_{\Si'}\!\ge\!2g_a(\Si')\!-\!2$
for each irreducible component $\Si'\!\subset\!\Si$.
The family of vector spaces $H_{\dbar}^0(\Si_{\bt};L)$ 
then forms a vector bundle $R_{\dbar}^0\pi_*L$ over~$\De$  with a conjugation lifting~$\tau_{\De}$ 
and there is a canonical isomorphism
\BE{Serre_e}R_{\dbar}^1\pi_*\big(L^*\!\otimes\!\wh\cT\big)\approx\big(R_{\dbar}^0\pi_*L\big)^*\EE
of real bundle pairs over~$(\De^2,\tau_{\De})$.
\end{lmm}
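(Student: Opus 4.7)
The plan is to follow the strategy of \cite[Lemma~6.10]{RealGWsI}, which handles the analogous one-nodal case, and adapt it to the two-nodal setting. The crucial observation is that Lemma~\ref{TSiext_lmm} identifies $(\wh\cT,\wh\vph)$ with the relative dualizing sheaf of the family $\pi\!:\cU\!\lra\!\De^2$: the sections of $\wh\cT|_{\Si_0}$ described there---meromorphic one-forms on the normalization $\wt\Si$ with at worst simple poles at the preimages of the nodes and residues summing to zero at each conjugate pair---are precisely the sections of $\om_{\Si_0}$ under the standard description of the dualizing sheaf of a nodal curve. The isomorphism~(\ref{Serre_e}) is thus the family version of classical Serre duality.

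First I would establish that $R_{\dbar}^0\pi_*L$ is a locally free $\cO_{\De^2}$-module carrying a conjugation lifting~$\tau_\De$. Under the degree hypotheses on~$L$, one has $H^1(\Si_\bt;L)\!=\!0$ on every fiber (by pointwise Serre duality applied to $L^*\!\otimes\!\wh\cT|_{\Si_\bt}$, whose degree is negative on every component of every fiber), and Riemann-Roch then shows that $\dim_\C H^0(\Si_\bt;L)$ is constant in~$\bt$. Grauert's theorem (cohomology and base change) yields local freeness of $R_{\dbar}^0\pi_*L$. The conjugation is defined fiberwise by $\xi\!\lra\!\wt\phi\!\circ\!\xi\!\circ\!\wt\tau_\De$.

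Next I would construct the pairing realizing~(\ref{Serre_e}) using the \v{C}ech description of Lemma~\ref{Cech_lmm}. For each $\bt\!\neq\!0$, the classical residue pairing on the smooth curve $\Si_\bt$ is a perfect pairing between $\wch{H}^1(\Si_\bt;L^*\!\otimes\!\wh\cT)$ and $H^0(\Si_\bt;L)$. At $\bt\!=\!0$, a class in $\wch{H}^1(\Si_0;L^*\!\otimes\!\wh\cT)$ lifts to a \v{C}ech cocycle on $\wt\Si$ valued in meromorphic sections of $L^*\!\otimes\!T^*\wt\Si(\x)$; pairing it against $\xi\!\in\!H^0(\Si_0;L)$ (which descends from the normalization with matching values at the paired nodes) and summing residues on $\wt\Si$ yields a well-defined scalar, precisely because the node-matching condition on~$\xi$ annihilates the contributions at the $x_i^\pm$ determined by the residue-sum-zero condition on~$\wh\cT|_{\Si_0}$. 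Since these pairings vary holomorphically in~$\bt$ and are perfect on each fiber, they globalize to the isomorphism~(\ref{Serre_e}).

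Compatibility of this isomorphism with the conjugations $\wt\phi$, $\wh\vph$, and $\wt\tau_\De$ follows because residues transform by complex conjugation under the anti-holomorphic involutions, so the pairing is equivariant and~(\ref{Serre_e}) is an isomorphism of real bundle pairs over $(\De^2,\tau_\De)$. The main obstacle will be the bookkeeping at the central fiber: carefully verifying that the classical residue pairing on the normalization $\wt\Si$ descends to a well-defined perfect pairing on the nodal curve~$\Si_0$, for which the explicit description of $\Om(\wh\cT|_{\Si_0})$ in Lemma~\ref{TSiext_lmm} is exactly the input needed.
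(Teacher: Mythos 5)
Your proposal is correct and matches the paper's (implicit) argument: the paper justifies this lemma only by remarking that it is the two-nodal analogue of \cite[Lemma~6.10]{RealGWsI} and ``holds for the same reasons'', and your identification of $(\wh\cT,\wh\vph)$ with the relative dualizing sheaf via the residue description in Lemma~\ref{TSiext_lmm}, the fiberwise $H^1$-vanishing from the degree hypotheses, and the conjugation-equivariant residue pairing are exactly the standard ingredients that argument rests on. The one imprecision is the parenthetical claim that $\deg(L^*\!\otimes\!\wh\cT)$ is negative on every component of every fiber---on the central fiber it may only be nonpositive on some components, and on the partially smoothed fibers over the coordinate axes the component structure changes---but $H^1(\Si_0;L)=0$ still follows from the stated hypotheses and then propagates to all nearby fibers by semicontinuity, so the conclusion stands.
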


\subsection{Canonical isomorphisms and canonical orientations}
\label{CanIsomCanOrien_subs}

\noindent
Let $(\Si,x_{12}^{\pm},\si)$ be a  symmetric surface with a pair of conjugate nodes.
We will next compare the orientations of isomorphisms of determinant lines
associated with~$(\Si,\si)$ which are induced via its smoothings~$(\Si_{\bt},\si_{\bt})$
as in Section~\ref{SymmSurfDegen_subs} and via its normalization~$(\wt\Si,\wt\si)$.
We continue with the notation introduced in Section~\ref{SymmSurfDegen_subs}.\\

\noindent
Let $\C_{\x},S_{\x}\!\lra\!\wt\Si$ denote the skyscraper sheaves
with fibers $\C$ at the preimages~$x_i^{\pm}$ of the nodes of~$\Si$
and fibers $T_{x_i^{\pm}}^*\wt\Si$, respectively.
The projections
\BE{H0TCisom_e15}\begin{split}
 H^0\big(\wt\Si;\C_{\x}\big)^{\wt\si}&\lra 
\C^2 = H^0\big(\wt\Si;\C_{x_1^+})\!\oplus\!H^0(\wt\Si;\C_{x_2^+}\big),\\
H^0\big(\wt\Si;S_{\x}\big)^{\wt\si}&\lra 
T_{x_1^+}^*\!\wt\Si\oplus T_{x_2^+}^*\!\wt\Si 
= H^0\big(\wt\Si;S_{x_1^+}\big)\!\oplus\!H^0\big(\wt\Si;S_{x_2^+}\big)
\end{split}\EE
to the values at $x_1^+$ and $x_2^+$ are isomorphisms.
We use the first isomorphism to orient $H^0(\wt\Si;\C_{\x})^{\wt\si}$ 
from the standard orientation on~$\C$.
We use the second isomorphism to orient $H^0(\wt\Si;S_{\x})^{\wt\si}$ 
from the orientations on $T_{x_1^+}^*\!\wt\Si$ and $T_{x_2^+}^*\!\wt\Si$
induced from the complex orientations on $T_{x_1^+}\!\wt\Si$ and $T_{x_2^+}\!\wt\Si$,
respectively, as in the proof of Lemma~\ref{ComplexOrient_lmm2}.
As indicated at the beginning of Section~\ref{ComplexOrient_subs}, 
the orientation on each $T_{x_i^+}^*\!\wt\Si$ induced from the complex orientation
of $T_{x_i^+}\!\wt\Si$  is the opposite of the complex orientation of $T_{x_i^+}^*\!\wt\Si$.
Thus, the induced orientation on  $T_{x_1^+}^*\!\wt\Si\!\oplus\!T_{x_2^+}^*\!\wt\Si$
agrees with the complex orientation.\\

\noindent
The residues of meromorphic one-forms on~$\wt\Si$ provide canonical identifications
$$T^*\wt\Si(\x)\big|_{x_i^+}\approx\C\,.$$
With the notation as in Corollary~\ref{canonisomev_crl},
we thus have
$$\big\{T^*\wt\Si(\x)^{\otimes2}\big\}_{\x}^1
\equiv\big\{T^*\wt\Si(\x)^{\otimes2}\big\}_{(x_1^+,x_1^-)}^1 \!\oplus\!
\big\{T^*\wt\Si^{\otimes2}(\x)\big\}_{(x_2^+,x_2^-)}^1
=H^0\big(\wt\Si;\C_{\x}\big)^{\wt\si}\!\oplus\! 
H^0\big(\wt\Si;S_{\x}\big)^{\wt\si}\,.$$
With $L\!=\!T^*\wt\Si$, \eref{canonisomev_e} becomes
\BE{canonisomevT_e}
\rdet\dbar_{(T^*\wt\Si(\x),(\tnd\wt\si)^*)^{\otimes2}}\approx
\big(\!\rdet \dbar_{(T^*\wt\Si,(\tnd\wt\si)^*)^{\otimes2}}\big)
\otimes \La_{\R}^2\C\!\otimes\!\La_{\R}^2\C
\otimes \La_{\R}^4\big(H^0(\wt\Si;S_{\x})^{\wt\si}\big).
\EE\\

\noindent
Let $\C_{x_{12}^{\pm}}\!\lra\!\Si$ be the skyscraper sheaf over~$x_{12}^{\pm}$.
By Lemma~\ref{TSiext_lmm}, there is an exact sequence
$$0\lra \cO(\wh\cT^{\otimes2}) \lra \cO\big(T^*\wt\Si(\x)^{\otimes2}\big)\lra  
\C_{x_{12}^+}\!\oplus\!\C_{x_{12}^-} \lra 0$$
of sheaves over~$\Si$.
Thus,
\eref{DvswtD_e2} applied with $(\wh\cT,\wh\vph)^{\otimes2}|_{\Si}$
and  $(\Si\!\times\!\C,\si\!\times\!\fc)$ determines an isomorphism
\BE{SqIsomExt_e0} 
\big(\!\rdet\,\dbar_{(\wh\cT,\wh\vph)^{\otimes2}|_{\Si}}\big)
\otimes  \La_{\R}^2\C\!\otimes\!\La_{\R}^2\C
\approx \rdet\,\dbar_{(T^*\wt\Si(\x),(\tnd\wt\si)^*)^{\otimes2}}\,.\EE
Combining this isomorphism with~\eref{canonisomevT_e}, we obtain an isomorphism
\BE{SqIsomExt_e}
\big(\!\rdet\,\dbar_{(\wh\cT,\wh\vph)^{\otimes2}|_{\Si}}\big)
\otimes  \La_{\R}^2\C\!\otimes\!\La_{\R}^2\C
\approx
\big(\!\rdet\,\dbar_{(T^*\wt\Si,(\tnd\wt\si)^*)^{\otimes2}}\big)
\otimes \La_{\R}^2\C\!\otimes\!\La_{\R}^2\C
\otimes \La_{\R}^4\big(H^0(\wt\Si;S_{\x})^{\wt\si}\big).
\EE

\begin{crl}\label{SqIsomExt_crl}
Let $(\Si,\si)$, $(\wt\Si,\wt\si)$,
$(\pi,\wt\tau_{\De})$, and $\cT,\wh\cT\!\lra\!\cU$ be as in Lemma~\ref{TSiext_lmm}.
The isomorphism~\eref{SqIsomExt_e} is orientation-preserving with respect~to 
\begin{enumerate}[label=$\bu$,leftmargin=*]

\item the canonical orientation of Corollary~\ref{realorient_crl} on 
$\rdet\,\dbar_{(\wh\cT,\wh\vph)^{\otimes2}|_{\Si}}$,

\item the canonical orientation of Corollaries~\ref{canonisom_crl2a} and~\ref{canonisom_crl} 
on $\rdet\,\dbar_{(T^*\wt\Si,(\tnd\wt\si)^*)^{\otimes2}}$,

\item the orientation on $H^0(\wt\Si;S_{\x})^{\wt\si}$ described above and
the complex orientation on~$\C$.
\end{enumerate}
\end{crl}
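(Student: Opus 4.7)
The plan is to decompose \eref{SqIsomExt_e} as the composition of \eref{SqIsomExt_e0} followed by \eref{canonisomevT_e} and handle these two halves separately. For \eref{canonisomevT_e}, I would apply Corollary~\ref{canonisomev_crl} iteratively (once for each of the two conjugate pairs $(x_1^+,x_1^-)$ and $(x_2^+,x_2^-)$) with $L\!=\!T^*\wt\Si$ over the smooth normalization $(\wt\Si,\wt\si)$. Its hypothesis is satisfied because $(T^*\wt\Si)^{(\tnd\wt\si)^*}\!\to\!\wt\Si^{\wt\si}$ is the real cotangent bundle over a disjoint union of circles, hence orientable. The conclusion is that \eref{canonisomevT_e} is orientation-preserving with respect to the canonical orientations from Corollaries~\ref{canonisom_crl2a} and~\ref{canonisom_crl} on both relative determinants and the canonical orientation on $\{T^*\wt\Si(\x)^{\otimes2}\}_\x^1$. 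Under the projections \eref{H0TCisom_e15}, this canonical orientation matches the one declared in the corollary statement: the paragraph following \eref{H0TCisom_e15} identifies the two $H^0(\wt\Si;\C_{x_i^+})$ summands with their complex orientation and exhibits the orientation on $H^0(\wt\Si;S_\x)^{\wt\si}$ obtained by dualizing the complex orientations on $T_{x_i^+}\wt\Si$ as agreeing with the complex orientation on $T_{x_1^+}^*\wt\Si\!\oplus\!T_{x_2^+}^*\wt\Si$.

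It then remains to verify that \eref{SqIsomExt_e0} is orientation-preserving with respect to the canonical orientation on $\rdet\,\dbar_{(\wh\cT,\wh\vph)^{\otimes2}|_\Si}$ from Corollary~\ref{realorient_crl}, the complex orientations on the $\La_\R^2\C$ factors, and the canonical orientation on $\rdet\,\dbar_{(T^*\wt\Si(\x),(\tnd\wt\si)^*)^{\otimes2}}$ from Corollaries~\ref{canonisom_crl2a} and~\ref{canonisom_crl}. By Corollary~\ref{realorient_crl}, the orientation on $\rdet\,\dbar_{(\wh\cT,\wh\vph)^{\otimes2}|_\Si}$ furnished by the smoothing agrees with the one given directly by Corollaries~\ref{canonisom_crl2a} and~\ref{canonisom_crl} applied to the nodal symmetric surface $(\Si,\si)$ and the rank-1 real bundle pair $(\wh\cT,\wh\vph)^{\otimes2}|_\Si$. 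Consequently the question becomes a purely algebraic comparison between two canonical orientations coming from real orientations on real bundle pairs over $\Si$ and over $\wt\Si$, intertwined by the residue-of-sheaves sequence recorded just before \eref{SqIsomExt_e0} (accounting for the first $\La_\R^2\C$) and by the trivial normalization sequence for $(\Si\!\times\!\C,\si\!\times\!\fc)$ through the $\dbar_\C$ component of $\rdet$ (accounting for the second).

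For this algebraic comparison, my plan is to argue as in the proofs of Lemma~\ref{canonisomExt_lmm} and Corollary~\ref{RealOrient_crl}. Specifically, I would choose a trivialization $\Psi$ as in Proposition~\ref{canonisom_prp} for $(\wh\cT,\wh\vph)^{\otimes2}|_\Si$ stabilized with two copies of its dual, and a corresponding trivialization over $\wt\Si$ for $(T^*\wt\Si(\x),(\tnd\wt\si)^*)^{\otimes2}$ stabilized likewise, both in the homotopy classes determined by the respective canonical real orientations. The residue evaluation at each of the two nodes is $\C$-linear, so the induced identification on the $\La_\R^2$ of the skyscraper quotient is orientation-preserving with respect to the complex orientation, which matches the declared orientation on the $\La_\R^2\C$ factors. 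The main obstacle will be to check precisely that the residue-based sheaf sequence, rather than the plain-evaluation sequence of \eref{DvswtD_e}, nevertheless intertwines the canonical trivializations of Proposition~\ref{canonisom_prp} on $\Si$ and on $\wt\Si$; I expect this to reduce, exactly as in the last step of the proof of Lemma~\ref{canonisomExt_lmm}, to a direct local model near each node in the coordinates $z_1^\pm z_2^\pm\!=\!t^\pm$ of \eref{cUdfn_e}, where the identification of Lemma~\ref{TSiext_lmm} realizes the residue pairing as complex multiplication and thus transports real orientations compatibly across the sequence.
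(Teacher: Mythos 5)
Your proposal is correct and follows the paper's proof: the same decomposition into \eref{SqIsomExt_e0} and \eref{canonisomevT_e}, with the first half handled by Corollary~\ref{RealOrient_crl} and the second by Corollary~\ref{canonisomev_crl} applied with $L\!=\!T^*\wt\Si$. The ``main obstacle'' you anticipate dissolves: Lemma~\ref{TSiext_lmm} defines the fiber of $\wh\cT$ at the node via the residue, so the residue-based sheaf sequence \emph{is} the normalization sequence~\eref{DvswtD_e} for $(\wh\cT,\wh\vph)^{\otimes2}|_{\Si}$ (up to a sign in the $\C$-linear node-gluing map, which does not affect the complex orientation of the cokernel $\La_{\R}^2\C$), and Corollary~\ref{RealOrient_crl} applies verbatim once one notes that the canonical real orientation on $(\wh\cT,\wh\vph)^{\otimes2}|_{\Si}$ lifts to the canonical one on $(T^*\wt\Si(\x),(\tnd\wt\si)^*)^{\otimes2}$.
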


\begin{proof}
The canonical orientation of Corollary~\ref{realorient_crl} on 
$\rdet\,\dbar_{(\wh\cT,\wh\vph)^{\otimes2}|_{\Si}}$ is the orientation induced by
the canonical real orientation on the restriction of $(\wh\cT,\wh\vph)$ to~$\Si$.
The latter lifts to the canonical real orientation on the real bundle pair
\BE{SqIsomExtCrl_e1}
\big(T^*\wt\Si(\x),(\tnd\wt\si)^*\big)^{\otimes2} 
\lra (\wt\Si,\wt\si).\EE
By Corollary~\ref{RealOrient_crl}, the isomorphism~\eref{SqIsomExt_e0} is thus
orientation-preserving with respect to the orientation in the first bullet item above,
the complex orientation on~$\C$, and 
the orientation on  $\rdet\,\dbar_{(T^*\wt\Si(\x),(\tnd\wt\si)^*)^{\otimes2}}$ induced by 
the canonical real orientation on~\eref{SqIsomExtCrl_e1}.
By Corollary~\ref{canonisomev_crl},  the isomorphism~\eref{canonisomevT_e}
is orientation-preserving with respect to the latter
and the orientations in the second and third bullet items above.
The last two statements together imply the claim.
\end{proof}

\noindent
Let $(\pi,\wt\tau_{\De},s_1,\ldots,s_l)$ be a smoothing of a  marked symmetric Riemann surface 
\BE{cCdfn_e}\cC \equiv \big(\Si,(z_1^+,z_1^-),\ldots,(z_l^+,z_l^-)\big)\EE
with a pair of conjugate nodes,
$\cT,\wh\cT\!\lra\!\cU$ be the holomorphic line bundles with involutions~$\vph,\wh\vph$
as in Lemma~\ref{TSiext_lmm},
and
$$\cT\cC=
\cT\big(\!-\!s_1\!-\!\wt\tau_{\De}\!\circ\!s_1\!-\!\ldots\!-\!s_l\!-\!\wt\tau_{\De}\!\circ\!s_l\big), 
\quad
\wh\cT\cC=\wh\cT\big(s_1\!+\!\wt\tau_{\De}\!\circ\!s_1\!+\!\ldots\!+\!s_l\!+\!\wt\tau_{\De}\!\circ\!s_l\big).$$
By the last statement of Lemma~\ref{TSiext_lmm}, $\cT\cC^*\!=\!\wh\cT\cC$. 
Let
\begin{alignat}{1}
\label{wtcCdfn_e}
\wt\cC&=\big(\wt\Si,(z_1^+,z_1^-),\ldots,(z_l^+,z_l^-),(x_1^+,x_1^-),(x_2^+,x_2^-)\big),\\
\notag
T\wt\cC&=T\wt\Si\big(\!-\!z_1^+\!-\!z_1^-\!-\!\ldots\!-\!z_l^+\!-\!z_l^-\!-\!x_1^+\!-\!x_1^-
-\!x_2^+\!-\!x_2^-\big),\\
\notag
T^*\wt\cC&=T^*\wt\Si\big(z_1^+\!+\!z_1^-\!+\!\ldots\!+\!z_l^+\!+\!z_l^-\!+\!x_1^+\!+\!x_1^-
+\!x_2^+\!+\!x_2^-\big).
\end{alignat}
Let $S\cC\!\lra\!\Si$ and $S\wt\cC\!\lra\!\wt\Si$ be the skyscraper sheaves of
the cotangent bundles at the marked points as in the proof of Lemma~\ref{ComplexOrient_lmm2}.
We also denote by $S\cC\!\subset\!S\wt\cC$ the lift of~$S\cC$ to~$\wt\Si$,
i.e.~the natural complement of the subsheaf~$S_{\x}$ of~$S\wt\cC$.\\

\noindent
By Lemma~\ref{TSiext_lmm}, taking the (second order)
residues of sections of  $\wh\cT\cC\!\otimes\!\wh\cT$
at $x_1^+\!\in\!\wt\Si$ induces an isomorphism
\BE{H0TC_e}
\det\dbar_{(\wh\cT\cC\otimes\wh\cT,\wh\vph^{\otimes2})}|_{\Si} 
\approx \det\dbar_{(T^*\wt\cC\otimes T^*\wt\Si,(\tnd\wt\si)^{*\otimes2})}
\otimes \La^2_{\R}\C;\EE
it corresponds to the isomorphism~\eref{sum} for the short exact sequence of
Fredholm operators represented by the middle column in Figure~\ref{CompOrientDM_fig2}.
Combining~\eref{H0TC_e} with the isomorphism~\eref{DvswtD_e2}
for the trivial rank~1 real bundle pair~$(V,\vph)$, we obtain an isomorphism
\BE{H0TCisom_e}
\big(\!\rdet\,\dbar_{(\wh\cT\cC\otimes\wh\cT,\wh\vph^{\otimes2})|_{\Si}}\big)
\otimes \La^2_{\R}\C \approx 
\big(\!\rdet\,\dbar_{(T^*\wt\cC\otimes T^*\wt\Si,(\tnd\wt\si^*)^{\otimes2})}\big)
\otimes \La^2_{\R}\C.\EE
The exact sequence represented by the middle row in Figure~\ref{CompOrientDM_fig2} and its analogue
for~$\wt\cC$ determine isomorphisms
\BE{H0SCevisom_e}\begin{split}
\det\dbar_{(\wh\cT\cC\otimes\wh\cT,\wh\vph^{\otimes2})}|_{\Si}
&\approx \big(\!\det\dbar_{(\wh\cT,\wh\vph)^{\otimes2}|_{\Si}}\big)
\otimes \La_{\R}^{\top}\big(H^0(\Si;S\cC)^{\si}\big),\\
\det\dbar_{(T^*\wt\cC\otimes T^*\wt\Si,(\tnd\wt\si^*)^{\otimes2})} 
&\approx \big(\!\det\dbar_{(T^*\wt\Si,(\tnd\wt\si^*))^{\otimes2}}\big) 
\otimes \La_{\R}^{\top}\big(H^0(\wt\Si;S\wt\cC)^{\wt\si}\big).
\end{split}\EE
The isomorphisms~\eref{H0SCevisom_e} induce orientations on the first factors on 
the two sides of~\eref{H0TCisom_e} from
\begin{enumerate}[label=(\arabic*),leftmargin=*]

\item\label{H0SC_it} the orientations of $H^0(\Si;S\cC)^{\si}$ and $H^0(\wt\Si;S\wt\cC)^{\wt\si}$
described in the proof of Lemma~\ref{ComplexOrient_lmm2}, and

\item\label{FMisom_it} the canonical orientations on 
\BE{FMisom_it_e}\begin{split}
\rdet\,\dbar_{(\wh\cT,\wh\vph)^{\otimes2}|_{\Si}}&\equiv
\big(\!\det\dbar_{(\wh\cT,\wh\vph)^{\otimes2}|_{\Si}}\big)
\otimes \big(\!\det\dbar_{\C}|_{\Si}\big)
\qquad\hbox{and}\\
\rdet\,\dbar_{(T^*\wt\Si,(\tnd\wt\si)^*)^{\otimes2}}&\equiv
\big(\!\det\dbar_{(T^*\wt\Si,(\tnd\wt\si)^*)^{\otimes2}}\big)
\otimes \big(\!\det\dbar_{\C}|_{\wt\Si}\big)
\end{split}\EE
provided by Corollaries~\ref{canonisom_crl2a} and~\ref{canonisom_crl}.

\end{enumerate}

\begin{figure}
$$\xymatrix{& 0\ar[d]  & 0\ar[d]  & 0\ar[d]&\\
0 \ar[r]&  \Ga\big(\wt\Si;T^*\wt\Si(\x)\!\otimes\!T^*\wt\Si\big)^{\wt\si} \ar[r]\ar[d]&  
\Ga\big(\wt\Si;T^*\wt\cC\!\otimes\!T^*\wt\Si\big)^{\wt\si} \ar[r]\ar[d]& 
H^0(\wt\Si;S\cC)^{\wt\si}\ar[r]\ar[d]^{\id}& 0\\
0\ar[r]&   \Ga\big(\Si;\wh\cT^{\otimes2}\big)^{\si} \ar[r]\ar[d]^{\ev_{x_{12}^+}}& 
\Ga(\Si;\wh\cT\cC\!\otimes\!\wh\cT)^{\si} \ar[r]\ar[d]^{\ev_{x_{12}^+}} &  
H^0(\Si;S\cC)^{\si} \ar[r]\ar[d]& 0\\
0 \ar[r]&   \C\ar[r]\ar[d]&  \C\ar[r]\ar[d]& 0\\
& 0  & 0  & &}$$ 
\caption{Commutative diagram for the proof of Corollary~\ref{H0TCisom_crl}} 
\label{CompOrientDM_fig2}
\end{figure}

\begin{crl}\label{H0TCisom_crl}
The isomorphism~\eref{H0TCisom_e} is orientation-preserving with respect to 
the two orientations described above and the complex orientation on~$\C$.
\end{crl}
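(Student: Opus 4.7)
The plan is to diagram-chase Figure~\ref{CompOrientDM_fig2}. Each of its two rows and two columns is a short exact sequence of sections that intertwines the relevant $\dbar$-operators, so each induces a short exact sequence of Fredholm operators. Applying the canonical isomorphism~\eref{sum} to all four yields isomorphisms of determinant lines fitting into a commutative square. In particular, the isomorphism~\eref{H0TC_e} coming from the middle column coincides with the composite obtained by going around via the middle row, the left column, and the inverse of the top row. Tensoring with the $\det\dbar_{\C}$ factors and invoking~\eref{DvswtD_e2} for the trivial real bundle pair to pass between $\det\dbar_{\C}|_{\Si}$ and $\det\dbar_{\C}|_{\wt\Si}$ gives two equivalent descriptions of the isomorphism~\eref{H0TCisom_e}.

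Along the second path, orientation compatibility reduces to three ingredients. First, by the very definition of the orientations on the LHS and RHS of~\eref{H0TCisom_e} via~\eref{H0SCevisom_e}, the middle-row and top-row portions of the composite are orientation-preserving with respect to the canonical orientations on $\rdet\,\dbar_{(\wh\cT,\wh\vph)^{\otimes 2}|_{\Si}}$ and $\rdet\,\dbar_{(T^*\wt\Si,(\tnd\wt\si)^*)^{\otimes 2}}$ and the orientations on $H^0(\Si;S\cC)^{\si}$ and $H^0(\wt\Si;S\wt\cC)^{\wt\si}$ described in the proof of Lemma~\ref{ComplexOrient_lmm2}. Second, the remaining comparison between these two canonical orientations on the square bundles is furnished by Corollary~\ref{SqIsomExt_crl}, which introduces an additional factor $\La_{\R}^4\big(H^0(\wt\Si;S_{\x})^{\wt\si}\big)$ carrying the orientation defined in the paragraph containing~\eref{H0TCisom_e15}. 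Third, I would observe that the direct-sum decomposition $H^0(\wt\Si;S\wt\cC)^{\wt\si} = H^0(\Si;S\cC)^{\si} \oplus H^0(\wt\Si;S_{\x})^{\wt\si}$, coming from $S\wt\cC = S\cC\oplus S_{\x}$ on~$\wt\Si$, is orientation-preserving when the left-hand side is given the orientation from Lemma~\ref{ComplexOrient_lmm2}; this holds because all three orientations are defined by projecting onto the values at the ``$+$''-point factors, and the marked points of~$\wt\cC$ split into those of~$\cC$ followed by the nodal preimages $x_1^+,x_2^+$ with matching orderings.

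The main technical obstacle is keeping careful track of the various $\La_{\R}^2\C$ factors arising from node residues (via columns~1 and~2 of the diagram, and via~\eref{DvswtD_e2} applied to $(\Si\!\times\!\C,\si\!\times\!\fc)$) and verifying that the $\La_{\R}^2\C$'s appearing in~\eref{SqIsomExt_e} of Corollary~\ref{SqIsomExt_crl} cancel against those produced by the diagram chase. Once this bookkeeping is done, orientation compatibility along the second path is established, and by the commutativity of the square of determinant-line isomorphisms it transfers to the original middle-column description of~\eref{H0TCisom_e}.
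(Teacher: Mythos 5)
Your proposal follows essentially the same route as the paper's proof: both reduce via the commutativity of Figure~\ref{CompOrientDM_fig2} to the $l\!=\!0$ (left-column) comparison, split off the skyscraper contribution $\La_{\R}^4\big(H^0(\wt\Si;S_{\x})^{\wt\si}\big)$ by factoring through $T^*\wt\Si(\x)\!\otimes\!T^*\wt\Si$, and identify the resulting composite with the isomorphism~\eref{SqIsomExt_e}, so that the claim follows from Corollary~\ref{SqIsomExt_crl}. The bookkeeping of the $\La_{\R}^2\C$ factors that you flag is handled in the paper exactly by observing that this composite is homotopic to~\eref{SqIsomExt_e}, so your outline is correct as stated.
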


\begin{proof}
The exact sequence represented by the first row in Figure~\ref{CompOrientDM_fig2}
and the $l\!=\!0$ case of the second isomorphism in~\eref{H0SCevisom_e}
determine isomorphisms
\BE{H0TCisom_e3}\begin{split}
\det\dbar_{(T^*\wt\cC\otimes T^*\wt\Si,(\tnd\wt\si^*)^{\otimes2})} 
&\approx \big(\!\det\dbar_{(T^*\wt\Si(\x)\otimes T^*\wt\Si,(\tnd\wt\si^*)^{\otimes2})}\big) 
\otimes \La_{\R}^{\top}\big(H^0(\wt\Si;S\cC)^{\wt\si}\big),\\
\det\dbar_{(T^*\wt\Si(\x)\otimes T^*\wt\Si,(\tnd\wt\si^*)^{\otimes2})} 
&\approx \big(\!\det\dbar_{(T^*\wt\Si,\tnd\wt\si^*)^{\otimes2}}\big) 
\otimes \La_{\R}^4\big(H^0(\wt\Si;S_{\x})^{\wt\si}\big).
\end{split}\EE
The second isomorphism in~\eref{H0SCevisom_e} is the composition of the first isomorphism
in~\eref{H0TCisom_e3} and the second one tensored with the identity
on $\La_{\R}^{\top}(H^0(\wt\Si;S\cC)^{\wt\si})$.\\

\noindent
Combining the analogue of~\eref{H0TC_e} for $l\!=\!0$ 
(i.e.~the isomorphism induced by the left column  in Figure~\ref{CompOrientDM_fig2})
with the isomorphism~\eref{DvswtD_e2}
for the trivial rank~1 real bundle pair~$(V,\vph)$, we obtain an isomorphism
\BE{H0TCisom_e5}
\big(\!\rdet\,\dbar_{(\wh\cT,\wh\vph)^{\otimes2}|_{\Si}}\big) \otimes \La^2_{\R}\C 
\approx 
\big(\!\rdet\,\dbar_{(T^*\wt\Si(\x)\otimes T^*\wt\Si,(\tnd\wt\si^*)^{\otimes2})}\big)
\otimes \La^2_{\R}\C.\EE
The canonical orientation on the second line in~\eref{FMisom_it_e} and 
the second isomorphism in~\eref{H0TCisom_e3} induce an orientation on 
the first factor on the right-hand side of~\eref{H0TCisom_e5}.
By the commutativity of the squares in Figure~\ref{CompOrientDM_fig2},
it is sufficient to show that the isomorphism~\eref{H0TCisom_e5} is orientation-preserving
with respect to the canonical orientation on  
$\rdet\,\dbar_{(\wh\cT,\wh\vph)^{\otimes2}|_{\Si}}$, 
the above orientation on $\rdet\,\dbar_{(T^*\wt\Si(\x)\otimes T^*\wt\Si,(\tnd\wt\si^*)^{\otimes2})}$,
 and the complex orientation on~$\C$.\\

\noindent
The composition of the isomorphism~\eref{H0TCisom_e5} tensored with the identity on~$\La^2_{\R}\C$
and the second isomorphism in~\eref{H0TCisom_e3} tensored with the identities on
$\det\dbar_{\wt\Si;\C}$ and two copies of~$\La^2_{\R}\C$ 
is homotopic to the isomorphism~\eref{SqIsomExt_e}.
By the previous paragraph, the claim is thus equivalent 
to the isomorphism~\eref{SqIsomExt_e} being orientation-preserving 
with respect to the canonical orientations on  the first factors
on the two sides, the complex orientation on~$\C$,
and the orientation on $\La_{\R}^4(H^0(\wt\Si;S_{\x})^{\wt\si})$ induced
as in the paragraph containing~\eref{H0TCisom_e15}.
This is indeed the case by Corollary~\ref{SqIsomExt_crl}.
\end{proof}

\noindent
The next two statements are obtained from Lemmas~\ref{Cech_lmm} and~\ref{Serre_lmm}
in the same way as \cite[Corollary~6.12]{RealGWsI} is obtained from \cite[Lemmas~6.9,6.10]{RealGWsI}.

\begin{crl}\label{famDI_crl}
If the marked curve~\eref{cCdfn_e} is stable, the orientation on the restriction of the real line bundle
$$\La_{\R}^{\top}\big((\wch{R}^1\pi_*\cT\cC)^{\si}\big)\otimes
\La_{\R}^{\top}\big(\big(R_{\dbar}^1\pi_*\cT\cC\big)^{\si}\big)
 \lra \De_{\R}^2$$
to $\De_{\R}^{*2}$ induced by Dolbeault Isomorphism   extends across $\bt\!=\!0$
as the orientation induced by  Dolbeault Isomorphism
for the nodal symmetric surface~$(\Si,\si)$.
\end{crl}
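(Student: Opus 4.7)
The plan is to apply Lemma~\ref{Cech_lmm} directly to the line bundle pair $(\cT\cC,\vph)$ over $(\cU,\wt\tau_{\De})$ and observe that the resulting isomorphism of real bundle pairs, after restriction to~$\De^2_{\R}$ and passage to $\si$-invariants, is the sought-after family of Dolbeault isomorphisms. First I would verify that the degree hypotheses of Lemma~\ref{Cech_lmm} are met. By Lemma~\ref{TSiext_lmm}, the restriction $\cT|_{\Si_{\bt}}$ equals $T\Si_{\bt}$ for $\bt\!\in\!\De^{*2}$, so $\cT\cC|_{\Si_{\bt}}$ is the tangent bundle of the smooth curve~$\Si_{\bt}$ twisted by $-\sum(z_i^+\!+\!z_i^-)$, with total degree $2\!-\!2g\!-\!2l\!<\!0$ by the stability of~\eref{cCdfn_e}. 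Over the central fiber, the same lemma identifies $\cT|_{\Si}$ with the subsheaf of $T\wt\Si(-\x)$ cut out by matching residues at the node preimages, so on each irreducible component $\wt\Si'\!\subset\!\wt\Si$ the degree of $\cT\cC|_{\wt\Si'}$ is $2\!-\!2g(\wt\Si')\!-\!n_{\vrt}(\wt\Si')\!-\!n_{\st}(\wt\Si')$, where $n_{\vrt}(\wt\Si')$ is the number of node preimages and $n_{\st}(\wt\Si')$ is the number of marked points on~$\wt\Si'$. Stability of~\eref{cCdfn_e} forces this quantity to be at most~$-1$.

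With the hypotheses in place, Lemma~\ref{Cech_lmm} furnishes a canonical isomorphism $\wch{R}^1\pi_*\cT\cC\!\approx\!R_{\dbar}^1\pi_*\cT\cC$ of real bundle pairs over $(\De^2,\tau_{\De})$. Restricting to~$\De^2_{\R}$ and taking $\wt\tau_{\De}$-invariants yields a canonical isomorphism
$$(\wch{R}^1\pi_*\cT\cC)^{\si}\approx (R_{\dbar}^1\pi_*\cT\cC)^{\si}$$
of real vector bundles over the whole of~$\De^2_{\R}$, including the central point. This isomorphism determines an orientation on the real line bundle
$$\La_{\R}^{\top}\big((\wch{R}^1\pi_*\cT\cC)^{\si}\big)\otimes\La_{\R}^{\top}\big((R_{\dbar}^1\pi_*\cT\cC)^{\si}\big)\lra\De^2_{\R}$$
which is continuous by construction and varies smoothly across $\bt\!=\!0$.

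To complete the argument, I would check that the fiberwise specialization of this canonical isomorphism recovers the relevant Dolbeault comparisons in each case. For $\bt\!\in\!\De^{*2}_{\R}$, the restriction to $(\Si_{\bt},\si_{\bt})$ is the classical Dolbeault isomorphism for the tangent bundle of~$\Si_{\bt}$ twisted by the marked points, so the induced orientation agrees with the one given by the Dolbeault Isomorphism for each smooth fiber. At $\bt\!=\!0$, the restriction is precisely the Dolbeault isomorphism used to orient the real line bundle~\eref{CidentDM_e} for the nodal curve $(\Si,\si)$ in the construction of Section~\ref{RealOrientations_subs}. The extension across $\bt\!=\!0$ is then automatic. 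The only real obstacle lies in Lemma~\ref{Cech_lmm} itself, which must produce a Čech-to-Dolbeault comparison that is at once canonical, real-analytic in the parameter~$\bt$, and compatible with the residue description of $\cT|_\Si$ from Lemma~\ref{TSiext_lmm}; granting that lemma, the corollary is purely formal, since an isomorphism of real line bundles over~$\De^2_{\R}$ induces an orientation on the tensor product of their top exterior powers with no degeneration anywhere on the base.
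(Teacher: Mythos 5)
Your proposal is correct and follows essentially the same route as the paper, which derives this corollary directly from Lemma~\ref{Cech_lmm} (by analogy with the one-node case treated in the prequel): the canonical \v{C}ech-to-Dolbeault isomorphism of real bundle pairs over all of $(\De^2,\tau_{\De})$ restricts to the real locus to give an orientation of the tensor product that is continuous across $\bt\!=\!0$ and specializes fiberwise to the Dolbeault-induced orientations. Your verification of the degree hypotheses via Lemma~\ref{TSiext_lmm} and stability is the right supporting detail.
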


\begin{crl}\label{famSD_crl}
If the marked curve~\eref{cCdfn_e} is stable, the orientation on the restriction of the real line bundle
$$\La_{\R}^{\top}\big(\big(R_{\dbar}^1\pi_*\cT\cC\big)^{\si}\big)
\otimes \La_{\R}^{\top}\big(\big((R_{\dbar}^0\pi_*(\wh\cT\cC\!\otimes\!\wh\cT))^{\si}\big)^*\big)
\lra \De_{\R}^2$$
to $\De_{\R}^{*2}$ induced by Serre Duality as in the proof of \cite[Proposition~5.9]{RealGWsI} 
extends across $\bt\!=\!0$
as the orientation induced by Serre Duality
for the nodal symmetric surface~$(\Si,\si)$.
\end{crl}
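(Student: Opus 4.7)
The plan is to apply Lemma~\ref{Serre_lmm} to the line bundle $L=\wh\cT\cC\otimes\wh\cT$ over $(\cU,\wt\tau_\De)$ and then show that the resulting fiberwise Serre duality isomorphism matches, at $\bt\!=\!0$, the one used in the construction described in the proof of \cite[Proposition~5.9]{RealGWsI} for the nodal curve. First I would verify the hypotheses of Lemma~\ref{Serre_lmm}: since the marked curve~\eref{cCdfn_e} is stable, we have $2g_a(\Si)+l\!\ge\!3$, and $\wh\cT|_{\Si'}$ has degree $2g(\Si')\!-\!2+k(\Si')$ on each irreducible component $\Si'\!\subset\!\wt\Si$, where $k(\Si')$ is the number of preimages of nodes lying on~$\Si'$; together with the twist by the marked points this yields both inequalities $\deg L|_{\Si}\!>\!2g_a(\Si)\!-\!2$ and $\deg L|_{\Si'}\!\ge\!2g_a(\Si')\!-\!2$. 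Using the canonical pairing $\cT\otimes\wh\cT\!\approx\!\cO_\cU$ from the last sentence of Lemma~\ref{TSiext_lmm}, one computes
\begin{equation*}
L^*\otimes\wh\cT\;=\;\cT\cC\otimes\cT\otimes\wh\cT\;\approx\;\cT\cC,
\end{equation*}
so the isomorphism~\eref{Serre_e} produces a canonical isomorphism
\begin{equation*}
R_{\dbar}^1\pi_*\cT\cC\;\approx\;\bigl(R_{\dbar}^0\pi_*(\wh\cT\cC\otimes\wh\cT)\bigr)^*
\end{equation*}
of real bundle pairs over~$(\De^2,\tau_\De)$.

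Taking top exterior powers of the $\wt\tau_\De$-invariant parts and dualizing produces a nowhere-zero section, hence an orientation, of the real line bundle displayed in the statement, over the entire disk~$\De_{\R}^2$. It then remains to identify this orientation with the two orientations described in the statement: over $\De_{\R}^{*2}$ and at~$\bt\!=\!0$. For $\bt\!\in\!\De_{\R}^{*2}$, the fiber $(\Si_\bt,\si_\bt)$ is a smooth symmetric surface on which $\wh\cT|_{\Si_\bt}\!\approx\!T^*\Si_\bt$ by Lemma~\ref{TSiext_lmm}; the restriction of~\eref{Serre_e} is then the standard Serre duality pairing, and the induced orientation is the one used in the proof of \cite[Proposition~5.9]{RealGWsI} over the smooth locus.

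The main content is the identification at $\bt\!=\!0$. Here the Serre duality construction of \cite[Proposition~5.9]{RealGWsI} for the nodal curve $(\Si,\si)$ proceeds by replacing $T^*\Si_\bt$ (which is not defined on the nodal fiber) with the dualizing sheaf — which, by Lemma~\ref{TSiext_lmm}, is precisely $\wh\cT|_\Si$ — and then pairing sections against elements of $\wch{H}^1$ of the dual twisted by this dualizing sheaf. The key step will be to check that the global Serre pairing provided by Lemma~\ref{Serre_lmm} restricts on the central fiber to exactly this pairing, i.e.\ that the $\cU$-level construction of the isomorphism~\eref{Serre_e} (via the trace map for the dualizing sheaf of $\pi$, the latter equal to $\wh\cT$ up to twist by the nodal loci and the marked sections) specializes to the nodal Serre pairing. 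This is the formal content of Grothendieck--Serre duality for the family~$\pi$, applied on $\De_{\R}^2$, combined with the explicit local model of $\wh\cT$ near the nodes from the proof of Lemma~\ref{TSiext_lmm}; the same argument is sketched in the proof of \cite[Corollary~6.12]{RealGWsI} for a one-nodal smoothing and carries over verbatim here, the only difference being that two nodes, interchanged by~$\wt\tau_\De$, appear simultaneously. Once this matching of pairings is established on the central fiber, the two orientations on the restriction to $\bt\!=\!0$ agree, completing the extension claim.

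The principal obstacle is this last compatibility check at the nodal fiber, which requires unpacking the residue-based local model of $\wh\cT$ near the node simultaneously with the Serre pairing; it is essentially formal once Lemma~\ref{TSiext_lmm} is used, but the accounting of signs under the identification $\wh\cT\!\approx\!\cT^*$ must be done carefully to confirm that no extra sign is introduced relative to the smooth case.
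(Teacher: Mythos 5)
Your proposal is correct and follows the route the paper intends: the paper disposes of this corollary in one sentence by saying it follows from Lemma~\ref{Serre_lmm} exactly as \cite[Corollary~6.12]{RealGWsI} follows from \cite[Lemmas~6.9,6.10]{RealGWsI}, and your argument (apply Lemma~\ref{Serre_lmm} with $L=\wh\cT\cC\otimes\wh\cT$, use $\cT\otimes\wh\cT\approx\cO_\cU$ and $(\wh\cT\cC)^*=\cT\cC$ to get $L^*\otimes\wh\cT\approx\cT\cC$, and conclude by continuity of the resulting bundle isomorphism over $\De_{\R}^2$) is precisely that. The only remark is that the central-fiber compatibility you single out as the main remaining work is already packaged into the statement of Lemma~\ref{Serre_lmm} --- the isomorphism~\eref{Serre_e} is asserted as a canonical isomorphism of real bundle pairs over all of $(\De^2,\tau_\De)$ restricting fiberwise to Serre duality, including at $\bt=0$ --- so no separate Grothendieck--Serre argument is needed at that point.
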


\noindent
We continue with the setup for~\eref{wtcCdfn_e}.
By Lemma~\ref{TSiext_lmm}, there is an exact sequence
\BE{KSext_e5a0} 0\lra \cO(\cT\cC|_{\Si}) \lra \cO\big(T\wt\cC\big)\lra  
\C_{x_{12}^+}\!\oplus\!\C_{x_{12}^-} \lra 0\EE
of sheaves with lifts of the involution~$\si$ over~$\Si$.
The projection~of
\BE{H0proj_e} \wch{H}^0\big(\Si;\C_{x_{12}^+}\!\oplus\!\C_{x_{12}^-}\big)^{\si}\subset 
\wch{H}^0\big(\Si;\C_{x_{12}^+}\big)\!\oplus\!\wch{H}^0\big(\Si;\C_{x_{12}^-}\big)
=\C\oplus\C\EE
to the first component induces an isomorphism of real vector spaces.\\

\noindent
If $\cC$ is stable, the real part of the cohomology sequence 
induced by~\eref{KSext_e5a0}, its analogue in Dolbeault cohomology,
and  Dolbeault Isomorphism induce a commutative diagram
\BE{KSext_e5a}\begin{split}
\xymatrix{ 0\ar[r]& \C \ar[r]\ar[d]^{\id}& 
\wch{H}^1\big(\Si;\cO(\cT\cC|_{\Si})\big)^{\si} \ar[r]\ar[d]^{\tn{DI}}&
\wch{H}^1\big(\wt\Si;\cO\big(T\wt\cC\big)\big)^{\si}  \ar[r]\ar[d]^{\tn{DI}}& 0\\
0\ar[r]& \C \ar[r]& H^1\big(\Si;\cT\cC\big)^{\si} \ar[r]& 
H^1\big(\wt\Si;T\wt\cC\big)^{\si} \ar[r]& 0}
\end{split}\EE
of exact sequences.
In particular, there are canonical isomorphisms
\BE{KSext_e5b}\begin{split}
\La_{\R}^{\top}\big(\wch{H}^1\big(\Si;\cO(\cT\cC|_{\Si})\big)^{\si}\big)
&\approx \La_{\R}^{\top}\big(\wch{H}^1\big(\wt\Si;\cO\big(T\wt\cC\big)\big)^{\si}\big)
\otimes\La_{\R}^{\top}\C\,,\\
\La_{\R}^{\top}\big(H^1\big(\Si;\cT\cC\big)^{\si}\big)
&\approx \La_{\R}^{\top}\big(H^1\big(\wt\Si;T\wt\cC\big)^{\si}\big)
\otimes\La_{\R}^{\top}\C\,.
\end{split}\EE
Combining them together, we obtain an isomorphism
\BE{DIisom_e}\begin{split}
&\La_{\R}^{\top}\big(\wch{H}^1\big(\Si;\cO(\cT\cC|_{\Si})\big)^{\si}\big)
\!\otimes\!
\La_{\R}^{\top}\big(H^1(\Si;\cT\cC)^{\si}\big)\\
&\hspace{.5in}\approx 
\Big(\La_{\R}^{\top}\big(\wch{H}^1\big(\wt\Si;\cO(T\wt\cC\big)\big)^{\si}\big)
\!\otimes\!\La_{\R}^{\top}\big(H^1(\wt\Si;T\wt\cC)^{\si}\big)\Big)
\otimes \La_{\R}^2\C\!\otimes\!\La_{\R}^2\C.
\end{split}\EE

\begin{crl}\label{DIisom_crl}
The isomorphism~\eref{DIisom_e} is orientation-preserving with respect to
the canonical orientation of Corollary~\ref{famDI_crl} on the left-hand side,
the orientation on the first tensor product on the right-hand side induced 
by Dolbeault Isomorphism, and the canonical orientation on the last tensor product.
\end{crl}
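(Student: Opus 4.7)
The plan is to deduce this corollary from Corollary~\ref{famDI_crl} by applying the naturality of the short-exact-sequence isomorphism~\eref{sum} to the commutative diagram~\eref{KSext_e5a}, in the same spirit as the preceding comparison corollaries. First, I would observe that the two rows of~\eref{KSext_e5a} are the real parts of the long exact sequences in $\wch{H}^*$ and $H^*_{\dbar}$ induced by~\eref{KSext_e5a0}; the two leftmost $\C$'s are the real-invariant parts of $\wch{H}^0(\Si;\C_{x_{12}^+}\!\oplus\!\C_{x_{12}^-})$ and $H^0_{\dbar}(\Si;\C_{x_{12}^+}\!\oplus\!\C_{x_{12}^-})$, both identified with a single copy of $\C$ via the projection~\eref{H0proj_e} and oriented by its complex structure. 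The isomorphisms~\eref{KSext_e5b} are the instances of~\eref{sum} applied to these two long exact sequences, and their tensor product is~\eref{DIisom_e}, with the $\La_{\R}^2\C\!\otimes\!\La_{\R}^2\C$ factor on the right-hand side being the tensor of the standard complex orientations on these two $\C$'s.

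Second, the vertical arrows in~\eref{KSext_e5a} are Dolbeault Isomorphism (the identity on the leftmost column) and altogether constitute a morphism of short exact sequences. By the naturality of~\eref{sum} under such morphisms, the induced square of isomorphisms of top exterior powers commutes. Hence, if the rightmost-column Dolbeault Isomorphism between $\wch{H}^1(\wt\Si;\cO(T\wt\cC))^{\wt\si}$ and $H^1(\wt\Si;T\wt\cC)^{\wt\si}$ and the leftmost identity $\C \to \C$ are both orientation-preserving with respect to standard orientations, then so is the middle-column Dolbeault Isomorphism between $\wch{H}^1(\Si;\cO(\cT\cC|_\Si))^\si$ and $H^1(\Si;\cT\cC)^\si$, provided the latter two spaces are equipped with the orientations produced by~\eref{sum} from the horizontal short exact sequences. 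This reduces the statement of the corollary to matching this "short-exact-sequence" orientation on the middle column with the family Dolbeault orientation of Corollary~\ref{famDI_crl}.

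Third, this matching is supplied by the family picture over $\De^2_{\R}$. By Lemma~\ref{Cech_lmm}, the real bundles $(\wch R^1\pi_*\cT\cC)^\si$ and $(R^1_{\dbar}\pi_*\cT\cC)^\si$ over $\De^2_{\R}$ are canonically isomorphic, specializing on smooth fibers $\bt \in \De^{*2}_{\R}$ to the fiberwise Dolbeault Isomorphism (which is manifestly orientation-preserving in the standard conventions). Corollary~\ref{famDI_crl} extends this isomorphism to an orientation-preserving isomorphism on the central fiber. The family version of~\eref{KSext_e5a0}, obtained from the comparison of $\cT\cC$ with the pushforward of $T\wt\cC$ across the normalization in a neighborhood of $\bt = 0$, degenerates at $\bt = 0$ into the short exact sequence~\eref{KSext_e5a0} on $\Si$, and it fits into a commutative diagram with the family Dolbeault Isomorphism on both sides. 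Restricting this commutative diagram to $\bt = 0$ identifies the family orientation with the short-exact-sequence orientation on the middle column, which is what is needed to conclude.

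The main obstacle is the concrete verification that the nodal-limit of the family Dolbeault Isomorphism agrees, as an oriented map of real lines, with the Dolbeault Isomorphism on the quotient sheaves extracted from~\eref{KSext_e5a0}. This amounts to a local computation with \v Cech cochains on the two coordinate patches $\cU_0^\pm$ of~\eref{cUdfn_e} and their Dolbeault representatives supported near the two conjugate nodes, checking that the residue pairings compatible with~$\wt\tau_{\De}$ in Lemma~\ref{TSiext_lmm} identify the two natural identifications up to the complex orientation of $T_{x_{12}^+}\Si$ — the very $\C$ that appears as the leftmost term in both rows of~\eref{KSext_e5a}, and which~\eref{DIisom_e} accounts for through the $\La_{\R}^2\C\!\otimes\!\La_{\R}^2\C$ factor. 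Once this local compatibility is in place, combining it with the naturality of~\eref{sum} and Corollary~\ref{famDI_crl} yields the claim.
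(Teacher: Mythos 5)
Your first two paragraphs reproduce the paper's argument exactly: the commutativity of the diagram~\eref{KSext_e5a}, combined with the naturality of the exact-sequence isomorphisms~\eref{KSext_e5b}, shows that~\eref{DIisom_e} carries the orientation induced by the Dolbeault Isomorphism for the nodal surface $(\Si,\si)$ to the orientation induced by the Dolbeault Isomorphism for $(\wt\Si,\wt\si)$ tensored with the canonical orientation on $\La_{\R}^2\C\!\otimes\!\La_{\R}^2\C$. The ``main obstacle'' you then set up in your last two paragraphs --- verifying by a local \v{C}ech/residue computation that the nodal limit of the family Dolbeault Isomorphism agrees with the Dolbeault Isomorphism of the nodal surface --- is not an obstacle you need to clear here: that agreement is precisely the content of Corollary~\ref{famDI_crl}, which is already established and which the statement of the present corollary invokes as the \emph{definition} of the orientation on the left-hand side of~\eref{DIisom_e}. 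In other words, once Corollary~\ref{famDI_crl} is quoted, the canonical orientation on the left-hand side \emph{is} the nodal Dolbeault orientation, and your commutativity argument finishes the proof; no degeneration of the family version of~\eref{KSext_e5a0} or residue computation enters. The paper's proof is exactly your paragraphs one and two followed by this one-sentence citation, so your argument is complete after deleting the proposed extra step.
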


\begin{proof}
By the commutativity of the diagram~\eref{KSext_e5a},
the isomorphism \eref{DIisom_e} is  orientation-preserving with respect to the orientations 
on the left-hand side and on the first tensor product on the right-hand side
induced by Dolbeault Isomorphism.
The former is the orientation of Corollary~\ref{famDI_crl}.
\end{proof}

\noindent
Combining the dual of~\eref{H0TC_e} with the second isomorphism in~\eref{KSext_e5b},
we obtain an isomorphism
\BE{SDisom_e}
\begin{split}
&\La_{\R}^{\top}\big(H^1(\Si;\cT\cC)^{\si}\big)
\!\otimes\!
\La_{\R}^{\top}\big(\big(H^0(\Si;\wh\cT\cC\!\otimes\!\wh\cT)^{\si}\big)^*\big)\\
&\hspace{.3in}\approx 
\Big(\La_{\R}^{\top}\big(H^1(\wt\Si;T\wt\cC)^{\si}\big) \!\otimes\!
\La_{\R}^{\top}\big(\big(H^0(\wt\Si;T^*\wt\cC\!\otimes\!T^*\wt\Si)^{\si}\big)^*\big)\Big)
\otimes \La_{\R}^2\C\!\otimes\!\La_{\R}^2(\C^{\vee})\,,
\end{split}\EE
where $\C^{\vee}\!=\!\Hom_{\C}(\C,\C)$.
The complex orientation on~$\C$ induces an orientation on~$\C^{\vee}$
under the isomorphism~\eref{duaVorient_e}.
The latter is the opposite of the complex orientation of~$\C^{\vee}$.

\begin{crl}\label{SDisom_crl}
The isomorphism~\eref{SDisom_e} is orientation-preserving with respect to
the canonical orientation of Corollary~\ref{famSD_crl} on the left-hand side,
the orientation on the first tensor product on the right-hand side induced 
by Serre Duality, and the complex orientations on~$\C$ and~$\C^{\vee}$.
\end{crl}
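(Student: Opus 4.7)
The plan is to mirror the proof of Corollary~\ref{DIisom_crl}, replacing the Dolbeault Isomorphism by Serre Duality.  First, I would assemble a commutative diagram of short exact sequences of real vector spaces analogous to~\eref{KSext_e5a}:
$$\xymatrix@C=0.3cm{
0\ar[r] & \C \ar[r]\ar[d]^{\cong} & H^1(\Si;\cT\cC)^{\si} \ar[r]\ar[d]^{\tn{SD}}_{\approx} & H^1(\wt\Si;T\wt\cC)^{\wt\si} \ar[r]\ar[d]^{\tn{SD}}_{\approx} & 0 \\
0\ar[r] & \C^{\vee} \ar[r] & \big(H^0(\Si;\wh\cT\cC\!\otimes\!\wh\cT)^{\si}\big)^* \ar[r] & \big(H^0(\wt\Si;T^*\wt\cC\!\otimes\!T^*\wt\Si)^{\wt\si}\big)^* \ar[r] & 0}$$
The top row is the cohomology sequence induced by~\eref{KSext_e5a0} (and is the bottom row of~\eref{KSext_e5a}), which underlies the second isomorphism in~\eref{KSext_e5b}; the bottom row is the dual of the $\si$-invariant cohomology sequence corresponding to the middle column of the diagram in the proof of Corollary~\ref{H0TCisom_crl}, which underlies the dual of~\eref{H0TC_e}.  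The vertical $\tn{SD}$ arrows are Serre Duality for~$\Si$ and~$\wt\Si$ with the sign convention used in the proof of \cite[Proposition~5.9]{RealGWsI}, and the leftmost vertical is the canonical pairing identification.  Commutativity of the diagram follows from the naturality of Serre Duality with respect to~\eref{KSext_e5a0} and its dualized counterpart.

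Second, I would observe that~\eref{SDisom_e} is the tensor product of the second isomorphism in~\eref{KSext_e5b}, induced by the top row, with the dual of~\eref{H0TC_e}, induced by the bottom row read dually.  The canonical orientation on the left-hand side of~\eref{SDisom_e} from Corollary~\ref{famSD_crl} is, by definition, the orientation transported across $\bt\!=\!0$ from Serre Duality for smoothings, and by Corollary~\ref{famSD_crl} itself it equals the orientation induced by the central vertical~$\tn{SD}$ for the nodal curve~$\Si$.  The orientation on the first tensor product on the right-hand side of~\eref{SDisom_e} is, by construction, the one induced by the right vertical~$\tn{SD}$ on~$\wt\Si$.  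The comparison between the two then reduces, via the commutativity of the diagram, to the orientation of the leftmost vertical arrow $\C\!\to\!\C^{\vee}$, tensored against the factors $\La_{\R}^2\C$ and $\La_{\R}^2\C^{\vee}$ appearing in~\eref{SDisom_e}.

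Third, I would verify that the leftmost vertical pairing $\C\!\to\!\C^{\vee}$ is orientation-preserving with respect to the complex orientations on $\C$ and $\C^{\vee}$ individually.  Both copies of~$\C$ arise concretely from residues at the conjugate nodes~$x_{12}^{\pm}$: on the top row, via the local model of Lemma~\ref{TSiext_lmm}, and on the bottom row, via the residue map computed as in the proof of Corollary~\ref{H0TCisom_crl}.  The Serre Duality pairing at the node is the residue pairing of meromorphic vector fields with meromorphic one-forms, which is $\C$-bilinear and hence respects the complex orientations on each factor separately, exactly as asserted by the corollary (in contrast to the real-part identification~\eref{duaVorient_e}, which is orientation-reversing for $k\!=\!1$).

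The main obstacle will be carrying out this residue computation at the pair of conjugate nodes with enough care to reconcile it with the Serre Duality sign convention of \cite[(5.21)]{RealGWsI}: in particular, one must track the interplay between the real-structure conjugation relating the residues at $x_{12}^+$ and $x_{12}^-$, the $\si$-invariance constraint, and the orientations chosen on $H^0(\Si;S\cC)^{\si}$ and $H^0(\wt\Si;S\wt\cC)^{\wt\si}$ in the proof of Lemma~\ref{ComplexOrient_lmm2}.  This step is parallel in spirit to the explicit verification underlying Corollary~\ref{canonisomev_crl} and to the compatibility of~\eref{H0TCisom_crl} with the complex orientation on~$\C$.
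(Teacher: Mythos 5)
Your proposal is correct and follows essentially the same route as the paper: the paper's proof is exactly the commutative diagram relating the two Serre Duality pairings for $\Si$ and $\wt\Si$ via the node exact sequence~\eref{KSext_e5a0} and its dual (presented there as pairings into $\R$ rather than as vertical isomorphisms, which is only a cosmetic difference), followed by the observation that the induced identification of the leftmost $\C$ with $\C^{\vee}$ is $\C$-linear—being the residue pairing at the node—and hence compatible with the complex orientations. The "obstacle" you flag at the end is not an issue in practice: no explicit residue computation is needed beyond noting this $\C$-bilinearity, since the orientation on $\C^{\vee}$ in the statement is the complex one rather than the one induced by~\eref{duaVorient_e}.
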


\begin{proof}
Since the diagram
$$\xymatrix{0  \ar[r]& \C \ar[r] \ar@{}[d]|\otimes&  
H^1\big(\Si;\cT\cC\big)^{\si} \ar[r] \ar@{}[d]|\otimes& 
H^1\big(\wt\Si;T\wt\cC\big)^{\si} \ar[r] \ar@{}[d]|\otimes& 0\\
0 &\ar[l]\ar[d]  \C  &\ar[l]\ar[d]  H^0\big(\Si;\wh\cT\cC\!\otimes\!\wh\cT\big)^{\si} &\ar[l]\ar[d]
H^0(\wt\Si;T^*\wt\cC\!\otimes\!T^*\wt\Si)^{\si} &\ar[l]  0\\
& \R \ar@{=}[r]  & \R \ar@{=}[r]& \R}$$
induced by the imaginary parts of the Serre Duality pairings commutes,
the isomorphism \eref{SDisom_e} is  orientation-preserving with respect to the orientations 
on the left-hand side and on the first tensor product on the right-hand side
induced by Serre Duality
and the complex orientations on~$\C$ and~$\C^{\vee}$.
The former is the orientation of Corollary~\ref{famSD_crl}.
The first pairing in the above diagram is the real part of a $\C$-linear pairing 
and thus identifies the oriented real vector space~$\C$ in the first row with 
the complex dual~$\C^{\vee}$ of the vector space~$\C$ in the second row.
\end{proof}

\subsection{Comparison of the canonical orientations}
\label{CanOrientComp_subs}

\noindent
Before establishing Theorem~\ref{CompOrient_thm} at the end of this section,
we obtain its analogue for the real Deligne-Mumford moduli spaces;
see Proposition~\ref{CompOrient_prp} below.
This is done after comparing the behavior of the Kodaira-Spencer (\sf{KS}) map
under the smoothings and normalization of a symmetric surface~$(\Si,\si)$
with a pair of conjugate nodes; see Lemma~\ref{KSext_lmm}.\\

\noindent
Let $g\!\in\!\Z$ and $l\!\in\!\Z^{\ge0}$ be such that $g\!+\!l\!\ge\!2$.
The identification of the last two pairs of conjugate marked points induces
an immersion 
\BE{iodfn_e}\io\!: \R\ov\cM_{g-2,l+2}^{\bu} \lra \R\ov\cM_{g,l}^{\bu}\,;\EE
the image $\R\cN_{g,l}^{\bu}$ of $\R\cM_{g-2,l+2}^{\bu}$ under this immersion consists 
of symmetric surfaces with one pair of conjugate nodes.
There is a canonical isomorphism
$$\cN\io\equiv \frac{\io^*T\R\ov\cM_{g,l}^{\bu}}{T\R\ov\cM_{g-2,l+2}^{\bu}}
\approx \cL_{l+1}\!\otimes_{\C}\!\cL_{l+2}$$
of the normal bundle of~$\io$ with the tensor product of the universal tangent line bundles
for the first points in the last two conjugate pairs.
Thus, there is a canonical isomorphism 
\BE{cNsmooth_e} 
 \io^*\big(\La_{\R}^{\top}\big(T\R\ov\cM_{g,l}^{\bu}\big)\big)
\approx \La_{\R}^{\top}(T\R\ov\cM_{g-2,l+2}^{\bu})\otimes
\La_{\R}^2\big(\cL_{l+1}\!\otimes_{\C}\!\cL_{l+2}\big)\EE
of real line bundles over $\R\ov\cM_{g-2,l+2}^{\bu}$.\\

\noindent
Combining the isomorphism~\eref{cNsmooth_e} with the isomorphism~\eref{DvswtD_e2} 
for the trivial rank~1 real bundle pair~$(V,\vph)$, we obtain an isomorphism
\BE{DMtensor_e}\begin{split}
&
\big(\La_{\R}^{\top}(T\R\cM_{g-2,l+2}^{\bu})\!\otimes\!(\det\dbar_{\C})\big)\otimes
\La_{\R}^2\big(\cL_{l+1}\!\otimes_{\C}\!\cL_{l+2}\big)\\
&\hspace{2in}\approx
\io^*\big(\La_{\R}^{\top}\big(T\R\ov\cM_{g,l}^{\bu}\big)\!\otimes\!(\det\dbar_{\C})\big)
\otimes\big(\La_{\R}^2\C\big)
\end{split}\EE
of real line bundles over $\R\ov\cM_{g-2,l+2}^{\bu}$.
Since the complement of~$\R\cN_{g,l}^{\bu}$ in a small neighborhood in $\R\ov\cM_{g,l}^{\bu}$ 
is connected and consists of smooth curves, 
the canonical orientation on the real line bundle~\eref{CidentDM_e} 
provided by \cite[Proposition~5.9]{RealGWsI} extends across $\R\cN_{g,l}^{\bu}$
and thus induces an orientation on the first tensor product 
on the right-hand side of~\eref{DMtensor_e}.

\begin{prp}\label{CompOrient_prp}
Let $g\!\in\!\Z$ and $l\!\in\!\Z^{\ge0}$ be such that $g\!+\!l\!\ge\!2$.
The isomorphism~\eref{DMtensor_e} is orientation-reversing with respect 
to the orientations on the real line bundles~\eref{CidentDM_e}  provided by 
\cite[Proposition~5.9]{RealGWsI} and the complex orientations of 
$\cL_{l+1}\!\otimes_{\C}\!\cL_{l+2}$ and~$\C$.
\end{prp}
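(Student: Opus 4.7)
The plan is to compute both canonical orientations appearing in~\eref{DMtensor_e} at a point $[\cC]\in\R\cN_{g,l}^\bu$ by explicit chains of isomorphisms, and to match these chains termwise using the corollaries of Sections~\ref{SymmSurfDegen_subs} and~\ref{CanIsomCanOrien_subs}. Fix such a $[\cC]$ with preimage $[\wt\cC]\in\R\cM_{g-2,l+2}^\bu$ under $\io$, and choose a smoothing $(\pi\colon\cU\to\De^2,\wt\tau_\De,s_1,\ldots,s_l)$ of $\cC$ as in Section~\ref{SymmSurfDegen_subs}. The real locus $\De_\R^2\subset\De^2$ then provides a real $2$-dimensional local transversal to $\R\cN_{g,l}^\bu$ in $\R\ov\cM_{g,l}^\bu$, which is identified via Kodaira-Spencer with the normal bundle $\cL_{l+1}\otimes_\C\cL_{l+2}$ preserving complex orientations.

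By Corollaries~\ref{realorient_crl}, \ref{famDI_crl}, and~\ref{famSD_crl}, each step in the construction of the canonical orientation from \cite[Proposition~5.9]{RealGWsI} extends continuously from the smooth fibers $\Si_\bt$ with $\bt\in\De_\R^{*2}$ to $\bt=0$, and at $\bt=0$ this extension is the same construction applied to the nodal fiber $(\Si,\si)$ using the family sheaves $\cT\cC|_\Si$ and $\wh\cT\cC|_\Si$ from Lemma~\ref{TSiext_lmm}. The canonical orientation on the left-hand side of~\eref{DMtensor_e} is expressed by the analogous chain on the normalization $(\wt\Si,\wt\si)$. I will then match the two chains termwise using Corollaries~\ref{DIisom_crl} (Dolbeault), \ref{SDisom_crl} (Serre Duality), \ref{H0TCisom_crl} (residue maps at the nodes), and~\ref{SqIsomExt_crl} (canonical orientations of relative determinants of cotangent squares), together with the direct-sum splitting $H^0(\wt\Si;S\wt\cC)^{\wt\si}=H^0(\Si;S\cC)^\si\oplus H^0(\wt\Si;S_\x)^{\wt\si}$ compatible with complex orientations, and the trivial-bundle case of~\eref{DvswtD_e2} relating $\det\dbar_\C|_{\wt\Si}$ with $\det\dbar_\C|_\Si$.

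Each of these linking isomorphisms is individually orientation-preserving when the auxiliary $\La_\R^2\C$ and $\La_\R^2(\C^\vee)$ factors are given their complex orientations. The unique source of sign arises in the Serre Duality step: the $\C$ factor in $H^1(\Si;\cT\cC|_\Si)^\si$ coming from the smoothing direction, which carries the complex orientation inherited via KS from $\cL_{l+1}\otimes_\C\cL_{l+2}$, must be matched with the $\C^\vee$ factor in $(H^0(\Si;\wh\cT\cC\otimes\wh\cT|_\Si)^\si)^*$ through the real part of a $\C$-linear Serre pairing; by the opening discussion of Section~\ref{ComplexOrient_subs}, the underlying identification $\Hom_\C(\C,\C)\to\Hom_\R(\C,\R)$, $\th\mapsto\Re\th$, is orientation-reversing when both sides carry their complex orientations, since $\dim_\C\C=1$ is odd.

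The hard part will be verifying that this is the sole source of sign in the final comparison. In particular, the $\La_\R^2\C$ factors produced by Corollaries~\ref{DIisom_crl}, \ref{H0TCisom_crl}, and~\ref{SqIsomExt_crl}, together with the explicit $\La_\R^2\C$ appearing on the right-hand side of~\eref{DMtensor_e} from~\eref{DvswtD_e2} and $\La_\R^2(\cL_{l+1}\otimes_\C\cL_{l+2})$ on the left-hand side from~\eref{cNsmooth_e}, must cancel in matched pairs of identical complex orientation. This requires careful bookkeeping of the tensor-product rearrangements and of the identifications $\det\dbar_\C|_{\wt\Si}\cong\det\dbar_\C|_\Si\otimes\La_\R^2\C$ and $\rdet\dbar_{(\wh\cT|_\Si)^{\otimes 2}}\cong\rdet\dbar_{(T^*\wt\Si)^{\otimes 2}}\otimes\La_\R^4(H^0(S_\x)^{\wt\si})$ throughout the comparison, so that the only residual sign is the one isolated above.
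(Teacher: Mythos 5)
Your proposal is correct and follows essentially the same route as the paper: extend the canonical orientation construction across the nodal stratum via Corollaries~\ref{realorient_crl}, \ref{famDI_crl}, and~\ref{famSD_crl}, match it termwise against the normalization using Corollaries~\ref{DIisom_crl}, \ref{SDisom_crl}, \ref{H0TCisom_crl}, and~\ref{SqIsomExt_crl} together with Lemma~\ref{KSext_lmm}, and trace the single residual sign to the orientation-reversal of $\Hom_{\C}(\C,\C)\to\Hom_{\R}(\C,\R)$ for the one-complex-dimensional smoothing parameter, i.e.\ the $\C$ versus $\C^{\vee}$ discrepancy under~\eref{duaVorient_e}. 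The bookkeeping you flag as the remaining work is exactly what the paper carries out via the isomorphisms~\eref{CompOrientPf_e1} and~\eref{H0TCisom_e}.
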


\noindent
Suppose $\cC$, $\wt\cC$, $(\pi,\wt\tau_{\De},s_1,\ldots,s_l)$, 
$(\cT,\vph)$, and $(\wh\cT,\wh\vph)$ are as in~\eref{cCdfn_e} and~\eref{wtcCdfn_e}
with $\cU|_{\De_{\R}^2}\!\lra\!\De_{\R}^2$ embedded inside of the universal curve fibration
over~$\R\ov\cM_{g,l}^{\bu}$.
Combining the first isomorphism in~\eref{KSext_e5b} and~\eref{cNsmooth_e}, we obtain an isomorphism
\BE{KSlbr_e}\begin{split}
&\La_{\R}^{\top}\big(T_{[\cC]}\R\ov\cM_{g,l}^{\bu}\big)
\!\otimes\!\La_{\R}^{\top}\big(\wch{H}^1\big(\Si;\cO(\cT\cC|_{\Si})\big)^{\si}\big)\\
&\qquad\approx  \Big(\La_{\R}^{\top}(T_{[\wt\cC]}\R\ov\cM_{g-2,l+2}^{\bu})
\!\otimes\!\La_{\R}^{\top}\big(\wch{H}^1\big(\wt\Si;\cO\big(T\wt\cC\big)\big)^{\si}\big)\Big)
\otimes
\La_{\R}^2\big(\cL_{l+1}\!\otimes_{\C}\!\cL_{l+2}\big)\!\otimes\!\La_{\R}^2\C\,.
\end{split}\EE
The KS map induces an orientation on the left-hand side 
of~\eref{KSlbr_e} whenever $\cC$ is a smooth curve.
Since the complement of~$\R\cN_{g,l}^{\bu}$ in a small neighborhood in $\R\ov\cM_{g,l}^{\bu}$ 
is connected and consists of smooth curves, this orientation extends over~$\R\cN_{g,l}^{\bu}$.

\begin{lmm}\label{KSext_lmm}
The isomorphism~\eref{KSlbr_e} is orientation-preserving with respect 
to the orientations on the left-hand side and the first tensor product on
the right-hand side determined by the KS map
and the canonical orientations of 
$\cL_{l+1}\!\otimes_{\C}\!\cL_{l+2}$ and~$\C$.
\end{lmm}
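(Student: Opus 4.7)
\textbf{Proof sketch for Lemma~\ref{KSext_lmm}.}

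My plan is to decompose the isomorphism~\eref{KSlbr_e} via the Kodaira-Spencer map applied to the family $\pi\!:\cU\!\lra\!\De_\R^2$ of Section~\ref{SymmSurfDegen_subs}, and to reduce everything to the claim that the induced map on the ``smoothing'' directions is $\C$-linear. Concretely, I will establish a commutative diagram
$$
\xymatrix{
0 \ar[r] & \cN_{[\wt\cC]}\io \ar[r]\ar[d]^{\ka} & T_{[\cC]}\R\ov\cM_{g,l}^{\bu} \ar[r]\ar[d]^{\tn{KS}} & T_{[\wt\cC]}\R\ov\cM_{g-2,l+2}^{\bu} \ar[r]\ar[d]^{\tn{KS}} & 0 \\
0 \ar[r] & \C \ar[r] & \wch{H}^1\big(\Si;\cO(\cT\cC|_{\Si})\big)^{\si} \ar[r] & \wch{H}^1\big(\wt\Si;\cO(T\wt\cC)\big)^{\wt\si} \ar[r] & 0
}
$$
in which the top row is the defining exact sequence of $\cN\io$, the bottom row is~\eref{KSext_e5a}, the middle and right vertical arrows are the KS isomorphisms for $[\cC]$ and $[\wt\cC]$, and $\ka$ is the induced isomorphism on kernels. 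Commutativity follows from the naturality of the KS construction: the KS image of a tangent vector to $\R\ov\cM_{g,l}^\bu$ coming from a deformation that preserves the nodes lies in the subspace of cohomology classes supported on the normalization (the right $\C$ in the diagram is precisely the part of $\wch{H}^1$ coming from $H^0(\Si;\C_{x_{12}^+}\!\oplus\!\C_{x_{12}^-})^{\si}$, i.e.\ from changing the gluing at the node).

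Second, I will identify $\ka$ explicitly using the local model $z_1^\pm z_2^\pm\!=\!t^\pm$ from~\eref{cUdfn_e}. The class in $\wch{H}^1(\Si;\cO(\cT\cC|_{\Si}))^{\si}$ of the smoothing direction $\prt/\prt t^+|_{t^+=0}$ is computed as a Čech coboundary on a cover of~$\Si$ adapted to the normalization, and by Lemma~\ref{TSiext_lmm} it corresponds under the connecting homomorphism of~\eref{KSext_e5a0} to a nonzero element of $\wch{H}^0(\Si;\C_{x_{12}^+}\!\oplus\!\C_{x_{12}^-})^{\si}$ depending $\C$-linearly on~$t^+$. Combined with the $\C$-linearity of the canonical identification $\cN_{[\wt\cC]}\io\!\approx\!\cL_{l+1}|_{\wt\cC}\!\otimes_\C\!\cL_{l+2}|_{\wt\cC}\!=\!T_{x_1^+}\wt\Si\!\otimes_\C\!T_{x_2^+}\wt\Si$ via the same smoothing parameter, this exhibits $\ka$ as a $\C$-linear isomorphism between complex lines. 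Hence $\ka$ is automatically orientation-preserving with respect to the complex orientation of $\cL_{l+1}\!\otimes_\C\!\cL_{l+2}$ and the orientation of the bottom~$\C$ inherited from the complex orientation of $\wch{H}^0(\Si;\C_{x_{12}^+})\!\approx\!\C$ through the projection~\eref{H0proj_e}.

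Third, I will assemble the conclusion. The square-orientation on the LHS of~\eref{KSlbr_e} induced by the middle KS map decomposes, via the vertical exact sequences of the diagram, as the tensor product of the square-orientation induced by the right KS map (orienting $\La_\R^{\top}(T_{[\wt\cC]}\R\ov\cM_{g-2,l+2}^\bu)\!\otimes\!\La_\R^{\top}(\wch{H}^1(\wt\Si;\cO(T\wt\cC))^{\wt\si})$) and the square-orientation induced by $\ka$ on $\La_\R^2\cN_{[\wt\cC]}\io\!\otimes\!\La_\R^2\C$. By the previous paragraph, the latter matches the tensor product of the complex orientations on $\cL_{l+1}\!\otimes_\C\!\cL_{l+2}$ and~$\C$, which is exactly the orientation appearing in~\eref{KSlbr_e}. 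This yields the claim.

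The main obstacle is the explicit Čech computation needed to establish the $\C$-linearity and correct normalization of~$\ka$: one must match the generator $\prt/\prt t^+$ of the smoothing parameter space with the canonical generator $\prt/\prt z_1^+\!\otimes\!\prt/\prt z_2^+$ of $T_{x_1^+}\wt\Si\!\otimes_\C\!T_{x_2^+}\wt\Si$, and verify that the quotient generator $z_1^+\prt/\prt z_1^+\!-\!z_2^+\prt/\prt z_2^+$ of $T\wt\Si(-\x)/\cT|_{\Si}$ from Lemma~\ref{TSiext_lmm} corresponds $\C$-linearly to this under the connecting homomorphism. All identifications in sight are holomorphic by construction, so no sign subtlety arises once the generators are tracked carefully.
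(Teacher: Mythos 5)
Your proposal is correct and follows essentially the same route as the paper: a commutative diagram relating the KS maps for $\cC$ and for its normalization $\wt\cC$, reducing everything to the $\C$-linearity of the induced map from the smoothing parameter $t^+\!\in\!\cL_{l+1}\!\otimes_{\C}\!\cL_{l+2}$ to the $\C$-factor coming from the node, verified in the local model $z_1^{\pm}z_2^{\pm}\!=\!t^{\pm}$. Be aware that the \v{C}ech computation you defer as the ``main obstacle'' is the entire substance of the paper's argument (one must compute the cocycles of both the radial and the angular derivatives of $t^+$ and check the latter is $\fI$ times the former, namely $\pm\fI\,(z_1^{\pm}\prt/\prt z_1^{\pm}-z_2^{\pm}\prt/\prt z_2^{\pm})$), and that since the lemma's orientation at the nodal point is defined by extension from nearby smooth fibers, your diagram at the central fiber should be accompanied by the (routine) observation that the KS isomorphisms vary continuously over $\De_{\R}^2$, so that the KS orientation at $[\cC]$ agrees with the extended one.
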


\begin{proof}
The proof is similar to that of \cite[Lemma~6.16]{RealGWsI}.
The parameter~$t^+$ in Section~\ref{SymmSurfDegen_subs}
can be viewed as an element of the complex line bundle
$\cL_{l+1}\!\otimes_{\C}\!\cL_{l+2}$ and parametrizes the smoothings of the marked symmetric
surface~$\cC$ as in~\eref{cCdfn_e}.
In the notation of Section~\ref{SymmSurfDegen_subs}, 
they are described by $\bt\!=\!(t^+,t^-)$ with $t^-\!=\!\ov{t^+}$. 
Denote by  $\cT\wt\cC\!\lra\!\wt\cU_{g-2,l+2}$
the twisted down vertical tangent bundle over the universal curve 
$\pi\!:\wt\cU_{g-2,l+2}\!\lra\!\R\cN_{g,l}^{\bu}$.\\

\noindent
As in the proof of \cite[Lemma~6.16]{RealGWsI}, the vector bundles 
$$T\R\cN_{g,l}^{\bu}, \big(\wch{R}^1\pi_*(\cT\wt\cC)\big)^{\si}\lra \R\cN_{g,l}^{\bu}$$
extend over a neighborhood of $\R\cN_{g,l}^{\bu}$ in $\R\ov\cM_{g,l}^{\bu}$
as a subbundle of $T\R\ov\cM_{g,l}^{\bu}$ and a quotient bundle of 
$(\wch{R}^1\pi_*\cT\cC)^{\si}$.
The KS map induces an isomorphism between these two extensions and gives rise
to a~diagram 
$$\xymatrix{ T_{\wt\cC}\R\cM_{g-2,l+2}^{\bu} \ar[r]\ar[d]_{\tn{KS}}^{\approx}& 
T_{\cC_{\bt}}\R\ov\cM_{g,l}^{\bu} \ar[d]_{\tn{KS}}^{\approx}\ar[r]& 
 \cL_1\!\otimes_{\C}\!\cL_2|_{\wt\cC}\ar[d]_{\tn{KS}}^{\approx}\\
\wch{H}^1\big(\wt\Si;\cO\big(T\wt\cC\big)\big)^{\si}
& \ar[l] \wch{H}^1\big(\Si_{\bt};\cO(\cT\cC|_{\Si_{\bt}})\big)^{\si_{\bt}} &\ar[l]\C}$$
commuting up to homotopy of the isomorphisms given by the vertical arrows.
The crucial point  is that the KS map sends the deformation parameter 
$t^+\!\in\!\cL_1\!\otimes_{\C}\!\cL_2$ to the $\C$-factor in~\eref{KSlbr_e}
in an orientation-preserving fashion.
This is shown in the next paragraph.\\

\noindent
Similarly to the last part of the proof of \cite[Lemma~6.16]{RealGWsI},
we cover a neighborhood of~$\Si_{\bt}$ in~$\cU$ by the open~sets
$$\cU_1^{\pm}=\big\{(t^+,t^-,z_1^{\pm},z_2^{\pm})\!\in\!\cU_0^{\pm}\!:\,2|z_2^{\pm}|\!<\!1\big\} 
\quad\hbox{and}\quad
\cU_2^{\pm}=\big\{(t^+,t^-,z_1^{\pm},z_2^{\pm})\!\in\!\cU_0^{\pm}\!:\,2|z_1^{\pm}|\!<\!1\big\},$$
along with coordinate charts each of which intersects at most one of~$\cU_1^{\pm}$ and~$\cU_2^{\pm}$.
By the same computation as before, the \v{C}ech 1-cocycle 
corresponding to the radial vector field~\cite[(6.25)]{RealGWsI} 
for the smoothing parameter~$t\!=\!t^+$ is given~by
\BE{KSext_e15}
\wh\th_{0;12}^{\pm}\equiv z_1^{\pm}\frac{\prt}{\prt z_1^{\pm}} - z_2^{\pm}\frac{\prt}{\prt z_2^{\pm}}, 
\qquad
\wh\th_{0;21}^{\pm}\equiv -z_1^{\pm}\frac{\prt}{\prt z_1^{\pm}} + z_2^{\pm}\frac{\prt}{\prt z_2^{\pm}}\EE
on $\cU_1^{\pm}\!\cap\!\cU_2^{\pm}$ after re-scaling by $|t|^{-1}$ 
and vanishes on all remaining overlaps.
In order to determine the image of the angular vector field, we replace~$t$ with $\ne^{\fI\th}t$
in the computation in the proof of \cite[Lemma~6.16]{RealGWsI}
and differentiate the resulting overlap maps~$f_{12}^{\pm}$ and~$f_{21}^{\pm}$ 
with respect to~$\th$ at $\th\!=\!0$.
Over~$\cU_0^{\pm}$, we then obtain the right-hand sides of the two expressions
in~\eref{KSext_e15} multiplied by~$\pm\fI$.
Thus, the KS map sends $t^+\!\in\!\cL_1\!\otimes_{\C}\!\cL_2$ to the $\C$-factor in~\eref{KSlbr_e}
in an orientation-preserving fashion.
\end{proof}

\begin{proof}[{\bf\emph{Proof of Proposition~\ref{CompOrient_prp}}}] 
Let  $(\wt\cC,\wt\si)$  be an element of $\R\ov\cM_{g-2,l+2}^{\bu}$.
Its image under~$\io$ is a marked symmetric curve~$(\cC,\si)$ with a pair of conjugate nodes. 
We continue with the notation and setup in the proof of Lemma~\ref{KSext_lmm}.\\

\noindent
The isomorphisms~\eref{cNsmooth_e} and~\eref{H0TC_e} induce an isomorphism
\begin{alignat}{1}
\label{CompOrientPf_e1}
&\La_{\R}^{\top}\big(T_{[\cC]}\R\ov\cM_{g,l}^{\bu}\big)\otimes
\La_{\R}^{\top}\big(\big(H^0(\Si;\wh\cT\cC\!\otimes\!\wh\cT)^{\si}\big)^*\big)\\
\notag
&\quad
\approx\Big( \La_{\R}^{\top}\big(T_{[\wt\cC]}\R\ov\cM_{g-2,l+2}^{\bu}\big)\!\otimes\!
\La_{\R}^{\top} \big(\big(H^0(\wt\Si;T^*\wt\cC\!\otimes\!T^*\wt\Si)^{\si}\big)^*\big)
\Big) \otimes \La_{\R}^2\big(\cL_{l+1}\!\otimes_{\C}\!\cL_{l+2}\big) \otimes \La_{\R}^2(\C^{\vee}).
\end{alignat}
Orientations on the left-hand side of~\eref{CompOrientPf_e1} and the first tensor product 
on the right-hand side are obtained by tensoring the orientations on 
the corresponding terms 
\begin{enumerate}[label=(\arabic*),leftmargin=*]

\item\label{KSisom_it} in~\eref{KSlbr_e} determined by the KS map,

\item\label{DIisom_it} in~\eref{DIisom_e} determined by  Dolbeault Isomorphism 
and Corollary~\ref{famDI_crl},

\item\label{SDisom_it} in~\eref{SDisom_e} determined by Serre Duality and Corollary~\ref{famSD_crl}. 

\end{enumerate}
By Lemma~\ref{KSext_lmm} and Corollaries~\ref{DIisom_crl} and~\ref{SDisom_crl},
the isomorphism~\eref{CompOrientPf_e1} is orientation-preserving with respect 
to these two orientations and the complex orientations on  
$\cL_{l+1}\!\otimes_{\C}\!\cL_{l+2}$ and~$\C^{\vee}$.\\

\noindent
The orientations on
$$\La_{\R}^{\top}\big(T_{[\cC]}\R\ov\cM_{g,l}^{\bu}\big)\otimes 
\big(\!\det\dbar_{\C}|_{[\cC]}\big)
\qquad\hbox{and}\qquad
\La_{\R}^{\top}\big(T_{[\wt\cC]}\R\ov\cM_{g-2,l+2}^{\bu}\big)\otimes \big(\!\det\dbar_{\C}|_{[\wt\cC]}\big)$$
provided by \cite[Proposition~5.17]{RealGWsI} are the tensor products of the orientations~on
\begin{enumerate}[label=(\arabic*),leftmargin=*]

\item the left-hand side of~\eref{CompOrientPf_e1} and the first tensor product 
on the right-hand side described above and

\item the first tensor products on the two sides of~\eref{H0TCisom_e} 
described below~\eref{H0SCevisom_e}.\\

\end{enumerate} 

\noindent
The isomorphism~\eref{duaVorient_e} with $V\!=\!\C$ induces a homotopy class of identifications
of $(\La^2_{\R}\C)^*\!\otimes\!\La_{\R}^2(\C^{\vee})$ with~$\R$.
By the previous paragraph and Corollary~\ref{H0TCisom_crl}, the isomorphisms 
\begin{equation*}\begin{split}
&\Big(\La_{\R}^{\top}\big(T_{[\cC]}\R\ov\cM_{g,l}^{\bu}\big)
\!\otimes\!\big(\!\det\dbar_{\cC;\C}\big)^*\Big) \otimes
\big(\La^2_{\R}\C\big)^*\\
&\quad
\approx\Big( \La_{\R}^{\top}\big(T_{[\wt\cC]}\R\ov\cM_{g-2,l+2}^{\bu}\big)\!\otimes\!
\big(\!\det\dbar_{\wt\cC;\C}\big)^*\Big) \otimes 
\La_{\R}^2\big(\cL_{l+1}\!\otimes_{\C}\!\cL_{l+2}\big)\!\otimes\! 
\big(\La^2_{\R}\C\big)^*\!\otimes\!\La_{\R}^2(\C^{\vee})
\end{split}\end{equation*}
induced by the isomorphism~\eref{cNsmooth_e}, the isomorphism~\eref{DvswtD_e2} 
for the trivial rank~1 real bundle pair~$(V,\vph)$,
and trivializations of $(\La^2_{\R}\C)^*\!\otimes\!\La_{\R}^2(\C^{\vee})$ in the above homotopy class
are orientation-preserving with respect to the orientations of Proposition~\ref{CompOrient_prp} 
and the complex orientations of~$\C$ and~$\C^{\vee}$.
Since the isomorphism~\eref{duaVorient_e} with $V\!=\!\C$ is orientation-reversing with 
respect to the complex orientations of~$\C$ and~$\C^{\vee}$, the isomorphism~\eref{DMtensor_e}
is also orientation-reversing 
with respect to the orientations of Proposition~\ref{CompOrient_prp}.
\end{proof}

\begin{proof}[{\bf\emph{Proof of Theorem~\ref{CompOrient_thm}}}] 
Throughout this argument, we omit $(X,B,J)^{\phi}$
from the notation for the moduli spaces of~maps and let
$$\cL=\cL_{l+1}\!\otimes_{\C}\!\cL_{l+2}\,.$$
By the construction of the orientations in the proofs of 
Corollary~5.10 and Theorem~1.3 in~\cite{RealGWsI}, it is sufficient to verify the claim 
over an element $[\wt{u}]\!\in\!\ov\fM_{g-2,l+2}'^{\bu}$ with a smooth stable domain.
Let $u$ be the induced real map from the corresponding nodal symmetric surface.
We denote the marked domains of~$\wt{u}$ and~$u$ by~$\wt\cC$ and~$\cC$, respectively,
and let $q\!=\!\ev_{l+1}(\wt{u})$.\\

\noindent
The forgetful morphisms~\eref{ffdfn_e} induce the short exact sequences represented by
the left and middle columns in the two diagrams of Figure~\ref{CompOrient_fig}.
The top row in the first diagram is the exact sequence on the indices of Fredholm
operators determined by the exact sequence~\eref{DvswtD_e} with $(V,\vph)\!=\!u^*(TX,\tnd\phi)$;
the middle row is the exact sequence above~\eref{SubIsom_e}.
The middle and bottom rows in the second diagram are the exact sequences associated
with the normal bundles~$\cN\io$ above~\eref{RestrOrient_e0} and~\eref{cNsmooth_e}, respectively.\\

\begin{figure}
$$\xymatrix{& 0 \ar[d] & 0 \ar[d] & 0 \ar[d] &\\
0\ar[r]&   \Ind D_u \ar[r]\ar[d]& \Ind D_{\wt{u}}\ar[r]^{\ev_{x_{l+1}^+}}\ar[d] 
& T_qX \ar[r]\ar[d]^{\id}&0\\
0 \ar[r]&  T_{\wt{u}}\ov\fM_{g-2,l+2}'^{\bu} \ar[r]\ar[d]^{\tnd\ff}& 
T_{\wt{u}}\ov\fM_{g-2,l+2}^{\bu} \ar[r]^>>>>>>{\tnd_{\wt{u}}\ev_{l+1}}\ar[d]^{\tnd\ff}&
T_qX\ar[r]\ar[d]& 0\\
0 \ar[r]&  T_{\wt\cC}\R\ov\cM_{g-2,l+2}^{\bu} \ar[r]^{\id}\ar[d]& 
T_{\wt\cC}\R\ov\cM_{g-2,l+2} \ar[r]\ar[d]& 0\\
& 0  & 0  & &\\
&&&&\\
& 0 \ar[d] & 0 \ar[d] & &\\
0\ar[r]&  \Ind D_u\ar[r]^{\id}\ar[d]& \Ind D_u\ar[r]\ar[d] & 0\ar[d]&\\
0 \ar[r]&  T_{\wt{u}}\ov\fM_{g-2,l+2}'^{\bu} \ar[r]^{\tnd\io}\ar[d]^{\tnd\ff}& 
T_{u}\ov\fM_{g,l}^{\bu} \ar[r]\ar[d]^{\tnd\ff}& 
\cL|_{\wt{u}}\ar[r]\ar[d]^{\id}& 0\\
0 \ar[r]&  T_{\wt\cC}\R\ov\cM_{g-2,l+2}^{\bu} \ar[r]^{\tnd\io}\ar[d]& 
T_{\cC}\R\ov\cM_{g,l}^{\bu} \ar[r]\ar[d]& \cL|_{\wt\cC}\ar[r]\ar[d] & 0\\
& 0  & 0  & 0 &}$$ 
\caption{Commutative diagrams for the proof of Theorem~\ref{CompOrient_thm}} 
\label{CompOrient_fig}
\end{figure}

\noindent
The middle row and column in the first diagram in Figure~\ref{CompOrient_fig} determine
isomorphisms
\BE{CompOrientPf_e3}\begin{split}
\La_{\R}^{\top}\big(T_{\wt{u}}\ov\fM_{g-2,l+2}'^{\bu}\big)
\otimes \La_{\R}^{2n}(T_qX) \otimes
\La_{\R}^2\big(\cL|_{\wt{\cC}}\big)
&\approx 
\La_{\R}^{\top}\big(T_{\wt{u}}\ov\fM_{g-2,l+2}^{\bu}\big) \otimes
\La_{\R}^2\big(\cL|_{\wt{\cC}}\big)\\
&\approx 
\big(\!\det D_{\wt{u}}\big)\otimes 
\La_{\R}^{\top}\big(T_{\wt\cC}\R\ov\cM_{g-2,l+2}^{\bu} \big)
\otimes \La_{\R}^2\big(\cL|_{\wt{\cC}}\big)\,.
\end{split}\EE
By the commutativity of the squares in this diagram, 
the composition of the two isomorphisms in~\eref{CompOrientPf_e3} 
equals to the composition of the isomorphism
\BE{CompOrientPf_e5}\begin{split}
&\La_{\R}^{\top}\big(T_{\wt{u}}\ov\fM_{g-2,l+2}'^{\bu}\big)
\otimes \La_{\R}^{2n}(T_qX) \otimes
\La_{\R}^2\big(\cL|_{\wt{\cC}}\big)\\
&\hspace{1in}\approx 
\big(\!\det D_u\big)\otimes 
\La_{\R}^{\top}\big(T_{\wt\cC}\R\ov\cM_{g-2,l+2}^{\bu} \big)
\otimes \La_{\R}^{2n}(T_qX) 
\otimes \La_{\R}^2\big(\cL|_{\wt{\cC}}\big)
\end{split}\EE
induced by the first column and the isomorphism~\eref{DvswtD_e2} 
 with $(V,\vph)\!=\!u^*(TX,\tnd\phi)$; 
the latter is induced by the first row.\\

\noindent
The middle row and column in the second diagram in Figure~\ref{CompOrient_fig} determine
isomorphisms
\BE{CompOrientPf_e7}\begin{split}
&\La_{\R}^{\top}\big(T_{\wt{u}}\ov\fM_{g-2,l+2}'^{\bu}\big)
\otimes \La_{\R}^{2n}(T_qX) \otimes
\La_{\R}^2\big(\cL|_{\wt\cC}\big)\\
&\qquad
\approx 
\La_{\R}^{\top}\big(T_u\ov\fM_{g,l}^{\bu}\big) \otimes  \La_{\R}^{2n}(T_qX)
\approx \big(\!\det D_u\big)\otimes 
\La_{\R}^{\top}\big(T_{\cC}\R\ov\cM_{g,l}^{\bu} \big)
\otimes  \La_{\R}^{2n}(T_qX)\,.
\end{split}\EE
By the commutativity of the squares in this diagram, 
the composition of the two isomorphisms in~\eref{CompOrientPf_e7} 
equals to the composition of the isomorphisms~\eref{CompOrientPf_e5}
and~\eref{cNsmooth_e};  the latter is induced by the bottom row.
Thus, the isomorphism
\BE{CompOrientPf_e9}\begin{split}
&\big(\!\det D_{\wt{u}}\big)\otimes 
\La_{\R}^{\top}\big(T_{\wt\cC}\R\ov\cM_{g-2,l+2}^{\bu} \big)
\otimes \La_{\R}^2\big(\cL|_{\wt{\cC}}\big)
\approx \big(\!\det D_u\big)\otimes 
\La_{\R}^{\top}\big(T_{\cC}\R\ov\cM_{g,l}^{\bu} \big)
\otimes  \La_{\R}^{2n}(T_qX)
\end{split}\EE
induced by~\eref{CompOrientPf_e3} and~\eref{CompOrientPf_e7} 
is the tensor product of  the isomorphism~\eref{DvswtD_e2} 
with $(V,\vph)\!=\!u^*(TX,\tnd\phi)$ and the isomorphism~\eref{cNsmooth_e}.\\

\noindent
The isomorphism~\eref{CompOrientPf_e3} induces an isomorphism
\BE{CompOrientPf_e15}\begin{split}
&\La_{\R}^{\top}\big(T_{\wt{u}}\ov\fM_{g-2,l+2}'^{\bu}\big)
\otimes \La_{\R}^{2n}(T_qX) \otimes
\La_{\R}^2\big(\cL|_{\wt{\cC}}\big)
\otimes\big(\!\det\dbar_{\wt\Si;\C}\big)^{\otimes(n+1)}\\
&\hspace{.2in}\approx 
\Big(\big(\!\det D_{\wt{u}}\big)\!\otimes\!\big(\!\det\dbar_{\wt\Si;\C}\big)^{\otimes n}\Big)
\otimes 
\Big(\La_{\R}^{\top}\big(T_{\wt\cC}\R\ov\cM_{g-2,l+2}^{\bu}\big)
\!\otimes\!\big(\!\det\dbar_{\wt\Si;\C}\big)\Big)
\!\otimes\!\La_{\R}^2\big(\cL|_{\wt{\cC}}\big)\,.
\end{split}\EE
The isomorphism~\eref{CompOrientPf_e7} and the isomorphisms~\eref{DvswtD_e2} with 
$$(V,\vph)=u^*\big(TX,\tnd\phi\big),\big(\Si\!\times\!\C,\si\!\times\!\fc\big)$$
induce an isomorphism
\BE{CompOrientPf_e17}\begin{split}
&\La_{\R}^{\top}\big(T_{\wt{u}}\ov\fM_{g-2,l+2}'^{\bu}\big)
\otimes \La_{\R}^{2n}(T_qX) \otimes
\La_{\R}^2\big(\cL|_{\wt{\cC}}\big)
\otimes\big(\!\det\dbar_{\wt\Si;\C}\big)^{\otimes(n+1)}\\
&\qquad
\approx 
\Big(\big(\!\det D_u\big)\!\otimes\!\big(\!\det\dbar_{\Si;\C}\big)^{\otimes n}\Big)
\!\otimes\!\La_{\R}^{2n}(T_qX)\!\otimes\!\La_{\R}^{2n}\C^n\\
&\hspace{2.2in}
\otimes
\Big(\La_{\R}^{\top}\big(T_{\cC}\R\ov\cM_{g,l}^{\bu}\big)\!
\otimes\!\big(\!\det\dbar_{\C;\Si}\big)\Big)\!\otimes\!\La_{\R}^2\C\,.
\end{split}\EE
A real orientation on~$(X,\om,\vph)$ induces orientations on
\BE{CompOrientPf_e25}
\rdet\,D_{\wt{u}}\equiv
 \big(\!\det D_{\wt{u}}\big)\!\otimes\!\big(\!\det\dbar_{\wt\Si;\C}\big)^{\otimes n}
\quad\hbox{and}\quad
\rdet\,D_u\equiv
 \big(\!\det D_u\big)\!\otimes\!\big(\!\det\dbar_{\Si;\C}\big)^{\otimes n}.\EE
The isomorphisms~\eref{CompOrientPf_e15} and~\eref{CompOrientPf_e17}
induce orientations on their common domain from the orientations in~\eref{CompOrientPf_e25},
the orientation of~\eref{CidentDM_e} provided by \cite[Proposition~5.9]{RealGWsI},
and the canonical orientations on~$\cL$, $TX$, and~$\C$.
The substance of Theorem~\ref{CompOrient_thm} is that the two induced orientations are
different.\\ 

\noindent
The two induced orientations are different if the composition of the inverse of 
the isomorphism in~\eref{CompOrientPf_e15} with the isomorphism in~\eref{CompOrientPf_e17} 
is orientation-reversing.
By the sentence containing~\eref{CompOrientPf_e9}, this composition is the tensor product~of 
\begin{enumerate}[label=(\arabic*),leftmargin=*]

\item the isomorphism~\eref{RealOrientCrl_e}  with $(V,\vph)\!=\!u^*(TX,\tnd\phi)$ and

\item the isomorphism~\eref{DMtensor_e}.

\end{enumerate}
By Corollaries~\ref{RealOrient_crl} and~\ref{canonisomExt2_crl2a}, 
the first isomorphism is orientation-preserving.
By Proposition~\ref{CompOrient_prp}, the second isomorphism is orientation-reversing.
\end{proof}

\begin{rmk}\label{twist_rmk2}
A real orientation on a $2n$-dimensional manifold $X$ determines orientations 
on the moduli spaces of real spaces if $n$ is odd. 
If $n$ is even,  a real orientation on~$X$ determines orientations on 
the tangent bundles of the moduli spaces of real maps twisted 
by the tangent bundles of the moduli spaces of real curves; 
the real spaces in this case are generally not orientable. 
If $n\!\in\!2\Z$ and $2g\!+\!l\!\ge\!3$, the comparison of Theorem~\ref{CompOrient_thm} 
should thus be made with the tangent bundles of the moduli spaces of maps twisted 
by the tangent bundles of the moduli spaces of curves as in~\cite[(1.3)]{RealGWsI}.
The isomorphism~\eref{CompOrient_e} is then replaced by its tensor product with
the inverse of~\eref{cNsmooth_e}.
The proof of Theorem~\ref{CompOrient_thm} implies that this isomorphism is orientation-preserving,
since the orientation-reversing isomorphism~\eref{DMtensor_e} now enters twice.
This $n\!\in\!2\Z$ analogue of Theorem~\ref{CompOrient_thm} is also 
invariant under the reordering of the nodes, since it now preserves 
the orientation of~$TX$ and $\cL_{l+1}\!\otimes_{\C}\!\cL_{l+2}$ appears~twice.
\end{rmk}

\vspace{.2in}

\noindent
{\it  Institut de Math\'ematiques de Jussieu - Paris Rive Gauche,
Universit\'e Pierre et Marie Curie, 
4~Place Jussieu,
75252 Paris Cedex 5,
France\\
penka.georgieva@imj-prg.fr}\\

\noindent
{\it Department of Mathematics, Stony Brook University, Stony Brook, NY 11794\\
azinger@math.stonybrook.edu}\\

\vspace{.2in}


\end{document}